\documentclass[12pt]{article}

\usepackage{amsmath,amsfonts,amssymb}
\usepackage{bbm}

\setcounter{page}{1}
\usepackage{lipsum}

\frenchspacing
\textwidth              16 cm
\textheight             22.5 cm
\topmargin              0.0  cm
\evensidemargin         0.0 cm
\oddsidemargin          0.0 cm


\newtheorem{defn}{Definition}[section]
\newtheorem{theo}[defn]{Theorem}
\newtheorem{lem}[defn]{Lemma}
\newtheorem{prop}[defn]{Proposition}

\newtheorem{rem}[defn]{Remark}
\newtheorem{exam}[defn]{Example}

\newenvironment{proof}{{\bf Proof }}{{\vskip 0.1cm \hfill$\Box$}}

\def\N {{\mathbb N}}

\def\R {{\mathbb R}}
\def\E{{\mathbb E}}
\def\P{{\mathbb P}}
\def\M{{\mathbb M}}
\newcommand{\F}{\mathcal{F}}


\begin{document}

\noindent
{{\Large\bf Well-posedness for a class of degenerate It\^o-SDEs with fully discontinuous coefficients}
{\footnote{This research was supported by Basic Science Research Program through the National Research Foundation of Korea(NRF) funded by the Ministry of Education(NRF-2017R1D1A1B03035632).}}\\ \\
\bigskip
\noindent
{\bf Haesung Lee},
{\bf Gerald Trutnau}
}\\

\noindent
{\bf Abstract.} We show uniqueness in law for a  general class of stochastic differential equations in $\mathbb{R}^d$, $d\ge 2$, with possibly degenerate and/or fully discontinuous locally bounded coefficients among all weak solutions that spend zero time at the points of degeneracy of the dispersion matrix. The points of degeneracy have $d$-dimensional Lebesgue-Borel measure zero. Weak existence is obtained for more general, not necessarily locally bounded drift coefficient.
\\ \\
\noindent
{Mathematics Subject Classification (2010): primary; 60H20, 47D07, 35K10; secondary: 
 60J60, 60J35, 31C25, 35B65.}\\

\noindent 
{Keywords: degenerate stochastic differential  equation, uniqueness in law, martingale problem, weak existence, strong Feller semigroup} 
\section{Introduction}
The question whether a solution to a stochastic differential equation (hereafter SDE) on $\R^d$ exists that is pathwise unique and strong occurs widely in the mathematical literature, see for instance introduction of \cite{LT18} for some detailed, but possibly incomplete recent development. Sometimes, strong solutions, which are roughly described as weak solutions for a given Brownian motion are required, for instance in signal processing where a noisy signal is implicitly given.
Sometimes, it may be impossible to obtain a strong solution or only weak solutions are important to consider, or for some reason only the strong Markov property of the solution is needed. Then uniqueness in law, i.e. the question whether given an initial distribution,  the distribution of any weak solution no matter on which probability space it is considered is the same, plays an important role. Also, it might be that pathwise uniqueness and strong solution results are just too restrictive, so that one is naturally led to consider weak solutions and their uniqueness. (Here we consider weak uniqueness of an SDE with respect to all initial conditions $x\in\R^d$ as defined for instance in \cite[Chapter 5]{KaSh}, see also Definition \ref{wellposedness} below). \\
In order to explain our motivation for this work, fix a symmetric matrix $C=(c_{ij})_{1 \leq i,j \leq d}$ of bounded measurable functions $c_{ij}$, such that for some $\lambda \ge 1$
\begin{equation*}
\lambda^{-1} \|\xi\|^2 \leq \langle C(x) \xi, \xi \rangle \leq \lambda \|\xi\|^2, \quad \text{ for all } x,\xi \in \mathbb{R}^d,
\end{equation*}
and a vector $\mathbf{H}=(h_1,..., h_d)$ of locally bounded measurable functions. Let 
\begin{equation}\label{weaksolutiongenerator}
{\mathcal L}f=\sum_{i,j=1}^d\frac{c_{ij}}{2}\partial_{ij}f+\sum_{i=1}^d h_i\partial_i f
\end{equation}
be the corresponding linear operator and 
\begin{equation}\label{weaksolutionintrofirst}
X_t = x+  \int_0^t \sqrt{C}(X_s) \, dW_s+   \int^{t}_{0}   \mathbf{H}(X_s) \, ds, \quad t\ge 0, \ x\in \R^d,
\end{equation}
be the corresponding It\^o-SDE. If the $c_{ij}$ are continuous and the $h_i$ bounded then \eqref{weaksolutionintrofirst} is well-posed, i.e. there exists a solution and it is unique in law (see \cite{StrVar}). If the $h_i$ are bounded, then \eqref{weaksolutionintrofirst} is well-posed for $d=2$ (see \cite[Exercise 7.3.4]{StrVar}), but if $d\ge 3$, there exists an example of a measurable discontinuous $C$ for which uniqueness in law does not hold (\cite{Nadi}). Hence even in the nondegenerate case, well-posedness for discontinuous coefficients is non-trivial and one is naturally led to search for general subclasses in which well-posedness holds. Some of these are given when $C$ is not far from being continuous, i.e. continuous up to a small set (e.g. a discrete set or a set of $\alpha$-Hausdorff measure zero with sufficiently small $\alpha$, or else, see for instance introductions of \cite{Kry04} and \cite{Nadi}  for references). Another special subclass is given when $C$ is piecewise constant on a decomposition of $\R^d$ into a finite union of polyhedrons (\cite{BaPa}). Actually, the  work \cite{BaPa} is one of our sources of motivation for this article. Though we do not perfectly cover the conditions in \cite{BaPa}, we can complement them in many ways. In particular, we can consider arbitrary decompositions of $\R^d$ into bounded disjoint measurable sets (choose for instance $\sqrt{\frac{1}{\psi}}=\sum_{i=1}^\infty \alpha_i 1_{A_i}$, with $\R^d=\dot{\cup}_{i=1}^{\infty}A_i$, $(\alpha_i)_{i\in \N}\subset (0,\infty)$ in \eqref{weaksolutionintro} below).  A further example for a discontinuous $C$, where well-posedness holds can be found in \cite{Gao}. There the discontinuity is along the common boundary of the upper and lower half spaces.
In \cite{Kry04}, among others, the problem of uniqueness in law for \eqref{weaksolutionintrofirst} is related to the Dirichlet problem for ${\mathcal L}$ as in \eqref{weaksolutiongenerator}, locally on smooth domains. This method, was also used in \cite{Nadi}, using previous work of Krylov. In particular, a shorter proof of the well-posedness results of Bass and Pardoux \cite{BaPa} and Gao \cite{Gao} is presented in \cite[Theorems 2.16 and 3.11]{Kry04}. But the most remarkable is the derivation of well-posedness for a special subclass of processes with degenerate discontinuous $C$. Though the discontinuity is only along a hyperplane of codimension one and the coefficients are quite regular outside the hyperplane, it seems to be one of the first examples of a discontinuous degenerate $C$ where well-posedness still holds (\cite[Example 1.1]{Kry04}).  
This intriguing example was the second source of our motivation. As it was the case for the results in \cite{BaPa}, we could not perfectly cover \cite[Example 1.1]{Kry04}, but again we could complement it in many ways.  As a main observation besides the above considerations, it seems that so far no general subclass was presented, where $C$ is degenerate (or also nondegenerate if $d\ge 3$) and fully discontinuous, but nonetheless well-posedness holds. This will be another main goal of this paper and our method strongly differs from the techniques used in  \cite{BaPa} and \cite{Kry04} and in previous literature. Our techniques involve semigroup theory, elliptic and parabolic regularity theory, the theory of generalized Dirichlet forms (i.e. the construction of a Hunt process from a sub-Markovian $C_0$-semigroup of contractions on some $L^1$-space with a weight) and  an adaptation of an idea of Stroock and Varadhan to show uniqueness  for the martingale problem using a Krylov type estimate. Krylov type estimates have been widely used to obtain the weak solution and its uniqueness, in particular pathwise uniqueness, simultaneously. The advantage of our method is that 
weak existence of a solution and uniqueness in law are shown separately of each other using different techniques. We use local Krylov type estimates (Theorem \ref{krylovtype}) to show uniqueness in law and once uniqueness in law holds we can improve the original Krylov estimate at least for the time-homogeneous case (see Remark \ref{remuniquenessinlawconsequences}). In particular our method typically implies weak existence results that are more general than the uniqueness results (see Theorem \ref{weakexistence4} here and \cite{LT18,LT19}). \\
Now, let us describe our results. Let $d\ge 2$, and $A=(a_{ij})_{1 \leq i,j \leq d}$ be a symmetric matrix of functions $a_{ij} \in H_{loc}^{1,2d+2}(\mathbb{R}^d) \cap C(\mathbb{R}^d)$ such that for every open ball $B\subset \R^d$, there exist constants $\lambda_B, \Lambda_B>0$ with
\begin{equation*}
\lambda_{B} \|\xi\|^2 \leq \langle A(x) \xi, \xi \rangle \leq \Lambda_B \|\xi\|^2, \quad \text{ for all } \xi \in \mathbb{R}^d, \; x \in B.
\end{equation*}
Let $\psi \in L_{loc}^q(\mathbb{R}^d)$, with $q>2d+2$, $\psi>0$ a.e., such that $\frac{1}{\psi} \in L_{loc}^{\infty}(\mathbb{R}^d)$. 
Here we assume that the expression $\frac{1}{\psi}$ stands for an arbitrary but fixed Borel measurable function satisfying $\psi\cdot \frac{1}{\psi}=1$ a.e. and  $\frac{1}{\psi}(x)\in [0,\infty)$ for any $x\in \R^d$. Let
$\mathbf{G}=(g_1,...,g_d)\in  L_{loc}^{\infty}(\mathbb{R}^d, \mathbb{R}^d)$ be a vector of Borel measurable functions. 
Let $(\sigma_{ij})_{1 \le i \le d,1\le j \le m}$, $m\in \mathbb{N}$ arbitrary but fixed, be any matrix consisting of continuous functions,  such that $A=\sigma \sigma^T$.
Suppose there exists a constant  $M> 0$, such that 
\begin{eqnarray}\label{consuniquelaw}
-\frac{\langle A(x)x, x \rangle}{ \psi(x)(\left \| x \right \|^2 +1)}+ \frac{\mathrm{trace}( A(x))}{2\psi(x)}+ \big \langle \mathbf{G}(x), x \big \rangle  \leq M\left ( \left \| x \right \|^2+1\right )\left (  {\rm ln}(\left \| x \right \|^2+1)+1\right )  
\end{eqnarray}
for a.e. $x\in \mathbb{R}^d$.
The main result of our paper (Theorem \ref{poijoijweuniqe}) is that then weak existence and uniqueness in law, i.e. well-posedness, holds for the stochastic differential equation
\begin{equation}\label{weaksolutionintro}
X_t = x+  \int_0^t \big (\sqrt{\frac{1}{\psi}}\cdot \sigma\big )(X_s) \, dW_s+   \int^{t}_{0}   \mathbf{G}(X_s) \, ds, \quad t\ge 0, \ x\in \R^d.
\end{equation}
among all weak solutions  $(\Omega, \mathcal{F}, (\mathcal{F}_t)_{t\ge 0}, X_t=(X_t^1,...,X_t^d), W = (W^1,\dots,W^m),\mathbb{P}_x )$, $x\in \mathbb{R}^d$, such that 
\begin{equation}\label{psiconditionintro}
\int_0^{\infty} 1_{\big \{ \sqrt{ \frac{1}{\psi} }=0\big \}}(X_s)ds=0\qquad \P_x\text{-a.s. } \ \forall x\in \R^d.
\end{equation}
Here it is important to mention that the solution and the integrals involving the solution in \eqref{weaksolutionintro} may a priori depend on the Borel versions chosen for $\sqrt{\frac{1}{\psi}}$ and $\mathbf{G}$ but that condition \eqref{psiconditionintro} is exactly the condition that makes these objects independent of the chosen Borel versions (cf. Lemma \ref{condforpsitohold}). $\sqrt{ \frac{1}{\psi} }$ may of course be fully discontinuous but if it takes all its values in $(0,\infty)$, then \eqref{psiconditionintro} is automatically satisfied.
However, since $\psi \in L_{loc}^q(\mathbb{R}^d)$, it must be a.e. finite, so that the zeros $Z$ of $\sqrt{\frac{1}{\psi}}$ have Lebesgue-Borel measure zero. Nonetheless, our main result comprehends the existence of a whole class of degenerate (on $Z$) diffusions with fully discontinuous coefficients for which well-posedness holds. This seems to be new in the literature. For another condition that implies 
\eqref{psiconditionintro}, we refer to Lemma \ref{condforpsitohold}, and for an explicit example for well-posedness, which reminds the Engelbert/Schmidt condition for uniqueness in law in dimension one (see \cite{EngSch}), we refer to Example \ref{wellposednessexplicitexample}. \\
We derive weak existence of a solution to \eqref{weaksolutionintro} up to its explosion time under quite more general conditions on the coefficients, see Theorem \ref{weakexistence4}.  In this case, for non-explosion one only needs that \eqref{consuniquelaw} holds outside an arbitrarily large open ball (see Remark \ref{explanationweaksolution}(ii)). Moreover, \eqref{psiconditionintro} is always satisfied for the weak solution that we construct (see Remark \ref{explanationweaksolution}) and our weak solution is a Hunt process, not only a strong Markov process.\\
The techniques that we use for weak existence are as follows. First, any solution to \eqref{weaksolutionintro} determines the same (up to a.e. uniqueness of the coefficients) second order partial differential operator $L$ on $C_0^{\infty}(\R^d)$. In Theorem \ref{helholmop}, we find a measure $\mu :=\rho\psi\,dx$, with some nice regularity of  $\rho$, that is pre-invariant for $L$, i.e.
$$
\int_{\R^d} Lf \,d\mu =    \int_{\R^d}  \big (\sum_{i,j=1}^d \frac{\frac{1}{\psi} a_{ij}}{2}\partial_{ij}f+\sum_{i=1}^d g_i\partial_i f\big ) \,d\mu =   0,\qquad \forall f\in C_0^{\infty}(\R^d).
$$
Then, using the existence of the pre-invariant density, we adapt the method from Stannat (cf. \cite{St99}) to our case and construct a sub-Markovian $C_0$-semigroup of contractions $(T_t)_{t\ge 0}$ on each $L^s(\R^d,\mu)$, $s\ge 1$, whose generator extends $(L,C_0^{\infty}(\R^d))$, i.e. we have found a suitable functional analytic frame (see Theorem \ref{mainijcie}, which further induces a generalized Dirichlet form, see \eqref{fwpeokce}) to describe a potential infinitesimal generator of a weak solution to \eqref{weaksolutionintro}. This is done in Section \ref{4.1}, where we also derive with the help of the results about general regularity properties from Section \ref{section 3}, the regularity properties of $(T_t)_{t\ge 0}$ and its resolvent (see Section \ref{fewfore}). Then, using crucially the  existence of a Hunt process for a.e. starting point related to $(T_t)_{t\ge 0}$ in Proposition \ref{Huntex} (which follows similarly to \cite[Theorem 6]{Tr5}) this leads to a transition function of a Hunt process that not only weakly solves \eqref{weaksolutionintro}, but moreover has a transition function with such a nice regularity that many presumably optimal classical conditions for properties of a solution to \eqref{weaksolutionintro} carry over to our situation. We mention, for instance, the non-explosion condition \eqref{consuniquelaw} and moment inequalities (see Remark \ref{mostresults4}). But also irreducibility and classical ergodic properties, as in \cite{LT18}, could be studied in this framework by further investigating the influence of $\frac{1}{\psi}$ on properties of the transition function. Similarly to the results of \cite{LT18}, the only point where Krylov type estimates are used in our method, is when it comes up to uniqueness. Here, because of the possible degeneracy of  $\sqrt{\frac{1}{\psi}}$, we need the condition \eqref{psiconditionintro} to derive a Krylov type estimate that holds for any weak solution to \eqref{weaksolutionintro} (see Theorem \ref{krylovtype} which follows straightforwardly from the original Krylov estimate \cite[2. Theorem (2), p. 52]{Kry})). Again, our constructed transition function has such a nice regularity that a time dependent drift-eliminating It\^o-formula holds for the function $g(x,t):=P_{T-t}f(x)$, $f\in C_0^{\infty}(\R^d)$. In fact, it holds for any weak solution to \eqref{weaksolutionintro}, so that for all these the one dimensional, hence all finite dimensional, marginals coincide (cf. Theorem \ref{weifjowej}). This latter technique goes back to an idea of Stroock/Varadhan (\cite{StrVar}) and we use the treatise of this technique as presented in \cite[Chapter 5]{KaSh}.

\section{Notations}
Throughout, we will use the same notations as in \cite{LT18,LT19}. \\
Additionally, for an open set $U$ in $\R^d$ and a measure $\mu$ on $\R^d$, let $L^q(U,\R^d, \mu):=\{ \mathbf{F}=(f_1,...,f_d):U \to\R^d\, \mid \, f_i\in L^q(U, \mu), 1\le i\le d\}$, equipped with the norm, $\| \mathbf{F} \|_{L^q(U, \R^d,\mu)}  := \| \|\mathbf{F}\| \|_{L^q(U, \mu)}, \,  \mathbf{F} \in L^q(U, \R^d, \mu)$. If $\mu=dx$, we  write $L^q(U)$, $L^q(U, \R^d)$ for $L^q(U, dx)$, $L^q(U, \R^d, dx)$ respectively and  even $\| \mathbf{F} \|_{L^q(U)}$ for $\| \mathbf {F} \|_{L^q(U, \R^d)}$. 
If $I$ is an open interval $\R$ and $p, q \in [1, \infty]$, we denote by $L^{p,q}(U \times I)$ the space of all Borel measurable functions $f$ on $U \times I$ for which 
$$
\|f\|_{L^{p,q}(U \times I)}:=\| \|f(\cdot,  \cdot )\|_{L^{p}(U)} \|_{L^{q}(I)}< \infty,
$$
and let $\text{supp}(f):= \text{supp}(|f|dxdt)$. For a locally integrable function $g$ on $U \times I$ and  $i \in \{1, \dots d  \}$, we denote by $\partial_i g$ the $i$-th weak spatial derivative on $U \times I$ and by $\partial_t g$ the weak time derivative on  $U \times I$, provided it exists. For $p,q  \in [1, \infty]$, let $W^{2,1}_{p,q}( U \times I)$ be the set of all locally integrable functions
$g: U \times I \rightarrow \R$ such that $\partial_t g,\, \partial_i g,\, \partial_i \partial_j g \in L^{p,q}( U \times I)$ for all $1 \leq i,j \leq d$. Let $W^{2,1}_{p}( U \times I ):= W^{2,1}_{p,p}( U \times I)$.
\section{Regularity results}\label{section 3}
\subsection{Regularity of linear parabolic equations with weight in the time derivative term} \label{7.1sections}
The following lemma is a slight modification of \cite[Lemma 6]{ArSe} and involves a weight function $\psi$.
\begin{lem} \label{feeer}
Let $U$ be a bounded open subset of $\R^d$ and $T>0$. Let $w \in L^{2}(U \times (0,T))$ be such that $\text{\rm supp}(w) \subset U \times (0,T]$ and assume $\partial_t w \in L^2(U \times (0,T))$, $\psi \in L^2(U)$. Then for a.e. $\tau \in (0,T)$, it holds
$$
 \int_0^{\tau} \int_{U} \partial_t w \cdot \psi \,dx dt  = \int_U w|_{t=\tau}\, \psi dx.
$$
\end{lem}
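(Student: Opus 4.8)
The plan is to reduce the asserted identity to the one–dimensional fundamental theorem of calculus in the time variable and then to interchange the two integrations by Fubini's theorem; the weight $\psi$ and the hypothesis $\psi\in L^2(U)$ enter only at the Fubini step. The first step is to exploit the support condition: since $w$ lives on $U\times(0,T)$, the set $\mathrm{supp}(w)=\mathrm{supp}(|w|\,dx\,dt)$ is a closed set, hence (being contained in the compact set $\overline{U}\times[0,T]$) itself compact, and by assumption it is contained in $U\times(0,T]$. Its projection onto the time axis is then a compact subset of $(0,T]$, so there is $\delta\in(0,T)$ with $\mathrm{supp}(w)\cap\big(U\times(0,\delta)\big)=\emptyset$, i.e. $w=0$ a.e. on $U\times(0,\delta)$, and therefore also $\partial_t w=0$ a.e. on $U\times(0,\delta)$.

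Next I would identify the time trace. Since $\partial_t w\in L^2(U\times(0,T))\subset L^1_{loc}(U\times(0,T))$, the usual characterization of the weak time derivative — test with functions $\eta(x)\chi(t)$, $\eta\in C_c^\infty(U)$ and $\chi$ running through a countable dense subset of $C_c^\infty((0,T))$, then use Fubini — shows that for a.e. $x\in U$ the slice $t\mapsto w(x,t)$ has an absolutely continuous representative $\widetilde w(x,\cdot)$ on $[0,T]$ with time derivative $\partial_t w(x,\cdot)$. Combined with the first step, $\widetilde w(x,t)=0$ for $t<\delta$, and hence
\[
\widetilde w(x,\tau)=\int_0^{\tau}\partial_t w(x,t)\,dt\qquad\text{for a.e. }x\in U\text{ and all }\tau\in(0,T).
\]
Moreover Cauchy–Schwarz yields $\big\|\widetilde w(\cdot,\tau)-\widetilde w(\cdot,\tau')\big\|_{L^2(U)}\le|\tau-\tau'|^{1/2}\,\|\partial_t w\|_{L^2(U\times(0,T))}$, so $\tau\mapsto\widetilde w(\cdot,\tau)$ is a continuous $L^2(U)$–valued curve; this is the representative denoted by $w|_{t=\tau}$, and it agrees with $w(\cdot,\tau)$ for a.e. $\tau$.

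It then remains to integrate the displayed identity against $\psi$ in $x$ and swap the order of integration. For $\tau\in(0,T)$,
\[
\int_U w|_{t=\tau}\,\psi\,dx=\int_U\Big(\int_0^{\tau}\partial_t w(x,t)\,dt\Big)\psi(x)\,dx,
\]
and Fubini's theorem applies because, by Cauchy–Schwarz and $\psi\in L^2(U)$,
\[
\int_0^{T}\!\!\int_U|\partial_t w(x,t)|\,|\psi(x)|\,dx\,dt\le T^{1/2}\,\|\partial_t w\|_{L^2(U\times(0,T))}\,\|\psi\|_{L^2(U)}<\infty .
\]
Interchanging the order of integration gives $\int_0^{\tau}\int_U\partial_t w\cdot\psi\,dx\,dt$, which is the desired identity.

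The only genuinely non-routine point is the second step: making precise the time trace $w|_{t=\tau}$ and observing that the support hypothesis forces the boundary contribution at $t=0$ in the fundamental theorem of calculus to vanish; the remainder is just Cauchy–Schwarz and Fubini. A technical alternative to the absolute-continuity-on-lines argument would be to mollify $w$ in the time variable, apply the elementary fundamental theorem of calculus to the mollifications (whose supports in $t$ still stay away from $0$), and pass to the limit, using that the time-mollified slices converge to $w(\cdot,\tau)$ in $L^2(U)$ for a.e. $\tau$.
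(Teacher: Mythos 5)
Your proposal is correct, but it runs along a genuinely different track than the paper's proof. The paper works at the level of the space-integrated function: it first shows, by approximating $\psi\in L^2(U)$ with $\psi_n\in C_0^\infty(U)$, that $\partial_t(w\psi)=\partial_t w\cdot\psi$ weakly, then proves that $f(t):=\int_U w(\cdot,t)\psi\,dx$ has weak derivative $\int_U\partial_t w\cdot\psi\,dx\in L^1((0,T))$ — a step that uses the spatial part of the support hypothesis through a cutoff $\chi$ equal to $1$ on a neighbourhood of the spatial projection of $\mathrm{supp}(w)$ — and finally applies the one-dimensional absolute-continuity result to $f$ and kills the term at $t=\tau_1$ by taking $\tau_1$ near $0$. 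You instead work slice-wise: the ACL-type characterization of the weak time derivative gives, for a.e.\ $x$, an absolutely continuous representative of $t\mapsto w(x,t)$, the support condition (used only in the time direction, via the compactness argument producing $\delta>0$ with $w=0$ a.e.\ on $U\times(0,\delta)$) anchors the fundamental theorem of calculus at $t=0$, and then Cauchy–Schwarz plus Fubini lets you integrate against $\psi$. Both arguments are sound and both yield the identity only for a.e.\ $\tau$, with $w|_{t=\tau}$ the a.e.\ time slice (equivalently your $L^2(U)$-continuous representative, which coincides with it for a.e.\ $\tau$). What each buys: your route dispenses with the smooth approximation of $\psi$ and with the spatial cutoff — indeed it never uses that $\mathrm{supp}(w)$ stays away from $\partial U$ — at the price of the countable-dense-test-function (or time-mollification) bookkeeping needed to justify the a.e.-slice absolute continuity; the paper's route stays entirely within weak formulations and avoids any slicing argument, but needs the full support hypothesis and the density of $C_0^\infty(U)$ in $L^2(U)$.
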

\begin{proof}
Let $\psi_n \in C_0^{\infty}(U)$, $n \geq 1$, satisfy $\lim_{n \rightarrow \infty }\psi_n = \psi$ in $L^2(U)$. Then $w \psi \in L^{1,2}(U \times (0,T))$ and for any $\varphi \in C_0^{\infty}(U \times (0,T))$, we have 
\begin{eqnarray*}
\iint_{U \times (0,T)} \partial_t \varphi  \cdot  w \psi\, dx dt &=& \lim_{n \rightarrow \infty} \iint_{U \times (0,T)} \partial_t \varphi  \cdot  w \psi_n \, dx dt  \\
&=& \lim_{n \rightarrow \infty} \iint_{U \times (0,T)} \partial_t (\varphi \psi_n) \cdot  w \, dx dt \\
&=& -\lim_{n \rightarrow \infty} \iint_{U \times (0,T)} \varphi \psi_n \cdot  \partial_t w \, dx dt \\
&=&  -\iint_{U \times (0,T)} \varphi \cdot (\partial_t w \cdot \psi )dx dt.
\end{eqnarray*}
Thus \,$\partial_t(w \psi) = \partial_t w \cdot \psi \in L^{1,2}(U \times (0,T))$. Now let $f(t):= \int_{U} w(x,t)\psi(x) dx$. Then $f(t)$ is defined for a.e. $t \in (0,T)$ and is in $L^1((0,T))$. Let $g \in C_0^{\infty}\big((0,T)\big)$ be given. Take $\tau_0 \in (0,T)$ satisfying $\text{supp}(g) \subset (0, \tau_0)$. Let $V$ be a bounded open subset of $\R^d$ such that $\overline{V} \subset U$ and $\text{supp}(w) \cap \big(U \times (0, \tau_0)\big) \subset V \times (0, \tau_0)$. Let $\chi \in C_0^{\infty}(U)$ with $\chi \equiv 1$ on $V$. Then
\begin{eqnarray*}
\int_0^T \partial_t g \cdot f \, dt  &=&  \iint_{U \times (0,\tau_0 )} \partial_t g \cdot w \, \psi dxdt \\
&=& \iint_{V \times (0, \tau_0)} \partial_t(g \chi) \cdot (w \psi) dx dt \\
&=& -\iint_{U \times (0, T)} g \chi \,\partial_t w \cdot \psi dx dt \\
&=& -\int_0^{T} g \cdot \left( \int_U \partial_t w \cdot \psi dx  \right) dt.
\end{eqnarray*}
Thus $\partial_t f = \int_U \partial_t w \cdot \psi dx \in L^1\big((0,T)\big)$. Then by \cite[Theorem 4.20]{EG15}, $f$ has an absolutely continuous $dx$-version on $(0,T)$ and by the Fundamental Theorem of Calculus, for a.e $\tau_1, \tau \in (0,T)$ it holds
$$
\int_{\tau_1}^{\tau} \int_U \partial_t w\cdot \psi dx dt = \int_{\tau_1}^{\tau} \partial_t f dt  = \int_{\tau_1}^{\tau} f' dt = f(\tau)-f(\tau_1)= \int_U (w|_{t=\tau}- w|_{t=\tau_1})\, \psi dx.
$$
Choosing $\tau_1$ near $0$, our assertion follows. 
\end{proof}

Throughout this section, we assume the following condition.
\begin{itemize}
\item[\bf (I)]
$U \times (0,T)$ is a bounded open set in $\R^d\times \R$, $T>0$.  $A=(a_{ij})_{1 \leq i,j \leq d}$ is a matrix of functions on $U$ that is uniformly strictly elliptic and bounded, i.e. there  exists constants $\lambda>0$, $M>0$, such that for all $\xi = (\xi_1, \dots, \xi_d) \in \R^d$, $x \in U$, it holds
$$
\quad \sum_{i,j=1}^{d} a_{ij}(x) \xi_i \xi_j \geq \lambda\|\xi\|^2, \qquad \max_{1 \leq i, j \leq d} | a_{ij}(x)   | \leq M, 
$$
and let $\mathbf{B} \in L^p(U,\R^d)$ with $p>d$, \,$\psi \in L^q(U)$, $q \in [2 \vee \frac{p}{2}, p )$. There exists $c_0>0$ such that $c_0 \leq \psi$ on $U$, and finally
$$
u \in H^{1,2}(U \times (0,T) )\cap L^\infty(U \times (0,T)).
$$ 
\end{itemize}
\centerline{}
Assuming {\bf(I)}, we consider a divergence form linear parabolic equation with a singular weight in the time derivative term  as follows
\begin{eqnarray} 
\;\; \iint_{U \times (0,T)} (u \partial_t\varphi) \psi dx dt = \iint_{U \times (0,T)}   \big \langle A\nabla u, \nabla \varphi  \big \rangle +\langle \mathbf{B}, \nabla u \rangle \varphi \,dx dt, \quad \nonumber \\
\text{ for all }\varphi \in C_0^{\infty}(U \times (0,T)). \quad \label{baseq}
\end{eqnarray}
Using integration by parts in the left hand term, \eqref{baseq} is equivalent to
\begin{eqnarray}  
\;\; -\iint_{U \times (0,T)} ( \partial_t u)\, \varphi \psi dx dt = \iint_{U \times (0,T)}   \big \langle A\nabla u, \nabla \varphi  \big \rangle +\langle \mathbf{B}, \nabla u \rangle \varphi dx dt, \nonumber \\
\text{ for all }\varphi \in C_0^{\infty}(U \times (0,T)). \label{baseq23}
\end{eqnarray}
Define $\mathcal{A}:= \{ v \in L^{\infty}(U \times (0,T))  \mid  \nabla v \in L^{2}(U \times (0,T)) \, \text{ and }\, \text{supp}(v) \subset U \times (0,T)  \}$. Using the standard mollification on $\R^d \times \R$ to approximate functions in $\mathcal{A}$, \eqref{baseq23} extends to
\begin{eqnarray}  
\;\; -\iint_{U \times (0,T)} ( \partial_t u)\, \varphi \psi dx dt = \iint_{U \times (0,T)}   \big \langle A\nabla u, \nabla \varphi  \big \rangle +\langle \mathbf{B}, \nabla u \rangle \varphi dx dt, \nonumber \\
\text{ for all }\varphi \in \mathcal{A}. \label{baseq24}
\end{eqnarray}
Fix $\beta \geq 1$. For $t \in \R$, define functions $G(t):=(t^+)^{\beta}$, $H(t):= \frac{1}{\beta+1} (t^{+})^{\beta+1}$, where $t^+:= \max(0,t)$.  Then by \cite[Theorem 4.4]{EG15},  $G'(t)=\beta (t^+)^{\beta-1} 1_{[0, \infty)}(t)$ and $H'(t)=G(t)$.
Let $\eta \in C_0^{\infty}(U \times (0, T])$ with $\eta \geq 0$. \;Given $\tau \in (0,T)$, define $\widetilde{\varphi}:= \eta^2 G(u) 1_{(0,\tau)}$. Then by \cite[Theorem 4.4]{EG15} (or \cite[Lemma 4]{ArSe}),
$$
\nabla \widetilde{\varphi}=
\left\{\begin{matrix}
\eta^2 G'(u) \nabla u +2 \eta \nabla \eta\, G(u),& \quad 0<t<\tau, \\[5pt] 
0,& \quad \tau \leq t < T.
\end{matrix}\right.
$$
Thus $\widetilde{\varphi} \in  \mathcal{A}$ and by \eqref{baseq24}, we have
\begin{equation} \label{baseq2}
\;\; -\iint_{U \times (0,T)} \left(\partial_t u \right)  \widetilde{\varphi} \psi dx dt = \iint_{U \times (0,T)}   \big \langle A\nabla u, \nabla \widetilde{\varphi}  \big \rangle +\langle \mathbf{B}, \nabla u \rangle \widetilde{\varphi} dx dt.
\end{equation}
Observe that by \cite[Theorem 4.4]{EG15} (or \cite[Lemma 4]{ArSe}),
$$
\partial_t (\eta^2 H(u)  ) = 2\eta  \partial_t \eta \, H(u)+\eta^2 G(u) \partial_t u.
$$
Thus by Lemma \ref{feeer}
\begin{eqnarray}
&&\iint_{U \times (0,T)} \widetilde{\varphi} \,(\partial_t u) \,\psi dx dt   \nonumber \\
&=& \iint_{U \times (0,\tau)} \eta^2 G(u) \partial_t u \cdot \psi dx dt \nonumber \\
&=& \int_0^{\tau} \int_U  \partial_t (\eta^2 H(u)  ) \psi dx dt-   2 \int_0^{\tau} \int_U \eta  \partial_t \eta \, H(u) \psi dx dt \nonumber \\
 &=& \int_U \eta^2 H(u)\mid_{t=\tau} \psi dx - \int_0^{\tau} \int_{U} 2 \eta \partial_t \eta \,H(u)\psi dx dt, \;\text{ for a.e. } \tau \in (0,T).   \qquad \label{loijwdqpp3}
\end{eqnarray}
By \eqref{baseq2} and \eqref{loijwdqpp3}, we get
\begin{eqnarray}
&& \int_U \eta^2 H(u) \mid_{t= \tau} \psi dx dt+ \int_{0}^{\tau} \int_{U}   \big \langle A\nabla u, \nabla \widetilde{\varphi}  \big \rangle +\langle \mathbf{B}, \nabla u \rangle \widetilde{\varphi} dx dt \nonumber \\
&&\qquad \;\;= \int_0^{\tau} \int_{U} 2 \eta\, \partial_t \eta \,H(u) \, \psi dx dt, \qquad \text{ for a.e. } \tau \in (0,T). \label{maindivest}
\end{eqnarray}
On $\{ \widetilde{\varphi}>0  \}$, it holds $u>0$, so that $\nabla u = \nabla u^+$. Thus on $\{ \widetilde{\varphi}>0  \}$, we have
\begin{eqnarray*}
&&\, \quad \big \langle A\nabla u, \nabla \widetilde{\varphi}  \big \rangle +\langle \mathbf{B}, \nabla u \rangle \widetilde{\varphi} \\ 
&&= \big \langle A\nabla u^+,\eta^2 G'(u) \nabla u^+ \rangle  +\big \langle A\nabla u^+, 2 \eta \nabla \eta\, G(u) \big \rangle  +\langle \mathbf{B}, \nabla u^+ \rangle \eta^2 G(u) \\
&&\geq \eta^2 G'(u) \lambda \,\|\nabla u^+\|^2-2 \eta G(u) dM  \| \nabla \eta \| \|\nabla u^+  \| -\eta^2 G(u) \|\mathbf{B}\|\| \nabla u^+ \|. \qquad
\end{eqnarray*}
Moreover on $\{ \widetilde{\varphi}>0 \}$
\begin{eqnarray*}
(u^+)^{-\beta-1}\,G(u)^2 \leq \,G'(u).
\end{eqnarray*}
Hence using Young's inequality, we obtain
\begin{eqnarray*}
&&\quad 2 \eta G(u)d M  \| \nabla \eta \| \|\nabla u^+  \|\\
&& \leq 2\cdot \frac14 \frac{\Big(\sqrt{\lambda}\, (u^+)^{-\frac{\beta+1}{2}}  G(u) \,\eta \,\|\nabla u ^+\|\Big)^2}{2}+ 2 \cdot 4 \frac{\Big( dM \sqrt{\lambda^{-1}}\, (u^+)^{\frac{\beta+1}{2}} \|\nabla \eta \| \Big)^2}{2} \\
&&=\frac{\lambda}{4} \eta^2 G'(u) \|\nabla u \|^2  + \frac{4d^2M^2}{\lambda} \|\nabla \eta \|^2\, (u^{+})^{\beta+1},
\end{eqnarray*}
and
\begin{eqnarray*}
\eta^2 G(u) \|\mathbf{B}\|\|\nabla u^+ \| &\leq& \frac12 \cdot \frac{\Big( \sqrt{\lambda}\, (u^+)^{-\frac{\beta+1}{2}}\, G(u)  \eta \|\nabla u^+\|\Big)^2  }{2} +2 \cdot \frac{\Big(\sqrt{\lambda^{-1}}\, (u^+)^{\frac{\beta+1}{2}} \| \mathbf{B}\| \eta \Big)^2}{2} \\
&\leq& \frac{\lambda}{4} \eta^2 G'(u) \|\nabla u^+\|^2 +\frac{1}{\lambda} \|\mathbf{B}\|^2 (u^+)^{\beta+1} \eta^2 .
\end{eqnarray*}
Therefore, on $\{\widetilde{\varphi}>0\}$, it holds
\begin{eqnarray}
&&\frac{\lambda}{2} \eta^2 G'(u) \|\nabla u^+\|^2 \nonumber \\
&&\leq  \big \langle A\nabla u, \nabla \widetilde{\varphi}  \big \rangle+\langle \mathbf{B}, \nabla u \rangle \widetilde{\varphi} +\Big( \frac{\|\mathbf{B}\|^2}{\lambda} \eta^2+ \frac{4 d^2M^2}{\lambda}\| \nabla \eta \|^2\Big) (u^+)^{\beta+1}. \label{baseq3}
\end{eqnarray}
Since $\{ \widetilde{\varphi}=0 \} \cap \big(U \times (0, \tau)\big) = \{ \eta =0 \} \cup \{ u \leq 0 \}$ and $\nabla u^+=0$ on $\{ u \leq 0 \}$, \eqref{baseq3} holds on $U \times (0, \tau)$. Combining \eqref{baseq3} and \eqref{maindivest}, we obtain for a.e. $\tau \in (0, T)$
\begin{eqnarray}
&& \hspace{-2em}\frac{1}{\beta+1}\int_U \eta^2 (u^+)^{\beta+1} \mid_{t= \tau} \psi dx+\frac{\lambda \beta}{2} \int_0^{\tau} \int_U \eta^2 (u^+)^{\beta-1}  \| \nabla u^+\|^2 dx dt \nonumber  \\
&& \; \hspace{-2em}\leq \int_0^{\tau} \int_{U} \Big( \frac{\|\mathbf{B}\|^2}{\lambda} \eta^2+ \frac{4 d^2 M^2}{\lambda}\| \nabla \eta \|^2 \Big) (u^+)^{\beta+1} dx dt + \frac{2}{\beta+1} \int_0^{\tau} \int_U \eta |\partial_t \eta | (u^+)^{\beta+1} \, \psi dx dt. \nonumber \\
 \label{fest1}
\end{eqnarray}\\
Let $(\bar{x}, \bar{t})$ be an arbitrary but fixed point in $U \times (0,T)$ and $R_{\bar{x}}(r)$ be the open cube in $\R^d$ of edge length $r>0$ centered at $\bar{x}$. Define $Q(r)  := R_{\bar{x}}(r) \times (\bar{t}- r^2, \bar{t})$.
\begin{theo} \label{weoifo9iwjeof}
Suppose that  $Q(3r) \subset U \times (0,T)$. Under the assumption {\bf(I)}, we have
\begin{equation} \label{supest}
\|u\|_{L^{\infty}(Q(r))} \leq C \|u\|_{L^{\frac{2p}{p-2},2}(Q(2r))},
\end{equation}
where $C>0$ is a constant depending only on $r$, $\lambda$, $M$ and $\|\mathbf{B}\|_{L^p(R_{\bar{x}}(3r))}$.
\end{theo}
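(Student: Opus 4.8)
The plan is to run a Moser iteration starting from the energy inequality \eqref{fest1}, which is the key a priori estimate already established. The essential point is that \eqref{fest1} has exactly the right structure for De Giorgi–Nash–Moser-type iteration: the left-hand side controls both a weighted sup-in-time bound for $(u^+)^{\beta+1}$ and a spatial Dirichlet energy of $(u^+)^{(\beta+1)/2}$, while the right-hand side is a lower-order term involving $(u^+)^{\beta+1}$ against the test function $\eta$ and $\|\mathbf B\|^2$. Since $\mathbf B\in L^p$ with $p>d$ and $\psi\in L^q$ with $q$ in the stated range, these lower-order terms can be absorbed by the parabolic Sobolev embedding at the cost of a small loss in integrability exponent at each step, which is what drives the iteration. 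Note first that it suffices to prove the estimate for $u$ replaced by $u$ itself (not just $u^+$): applying the result to $-u$ (which satisfies the same type of equation \eqref{baseq23} with $\mathbf B$ and $A$ unchanged) and adding gives the bound for $|u|$, hence for $\|u\|_{L^\infty}$.

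First I would fix the iteration scheme. Choose a decreasing sequence of cubes $Q(r_k)$ with $r_0 = 2r$, $r_k \downarrow r$, e.g. $r_k = r(1 + 2^{-k})$, and cutoff functions $\eta_k \in C_0^\infty$ that are $1$ on $Q(r_{k+1})$, supported in $Q(r_k)$, with $\|\nabla \eta_k\| \lesssim 2^k/r$ and $|\partial_t \eta_k| \lesssim 4^k/r^2$. Set $\beta_k + 1 = \chi^k$ where $\chi = 1 + \tfrac{2}{d} - \varepsilon$ is the parabolic Sobolev gain (slightly reduced to accommodate the $L^{p,q}$ losses from $\mathbf B$ and $\psi$; one checks $q \ge 2 \vee \frac p2$ and $p > d$ give enough room). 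At step $k$, from \eqref{fest1} with $\eta = \eta_k$, $\beta = \beta_k$, and $c_0 \le \psi$, one bounds the full parabolic norm $\|(u^+)^{(\beta_k+1)/2}\|_{V(Q(r_{k+1}))}$ — meaning $\sup_t L^2_x$ plus $L^2_{t,x}$ of the gradient — by $C 4^k r^{-2}(1 + \|\mathbf B\|_{L^p}^2)\,\|(u^+)^{(\beta_k+1)/2}\|^2_{L^{?}(Q(r_k))}$, using Hölder to split off $\|\mathbf B\|^2$ and $\psi$ against a high power of $(u^+)^{(\beta_k+1)/2}$. The Gagliardo–Nirenberg–Sobolev interpolation inequality for the space $V$ then upgrades the left-hand norm to an $L^{2\chi}$-in-spacetime norm, yielding
\[
\|u^+\|_{L^{\chi^{k+1}\cdot 2/\ldots}(Q(r_{k+1}))} \le \big(C\,4^k\,r^{-2}(1+\|\mathbf B\|_{L^p}^2)\big)^{1/\chi^k}\,\|u^+\|_{L^{\ldots}(Q(r_k))}.
\]
Iterating and using $\prod_k (C 4^k)^{1/\chi^k} < \infty$ (since $\sum_k k/\chi^k < \infty$) produces $\|u^+\|_{L^\infty(Q(r))} \le C\,\|u^+\|_{L^{s_0}(Q(2r))}$ for the starting exponent $s_0$, which by the bookkeeping of exponents is precisely $\|u^+\|_{L^{2p/(p-2),2}(Q(2r))}$ — the mixed norm reflecting that $\mathbf B \in L^p$ forces a Hölder split with conjugate exponents $(p/(p-2), \ldots)$ in the very first absorption.

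The main obstacle, and the place requiring genuine care rather than bookkeeping, is handling the singular weight $\psi$ in the time-derivative term. In the standard (unweighted) Moser iteration the term $\sup_\tau \int \eta^2 (u^+)^{\beta+1}$ comes with Lebesgue measure; here it comes weighted by $\psi$, and there is also the extra right-hand term $\tfrac{2}{\beta+1}\int\!\!\int \eta|\partial_t\eta|(u^+)^{\beta+1}\psi$. The lower bound $c_0 \le \psi$ lets one keep the left-hand side coercive (divide through by $c_0$), but to control the $\psi$-weighted right-hand term one must Hölder it out using $\psi \in L^q$, which is exactly why the exponent range $q \in [2\vee\frac p2, p)$ appears in assumption \textbf{(I)} — it guarantees the residual power of $(u^+)^{\beta+1}$ still sits below the parabolic Sobolev exponent so that the iteration closes with a uniform $\chi > 1$. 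A secondary technical point is that \eqref{fest1} holds only for a.e.\ $\tau$, so the "$\sup_\tau$" must be read as an essential supremum and one invokes that the $V$-norm is still well-defined; this is routine given $u \in H^{1,2} \cap L^\infty$. Once the constants are tracked, $C$ depends only on $r$, $\lambda$, $M$, and $\|\mathbf B\|_{L^p(R_{\bar x}(3r))}$ as claimed, since the dependence on $\psi$ enters only through $c_0$ and $\|\psi\|_{L^q}$, both of which can be absorbed — but one should double-check whether the statement implicitly also allows dependence on $c_0$ and $\|\psi\|_{L^q(R_{\bar x}(3r))}$; if so the claim is immediate, and if not one argues that the $\psi$-term can be discarded by choosing $\eta$ independent of time on a slightly smaller cube, trading $Q(3r)$ room for it.
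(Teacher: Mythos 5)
Your overall strategy is the same as the paper's: Moser iteration starting from \eqref{fest1}, H\"older absorption of the $\mathbf{B}$- and $\psi$-terms, a parabolic Sobolev-type embedding, iteration over shrinking cylinders, and the same argument for $-u$. The gap is in the exponent bookkeeping, which you leave as placeholders and which is where the real content lies. When you H\"older the term $\iint \|\mathbf{B}\|^2\eta^2 v^2$ with $\mathbf{B}\in L^p$ in space, you are forced onto the mixed norm $\|v\|^2_{L^{2p/(p-2),2}}$ at \emph{every} step of the iteration, not only ``in the very first absorption''; likewise the $\psi$-weighted $\partial_t\eta$-term is controlled, using $q\ge \tfrac p2$ (equivalently $\tfrac{2q}{q-1}\le\tfrac{2p}{p-2}$), by the same mixed norm with constant $\|\psi\|_{L^q}$. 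If you then upgrade the resulting energy bound by the standard unmixed parabolic embedding into $L^{2(d+2)/d}_{t,x}$ and iterate with a gain $\chi=1+\tfrac 2d-\varepsilon$, the iteration closes only if $\tfrac{2(d+2)}{d}>\tfrac{2p}{p-2}$, i.e. $p>d+2$; under the hypothesis $p>d$ alone this fails, so your claim that ``$p>d$ gives enough room'' is unjustified for $d<p\le d+2$. The paper avoids this by running the whole iteration in the single mixed-norm scale: with $v=(u^+)^{\gamma}$ it derives $\|v^{\sigma}\|^{2/\sigma}_{L^{\frac{2p}{p-2},2}(Q(l'))}\le C(l-l')^{-2}\gamma^{2}\|v\|^{2}_{L^{\frac{2p}{p-2},2}(Q(l))}$ via the mixed-norm embedding of Aronson--Serrin (\cite[Lemma 3]{ArSe}), with gain $\sigma$ tied to $\theta=1-d/p$ (namely $\sigma=1+\theta/2$ for $d=2$, $\sigma=1+2\theta/d$ for $d\ge3$), which is $>1$ for every $p>d$; iterating over $l_m=3^{-1}(1+2^{-m})$ then yields \eqref{supest}. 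Supplying this mixed-norm embedding and the correct, $p$-dependent gain is the missing step in your sketch.

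Two further points. The ``a.e.\ $\tau$'' issue is indeed harmless, but your fallback for removing the $\psi$-dependence of the constant --- choosing $\eta$ independent of time --- is not available: the cutoff must vanish near the bottom $\bar t-9r^{2}$ of the parabolic cylinder (the derivation of \eqref{loijwdqpp3} via Lemma \ref{feeer} needs support in $(0,T]$ in time), so $\partial_t\eta\not\equiv 0$ and the term $\iint \eta|\partial_t\eta|(u^+)^{\beta+1}\psi$ cannot be discarded. In fact the paper's constant does involve $c_0$ and $\|\psi\|_{L^{q}(R_{\bar x}(3r))}$ (through $C_1$ and $C_2$ in the proof); these are data fixed by assumption {\bf(I)}, so your observation about the phrasing of the statement is fair, but the proposed trick would not repair it.
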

\begin{proof}
Let $\eta \in C_0^{\infty}(R_{\bar{x}}(r) \times (\bar{t}-9r^2, \bar{t}] )$. Then \eqref{fest1} holds with $U\times (0,T)$ replaced by $Q(3r)$.  Using appropriate scaling arguments (cf. \cite[proof of Theorem 2]{ArSe}), we may assume $r = \frac{1}{3}$.\, Set $v:= (u^+)^{\gamma}$ with $\gamma:=\frac{\beta+1}{2}$. Then $\|\nabla v \|^2 = {\gamma}^2 (u^+)^{\beta-1}\|\nabla u^+\|^2 $. By \eqref{fest1}, it holds for a.e. $\tau \in (\bar{t}-1, \bar{t})$
\begin{eqnarray*}
&&\frac{c_0}{2\gamma}\int_{R_{\bar{x}}(1)} \eta^2 v^2 \mid_{t= \tau} dx+\frac{\lambda}{2\gamma^2}  \int_{\bar{t}-1}^{\tau} \int_{R_{\bar{x}}(1)} \eta^2  \|\nabla v\|^2 dx dt \nonumber  \\
&& \qquad  \leq \iint_{Q(1)} \Big(\frac{\|\mathbf{B}\|^2}{\lambda} \eta^2+ \frac{4 d^2 M^2}{\lambda}\| \nabla \eta \|^2  \Big) v^2 dx dt + \iint_{Q(1)} \eta |\partial_t \eta |v^2 \, \psi dx. \nonumber 
\end{eqnarray*}
Let $l$ and $l'$ be positive numbers satisfying $\frac13 < l' <l \leq \frac23$. Assume that $\eta \equiv 1 $ in $Q(l')$, $\eta \equiv 0$ outside $Q(l)$, $0 \leq \eta \leq 1$, and $|\partial_t \eta |, \|\nabla \eta\|\leq 2d(l-l')^{-1}$. Then
\begin{eqnarray*}
&&\iint_{Q(1)} \Big( \frac{\|\mathbf{B}\|^2}{\lambda} \eta^2+ \frac{4 d^2M^2}{\lambda}\| \nabla \eta \|^2   \Big) v^2 dx dt \\
&& \;\;\leq \frac{4d^2}{\lambda}  (l-l')^{-2} \iint_{Q(l)} \left( \|\mathbf{B}\|^2 + 4 d^2 M^2 \right) v^2 dx dt \\
&& \;\; \leq\frac{4d^2}{\lambda}  (l-l')^{-2} (\|\mathbf{B}\|^2_{L^{p}(R_{\bar{x}}(1))} + 4d^2M^2)\|v\|^2_{L^{\frac{2p}{p-2},2}(Q(l))},
\end{eqnarray*}
and since $\frac{2q}{q-1} \leq \frac{2p}{p-2}$, it follows
\begin{eqnarray*}
\int_{\bar{t}-1}^{\bar{t}}  \int_{R(1)} \eta |\partial_t \eta |v^2 \, \psi dx &\leq& 2d(l-l')^{-1} \|\psi\|_{L^q(R_{\bar{x}}(1))}\|v\|^2_{L^{\frac{2q}{q-1},2}(Q(l))} \\
&\leq& 2d(l-l')^{-2} \|\psi\|_{L^q(R_{\bar{x}}(1))}  \|v\|^2_{L^{\frac{2p}{p-2},2 }(Q(l))}.
\end{eqnarray*}
Thus we obtain
\begin{eqnarray*}
\lambda \| \eta \nabla v \|^2_{L^2(Q(1))} \leq 2C_1(l-l')^{-2} \gamma^2 \|v\|^2_{L^{\frac{2p}{p-2},2}(Q(l))}
\end{eqnarray*}
and
$$
\| \eta v\|^2_{L^{2, \infty}(Q(1))} \leq 2c_0^{-1}C_1(l-l')^{-2} \gamma^2 \|v\|^2_{L^{\frac{2p}{p-2},2}(Q(l))},
$$
where $C_1=\frac{4d^2}{\lambda}(\|\mathbf{B}\|^2_{L^{p}(R_{\bar{x}}(1))} + 4d^2M^2)+ 2d\|\psi\|_{L^q(R_{\bar{x}}(1))} $.\\
\centerline{}
Now set $\theta:=1-\frac{d}{p}$ and $\sigma:=1+\frac{\theta}{2}$ if $d=2$,\; $\sigma:=1+\frac{2\theta}{d}$ if $d \geq 3$. Set $p_{\sigma}:=\left( \frac{\sigma p}{p-2}  \right)'=\frac{\sigma p}{\sigma p-p+2}$,\; $q_{\sigma}:= \sigma'=\frac{\sigma}{\sigma-1}$. Then 
$$
 \frac{d}{2p_{\sigma}}+\frac{1}{q_{\sigma}} <1 \;\text{ if } d=2, \qquad \; \frac{d}{2p_{\sigma}}+\frac{1}{q_{\sigma}} = 1 \;\text{ if } d\geq 3.
$$
By \cite[Lemma 3]{ArSe},
\begin{eqnarray}
\|v^{\sigma}\|^{2/\sigma}_{L^{\frac{2p}{p-2},2 }(Q(l'))} &\leq& \|(\eta v)^{\sigma}\|^{2/\sigma}_{L^{\frac{2p}{p-2},2}(Q(1))}\nonumber   \\
 &=& \|\eta v\|^{2}_{L^{\frac{2\sigma p}{p-2},2\sigma}(Q(1))} \nonumber  \\
&=&\|\eta v\|^{2}_{L^{2(p_{\sigma})',2(q_{\sigma})'}(Q(1))} \nonumber   \\
&\leq& K \Big( \|\eta v\|^2_{L^{2, \infty}(Q(1))}  + \| \nabla(\eta v) \|_{L^2(Q(1))}^2  \Big) \nonumber  \\
&\leq& K \Big( \|\eta v\|^2_{L^{2, \infty}(Q(1))}  + 2\| \eta \nabla  v \|_{L^2(Q(1))}^2  +8d^2(l-l')^{-2}\|v\|^2_{L^2(Q(l))}  \Big) \nonumber \\
&\leq& C_2(l-l')^{-2}\gamma^2  \|v\|^2_{L^{\frac{2p}{p-2},2 }(Q(l))}, \label{itstart}
\end{eqnarray}
where $C_2=K(4C_1\lambda^{-1} + 2C_1c_0^{-1}+8d^2)$. Now for $m \in \N \cup \{0\}$, set $l=l_{m}:=3^{-1}(1+2^{-m})$,\;\; $l'=l'_m := 3^{-1}(1+2^{-m-1})$,\; $\varphi_m:= \|(u^+)^{\sigma^m}\|^{2/\sigma^m}_{L^{\frac{2p}{p-2},2 }(Q(l_m))}$. Taking $\gamma=\sigma^m$ and $1/3<l'=l'_m <l= l_m \leq 2/3$ for $m \in \N \cup \{0\}$, we obtain using \eqref{itstart}
\begin{eqnarray} \label{iteration-3}
\varphi_{m+1} \leq (36C_2)^{\frac{1}{\sigma^m}} (2 \sigma)^{\frac{2m}{\sigma^m}} \varphi_m.
\end{eqnarray}
Iterating \eqref{iteration-3}, we get
\begin{eqnarray*}
\varphi_{m+1} &\leq& (36C_2)^{\sum_{i=0}^{m}\frac{1}{\sigma^i}}  (2 \sigma)^{\sum_{i=0}^{m}\frac{2i}{\sigma^i}} \varphi_0 \\
&\leq& \underbrace{(36C_2)^{ \frac{\sigma}{\sigma-1} }  (2 \sigma)^{\frac{2\sigma}{(\sigma-1)^2}}}_{=:C_3} \|u\|^2_{L^{\frac{2p}{p-2},2 }(Q(2/3))}.
\end{eqnarray*}
Letting $m \rightarrow \infty$, we get 
$$
\| u^+\|_{L^{\infty}(Q(1/3))} \leq \sqrt{C_3} \|u\|_{L^{\frac{2p}{p-2},2 }(Q(2/3))}.
$$
Exactly in the same way, but with $u$ replaced by $-u$, we obtain \eqref{supest} with $C=2\sqrt{C_3}$.
\end{proof}

\subsection{Elliptic H\"{o}lder regularity and estimate}
\begin{lem} \label{div}
Let $U$ be a bounded open ball in $\R^d$. Let $f \in L^{\widetilde{q}}(U)$ with $\frac{d}{2}<\widetilde{q}<d$. Then there exists  \;$\bold{F}=\left(f_1, \dots, f_d   \right) \in H^{1,\widetilde{q}}(U, \R^d)$ such that \;${\rm div}\bold{F} = f$   in $U$ and
\begin{align*}
\sum_{i=1}^{d }\|f_i\|_{H^{1,\widetilde{q}}(U)} \leq C \|f\|_{L^{\widetilde{q}}(U)},
\end{align*}
where $C>0$ only depends on $\widetilde{q}$, $U$.
In particular, applying the Sobolev inequality, we get
\[
\sum_{i=1}^{d }\|f_i\|_{L^{\frac{d\widetilde{q}}{d-\widetilde{q}}}(U)} \leq C' \|f\|_{L^{\widetilde{q}}(U)},
\]
where $C'>0$ only depends on $\widetilde{q}$, $U$.
\end{lem}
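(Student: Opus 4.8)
The plan is to reduce the divergence equation $\mathrm{div}\,\mathbf{F}=f$ to the Poisson equation $\Delta u=f$, whose solution is controlled by classical $L^{\widetilde{q}}$-elliptic regularity. First I would solve, on the ball $U$ (which has smooth boundary), the Dirichlet problem $\Delta u=f$ in $U$, $u=0$ on $\partial U$. Since $1<\widetilde{q}<\infty$, the classical interior-and-boundary $L^{\widetilde{q}}$-theory for second order uniformly elliptic operators (Calder\'on--Zygmund / Agmon--Douglis--Nirenberg estimates) yields a solution $u\in H^{2,\widetilde{q}}(U)$ with $\|u\|_{H^{2,\widetilde{q}}(U)}\le C\,\|f\|_{L^{\widetilde{q}}(U)}$, where $C$ depends only on $\widetilde{q}$ and $U$, the dimension $d$ being fixed. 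I then set $\mathbf{F}:=\nabla u=(\partial_1 u,\dots,\partial_d u)$. By construction $\mathrm{div}\,\mathbf{F}=\sum_{i=1}^d\partial_i\partial_i u=\Delta u=f$ a.e.\ in $U$, and each component $f_i=\partial_i u$ satisfies $f_i\in L^{\widetilde{q}}(U)$ and $\partial_j f_i=\partial_j\partial_i u\in L^{\widetilde{q}}(U)$ for $1\le j\le d$, so that $f_i\in H^{1,\widetilde{q}}(U)$ with $\|f_i\|_{H^{1,\widetilde{q}}(U)}\le\|u\|_{H^{2,\widetilde{q}}(U)}\le C\|f\|_{L^{\widetilde{q}}(U)}$; summing over $i$ gives the first estimate.

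For the second (Sobolev) estimate, since $\widetilde{q}<d$ the Sobolev embedding $H^{1,\widetilde{q}}(U)\hookrightarrow L^{d\widetilde{q}/(d-\widetilde{q})}(U)$ holds on the bounded ball $U$, with embedding constant depending only on $\widetilde{q}$ and $U$; applying it to each $f_i$ yields $\|f_i\|_{L^{d\widetilde{q}/(d-\widetilde{q})}(U)}\le C'\|f_i\|_{H^{1,\widetilde{q}}(U)}\le C'C\|f\|_{L^{\widetilde{q}}(U)}$.

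I do not expect a genuine obstacle here; the argument is essentially a packaging of standard facts, and the only points deserving attention are: that the $L^{\widetilde{q}}$-Dirichlet estimate applies for all $1<\widetilde{q}<\infty$ precisely because the ball has smooth boundary; that the upper bound $\widetilde{q}<d$ is exactly what makes the target exponent $\tfrac{d\widetilde{q}}{d-\widetilde{q}}$ finite; and that the constant depends on $U$ only through its geometry, which is automatic for a ball. An essentially equivalent alternative would be to extend $f$ by zero to $\R^d$ and take the Newtonian potential $u:=N_d*f$, with $N_d$ the fundamental solution of the Laplacian: then each $\partial_i\partial_j u$ is a Calder\'on--Zygmund singular integral of $f$, hence bounded on $L^{\widetilde{q}}$, while $\partial_i u$, having a kernel of Riesz-potential type of order one, is controlled in $L^{d\widetilde{q}/(d-\widetilde{q})}(\R^d)$ directly by the Hardy--Littlewood--Sobolev inequality (valid since $1<\widetilde{q}<d$), which produces the second estimate at once; in this variant the hypothesis $\widetilde{q}>d/2$ merely guarantees that $u$ itself is locally integrable when $d\ge3$, and plays no role in the conclusion, which only uses $1<\widetilde{q}<d$.
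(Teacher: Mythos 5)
Your proposal is correct and follows essentially the same route as the paper: solve the Dirichlet problem $\Delta u = f$ in the ball with the $L^{\widetilde{q}}$-elliptic estimate (the paper cites Gilbarg--Trudinger, Theorem 9.15 and Lemma 9.17), set $\mathbf{F}:=\nabla u$, and conclude with the Sobolev embedding. The Newtonian-potential variant you sketch is a fine alternative but is not needed; the main argument matches the paper's proof.
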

\begin{proof}
By \cite[Theorem 9.15 and Lemma 9.17]{Gilbarg}, there exists $u\in H^{2, \widetilde{q}}(U) \cap H_0^{1,\widetilde{q}}(U) $ such that $\Delta u = f$ in $U$ and
\[
\|u\|_{H^{2,\widetilde{q}}(U)} \leq C_1 \|f \|_{L^{\widetilde{q}}(U)},
\] 
where $C_1>0$ is a constant only depending on $\widetilde{q}$, $U$. Let $\bold{F}:= \nabla u$. Then $\bold{F} \in H^{1,\widetilde{q}}(U, \R^d)$ with $\text{div}\bold{F}=f$ in $U$ 
and it holds 
\begin{align*}
\sum_{i=1}^{d }\|f_i\|_{H^{1,{\widetilde{q}}}(U)}&=\sum_{i=1}^{d }\|\partial_i u\|_{H^{1,{\widetilde{q}}}(U)}=\sum_{i=1}^{d } \Big( \|\partial_i u\|^{\widetilde{q}}_{L^{\widetilde{q}}(U)} + \sum_{j=1}^{d} \left\|\partial_j \partial_i u \right\|^{\widetilde{q}}_{L^{\widetilde{q}}(U)} \Big)^{\frac{1}{\widetilde{q}}}\\
&=  \sum_{i=1}^{d } \|\partial_i u\|_{L^{\widetilde{q}}(U)}+\sum_{i=1}^{d } \sum_{j=1}^{d} \left\|\partial_j \partial_i u \right\|_{L^{\widetilde{q}}(U)}\\
&\leq \left( d+d^2 \right)^{\frac{\widetilde{q}-1}{\widetilde{q}}}\Big( \sum_{i=1}^{d } \|\partial_i u\|^{\widetilde{q}}_{L^{\widetilde{q}}(U)}+\sum_{i=1}^{d } \sum_{j=1}^{d} \left\|\partial_j \partial_i u \right\|^{\widetilde{q}}_{L^{\widetilde{q}}(U)} \Big)^{\frac{1}{\widetilde{q}}}\\
&\leq \left( d+d^2 \right)^{\frac{{\widetilde{q}}-1}{\widetilde{q}}} \|u\|_{H^{2,\widetilde{q}}(U)} \\
&\leq C_1 \left( d+d^2 \right)^{\frac{\widetilde{q}-1}{\widetilde{q}}}\,  \|f\|_{L^{\widetilde{q}}(U)}.
\end{align*} 
\vspace{-2.3em}
\end{proof}\\
\centerline{}
The following theorem is an adaptation of \cite[Théorème 7.2]{St65} using \cite[Theorem 1.7.4]{BKRS}. It might already exist in the literature but we couldn't find any reference for it and therefore provide a proof.
\begin{theo} \label{holreg}
Let $U$ be a bounded open ball in $\R^d$. Let $A=\left( a_{ij} \right)_{1 \leq i,j \leq d}$ be a matrix of bounded functions on $U$ that is uniformly strictly elliptic. Assume $\mathbf{B} \in L^p(U,\R^d)$, $c \in L^{q}(U)$,  $f \in L^{\widetilde{q}}(U)$ for some $p>d$, \,$q,\, \widetilde{q}>\frac{d}{2}$. If $u \in H^{1,2}(U)$ satisfies
\begin{equation} \label{pdedec}
\int_U \left \langle A \nabla u, \nabla \varphi \right \rangle + \left( \left \langle \mathbf{B}, \nabla u  \right \rangle  + cu  \right) \varphi \;dx=\int _U f \varphi\;dx,\;  \;  \text{ for all } \varphi \in C_0^{\infty}(U),
\end{equation}
then for any open ball $U_1$ in $\R^d$ with $\overline{U}_1 \subset U$, we have $u \in C^{0,\gamma}(\overline{U}_1)$ and
\[
\|u\|_{C^{0,\gamma}(\overline{U}_1)} \leq C \left( \|u\|_{L^1(U)}+\|f\|_{L^{\widetilde{q}}(U)} \right),
\]
where $\gamma \in (0,1)$ and $C>0$ are constants which are independent of $u$ and $f$.
\end{theo}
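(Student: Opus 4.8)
The plan is to split the problem into two parts: first establish an interior $L^\infty$ bound for $u$, and then upgrade this to Hölder continuity with the desired estimate using the linear theory for equations in divergence form with measurable coefficients. For the $L^\infty$ bound, I would absorb the zeroth-order term $cu$ and the source $f$ into the equation and run a Moser iteration (or invoke a De Giorgi--Nash--Moser type result). Concretely, write the equation as $\int_U \langle A\nabla u,\nabla\varphi\rangle\,dx = \int_U\big(f - cu - \langle\mathbf B,\nabla u\rangle\big)\varphi\,dx$; since $u\in H^{1,2}(U)\cap L^\infty_{loc}$ is not yet known to be bounded, I would instead test directly with truncations $\eta^2 (u-k)^+$ for level sets $k$, exactly as in the proof of Theorem \ref{weoifo9iwjeof} but in the elliptic (stationary) setting, where the time-derivative term is absent. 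The conditions $p>d$, $q,\widetilde q>d/2$ are precisely what make the lower-order perturbations subcritical, so Young's inequality plus the Sobolev embedding $H^{1,2}_0\hookrightarrow L^{2d/(d-2)}$ (resp.\ any $L^r$ for $d=2$) closes the iteration and yields, for any $U_1$ with $\overline{U}_1\subset U$,
$$
\|u\|_{L^\infty(U_1)} \le C\big(\|u\|_{L^2(U)} + \|f\|_{L^{\widetilde q}(U)}\big),
$$
and then a standard interpolation argument replaces $\|u\|_{L^2(U)}$ by $\|u\|_{L^1(U)}$.

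For the Hölder part, once $u$ is known to be locally bounded, the term $cu$ lies in $L^q(U)$ with $q>d/2$, so the right-hand side of \eqref{pdedec} can be viewed as $f - cu \in L^{\min(q,\widetilde q)}(U)$ with $\min(q,\widetilde q)>d/2$, and the drift term $\langle\mathbf B,\nabla u\rangle$ with $\mathbf B\in L^p$, $p>d$, is also admissible. The cleanest route is to rewrite the source in divergence form: using Lemma \ref{div}, for $\widetilde q\in(d/2,d)$ one writes $f - cu = \mathrm{div}\,\mathbf F$ with $\mathbf F\in L^{d\widetilde q/(d-\widetilde q)}$, bringing the equation into the form $\int_U\langle A\nabla u - \mathbf F + (\text{drift terms}),\nabla\varphi\rangle = 0$ covered by the classical De Giorgi--Nash theory (this is the content of \cite[Théorème 7.2]{St65}), which gives $u\in C^{0,\gamma}(\overline{U}_1)$ with the quantitative estimate
$$
\|u\|_{C^{0,\gamma}(\overline{U}_1)} \le C\big(\|u\|_{L^\infty(U_1')} + \|\mathbf F\|_{L^{\tilde r}(U_1')} + \dots\big)
$$
on a slightly larger ball $U_1'$; chaining this with the $L^\infty$ bound above and Lemma \ref{div} (which controls $\|\mathbf F\|$ by $\|f-cu\|_{L^{\widetilde q}}\le \|f\|_{L^{\widetilde q}} + \|c\|_{L^q}\|u\|_{L^\infty}$) produces the claimed inequality in terms of $\|u\|_{L^1(U)}$ and $\|f\|_{L^{\widetilde q}(U)}$. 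If $\widetilde q\ge d$ one first restricts to a smaller exponent $\widetilde q'\in(d/2,d)$ and uses $L^{\widetilde q}\hookrightarrow L^{\widetilde q'}$ locally.

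The main obstacle I anticipate is bookkeeping rather than any deep difficulty: one must track which auxiliary balls ($U_1\subset U_1'\subset U_1''\subset U$) each estimate lives on, ensure the constants genuinely depend only on the allowed data ($\widetilde q$, the ellipticity bounds, $\|\mathbf B\|_{L^p}$, $\|c\|_{L^q}$, and the geometry of the balls) and not on $u$, and handle the interpolation that downgrades the $L^2$ (or $L^{2d/(d-2)}$) norm on the right-hand side all the way down to $\|u\|_{L^1(U)}$ — this uses that $u$ is already known bounded on an intermediate ball, so $\|u\|_{L^2(U_1')}\le \|u\|_{L^\infty(U_1')}^{1/2}\|u\|_{L^1(U_1')}^{1/2}$ and a covering/iteration lemma absorbs the $L^\infty$ factor. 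A secondary technical point is justifying that the truncations $\eta^2(u-k)^+$ are admissible test functions (density of $C_0^\infty$), which is exactly the kind of mollification argument already carried out before \eqref{baseq24}, so it transfers verbatim. I do not expect to need any new idea beyond combining Lemma \ref{div}, the interior Moser estimate, and the cited classical De Giorgi--Nash result of Stampacchia.
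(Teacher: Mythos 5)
Your proposal is correct in substance, and its Hölder step coincides with the paper's: the paper likewise restricts to $\frac d2<\widetilde q<d$, uses Lemma \ref{div} on an intermediate ball $U_2$ to write $f=\mathrm{div}\,\mathbf F$ with $\mathbf F\in L^{d\widetilde q/(d-\widetilde q)}(U_2,\R^d)$, applies \cite[Th\'eor\`eme 7.2]{St65} to get the oscillation decay $\omega_x(r)\le K\big(\|u\|_{L^2(U_2)}+\sum_i\|f_i\|_{L^{d\widetilde q/(d-\widetilde q)}(U_2)}\big)r^\gamma$, and then converts this into the $C^{0,\gamma}(\overline U_1)$ bound via the Campanato characterization \cite[Theorem 3.1]{HanLin}. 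Where you genuinely diverge is in the treatment of the zeroth-order term and of the $L^1$ norm. The paper never needs local boundedness of $u$: it keeps $\langle\mathbf B,\nabla u\rangle+cu$ inside the operator (Stampacchia's theorem admits $\mathbf B\in L^p$, $p>d$, and $c\in L^q$, $q>\frac d2$, directly), and it obtains the downgrade from $\|u\|_{L^2(U_2)}$ to $\|u\|_{L^1(U)}$ by quoting the interior a priori estimate \cite[Theorem 1.7.4]{BKRS}, which bounds $\|u\|_{H^{1,2}(U_2)}$ by $\|u\|_{L^1(U)}+\|f\|_{L^{\widetilde q}(U)}$. You instead insert a preliminary elliptic Moser/De Giorgi $L^\infty$ bound (an elliptic analogue of Theorem \ref{weoifo9iwjeof}) and use it twice: to move $cu$ to the right-hand side before invoking Lemma \ref{div}, and to replace $\|u\|_{L^2}$ by $\|u\|_{L^1}$ through the interpolation $\|u\|_{L^2}\le\|u\|_{L^\infty}^{1/2}\|u\|_{L^1}^{1/2}$ plus an absorption lemma on nested balls. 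Both routes work: yours is more self-contained (no appeal to the BKRS interior estimate, and the $cu$ term is handled with only the classical divergence-form theory), at the price of writing out the elliptic iteration and the radius bookkeeping you anticipate; the paper's version is shorter because the cited results of Stampacchia and Bogachev--Krylov--R\"ockner--Shaposhnikov already absorb the lower-order terms and the $L^1$ data, but it leans on those external statements. The only point to watch in your write-up is that the quantitative output of \cite[Th\'eor\`eme 7.2]{St65} is an oscillation (hence seminorm) estimate, so the sup-norm part of $\|u\|_{C^{0,\gamma}(\overline U_1)}$ must be supplied separately — your $\|u\|_{L^\infty(U_1')}$ term does this, just as the paper's extra $\|u\|_{L^2(U_2)}$ term does via Campanato.
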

\begin{proof}
Without loss of generality, we may assume $\frac{d}{2}<\widetilde{q}<d$. Let $U_2$ be an open ball in $\R^d$ satisfying $\overline{U}_1 \subset U_2 \subset \overline{U}_2 \subset U$. By Lemma \ref{div}, we can find $\bold{F}=(f_1, \cdots, f_d) \in H^{1,\widetilde{q}}(U_2, \R^d) \subset L^{\frac{d\widetilde{q}}{d-\widetilde{q}}}(U_2, \R^d)$ such that 
\begin{align*}
\text{div} \bold{F} = f \; \text{ in } U_2,  \qquad \; \sum_{i=1}^{d} \|f_i\|_{L^{\frac{d\widetilde{q}}{d-\widetilde{q}}}(U_2)} \leq C_1 \|f\|_{L^{\widetilde{q}}(U_2)},
\end{align*}
where $C_1>0$ is a constant only depending on $\widetilde{q}$ and $U_2$. Then \eqref{pdedec} implies
\begin{equation*} 
\int_{U_2} \left \langle A \nabla u, \nabla \varphi \right \rangle + \left( \left \langle \mathbf{B}, \nabla u  \right \rangle  + cu  \right) \varphi \;dx=\int _{U_2}  \left \langle -\bold{F}, \nabla \varphi \right \rangle dx  \;  \text{ for all } \varphi \in C_0^{\infty}(U_2).
\end{equation*}
Given $x \in U_1$, $r>0$ with $r<\text{dist}(x, U_2)$, set $\omega_{x}(r):=\sup_{B_{x}(r)} u-\inf_{B_{x}(r)} u$.  By \cite[Théorème 7.2]{St65} and Lemma \ref{div},
\begin{align*}
\omega_{x}(r) & \leq K \left( \|u\|_{L^2(U_2)}+ \sum_{i=1}^{d} \|f_i\|_{L^{\frac{d\widetilde{q}}{d-\widetilde{q}}}(U_2)} \right) r^{\gamma} \\
& \leq K (1+C')\left( \|u\|_{L^2(U_2)}+ \|f\|_{L^{\widetilde{q}}(U_2)} \right) r^{\gamma},
\end{align*}
where $\gamma \in (0,1)$ and $K, C'>0$ are constants which are independent of $x$, $r$, $u$, $\bold{F}$, $f$. Thus we have
\[
\int_{B_{r}(x)} |u(y)-u_{x,r}|^2 dy \leq   (K')^2 \left( \|u\|_{L^2(U_2)}+ \|f\|_{L^{\widetilde{q}}(U_2)} \right)^2  r^{d+2\gamma},
\] 
where $u_{x,r}:=\frac{1}{|B_r(x)|}\int_{B_r(x)} u(y) \,dy$ and $(K')^2:= K^2 \cdot\frac{\pi^{d/2}}{\Gamma(\frac{d}{2}+1)}(1+C')^2$. Finally by \cite[Theorem 3.1]{HanLin}, \cite[Theorem 1.7.4]{BKRS}, 
we obtain 
\begin{align*}
\|u\|_{C^{0,\gamma}(\overline{U}_1)} &\leq c \big(  K' \left( \|u\|_{L^2(U_2)}+ \|f\|_{L^{\widetilde{q}}(U_2)} \right)+ \|u\|_{L^2(U_2)} \big) \\
&\leq \left(cK' \vee c\right) \left(\|u\|_{H^{1,2}(U_2)}+  \|f\|_{L^{\widetilde{q}}(U_2)}  \right) \\
& \leq \left(cK' \vee c\right) \big( C_1\|u\|_{L^{1}(U)}+ C_1\|f\|_{L^{\widetilde{q}}(U)}+\|f\|_{L^{\widetilde{q}}(U_2)} \big) \\
&\leq (C_1+1)\left(cK' \vee c\right) \big( \|u\|_{L^{1}(U)}+\|f\|_{L^{\widetilde{q}}(U)} \big),
\end{align*}
where $c>0$, $C_1>0$ are constants which are independent of $u$ and $f$. 
\end{proof}

\section{The $L^1$-generator and its strong Feller semigroup} \label{4.1}
\subsection{Framework} \label{4.1ok}
Let $\rho \in H^{1,2}_{loc}(\R^d) \cap L_{loc}^{\infty}(\R^d)$, $\psi \in L^1_{loc}(\R^d)$ be a.e. strictly positive functions satisfying $\frac{1}{\rho}$, $\frac{1}{\psi} \in L_{loc}^{\infty}(\R^d)$. Here we assume that  the expressions $\frac{1}{\rho}$, $\frac{1}{\psi}$, denote any Borel measurable functions satisfying $\rho\cdot \frac{1}{\rho}=1$ and $\psi\cdot \frac{1}{\psi}=1$ a.e. respectively (later, especially in Section \ref{effwegewe} it will be important which Borel measurable version version $\frac{1}{\psi}$ we choose, but for the moment it doesn't matter). Set $\widehat{\rho}:=\rho \psi$,\, $\mu:=\widehat{\rho} \,dx$. If $U$ is any open subset of $\R^d$, then the bilinear form $\int_{U} \langle \nabla u, \nabla v \rangle dx$, \, $u,v \in C_0^{\infty}(U)$ is closable in $L^2(U, \mu)$ by \cite[Subsection II.2a)]{MR}. Define $\widehat{H}_0^{1,2}(U, \mu)$ as the closure of $C_0^{\infty}(U)$ in $L^2(U, \mu)$ with respect to the norm $\left(\int_U \|\nabla u\|^2 dx + \int_{U} u^2 d\mu \right)^{1/2}$. Thus $u \in \widehat{H}_0^{1,2}(U, \mu)$, if and only if there exists $(u_n)_{n \geq 1} \subset C_0^{\infty}(U)$  such that 
\begin{equation} \label{closablet}
\lim_{n \rightarrow \infty} u_n =u \; \text{ in } L^2(U, \mu), \;\quad \lim_{n,m \rightarrow \infty} \int_U \| \nabla (u_n-u_m) \|^2 dx =0,
\end{equation}
and moreover $\widehat{H}^{1,2}_0(U, \mu)$ is a Hilbert space with the inner product 
$$
\langle u, v \rangle_{\widehat{H}^{1,2}_0(U, \mu)}= \lim_{n \rightarrow \infty} \int_{U} \langle \nabla u_n, \nabla v_n \rangle dx + \int_{U} uv \,d\mu,
$$

\text{} \hspace{-1.81em}where $(u_n)_{n \geq 1}, (v_n)_{n \geq 1} \subset C_0^{\infty}(U)$ are arbitrary sequences that satisfy \eqref{closablet}.\\
\centerline{}
If $u \in \widehat{H}^{1,2}_0(V, \mu)$ for some bounded open subset $V$ of $\R^d$, then $u \in H_0^{1,2}(V) \cap L^2(V, \mu)$ and there exists $(u_n)_{n \geq 1} \subset C_0^{\infty}(V)$, such that
$$
\lim_{n \rightarrow \infty} u_n = u \;\; \text{ in } H_0^{1,2}(V)  \; \text{ and } \text{ in } L^2(V, \mu).
$$
Consider a symmetric matrix of functions $A=(a_{ij})_{1 \leq i,j \leq d}$ satisfying 
\begin{equation*}\label{acondi}
a_{ij} =a_{ji} \in H_{loc}^{1.2}(\R^d), \quad  1\leq i,j \leq d,
\end{equation*}
and assume $A$ is locally uniformly strictly elliptic, i.e. for every open ball $B$, there exist constants $\lambda_B, \Lambda_B>0$ such that 
\begin{equation}\label{ellipticity}
\lambda_{B} \|\xi\|^2 \leq \langle A(x) \xi, \xi \rangle \leq \Lambda_B \|\xi\|^2, \quad \text{ for all } \xi \in \R^d, \; x \in B.
\end{equation}
Define $\widehat{A}:= \frac{1}{\psi} A$. By \cite[Subsection II.2b)]{MR},
the symmetric bilinear form 
\begin{equation*} \label{weofijfh}
\mathcal{E}^0(f,g) := \frac12 \int_{\R^d} \langle \widehat{A} \nabla f, \nabla g \rangle d\mu, \quad f,g \in C_0^{\infty}(\R^d),
\end{equation*}
is closable in $L^{2}(\R^d, \mu)$ and its closure $(\mathcal{E}^0, D(\mathcal{E}^0))$ is a symmetric  Dirichlet form in $L^2(\R^d, \mu)$ (see \cite[(II. 2.18)]{MR}). Denote the corresponding generator of $(\mathcal{E}^0, D(\mathcal{E}^0))$ by $(L^0,D(L^0))$. Let $f \in C_0^{\infty}(\R^d)$. Using integration by parts, for any $g \in C_0^{\infty}(\R^d)$,
\begin{eqnarray*}
\mathcal{E}^0(f,g) &=& \frac12 \int_{\R^d} \langle \rho  A \nabla f, \nabla g \rangle dx \\
&=& -\frac12 \int_{\R^d} \left( \rho \, \text{trace} (A\nabla^2f ) + \langle \rho \nabla A +  A\nabla \rho, \nabla f \rangle \right) g \,dx \\
&=&  - \int_{\R^d} \big( \frac12 \text{trace} (\widehat{A}\nabla^2f ) + \langle \underbrace{\frac{1}{2 \psi} \nabla A +  \frac{A\nabla \rho}{2 \rho \psi}}_{=:\beta^{\rho,A, \psi} }, \nabla f \rangle \big) g \,d\mu.
\end{eqnarray*}
Thus $f \in D(L^0)$. This implies $C_0^{\infty}(\R^d) \subset D(L^0)$ and 
$$
L^0f =\frac12\text{trace}(\widehat{A} \nabla^2 f) + \langle \beta^{\rho, A, \psi}, \nabla f \rangle  \in L^2(\R^d, \mu).
$$ 
Let $(T^0_t)_{t>0}$ be the sub-Markovian $C_0$-semigroup of contractions on $L^2(\R^d,\mu)$ associated with $(L^0,D(L^0))$. By Proposition \ref{extlempt}, \,$T^0_t|_{L^1(\R^d, \mu) \cap L^{\infty}(\R^d, \mu)}$ can be uniquely extended to a sub-Markovian  $C_0$-semigroup of contractions $(\overline{T^0_t})_{t>0}$ on $L^1(\R^d, \mu)$.\\
\quad Now let $\mathbf{B} \in L_{loc}^2(\R^d, \R^d, \mu)$ be weakly divergence free with respect to $\mu$, i.e.
\begin{equation} \label{divfree}
\int_{\R^d} \langle \mathbf{B}, \nabla u \rangle d\mu = 0, \quad \text{ for all } u \in C_0^{\infty}(\R^d).
\end{equation}
Moreover assume
\begin{equation} \label{adddivassump}
 \mathbf{ \rho \psi B} \in L_{loc}^2(\R^d, \R^d).
\end{equation}
Then using Lemma \ref{applempn},  \eqref{divfree} can be extended to all $u \in \widehat{H}_0^{1,2}(\R^d, \mu)_{0,b}$ and
$$
\int_{\R^d} \langle \mathbf{B}, \nabla u \rangle v d\mu = -\int_{\R^d} \langle \mathbf{B}, \nabla v \rangle u d\mu, \quad \text{ for all } u,v \in \widehat{H}_0^{1,2}(\R^d, \mu)_{0,b}.
$$
Define $Lu := L^0 u+ \langle \mathbf{B}, \nabla u \rangle, \; u \in D(L^0)_{0,b}$. Then  $(L, D(L^0)_{0,b})$ is an extension of
$$
\frac12 \text{trace}(\widehat{A} \nabla^2 u) + \langle \beta^{\rho, A,\psi} + \mathbf{B}, \nabla u \rangle, \quad u \in C_0^{\infty}(\R^d).
$$
For any bounded open subset $V$ of $\R^d$, 
$$
\mathcal{E}^{0,V}(f,g) := \frac12 \int_{V} \langle \widehat{A} \nabla f, \nabla g \rangle d\mu, \quad f,g \in C_0^{\infty}(V).
$$
is also closable on $L^2(V, \mu)$ by \cite[Subsection II.2b)]{MR}. Denote by $(\mathcal{E}^{0,V}, D(\mathcal{E}^{0,V}))$ the closure of $(\mathcal{E}^{0,V}, C_0^{\infty}(V))$ in $L^2(V, \mu)$. Using \eqref{ellipticity} and $0<\inf_V \rho \leq \sup_V \rho< \infty$, it is clear that $D(\mathcal{E}^{0,V})= \widehat{H}_0^{1,2}(V, \mu)$ since the norms $\| \cdot \|_{D(\mathcal{E}^{0,V})}$ and  $\|\cdot\|_{\widehat{H}^{1,2}_0 (V, \mu)}$ are equivalent.  Denote by $(L^{0,V}, D(L^{0,V}))$ the generator of $(\mathcal{E}^{0,V}, D(\mathcal{E}^{0,V}))$, by $(G^{0,V}_{\alpha})_{\alpha>0}$ the associated sub-Markovian $C_0$-resolvent of contractions on $L^2(V, \mu)$, by $(T_t^{0,V})_{t>0}$ the associated sub-Markovian $C_0$-semigroup of contractions on $L^2(V, \mu)$ and by $(\overline{T}_t^{0,V})_{t>0}$ the unique extension of $(T^{0,V}_t|_{L^1(V, \mu) \cap L^{\infty}(V, \mu)})_{t>0}$  on $L^1(V, \mu)$, which is a sub-Markovian  $C_0$-semigroup of contractions on $L^1(V, \mu)$. Let $(\overline{L}^{0,V}, D(\overline{L}^{0,V}))$ be the generator corresponding to $(\overline{T}_t^{0,V})_{t>0}$.
By Proposition \ref{extlempt}, $(\overline{L}^{0,V}, D(\overline{L}^{0,V}))$ is the closure of $(L^{0,V}, D(L^{0,V}))$  on $L^1(\R^d, \mu)$.

\subsection{The $L^1$-generator}\label{l1existencere}

In this section, we use all notations and assumptions from Section \ref{4.1ok}. All ideas and techniques used here are based on \cite[Chapter 1]{St99}. But the structure of the given symmetric Dirichlet form differs from that of \cite{St99} which will enable us to cover a degenerate diffusion matrix. Because of that subtle difference, we  check the details one by one that the methods of \cite[Chapter 1]{St99} can be adapted to our situation. The main difference between \cite[Chapter 1]{St99} and what is treated here is that we consider local convergence in the space $\widehat{H}^{1,2}_0(V, \mu)$, while \cite[Chapter 1]{St99} considers the space $H_0^{1,2}(V, \mu)$ where the pre-invariant density of \cite[Chapter 1]{St99} does not need to be locally bounded. Since $\widehat{H}^{1,2}(V,\mu)$ is naturally included in the Sobolev space $H^{1,2}(V)$, the arguments to derive our results are at times even easier than the ones of \cite[Chapter 1]{St99}. For instance, we can use the product and chain rules in $\widehat{H}^{1,2}(V, \mu)$ inherited from the Sobolev space structure (see Remark \ref{gensitce}). Moreover, assumption \eqref{adddivassump} will play an important role to apply the methods of \cite[Chapter 1]{St99}.

\begin{lem} \label{existlem}
Let $V$ be a bounded open subset of $\R^d$. Then
\begin{itemize}
\item[(i)]
$D(\overline{L}^{0,V})_b \subset \widehat{H}_0^{1,2}(V, \mu)$.
\item[(ii)]
$\lim_{t \rightarrow 0+}T_t^{0,V}u = u$ \;\, in $\widehat{H}_0^{1,2}(V, \mu)$  \;for all  $u \in D(\overline{L}^{0,V})_b$.

\item[(iii)]
$\mathcal{E}^0(u,v) = - \int_{V} \overline{L}^{0,V} u\, v \,d\mu$ \,\;for all $u \in D(\overline{L}^{0,V})_b$, $v \in \widehat{H}^{1,2}_0(V, \mu)_b$.

\item[(iv)]
Let $\varphi \in C^2(\R^d)$, $\varphi(0)=0$, and $u \in D(\overline{L}^{0,V})_b$. Then $\varphi(u) \in D(\overline{L}^{0,V})_b$ and
\begin{equation*} \label{cdsid}
\overline{L}^{0,V} \varphi(u) = \varphi'(u) \overline{L}^{0,V} u +  \frac12\varphi ''(u) \langle\widehat{A} \nabla u, \nabla u \rangle.
\end{equation*}
\end{itemize}
\end{lem}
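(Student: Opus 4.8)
The plan is to imitate the classical semigroup/Dirichlet form bootstrap of \cite[Chapter 1]{St99}, checking that the arguments survive when $H_0^{1,2}(V,\mu)$ is replaced by $\widehat H_0^{1,2}(V,\mu)$. The four assertions are naturally proved in the order (i)$\to$(ii)$\to$(iii)$\to$(iv), each feeding into the next.

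For (i) I would start from $u\in D(\overline L^{0,V})_b$, so $\overline L^{0,V}u\in L^1(V,\mu)$ and $\overline T_t^{0,V}u\to u$ in $L^1(V,\mu)$ as $t\to 0+$. Apply the resolvent $G^{0,V}_\alpha$ (equivalently, write $u=\int_0^\infty \alpha e^{-\alpha t}\overline T_t^{0,V}u\,dt$ after a shift) to realize $u$ as an $L^1$-limit of elements $u_\alpha := \alpha G^{0,V}_\alpha u \in D(L^{0,V})\subset \widehat H_0^{1,2}(V,\mu)$ that are uniformly bounded in $L^\infty$ (sub-Markovianity). The key energy estimate is the standard one: for $v\in D(L^{0,V})_b$,
\[
\mathcal E^{0,V}(v,v) = -\int_V L^{0,V}v\,v\,d\mu \le \|L^{0,V}v\|_{L^1(V,\mu)}\,\|v\|_{L^\infty(V,\mu)},
\]
and a Cauchy-type version of this along the approximating family shows $(u_\alpha)$ is bounded in $\widehat H_0^{1,2}(V,\mu)$; since bounded sets in this Hilbert space are weakly precompact and the $L^1$-limit is $u$, we get $u\in \widehat H_0^{1,2}(V,\mu)$ (identifying the weak limit via the uniform $L^\infty$ bound and the fact that $\widehat H_0^{1,2}(V,\mu)\hookrightarrow H_0^{1,2}(V)$, so weak $\widehat H_0^{1,2}$-convergence forces $L^2$-convergence on $V$). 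Here the local boundedness of $\rho$ (hence the equivalence $D(\mathcal E^{0,V})=\widehat H_0^{1,2}(V,\mu)$ already recorded in Section \ref{4.1ok}) is what makes the Sobolev-space structure available and keeps the argument cleaner than in \cite{St99}.

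For (ii), using (i) we know $T_t^{0,V}u\in\widehat H_0^{1,2}(V,\mu)$ for each $t>0$ with $\sup_{t\le 1}\|T_t^{0,V}u\|_{\widehat H_0^{1,2}(V,\mu)}<\infty$ (same spectral/energy estimate, since $\mathcal E^{0,V}(T_t^{0,V}u,T_t^{0,V}u)$ is nonincreasing in $t$). Combined with $T_t^{0,V}u\to u$ in $L^2(V,\mu)$, weak-$\widehat H_0^{1,2}$ convergence to $u$ follows; upgrading to norm convergence is the classical trick $\|T_t^{0,V}u-u\|_{\widehat H}^2 = \|T_t u\|^2 - 2\langle T_t u,u\rangle + \|u\|^2$ and using that $\langle T_t u,u\rangle_{\widehat H_0^{1,2}} = \langle T_t u,u\rangle_{L^2(V,\mu)} + \mathcal E^{0,V}(T_tu,u) \to \|u\|_{L^2}^2 + \mathcal E^{0,V}(u,u)$ together with $\limsup_{t\to0}\mathcal E^{0,V}(T_tu,T_tu)\le\mathcal E^{0,V}(u,u)$, which gives $\limsup\|T_tu\|_{\widehat H}\le\|u\|_{\widehat H}$; lower semicontinuity of the norm under weak convergence closes it. Then (iii) is obtained by writing, for $v\in\widehat H_0^{1,2}(V,\mu)_b$,
\[
-\int_V \overline L^{0,V}u\,v\,d\mu = \lim_{t\to0+}\int_V \frac{u-T_t^{0,V}u}{t}\,v\,d\mu = \lim_{t\to0+}\frac1t\!\int_0^t \mathcal E^{0,V}(T_s^{0,V}u,v)\,ds = \mathcal E^{0,V}(u,v),
\]
where the middle equality is the integrated form of $\tfrac{d}{ds}T_s u = L^{0,V}T_su$ paired with $v$ (valid first for $v\in D(L^{0,V})_b$, then extended by density and the uniform energy bound), and the last equality uses (ii). Finally $\mathcal E^{0,V}=\mathcal E^0$ on such pairs because both equal $\tfrac12\int_V\langle\widehat A\nabla u,\nabla v\rangle\,d\mu$.

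For (iv), with $\varphi\in C^2(\R^d)$, $\varphi(0)=0$, and $u\in D(\overline L^{0,V})_b$: since $u\in\widehat H_0^{1,2}(V,\mu)_b\subset H_0^{1,2}(V)\cap L^\infty$, the chain rule in $H^{1,2}(V)$ gives $\varphi(u)\in\widehat H_0^{1,2}(V,\mu)_b$ with $\nabla\varphi(u)=\varphi'(u)\nabla u$. To show $\varphi(u)\in D(\overline L^{0,V})_b$ and identify the generator, I would verify that $w:=\varphi'(u)\overline L^{0,V}u+\tfrac12\varphi''(u)\langle\widehat A\nabla u,\nabla u\rangle\in L^1(V,\mu)$ (both terms are: the first by $\varphi'(u)\in L^\infty$ and $\overline L^{0,V}u\in L^1$; the second by $\varphi''(u)\in L^\infty$ and $\langle\widehat A\nabla u,\nabla u\rangle\in L^1(V,\mu)$ from $u\in\widehat H_0^{1,2}(V,\mu)$), and then check the weak equation
\[
\mathcal E^0(\varphi(u),v) = -\int_V w\,v\,d\mu,\qquad \forall\,v\in\widehat H_0^{1,2}(V,\mu)_b.
\]
Using $\nabla\varphi(u)=\varphi'(u)\nabla u$,
\[
\mathcal E^0(\varphi(u),v) = \tfrac12\!\int_V\langle\widehat A\nabla u,\nabla(\varphi'(u)v)\rangle\,d\mu - \tfrac12\!\int_V\langle\widehat A\nabla u,\nabla u\rangle\,\varphi''(u)\,v\,d\mu,
\]
and the first term equals $\mathcal E^0(u,\varphi'(u)v) = -\int_V\overline L^{0,V}u\,\varphi'(u)v\,d\mu$ by (iii), applied with the test function $\varphi'(u)v\in\widehat H_0^{1,2}(V,\mu)_b$ (a legitimate element, again by the product/chain rules in the Sobolev structure, cf. Remark \ref{gensitce}). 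Summing gives exactly $-\int_V w\,v\,d\mu$; since $(\overline L^{0,V},D(\overline L^{0,V}))$ is the generator and its weak characterization via $\mathcal E^0$ on $\widehat H_0^{1,2}(V,\mu)_b$ determines membership in the domain, we conclude $\varphi(u)\in D(\overline L^{0,V})_b$ with $\overline L^{0,V}\varphi(u)=w$.

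The main obstacle I anticipate is not any single computation but the careful handling of the domains and densities at the interface of the $L^1$- and $L^2$-theories: making sure the energy/Cauchy estimates used to extract the weak $\widehat H_0^{1,2}$-limit in (i)--(ii) are uniform along the right approximating family, that $\varphi'(u)v$ and $\varphi(u)$ genuinely lie in $\widehat H_0^{1,2}(V,\mu)_b$ (not merely in $H^{1,2}(V)$) so that (iii) is applicable, and that the weak-equation characterization of $D(\overline L^{0,V})$ via $\mathcal E^0$ is exactly the one available from Proposition \ref{extlempt}. The local boundedness of $\rho$ and assumption \eqref{adddivassump}, together with the equivalence of norms $D(\mathcal E^{0,V})=\widehat H_0^{1,2}(V,\mu)$, are what make all of these steps go through essentially as in \cite[Chapter 1]{St99}.
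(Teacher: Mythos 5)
Your handling of (i)--(iii) is essentially sound and runs parallel to the paper: the paper proves (i) and (ii) in one stroke by showing that $(T_t^{0,V}u)_{t>0}$ is a Cauchy family in $\widehat H_0^{1,2}(V,\mu)$ as $t\to 0+$ (analyticity gives $T_t^{0,V}u\in D(L^{0,V})$, $L^{0,V}T_t^{0,V}u=\overline T_t^{0,V}\overline L^{0,V}u$, and the $L^1$-strong continuity of $\overline T_t^{0,V}\overline L^{0,V}u$ closes the estimate), whereas you obtain (i) by resolvent approximation $u_\alpha=\alpha G_\alpha^{0,V}u$ plus Banach--Alaoglu and (ii) by the spectral upgrade from weak to strong convergence; both routes work (note only that your ``$\widehat H$-inner product'' computation is really done in the equivalent $\mathcal E^{0,V}_1$-inner product, which is harmless by the norm equivalence recorded in Section \ref{4.1ok}). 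Your (iii) via the difference quotient and Ces\`aro averaging is equivalent to the paper's passage to the limit along $T_t^{0,V}$.

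The gap is in (iv), at the decisive final step. You assert that since $(\overline L^{0,V},D(\overline L^{0,V}))$ is the generator, ``its weak characterization via $\mathcal E^0$ on $\widehat H_0^{1,2}(V,\mu)_b$ determines membership in the domain,'' and you point to Proposition \ref{extlempt}. But Proposition \ref{extlempt} only identifies $D(\overline L^{0,V})$ as the $L^1$-closure of the $L^2$-generator; it gives no characterization of that domain through the identity $\mathcal E^{0}(g,v)=-\int_V w\,v\,d\mu$ for $v\in\widehat H_0^{1,2}(V,\mu)_b$ with $w\in L^1(V,\mu)$. That implication -- from the weak identity to $g\in D(\overline L^{0,V})$ with $\overline L^{0,V}g=w$ -- is exactly the nontrivial ingredient, and the paper imports it from \cite[I. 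Lemma 4.2.2.1]{BH}. It can alternatively be proved by a short duality argument: test the identity with $v=G_1^{0,V}\phi$, $\phi\in L^1(V,\mu)\cap L^\infty(V,\mu)$, use $\mathcal E^{0,V}_1(G_1^{0,V}\phi,g)=\int_V\phi\, g\,d\mu$ and the symmetry of the resolvent in the $L^1$--$L^\infty$ pairing to conclude $g=(1-\overline L^{0,V})^{-1}(g-w)$; but some such argument must be supplied, since it is the whole content of (iv). A secondary point: membership of $\varphi(u)$ and $\varphi'(u)v$ in $\widehat H_0^{1,2}(V,\mu)_b$ -- the closure of $C_0^\infty(V)$, not merely $H^{1,2}(V)\cap L^\infty(V)$ -- does not follow from the plain Sobolev chain rule; the paper obtains it by approximating $u$ with $u_n=nG_n^{0,V}u$ (uniform $L^\infty$ bounds, a.e.\ convergence, Banach--Alaoglu) and by \cite[I. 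Corollary 4.15]{MR} for the product. You flag this in your closing paragraph, so it is an acknowledged loose end rather than an oversight, but it should be carried out along the lines of Lemma \ref{bddapxlem} rather than asserted.
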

\begin{proof}
Let $u \in D(\overline{L}^{0,V})_b$. Since $(T^{0,V}_t)_{t>0}$ is an analytic semigroup on $L^2(V, \mu)$, we get
$$
\overline{T}_t^{0,V} u =T^{0,V}_t u \in D(L^{0,V}) \;\; \text{ for all } t>0,
$$ 
hence by Proposition \ref{extlempt},
$$
L^{0,V} T_t^{0,V}u = L^{0,V} \overline{T}_t^{0,V} u = \overline{L}^{0,V}\overline{T}_t^{0,V}u = \overline{T}_t^{0,V} \overline{L}^{0,V} u.
$$
Therefore
\begin{eqnarray*}
&&\mathcal{E}^{0,V}(T^{0,V}_t u - T_s^{0,V} u, T^{0,V}_t u - T_s^{0,V} u ) \\
&& \qquad \qquad = - \int_{V} L^{0,V} ( T^{0,V}_t u - T_s^{0,V} u ) \cdot (T^{0,V}_t u - T_s^{0,V} u ) d\mu \\
&&\qquad \qquad   =-\int_{V} ( \overline{T}^{0,V}_t \overline{L}^{0,V} u - \overline{T}_s^{0,V} \overline{L}^{0,V}  u ) \cdot (T^{0,V}_t u - T_s^{0,V} u) d\mu \\
&&  \qquad \qquad  \leq \| \overline{T}^{0,V}_t \overline{L}^{0,V} u - \overline{T}_s^{0,V} \overline{L}^{0,V}  u \|_{L^1(V, \mu)} \cdot 2 \|u\|_{L^{\infty}(V, \mu)}\ \longrightarrow \ 0 \;\; \; \text{ as } \; t, s \rightarrow 0+.
\end{eqnarray*}
Thus $(T_t^{0,V} u)_{t>0}$ is an $\widehat{H}_0^{1,2}(V, \mu)$-Cauchy sequence as $t \rightarrow 0+$, which implies $u \in \widehat{H}_0^{1,2}(V, \mu)$ and  $\lim_{t \rightarrow 0+} T^{0,V}_t u = u \; \text{ in } \;  \widehat{H}_0^{1,2}(V, \mu)$.
Thus (i), (ii) are proved. \\ \\
Let $v \in \widehat{H}_0^{1,2}(V, \mu)_b$. Then
\begin{eqnarray*}
\mathcal{E}^{0,V}(u,v) &=& \lim_{t \rightarrow 0+} \mathcal{E}^{0,V} (T^{0,V}_t u, v) = \lim_{t \rightarrow 0+} - \int_{V} \left(L^{0,V} T_t^{0,V} u \right) v\, d\mu \\
&=& \lim_{t\rightarrow 0+} -\int_V \left(  \overline{T}^{0,V}_t \overline{L}^{0,V} u \right) \; v \,d \mu   =  -\int_V \overline{L}^{0,V}u  \; v \,d \mu,
\end{eqnarray*}
hence (iii) is proved.\\
\centerline{}
\quad (iv): Note $u \in D(\overline{L}^{0,V})_b \subset \widehat{H}_0^{1,2}(V, \mu)_b$.  Set $u_n:= nG^{0,V}_n u $, $M:= \|u\|_{L^{\infty}(V)}$. Then $\|u_n\|_{L^{\infty}(V)} \leq M$. By strong continuity,  $\lim_{n \rightarrow \infty} u_n =u \; \text{ in } \widehat{H}_0^{1,2}(V, \mu)$ and there exists a subsequence of $(u_n)_{n \geq 1}$, say $(u_n)_{n \geq 1}$ again, such that $\lim_{n \rightarrow \infty} u_n = u$ \; $\mu$-a.e. on $V$.
Thus by Lebesgue's Theorem,  $\lim_{n \rightarrow \infty} \varphi(u_n) = \varphi(u)$ in $L^2(V, \mu)$. Observe that
\begin{eqnarray*}
\sup_{n \geq 1}\| \nabla \varphi (u_n) \|_{L^{2}(V, \R^d)}&=& \sup_{n \geq 1} \| \varphi'(u_n) \nabla u_n \|_{L^2(V, \R^d)} \\
&\leq& \|\varphi'\|_{L^{\infty}([-M, M  ])} \sup_{n \geq 1} \|u_n\|_{\widehat{H}^{1,2}_0(V, \mu)} <\infty.
\end{eqnarray*}
Thus by Banach-Alaoglu Theorem, $\varphi(u) \in \widehat{H}^{1,2}_0(V, \mu)$.\, Similarly, we  get $\varphi'(u) \in \widehat{H}_0^{1,2}(V, \mu)$. Let $v \in \widehat{H}_0^{1,2}(V, \mu)_b$.
Then by \cite[I. Corollary 4.15]{MR}, $v \varphi'(u) \in \widehat{H}_0^{1,2}(V, \mu)_b$ and 
\begin{eqnarray*}
\mathcal{E}^{0,V}(\varphi(u), v)  &=&  \frac12 \int_V \langle \widehat{A} \nabla \varphi(u), \nabla v \rangle d \mu \\
&=&  \frac12 \int_V \langle \widehat{A} \nabla u, \nabla v \rangle \varphi'(u)d \mu \\
&=&  \frac12 \int_V \langle \widehat{A} \nabla u, \nabla (v \varphi'(u))  \rangle d \mu  -  \frac12 \int_V \langle \widehat{A} \nabla u, \nabla u \rangle \varphi''(u)v d\mu \\
&=& -\int_V  \big(\varphi'(u)\overline{L}^{0,V} u   +   \frac12\varphi''(u) \langle\widehat{A} \nabla u, \nabla u \rangle \big) v\, d\mu.
\end{eqnarray*}
Since $ \varphi'(u)\overline{L}^{0,V} u   +   \frac12 \varphi''(u) \langle\widehat{A} \nabla u, \nabla u \rangle  \in L^1(V, \mu)$, (iv) holds by \cite[I. Lemma 4.2.2.1]{BH}.
\end{proof}
\centerline{}
Recall that a densely defined operator $(L,D(L))$ on a Banach space $X$ is called {\it dissipative} \;if for any $u \in D(L)$, there exists $l_u \in X'$ such that 
\begin{equation} \label{dissipativei}
\|l_u\|_{X'} = \|u\|_X,  \,\; l_u(u) = \|u\|_X^2  \;\text{ and  }\,  l_u(Lu) \leq 0.
\end{equation}

\begin{prop} \label{first1}
Let $V$ be a bounded open subset of $\R^d$. 
\begin{itemize}
\item[(i)]
The operator $(L^V, D(L^{0,V})_b)$ on $L^1(V, \mu)$ defined by
$$
L^{V} u  := L^{0,V} u  + \langle \mathbf{B}, \nabla u \rangle,\;\;  u \in D(L^{0,V})_b,
$$ 
is dissipative, hence closable on $L^1(V, \mu)$. The closure $(\overline{L}^V, D(\overline{L}^V))$ generates a sub-Markovian $C_0$-semigroup of contractions $(\overline{T}^V_t)_{t>0}$ on $L^1(V, \mu)$.

\item[(ii)]
$D(\overline{L}^V)_b \subset \widehat{H}^{1,2}_0(V, \mu)$ and
\begin{equation} \label{stamid}
\mathcal{E}^{0,V}(u,v) - \int_{V} \left \langle \mathbf{B}, \nabla u  \right \rangle v d\mu  = \int_V \overline{L}^V u \cdot v d \mu, \; \text{ for all } u \in D(\overline{L}^V)_b,\; v \in \widehat{H}_0^{1,2}(V, \mu)_b.
\end{equation}
\end{itemize}
\end{prop}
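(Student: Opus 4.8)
The strategy is to mimic the classical argument (as in \cite[Chapter 1]{St99}) that adds a weakly divergence-free perturbation to a symmetric Dirichlet form generator without destroying $L^1$-dissipativity, being careful to use the local Sobolev structure of $\widehat{H}^{1,2}_0(V,\mu)$ and the extra assumption \eqref{adddivassump}. For part (i), the plan is first to show that $(L^V,D(L^{0,V})_b)$ is densely defined on $L^1(V,\mu)$: this holds because $D(L^{0,V})_b$ is already dense in $L^2(V,\mu)$, hence in $L^1(V,\mu)$ (using that $V$ is bounded and $\mu$ finite on $V$, together with a truncation argument to stay in the bounded functions). Then I would verify dissipativity in the sense of \eqref{dissipativei}. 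For $u \in D(L^{0,V})_b$, choose the standard normalized signed functional $l_u$ associated with $\mathrm{sgn}(u)$; one must check $l_u(L^V u)\le 0$. Writing $L^V u = \overline{L}^{0,V}u + \langle \mathbf{B},\nabla u\rangle$, the symmetric part contributes $\le 0$ because $(\overline{L}^{0,V},D(\overline{L}^{0,V}))$ already generates a sub-Markovian $C_0$-semigroup of contractions on $L^1(V,\mu)$ (Proposition \ref{extlempt} / Lemma \ref{existlem}), so it is $L^1$-dissipative. The key new point is that the drift term integrates to zero against any $l_u$ of the form $\phi'(u)$: approximating $|\cdot|$ by smooth convex $\phi$ with $\phi(0)=0$, one has $\langle \mathbf{B},\nabla u\rangle \phi'(u) = \langle \mathbf{B}, \nabla \phi(u)\rangle$ and then $\int_V \langle \mathbf{B},\nabla \phi(u)\rangle\,d\mu = 0$ by the extended divergence-free relation from Section \ref{4.1ok} (valid since $u$, hence $\phi(u)$, lies in $\widehat{H}^{1,2}_0(V,\mu)_{0,b}$ by Lemma \ref{existlem}(i),(iv) and \eqref{adddivassump}). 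Taking limits in the approximation gives $l_u(L^V u) \le 0$. Dissipativity then yields closability; to see that the closure $(\overline{L}^V,D(\overline{L}^V))$ actually generates a $C_0$-semigroup, I invoke the Lumer–Phillips theorem, which requires in addition a range condition, e.g. $(\alpha - L^V)(D(L^{0,V})_b)$ dense in $L^1(V,\mu)$ for some (hence all) $\alpha>0$; this is obtained by a perturbation/Neumann-series argument treating $\langle \mathbf{B},\nabla\cdot\rangle$ as a relatively bounded perturbation of $L^{0,V}$ on the bounded functions, using the resolvent $G^{0,V}_\alpha$ and the $H^{1,2}$-bounds on $D(L^{0,V})_b$ from Lemma \ref{existlem}. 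Sub-Markovianity of $(\overline{T}^V_t)_{t>0}$ follows because the approximating operators are sub-Markovian (the drift perturbation preserves the positivity and contraction structure on $L^1\cap L^\infty$, again by the divergence-free cancellation applied to $\phi' $ with $\phi$ approximating $(\cdot -1)^+$ and $(-\cdot)^+$).

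For part (ii), the inclusion $D(\overline{L}^V)_b \subset \widehat{H}^{1,2}_0(V,\mu)$ and the identity \eqref{stamid} are proved exactly as in Lemma \ref{existlem}(i),(iii), now for the perturbed semigroup. Given $u \in D(\overline{L}^V)_b$, set $u_t := T^V_t u$ or rather work with the resolvent approximants $u_n := n\overline{G}^V_n u \in D(L^{0,V})_b$, which converge to $u$ in $L^1(V,\mu)$ and are uniformly bounded in $L^\infty$. Using \eqref{stamid} in the already-known form on $D(L^{0,V})_b$ together with the $\widehat{H}^{1,2}_0(V,\mu)$-coercivity estimate $\mathcal{E}^{0,V}(w,w) \ge c\|w\|^2_{H^{1,2}_0(V)}$ minus lower-order terms (which come from the drift via Young's inequality and $\mathbf{B}\in L^2_{loc}(\mu)$, $\rho\psi\mathbf{B}\in L^2_{loc}$), one shows $(u_n)$ is Cauchy in $\widehat{H}^{1,2}_0(V,\mu)$; hence $u\in \widehat{H}^{1,2}_0(V,\mu)$ and $u_n \to u$ there. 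Passing to the limit in $\mathcal{E}^{0,V}(u_n,v) - \int_V \langle\mathbf{B},\nabla u_n\rangle v\,d\mu = \int_V (L^{0,V}u_n + \langle\mathbf{B},\nabla u_n\rangle) v\,d\mu = \int_V (\overline{L}^V u_n) v\,d\mu$ for fixed $v \in \widehat{H}^{1,2}_0(V,\mu)_b$, and using $\overline{L}^V u_n \to \overline{L}^V u$ in $L^1(V,\mu)$ with $v$ bounded, gives \eqref{stamid}.

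The main obstacle I anticipate is the range/density condition needed to upgrade dissipativity to generation (Lumer–Phillips): one must show $\overline{(\alpha-L^V)(D(L^{0,V})_b)}= L^1(V,\mu)$, which is where the perturbation $\langle \mathbf{B},\nabla\cdot\rangle$ genuinely interacts with the degenerate weight. Here the assumption \eqref{adddivassump} ($\rho\psi\mathbf{B}\in L^2_{loc}$) together with $\mathbf{B}\in L^2_{loc}(\mu)$ is exactly what makes $\langle\mathbf{B},\nabla\cdot\rangle$ controllable relative to $L^{0,V}$ on the bounded part of the domain, so that a Neumann-series or a priori-estimate argument closes; checking this carefully — and verifying that all the manipulations (chain rule, divergence-free cancellation) remain valid in $\widehat{H}^{1,2}_0(V,\mu)$ rather than in the weighted Sobolev space of \cite{St99} — is the technical heart, though each individual step is routine given Lemma \ref{existlem}, Proposition \ref{extlempt}, and the product/chain rules inherited from $H^{1,2}(V)$.
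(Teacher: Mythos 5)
Your dissipativity argument for (i) is essentially the paper's (compose with smooth convex approximations of the positive part, use Lemma \ref{existlem}(iv) and the $L^1$-contraction property for the symmetric part, and kill the drift term via \eqref{divfree} applied to $\varphi(u)\in \widehat{H}^{1,2}_0(V,\mu)$), but the step you yourself flag as the main obstacle — the range condition for Lumer--Phillips — is where your proposal has a genuine gap. A Neumann-series / relatively-bounded-perturbation argument cannot work under the standing assumptions: $\mathbf{B}$ is only in $L^2_{loc}(\R^d,\R^d,\mu)$ with $\rho\psi\mathbf{B}\in L^2_{loc}$, so $\langle \mathbf{B},\nabla\,\cdot\,\rangle$ is not relatively bounded with respect to $L^{0,V}$ on $L^1(V,\mu)$ (the only available bound is of the form $\|\langle\mathbf{B},\nabla u\rangle\|_{L^1(V,\mu)}\le \|\rho\psi\mathbf{B}\|_{L^2(V)}\|\nabla u\|_{L^2(V)}$, and $\|\nabla G^{0,V}_\alpha f\|_{L^2}$ is controlled by mixed $L^1$--$L^\infty$ quantities, not by $\|f\|_{L^1}$ with a small constant), so no perturbation series closes. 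The paper instead proves denseness of $(1-L^V)(D(L^{0,V})_b)$ by duality: if $h\in L^\infty(V,\mu)$ annihilates the range, then $u\mapsto\int_V(1-L^{0,V})u\,h\,d\mu=\int_V\langle\rho\psi\mathbf{B},\nabla u\rangle h\,dx$ is continuous in the $\widehat{H}^{1,2}_0(V,\mu)$-norm, Riesz representation on $\widehat{H}^{1,2}_0(V,\mu)$ produces $v$ with $\mathcal{E}^{0,V}_1(u,v)=\int(1-L^{0,V})u\,h\,d\mu$, denseness of $(1-L^{0,V})(D(L^{0,V})_b)$ forces $h=v\in\widehat{H}^{1,2}_0(V,\mu)$, and finally the antisymmetry $\int_V\langle\mathbf{B},\nabla h\rangle h\,d\mu=\tfrac12\int_V\langle\mathbf{B},\nabla h^2\rangle d\mu=0$ gives $\mathcal{E}^{0,V}_1(h,h)=0$, hence $h=0$ and Hahn--Banach concludes. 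This use of the divergence-free structure through duality, not through smallness, is the key idea your sketch is missing.

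There is a second gap in your proof of (ii). You take $u_n:=n\overline{G}^V_n u$ and invoke ``\eqref{stamid} in the already-known form on $D(L^{0,V})_b$'', but $n\overline{G}^V_n u$ lies in $D(\overline{L}^V)_b$, not in $D(L^{0,V})_b$, so the identity is not yet available for these approximants — that is circular. If instead you take core approximants $u_n\in D(L^{0,V})_b$ converging in the graph norm of $\overline{L}^V$ (as the paper does), the estimate $\mathcal{E}^{0,V}(u_n-u_m,u_n-u_m)\le\|L^V(u_n-u_m)\|_{L^1(V,\mu)}\,\|u_n-u_m\|_{L^\infty(V,\mu)}$ is useless without uniform $L^\infty$-control of the $u_n$, which graph-norm convergence does not provide. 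The paper's Steps 2--3 of (ii) are exactly the device needed here: one first shows that the energy of $u_n$ on the intermediate level sets $\{M_1\le|u_n|\le M_2\}$ tends to zero (choosing $\varphi'(t)=(t-M_1)^+\wedge(M_2-M_1)$), then composes with a truncation $\varphi\in C^2_0(\R)$ equal to the identity on $[-\|u\|_\infty-1,\|u\|_\infty+1]$ so that $\overline{L}^V\varphi(u_n)\to\overline{L}^V u$ in $L^1(V,\mu)$ and $\varphi(u_n)$ becomes an $\widehat{H}^{1,2}_0(V,\mu)$-Cauchy sequence; \eqref{stamid} then follows in the limit. Your ``coercivity minus lower-order drift terms'' estimate is also not available: with $\mathbf{B}$ merely in $L^2_{loc}(\mu)$ the drift is not form-bounded, and it is only its antisymmetry that is exploited. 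So both the generation step in (i) and the regularity step in (ii) need the paper's specific arguments, which your proposal does not supply.
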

\begin{proof} (i)
\; {\bf Step 1:} For $u \in D(L^{0,V})_b$, we have $\int_V L^V u 1_{\{ u >1 \}} d \mu \leq 0$. \\
Let $\varphi_{\varepsilon}  \in C^2(\R)$, $\varepsilon>0$, be such that $\varphi_{\varepsilon}''\geq 0$,  $0 \leq \varphi_{\varepsilon} \leq 1$ and  $\varphi_{\varepsilon}(t) = 0$ if $t<1$, $\varphi'_{\varepsilon}(t) = 1$ if $t \geq 1+\varepsilon$. Then $\varphi_{\varepsilon}(u) \in D(\overline{L}^{0,V})$ by Lemma \ref{existlem}(iv) and 
\begin{eqnarray}
\int_{V} L^{0,V} u \, \varphi'_{\varepsilon}(u) d \mu 
&\leq& \int_{V} L^{0,V}u \,\varphi'_{\varepsilon}(u)\,d\mu +\int_{V} \frac12 \varphi''_{\varepsilon}(u) \langle \widehat{A} \nabla u, \nabla u \rangle d \mu  \nonumber \\
& = & \int_{V} \overline{L}^{0,V} \varphi_{\varepsilon} (u) d\mu  
= \lim_{t \rightarrow 0+}\int_V \frac{\overline{T}^{0,V}_t \varphi_{\varepsilon} (u)- \varphi_{\varepsilon} (u)     }{t} d \mu
\ \leq\  0, \label{subinvariant1}
\end{eqnarray}
where the last inequality followed by the $L^1(V, \mu)$-contraction property of $(\overline{T}_t^{0,V})_{t>0}$. \\
Since $\lim_{\varepsilon \rightarrow 0+} \varphi'_{\varepsilon}(t) = 1_{(0,\infty)}(t)$ for every $t \in \R$, we have
$$
\lim_{\varepsilon \rightarrow 0+}\varphi'_{\varepsilon}(u) = 1_{\{u>1\}} \,\;\; \mu \text{-a.e. on } V \; \text{ and } \; \| \varphi'_{\varepsilon} (u)\|_{L^{\infty}(V)} \leq 1.
$$
Thus by Lebesgue's Theorem 
$$
\int_V L^{0,V} u\, 1_{\{ u >1 \}} d \mu = \lim_{\varepsilon \rightarrow 0+} \int_{V} L^{0,V} u\;  \varphi_{\varepsilon}'(u) d \mu \leq 0.
$$
Similarly, since $\varphi_{\varepsilon}(u) \in \widehat{H}^{1,2}_0(V, \mu)$, using \eqref{divfree} we get
$$
\int_{V}\langle \mathbf{B}, \nabla u \rangle  1_{\{u >1\}} d \mu =  \lim_{\varepsilon \rightarrow 0+} \int_{V} \langle \mathbf{B}, \nabla u \rangle \varphi'_{\varepsilon}(u) d \mu  = \lim_{\varepsilon \rightarrow 0+} \int_{V} \langle \mathbf{B}, \nabla \varphi_{\varepsilon}(u) \rangle d \mu  = 0.
$$
Therefore $\int_{V} L^{V} u 1_{\{u >1 \}} d \mu \leq 0$ and Step 1 is proved.  Observe that by Step 1, for any $n \geq 1$
$$
\int_{V} \left(L^V n u \right)1_{\{ nu >1 \}} d \mu \leq 0 \, \Longrightarrow \, \int_{V} L^V  u \,1_{\{ u >\frac{1}{n} \}} d \mu \leq 0.
$$
Letting $n \rightarrow \infty$  it follows from Lebesgue's Theorem that
$\int_{V} L^V u \,1_{\{u >0 \}} d \mu \leq 0$. Replacing $u$ with $-u$, we have 
$$
-\int_{V} L^V u \,1_{\{u <0 \}} d \mu =  \int_{V} L^V (-u) \,1_{\{-u >0 \}} d \mu \leq 0,
$$
hence
$$
\int_{V} L^V u \,(1_{\{u >1 \}} - 1_{ \{ u <0 \}}) d \mu \leq 0.
$$
Setting $l_u := \|u\|_{L^1(V, \mu)}  (1_{\{u >1 \}} - 1_{ \{ u <0 \}})  \in L^{\infty}(V, \mu) = (L^1(V, \mu))'$, \eqref{dissipativei} is satisfied. Since $C_0^{\infty}(V) \subset   D(L^{0,V})_b$, $(L^{0,V}, D(L^{0,V})_b)$ is densely defined on $L^1(V, \mu)$. Consequently,
 $(L^{0,V}, D(L^{0,V})_b)$ is dissipative. \\
\centerline{}
{\bf Step 2:} \;We have $(1-L^V)(D(L^{0,V})_b) \subset L^1(V, \mu)$  densely. \\
Let $h \in L^{\infty}(V, \mu)  = (L^1(V, \mu))'$ be such that $\int_{V} (1-L^V) u \, h d\mu = 0$ for all $u \in D(L^{0,V})_b$.
Then $u \mapsto \int_{V} (1-L^{0,V}) u \, h \,d\mu $ is continuous with respect to the norm on $\widehat{H}^{1,2}_0(V, \mu)$ since
\begin{eqnarray*}
\big | \int_V (1-L^{0,V} u )u \, h d \mu   \big |  &=& \big| \int_{V} \langle \rho \psi \mathbf{B}, \nabla u \rangle h \,dx \big | \\
&\leq& \|h\|_{L^{\infty}(V)} \| \rho \psi \mathbf{B} \|_{L^2(V, \R^d)} \| \nabla u\|_{L^2(V, \R^d)} \\
&\leq& \|h\|_{L^{\infty}(V)} \| \rho \psi \mathbf{B} \|_{L^2(V, \R^d)} \|u\|_{\widehat{H}_0^{1,2}(V, \mu)}.
\end{eqnarray*}
Thus, by the Riesz representation Theorem, there exists $v \in \widehat{H}_0^{1,2}(V, \mu)$ such that
$$
\mathcal{E}_1^{0,V}(u,v) = \int_{V}  (1-L^{0,V}  )u \cdot h d \mu \quad \text{ for all } u \in D(L^{0,V})_b,
$$
which implies that
\begin{equation} \label{den12}
\int_V (1-L^{0,V})u \cdot  (h-v) d \mu = 0  \quad \text{ for all } u \in D(L^{0,V})_b.
\end{equation}
Since $(L^{0,V}, D(L^{0,V}))$ generates a sub-Markovian resolvent in $L^2(V, \mu)$,
$$
L^1(V, \mu) \cap L^{\infty}(V, \mu) \subset (1-L^{0,V})(D(L^{0,V})_b),
$$
hence $ (1-L^{0,V})(D(L^{0,V})_b) \subset L^1(V, \mu)$ densely. Therefore \eqref{den12} implies $h-v = 0$. In particular, $h \in \widehat{H}_0^{1,2}(V, \mu)$ and
\begin{eqnarray*}
\mathcal{E}^{0,V}_1(h,h) &=& \lim_{\alpha \rightarrow \infty} \mathcal{E}^{0,V}_1(\alpha G^{0,V}_{\alpha} h, h) \\
&=& \lim_{\alpha \rightarrow \infty} \int_V (1-L^{0,V})(\alpha G^{0,V}_{\alpha} h) \, h d\mu \\
&=& \lim_{\alpha \rightarrow \infty} \int_{V} \big \langle \rho \psi \mathbf{B}, \nabla (\alpha G_{\alpha} h) \big \rangle h dx  \\
&=& \int_{V} \big \langle \rho \psi \mathbf{B}, \nabla h \big \rangle h dx =  \frac12 \int_{V} \big \langle  \mathbf{B}, \nabla h^2 \big \rangle d\mu = 0,
\end{eqnarray*}
therefore $h = 0$. Then applying the Hahn-Banach Theorem \cite[Proposition 1.9]{BRE}, Step 2 is proved.  By the Lumer-Phillips Theorem \cite[Theorem 3.1]{LP61}, the closure $(\overline{L}^V, D(\overline{L}^V))$ of $(L^V, D(L^{0,V})_b)$ generates a contraction $C_0$-semigroup $(\overline{T}^V_t)_{t>0}$  on $L^1(V, \mu)$.\\
\centerline{}
{\bf Step 3:} $(\overline{T}^V_t)_{t > 0}$ is sub-Markovian. \\
Let $(\overline{G}_{\alpha}^V)_{\alpha>0}$ be the associated resolvent. It is enough to show that $(\overline{G}_{\alpha}^V)_{\alpha>0}$ is sub-Markovian since $T^V_t u = \displaystyle \lim_{\alpha \rightarrow \infty} \exp\big( t \alpha ( \alpha \overline{G}^V_{\alpha}u -u )   \big)$ in $L^1(V, \mu)$ by the proof of Hille-Yosida (cf. \cite[I. Theorem 1.12]{MR}). Observe that by construction 
$$
D(L^{0,V})_b \subset D(\overline{L}^V)  \text{ densely with respect to the graph norm } \| \cdot \|_{D(\overline{L}^V)}.
$$
Let $u \in D(\overline{L}^V)$ and take $u_n \in D(\overline{L}^{0,V})_b$ satisfying $\lim_{n \rightarrow \infty}u_n =u$ in $D(\overline{L}^V)$ and $\lim_{n \rightarrow \infty} u_n =u$, \, $\mu$-a.e. on $V$. Let $\varepsilon>0$ and $\varphi_{\varepsilon}$ be as in Step 1. Then by \eqref{subinvariant1}
$$
\int_{V} \overline{L}^V u \,1_{\{ u>1 \}} d\mu =\lim_{\varepsilon \rightarrow 0+}\int_{V} \overline{L}^V  u \, \varphi_{\varepsilon}'(u) d \mu \leq 0.
$$
Let $f \in L^1(V, \mu)$ and $u:= \alpha \overline{G}^V_{\alpha} f \in D(\overline{L}^V)$. If $f \leq 1$, then
\begin{eqnarray*}
\alpha \int_V u 1_{\{ u >1 \}} d \mu \leq \int_V (\alpha u - \overline{L}^V u )1_{\{ u >1 \}} d \mu = \alpha \int_V f 1_{\{ u >1 \}} d \mu \leq \alpha \int_V 1_{\{ u >1 \}} d \mu. 
\end{eqnarray*}
Therefore, $\alpha \int_V (u-1)1_{\{ u >1 \}} d \mu \leq 0$, which implies $u \leq 1$. If $f \geq 0$, then $-nf \leq 1$ for all $n \in \N$, hence $-nu \leq 1$ for all $n \in \N$. Thus $u \geq 0$. Therefore  $(\overline{G}^V_{\alpha})_{\alpha>0}$ is sub-Markovian. \\
\centerline{}
(ii) \; {\bf Step 1: }\;  It holds\, $D(\overline{L}^{0,V})_b \subset D(\overline{L}^V) \text{ \;and\; } \overline{L}^V u = \overline{L}^{0,V} u + \langle   \mathbf{B}, \nabla u \rangle, \; u \in D(\overline{L}^{0,V})_b.$
 Let $u \in D(\overline{L}^{0,V})_b$. \;Since $(T_t^{0,V})_{t>0}$ is an analytic semigroup, $T_t^{0,V} u \in D(L^{0,V})_b \subset D(\overline{L}^V)$ and $\overline{L}^V T_t^{0,V} u = L^{0,V} T_t^{0,V} u + \langle \mathbf{B}, \nabla T_t^{0,V}u \rangle  = \overline{T}_t^{0,V} \overline{L}^{0,V} u + \langle  \mathbf{B}, \nabla T_t^{0,V} u  \rangle$. By Lemma \ref{existlem}(ii), $\lim_{t \rightarrow 0+}T^{0,V}_t u =u$ in $\widehat{H}^{1,2}_0(V, \mu)$, which implies that 
\begin{equation*}
\lim_{t \rightarrow 0+} \overline{L}^V T_t^{0,V} u = \overline{L}^{0,V} u + \langle \mathbf{B}, \nabla u \rangle \quad \text{ in }\; L^1(V, \mu),
\end{equation*}
by \eqref{adddivassump}. \,Since $\lim_{t \rightarrow 0+} T^{0,V}_t u = \lim_{t \rightarrow 0+} \overline{T}^{0,V}_t u =u$\;  in \,$L^1(V, \mu)$ and $(\overline{L}^V, D(\overline{L}^V))$ is a closed operator on $L^1(V, \mu)$, we obtain
$$
u \in D(\overline{L}^V), \quad  \; \overline{L}^V u = \overline{L}^{0,V} u + \langle \mathbf{B}, \nabla u \rangle.
$$ 
\centerline{}
{\bf Step 2: } Let $u \in D(\overline{L}^V)_b$ and take $u_n \in D(L^{0,V})_b$ as in Step 3 of the proof of Proposition \ref{first1}(i). Let $M_1, M_2>0$ be such that $\|u\|_{L^{\infty}(V)}<M_1 < M_2$.  Then
\begin{equation} \label{imlemce}
\lim_{n \rightarrow \infty} \int_{\{ M_1 \leq |u_n| \leq M_2 \}} \langle \widehat{A} \nabla u_n, \nabla u_n \rangle d \mu = 0.
\end{equation}
Indeed, let $\varphi \in C^1(\R)$ be such that $\varphi'(t):= (t-M_1)^+ \wedge (M_2-M_1)$ with $\varphi(0)=0$. Then by Lemma \ref{existlem}(i) (iv), we have $\varphi (u_n) \in \widehat{H}_0^{1,2}(V, \mu)$. Observe that $\varphi'(u) = 0$, \, $\mu$-a.e. on $V$ and
\begin{eqnarray*}
&&\int_{\{ M_1 \leq u_n \leq M_2 \}} \varphi''(u_n)  \langle \widehat{A} \nabla u_n, \nabla u_n \rangle d \mu  = \int_{V} \big \langle \widehat{A} \nabla u_n, \nabla \varphi'(u_n) \big \rangle d \mu  \\
&&= \mathcal{E}^{0,V}(u_n, \varphi'(u_n)) = -\int_V L^{0,V} u_n \, \varphi'(u_n) d\mu \\
&&= - \int_V L^{0,V} u_n \,\varphi '(u_n)d \mu - \int_V \langle \mathbf{B}, \nabla \varphi(u_n) \rangle d \mu  \\
&&= - \int_V L^V u_n\, \varphi '(u_n)d \mu \longrightarrow - \int_V  \overline{L}^V u\, \varphi '(u)d \mu  =  0, \quad \text{ as } n \rightarrow \infty,
\end{eqnarray*}
where the convergence of the last limit holds by Lebesgue's Theorem, since
$$
\lim_{n \rightarrow \infty} \varphi'(u_n) = \varphi'(u) = 0, \; \text{ \;$\mu$-a.e. on $\R^d$ }
$$
and
\begin{eqnarray*}
&&\big| \int_V L^V u_n\cdot \varphi '(u_n)d \mu - \int_V \overline{L}^V u\cdot \varphi '(u)d \mu \big| \\
&\leq& \|\varphi' \|_{L^{\infty}(V)}\int_{V}   \big|  (L^V u_n-\overline{L}^V u)\big|  d\mu + \int_{V} |\overline{L}^V u|\cdot |\varphi'(u_n)-\varphi'(u)| d\mu \\
&& \; \longrightarrow 0 \; \text{ as } n \rightarrow \infty.
\end{eqnarray*}
Similarly, 
$$
\int_{\{-M_2 \leq u_n \leq -M_1\}} \langle \widehat{A} \nabla u_n, \nabla u_n \rangle d\mu =\int_{\{M_1 \leq -u_n \leq M_2\}} \langle \widehat{A} \nabla (-u_n), \nabla (-u_n) \rangle d\mu =0,
$$
hence \eqref{imlemce} is proved. \\
\centerline{}
{\bf Step 3: }   Let $u$, $u_n$, $n \geq 1$ be as in Step 2.  Let $\varphi \in C_0^2(\R)$ be such that $\varphi(t) = t$ if $|t|< \|u\|_{L^{\infty}(V)}+1$ and $\varphi(t) = 0$ if $|t| \geq \|u\|_{L^{\infty}(V)}+2$.  Using  Step 2 and Lebesgue's Theorem
\begin{eqnarray*}
\overline{L}^V \varphi (u_n)  = \varphi'(u_n)L^V u_n + \varphi''(u_n) \langle A \nabla u_n, \nabla u_n \rangle \longrightarrow \overline{L}^V u \;\; \text{ in } L^1(V, \mu) \; \text{ as } n \rightarrow \infty.
\end{eqnarray*}
Therefore
\begin{eqnarray*}
&&\mathcal{E}^{0,V}(\varphi(u_n)-\varphi(u_m), \,\varphi(u_n)-\varphi(u_m)) \\
&& \qquad \quad = - \int_V \overline{L}^V \big(  \varphi(u_n)- \varphi(u_m)  \big)   \cdot \big(  \varphi(u_n)- \varphi(u_m)  \big) d\mu \\
&& \qquad \quad \leq 2 \|\varphi \|_{L^{\infty}(\R^d)}  \| \overline{L}^V \varphi(u_n) - \overline{L}^V \varphi(u_m)  \|_{L^1(V, \mu)} \longrightarrow 0 \;\; \text{ as } \; n,m \rightarrow \infty.
\end{eqnarray*}
Thus $\lim_{n \rightarrow \infty} \varphi(u_n) = u$  \,in\, $\widehat{H}_0^{1,2}(V, \mu)$ by the completeness of $\widehat{H}_0^{1,2}(V, \mu)$. Then using \eqref{adddivassump}, for any $v \in \widehat{H}_0^{1,2}(V, \mu)_b$, 
\begin{eqnarray*}
&&\mathcal{E}^{0,V}(u,v) - \int_{V} \langle  \mathbf{B}, \nabla u  \rangle \, v d \mu = \lim_{n \rightarrow \infty} (\mathcal{E}^{0,V}\left(\varphi(u_n), v \right) - \int_{V} \langle  \mathbf{B}, \nabla \varphi(u_n) \rangle d \mu ) \\
&& \qquad \qquad = - \lim_{n \rightarrow \infty} \int_{V} \overline{L}^V \varphi(u_n)  \cdot v d\mu = - \int_{V} \overline{L}^V u \cdot v d\mu,
\end{eqnarray*}
which completes the proof of (ii).
\end{proof}

\begin{rem} \label{gensitce}
One can generalize the assumptions of Proposition \ref{first1} to more general positive functions $\rho$ as follows. Consider $\psi$ as in Section \ref{4.1ok} and assume $\phi \in H_{loc}^{1,2}(\R^d)$ with $\phi>0$ a.e. on $\R^d$ and let $\rho:= \phi^2$, $\mu:=\rho \psi dx$. Let $A=(a_{ij})_{1 \leq i,j \leq d}$ be a symmetric matrix of functions that is locally uniformly strictly elliptic on $\R^d$ and $a_{ij} \in H_{loc}^{1,2}(\R^d, \rho dx)$ for all $1\leq i,j \leq d$. Assume $\mathbf{B}$ satisfies \eqref{divfree} and $\psi \mathbf{B} \in L_{loc}^2(\R^d, \R^d,\rho dx)$. Let $(\mathcal{E}^0, D(\mathcal{E}^0))$, $(L^0, D(L^0))$ be defined in the same manner as in Section \ref{4.1ok}. For an open set $U$ in $\R^d$, define $\widehat{H}_{0, \rho}^{1,2}(U, \mu)$ as the closure of $C_0^{\infty}(U)$ in $L^2(U, \mu)$ with respect to the norm $\left(\int_U \|\nabla u\|^2 \rho dx + \int_{U} u^2 d\mu \right)^{1/2}$. Then replacing $\widehat{H}^{1,2}_0(U, \mu)$ with $\widehat{H}_{0, \rho}^{1,2}(U, \mu)$, one obtains the same results as in Lemma \ref{existlem} and Proposition \ref{first1}. Especially, if $\psi \equiv 1$, it reduces to the framework of \cite{St99}. But considering a future goal in Theorem \ref{helholmop}, where we obtain $\rho \in H_{loc}^{1,p}(\R^d) \cap C_{loc}^{0,1-d/p}(\R^d)$ with $\rho(x)>0$ for all $x\in \R^d$, which in particular means that $\rho \in L_{loc}^{\infty}(\R^d)$ and $\frac{1}{\rho} \in L^{\infty}_{loc}(\R^d)$, we maintain our present assumptions in Section \ref{4.1ok} because it makes the arguments in the proofs more simple.
\end{rem}

\begin{rem} \label{remimpco}
Let $V$ be a bounded open subset of $\R^d$. Define 
$$
{L^{*\,V}} u:= L^{0,V}+\langle -\mathbf{B}, \nabla u \rangle, \quad u \in D(L^{0,V})_b.
$$
Note that $-\mathbf{B}$ has the same structural properties as $\mathbf{B}$ since \eqref{divfree} and \eqref{adddivassump} hold. Thus Proposition \ref{first1} holds equally with $\mathbf{B}$ replaced by $-\mathbf{B}$. In particular, there exists a  closed extension $(\overline{L}^{*\,V}, D(\overline{L}^{*\,V}))$ of $({L^{*\,V}}, D(L^{0,V})_b)$ on $L^1(V, \mu)$, which generates a sub-Markovian $C_0$-resolvent of contractions $(\overline{G}_{\alpha}^{*\,V} )$ on $L^1(V, \mu)$ and
\begin{equation*}
\mathcal{E}^{0,V}(u,v) + \int_{V} \langle \mathbf{B}, \nabla u \rangle v d \mu= - \int_{V} \overline{L}^{*\,V} u\, v d\mu, \quad u \in D(\overline{L}^{*\,V} )_b, \; v \in \widehat{H}^{1,2}_0(V, \mu).
\end{equation*}
Let $(L^{*\,V}, D(L^{*\,V}))$ be the part of $(\overline{L}^{*\,V}, D(\overline{L}^{*\,V}))$ on $L^2(V, \mu)$ and $(L^V, D(L^V))$ be the part of  $(\overline{L}^V, D(\overline{L}^V))$ on $L^2(V, \mu)$. Then for any $u \in D(L^V)_b$, $v \in D(L^{*\,V})_b$
\begin{eqnarray}
-\int_{V} L^V u\cdot v d \mu &=& \mathcal{E}^{0,V}(u,v) - \int_{V} \langle \mathbf{B}, \nabla u \rangle vd \mu \nonumber \\
&=& \mathcal{E}^{0,V}(v,u) + \int_{V} \langle \mathbf{B}, \nabla v \rangle u d \mu  \nonumber\\
&=& - \int_{V}  L^{*\,V} v \cdot ud \mu \label{fewstard}
\end{eqnarray}
Let $(G_{\alpha}^V)_{\alpha>0}$ and $({G^*_{\alpha}}^V)_{\alpha>0}$ be the resolvent associated to $(L^V, D(L^V))$, $({L^*}^V, D({L^*}^V))$ on $L^2(V, \mu)$, respectively. Then for any $f, g \in L^2(V, \mu) \cap L^{\infty}(V, \mu)$,
\begin{eqnarray}
\int_{V} G_{\alpha}^V f \cdot g d\mu  &=& \int_{V} G_{\alpha}^V f \cdot (\alpha - {L^{*}}^V ) {G^*_{\alpha}}^V g \,d\mu \nonumber  \\
&\underset{\eqref{fewstard}}{=}& \int_{V} (\alpha-L^V) G^V_{\alpha} f \cdot {G^*_{\alpha}}^V g \,d\mu \nonumber  \\
&=& \int_{V} f \cdot {G_{\alpha}^{*}}^{V} g \,d\mu. \label{adjoinidko}
\end{eqnarray}
By denseness of $L^2(V, \mu) \cap L^{\infty}(V, \mu)$ in $L^2(V,\mu)$, \eqref{adjoinidko} extends to all $f,g \in L^2(V, \mu)$.  Thus for each $\alpha>0$,\; ${G_{\alpha}^*}^V$ is the adjoint operator of $G_{\alpha}^V$ on $L^2(V, \mu)$.
\end{rem}
Now let $V$ be a bounded open subset of $\R^d$.
Denote by $(\overline{G}_{\alpha}^V)_{\alpha>0}$ the resolvent associated with $(\overline{L}^V, D(\overline{L}^V))$ on $L^1(V, \mu)$. Then $(\overline{G}^V_{\alpha})_{\alpha>0}$ can be extended  to $L^1(\R^d, \mu)$ by
\begin{equation} \label{extdefpj}
\overline{G}_{\alpha}^{V} f := 
\; \; \left\{\begin{matrix}
\overline{G}^V_{\alpha} (f 1_V) \quad \text{ on }\; V \\ 
\quad 0 \qquad \; \quad \text{ on } \, \R^d \setminus V,
\end{matrix}\right. \qquad
f \in L^1(\R^d, \mu),
\end{equation}
Let $g \in L^1(\R^d, \mu)_b$. Then $\overline{G}^V_{\alpha}(g1_V)\in D(\overline{L}^V)_b \subset \widehat{H}^{1,2}_0(V, \mu)$, hence $\overline{G}^V_{\alpha} g \in \widehat{H}^{1,2}_0(V, \mu)$.\\
Note that if $u \in D(\mathcal{E}^{0,V})$, then by definition it holds $u \in D(\mathcal{E}^0)$ and $\mathcal{E}^{0,V}(u,u) = \mathcal{E}^{0}(u,u)$. Therefore we obtain
\begin{equation} \label{fkppmeew}
\mathcal{E}^0(\overline{G}_{\alpha}^{V_n}g,  \, \overline{G}_{\alpha}^{V_n}g ) = \mathcal{E}^{0,V_n}\big(\overline{G}_{\alpha}^{V_n}(g1_{V_n}),  \, \overline{G}_{\alpha}^{V_n} (g1_{V_n}) \big).
\end{equation}

\begin{lem} \label{increlem}
Let $V_1$, $V_2$ be bounded open subsets of $\R^d$ and $\overline{V}_1 \subset V_2$. Let $u \in L^1(\R^d, \mu)$, $u \geq 0$, and $\alpha>0$. Then $\overline{G}^{V_1}_{\alpha} u \leq \overline{G}^{V_2}_{\alpha} u$.
\end{lem}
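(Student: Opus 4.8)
The plan is to reduce the statement to the weak form of the resolvent equation furnished by Proposition \ref{first1} and then run a maximum-principle type comparison, taking the positive part of the difference as a test function. By sub-Markovianity the extended operators $\overline G^{V_1}_\alpha,\overline G^{V_2}_\alpha$ are positivity preserving $L^1(\R^d,\mu)$-contractions, so replacing $u$ by $u\wedge n$ and letting $n\to\infty$ it suffices to treat $u\in L^1(\R^d,\mu)_b$, $u\ge 0$. Put $g_i:=\overline G^{V_i}_\alpha u$; by \eqref{extdefpj} $g_i\equiv 0$ on $\R^d\setminus V_i$, by sub-Markovianity $g_i\ge 0$, and $g_i|_{V_i}=\overline G^{V_i}_\alpha(u1_{V_i})\in D(\overline L^{V_i})_b\subset\widehat H^{1,2}_0(V_i,\mu)$ with $\overline L^{V_i}g_i=\alpha g_i-u$ on $V_i$. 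Inserting this in \eqref{stamid} gives, for $i=1,2$,
\[
\mathcal E^{0,V_i}(g_i,v)-\int_{V_i}\langle\mathbf B,\nabla g_i\rangle\,v\,d\mu+\alpha\int_{V_i}g_i\,v\,d\mu=\int_{V_i}u\,v\,d\mu,\qquad v\in\widehat H^{1,2}_0(V_i,\mu)_b .
\]

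Since $V_1\subset V_2$, extension by zero embeds $\widehat H^{1,2}_0(V_1,\mu)$ into $\widehat H^{1,2}_0(V_2,\mu)$, so the equation for $g_2$ may be tested against any $v\in\widehat H^{1,2}_0(V_1,\mu)_b$; for such $v$ (which vanish together with their weak gradient a.e.\ on $V_2\setminus V_1$) every $V_2$-integral collapses to the corresponding $V_1$-integral and $\mathcal E^{0,V_2}(g_2,v)=\tfrac12\int_{V_1}\langle\widehat A\nabla g_2,\nabla v\rangle\,d\mu$. Subtracting the two identities I obtain, with $w:=g_1-g_2|_{V_1}\in\widehat H^{1,2}(V_1,\mu)\cap L^\infty(V_1)$,
\[
\tfrac12\int_{V_1}\langle\widehat A\nabla w,\nabla v\rangle\,d\mu-\int_{V_1}\langle\mathbf B,\nabla w\rangle\,v\,d\mu+\alpha\int_{V_1}w\,v\,d\mu=0,\qquad v\in\widehat H^{1,2}_0(V_1,\mu)_b .
\]

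The crucial point is that $w^+\in\widehat H^{1,2}_0(V_1,\mu)_b$. Indeed $w^+$ is bounded and, since $g_2\ge 0$, $0\le w^+\le g_1$ with $g_1\in\widehat H^{1,2}_0(V_1,\mu)$; a nonnegative element of $\widehat H^{1,2}(V_1,\mu)$ dominated by an element of $\widehat H^{1,2}_0(V_1,\mu)$ again belongs to $\widehat H^{1,2}_0(V_1,\mu)$, by a standard approximation: if $(\phi_n)\subset C_0^\infty(V_1)$ approximates $g_1$ in $\widehat H^{1,2}_0(V_1,\mu)$, then $w^+\wedge\phi_n^+$ has compact support in $V_1$, hence lies in $\widehat H^{1,2}_0(V_1,\mu)$ by mollification, and converges to $w^+\wedge g_1=w^+$ in $\widehat H^{1,2}_0(V_1,\mu)$, using that $t\mapsto t^+$ and $\wedge$ are $1$-Lipschitz (which handles the $L^2(\mu)$-part) and continuous on $H^{1,2}(V_1)$ (which handles the gradient part). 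Now take $v=w^+$. Since $\nabla w=\nabla w^+$ a.e.\ on $\{w>0\}$, $\nabla w^+=0$ a.e.\ on $\{w\le 0\}$, and $\widehat A=\tfrac1\psi A$ is positive semidefinite, $\tfrac12\int_{V_1}\langle\widehat A\nabla w,\nabla w^+\rangle\,d\mu=\tfrac12\int_{V_1}\langle\widehat A\nabla w^+,\nabla w^+\rangle\,d\mu\ge 0$; moreover $\int_{V_1}\langle\mathbf B,\nabla w\rangle\,w^+\,d\mu=\tfrac12\int_{V_1}\langle\mathbf B,\nabla(w^+)^2\rangle\,d\mu=0$ by \eqref{divfree} in its extended form, applied to the zero-extension of $(w^+)^2\in\widehat H^{1,2}_0(V_1,\mu)_b$; and $\alpha\int_{V_1}w\,w^+\,d\mu=\alpha\int_{V_1}(w^+)^2\,d\mu$. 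Hence $0=\tfrac12\int_{V_1}\langle\widehat A\nabla w^+,\nabla w^+\rangle\,d\mu+\alpha\int_{V_1}(w^+)^2\,d\mu$, which forces $w^+=0$ $\mu$-a.e., i.e.\ $g_1\le g_2$ $\mu$-a.e.\ on $V_1$; as $\mu$ is equivalent to Lebesgue measure this holds a.e.\ on $V_1$, and on $\R^d\setminus V_1$ one has $g_1=0\le g_2$. Therefore $\overline G^{V_1}_\alpha u\le\overline G^{V_2}_\alpha u$, and the case of general $u\ge 0$ follows from the first reduction.

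The main obstacle is precisely the squeeze statement $0\le w^+\le g_1\in\widehat H^{1,2}_0(V_1,\mu)\Rightarrow w^+\in\widehat H^{1,2}_0(V_1,\mu)$: $w^+$ itself carries no boundary information, so the membership must be extracted from the domination; and because $\psi$ is only in $L^1_{loc}(\R^d)$, the space $\widehat H^{1,2}_0(V_1,\mu)$ is a strictly stronger closure than $H^{1,2}_0(V_1)$, so the $L^2(\mu)$-convergence of the approximants has to be argued separately --- which is exactly where the $1$-Lipschitz property of $t\mapsto t^+$ and of $\wedge$ is used. Everything else (the collapse of the $V_2$-integrals, the product- and chain-rule manipulations behind $(w^+)^2\in\widehat H^{1,2}_0(V_1,\mu)_b$, and the application of \eqref{divfree}) is routine bookkeeping already available from Section \ref{4.1ok} and Proposition \ref{first1}.
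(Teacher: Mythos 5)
Your argument is correct, and its skeleton is the paper's: compare the two resolvents by testing with the positive part of their difference, kill the $\mathbf{B}$-term by the (extended) divergence-freeness \eqref{divfree}, and conclude from positivity of the form; the decisive point in both proofs is that the positive part of the difference lies in $\widehat H^{1,2}_0(V_1,\mu)$ because it is squeezed between $0$ and $\overline G^{V_1}_\alpha u$. You differ from the paper in two ways. First, the paper keeps the identity on $V_2$ and runs a chain of inequalities, using that $\mathcal E^{0,V_2}$ is a symmetric Dirichlet form (so $\mathcal E^{0,V_2}(w^-,w^+)\le 0$) and that the surplus term $\int_{V_2\setminus V_1}u\,w^+\,d\mu$ is nonnegative; you instead restrict the $V_2$-equation to test functions supported in $V_1$ (zero extension of $\widehat H^{1,2}_0(V_1,\mu)_b$ into $\widehat H^{1,2}_0(V_2,\mu)_b$), subtract, and obtain an exact identity on $V_1$, so that only the pointwise facts $\nabla w^+=1_{\{w>0\}}\nabla w$ and $\alpha\int w\,w^+\,d\mu=\alpha\int (w^+)^2\,d\mu$ are needed — a slightly more elementary bookkeeping that avoids the abstract Dirichlet-form contraction property. (Note that in doing so you implicitly use the sign-corrected form of \eqref{stamid}, i.e.\ with $-\int_V\overline L^V u\cdot v\,d\mu$ on the right, consistent with Theorem \ref{mainijcie}(c); the displayed \eqref{stamid} has a sign typo.) Second, where the paper invokes its Lemma \ref{tecpjp} (proved via Lemma \ref{applempn} and Banach--Alaoglu, i.e.\ weak compactness), you reprove the squeeze statement inline by strong approximation: $w^+\wedge\phi_n^+$ with $\phi_n\in C_0^\infty(V_1)$ approximating $g_1$, handling the $L^2(\mu)$-part by the $1$-Lipschitz property of $t\mapsto t^+$ and $\wedge$ and the gradient part by continuity of these lattice operations on $H^{1,2}(V_1)$ (legitimate here since $\rho\psi$ is bounded below on the bounded set $V_1$, so $\widehat H^{1,2}_0$-convergence implies $H^{1,2}$-convergence); this yields norm convergence instead of mere weak convergence, at the cost of invoking the standard but not entirely trivial continuity of $u\mapsto u^+$ on $H^{1,2}$, and of the small mollification step (uniform boundedness, fixed compact support, $\mu(K)<\infty$, a.e.\ subsequence plus dominated convergence) needed to see that a bounded compactly supported $H^{1,2}(V_1)$-function lies in $\widehat H^{1,2}_0(V_1,\mu)$. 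Both routes are sound; the paper's is more modular (it reuses Lemma \ref{tecpjp} and the Dirichlet-form calculus), yours is more self-contained and stays entirely inside $V_1$.
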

\begin{proof}
Using the denseness in $L^1(\R^d, \mu)$, we may assume $u \in L^1(\R^d, \mu)_b$. Let $w_{\alpha}:= \overline{G}_{\alpha}^{V_1} u- \overline{G}^{V_2}_{\alpha} u$.  Then clearly $w_{\alpha} \in \widehat{H}_0^{1,2}(V_2, \mu)$. Observe that $w^+_{\alpha} \leq \overline{G}_{\alpha}^{V_1}u$ on $\R^d$, so that $w^+_{\alpha} \in \widehat{H}^{1,2}_0(V_1, \mu)$ by Lemma \ref{tecpjp}.
By \cite[Theorem 4.4 (iii)]{EG15}, we obtain
\begin{equation} \label{evpwoek}
\int_{V_2} \langle \mathbf{B}, \nabla w_{\alpha} \rangle w_{\alpha}^+ d \mu = \int_{V_2} \langle \mathbf{B}, \nabla w_{\alpha}^+ \rangle w^{+}_{\alpha}d \mu = 0.
\end{equation}
Since $\mathcal{E}^{0,V_2}$ is a symmetric Dirichlet form, $\mathcal{E}^{0,V_2}(w_{\alpha}^{-}, w^+_{\alpha}) = \mathcal{E}^{0,V_2}(w_{\alpha}^{+}, w^-_{\alpha}) \leq 0$.
Therefore
\begin{eqnarray*}
\mathcal{E}^{0,V_2}( w^+_{\alpha},  w^+_{\alpha}) &\leq& \mathcal{E}^{0,V_2}_{\alpha}( w_{\alpha},  w^+_{\alpha}) - \int_{V_2} \langle \mathbf{B}, \nabla w_{\alpha} \rangle w_{\alpha}^+ d \mu \\
&\leq& \Big( \mathcal{E}^{0,V_1}_{\alpha}( \overline{G}_{\alpha}^{V_1} u,  w^+_{\alpha}) - \int_{V_1} \langle \mathbf{B}, \nabla \overline{G}_{\alpha}^{V_1}u \rangle w_{\alpha}^+ d \mu \Big) \\
&& \qquad \qquad - \Big(  \mathcal{E}^{0,V_2}_{\alpha}( \overline{G}_{\alpha}^{V_2} u,  w^+_{\alpha}) - \int_{V_2} \langle \mathbf{B}, \nabla \overline{G}_{\alpha}^{V_2} u \rangle w_{\alpha}^+ d \mu \Big) \\
&\leq& \int_{V_1}  (\alpha- \overline{L}^{V_1}) \overline{G}_{\alpha}^{V_1} u \, w^{+}_{\alpha} d \mu - \int_{V_2}  (\alpha- \overline{L}^{V_2}) \overline{G}_{\alpha}^{V_2} u \, w^{+}_{\alpha} d \mu \\
&=& \int_{V_1} u w^{+}_{\alpha} d \mu -\int_{V_2} u w^{+}_{\alpha} d \mu =0.
\end{eqnarray*}
Therefore $w_{\alpha}^+= 0$ in $\R^d$, hence $\overline{G}_{\alpha}^{V_1} u \leq   \overline{G}_{\alpha}^{V_2} u$  on $\R^d$.
\end{proof}

\begin{rem}
Since $w_{\alpha}, w_{\alpha}^+ \in H^{1,2}(V_2)$ in Lemma \ref{increlem}, we could directly get \eqref{evpwoek} using \cite[Theorem 4.4 (iii)]{EG15}. However, in the general situation as in Remark \ref{gensitce}, if $\rho$ is not bounded below by a strictly positive constant, then $w_{\alpha}, w_{\alpha}^+$ may not be contained in $H^{1,2}(V_2)$. In that case by Lemma \ref{bddapxlem}, we can take a sequence $(f_n)_{n \geq 1} \subset C_0^{\infty}(V_2)$ such that $\sup_{n \geq 1} \|f_n\|_{L^{\infty}(V_2)} \leq \|w_{\alpha}\|_{L^{\infty}(V_2)}$ and
\begin{eqnarray*}
&&\lim_{n \rightarrow \infty} f_n = w_{\alpha} \; \text{ in } D(\mathcal{E}^{0,V_2}), \;\quad \lim_{n \rightarrow \infty} f^+_n = w^+_{\alpha} \; \text{ weakly in } D(\mathcal{E}^{0,V_2}), \;\quad \\
&& \quad \lim_{n \rightarrow \infty} f_n^+ = w_{\alpha}^+ \quad \; \mu \text{-a.e. in } V_2.
\end{eqnarray*}
By \cite[Theorem 4.4 (iii)]{EG15}
\begin{equation} \label{appdivme}
\int_{V_2} \langle \mathbf{B}, \nabla f_n \rangle f_n^+ d \mu = \int_{V_2} \langle \mathbf{B}, \nabla f_{n}^+ \rangle f^{+}_{n}d \mu,
\end{equation}
and 
\begin{eqnarray*}
&&\big| \int_{V_2} \langle \mathbf{B}, \nabla w_{\alpha}^+ \rangle w_{\alpha}^{+}d\mu  - \int_{V_2} \langle \mathbf{B}, \nabla f_{n}^+ \rangle f^{+}_{n}d\mu \big|  \\
&\leq& \underbrace{ \big| \int_{V_2 }  \langle \mathbf{B}, \nabla (w_{\alpha}^{+}-f^+_n )\rangle w_{\alpha}^{+} d \mu \big|}_{:=I_n} + \underbrace{\big| \int_{V_2} \langle \mathbf{B}, \nabla f_{n}^+ \rangle (w_{\alpha}^{+}- f^{+}_{n}) d\mu   \big|}_{=:J_n}.
\end{eqnarray*}
Since $\lim_{n \rightarrow \infty} f^+_n = w^+_{\alpha} \; \text{ weakly in } D(\mathcal{E}^{0,V_2})$, we have $\lim_{n \rightarrow \infty} I_n = 0$. Using the Cauchy-Schwarz inequality, it holds
\begin{eqnarray*}
J_n &\leq& \int_{V_2} \|\mathbf{B}\| \|\nabla f_n^+\|\, |w_{\alpha}^+ - f^+_n| d\mu \\
&\leq&  \big( \int_{V_2} \|\nabla f_n^+\|^2\, |w_{\alpha}^+ -f^+_n|\, \rho dx \big)^{1/2} \big(\int_{V_2}  \|\psi \mathbf{B}\|^2 |w_{\alpha}^+ - f^+_n| \,\rho dx \big)^{1/2} \\
&\leq& \sqrt{2}\|w_{\alpha}\|^{1/2}_{L^{\infty}(V)} \sup_{n \geq 1} \|f_n^+\|_{\widehat{H}^{1,2}_{0, \rho}(V_2, \mu)}
 \big(\int_{V_2}  \|\psi \mathbf{ B}\|^2 |w_{\alpha}^+ - f^+_n| d\mu \big)^{1/2} \\
&& \quad \longrightarrow 0 \;\; \text{ as } n \rightarrow \infty
\end{eqnarray*}
by Lebesgue's Theorem. Applying the same method for the left hand side of \eqref{appdivme}, we obtain
\begin{equation*} 
\int_{V_2} \langle \mathbf{B}, \nabla w_{\alpha} \rangle w_{\alpha}^+ d \mu = \int_{V_2} \langle \mathbf{B}, \nabla w_{\alpha}^+ \rangle w^+_{\alpha} d \mu.
\end{equation*}
\end{rem}
\text{}\\
By means of Proposition \ref{first1}, we can derive the following Theorem \ref{mainijcie}.
\begin{theo} \label{mainijcie}
There exists a  closed extension $(\overline{L}, D(\overline{L}))$ of $Lu:= L^0 u + \langle \mathbf{B}, \nabla u \rangle$,  \,$u \in D(L^0)_{0,b}$ on $L^1(\R^d, \mu)$ satisfying the following properties:

\begin{itemize}
\item[(a)] 
$(\overline{L}, D(\overline{L}))$ generates a sub-Markovian $C_0$-semigroup of contractions $(\overline{T}_t)_{t>0}$ on $L^1(\R^d, \mu)$.

\item[(b)]
Let $(U_n)_{n \geq 1}$ is a family of bounded open subsets of $\R^d$ satisfying $\overline{U}_n \subset U_{n+1}$ and $\R^d= \bigcup_{n \geq 1}  U_n$. Then \, $\lim_{n \rightarrow \infty} \overline{G}_{\alpha}^{U_n} f = (\alpha- \overline{L})^{-1} f$ \; in $L^1(\R^d, \mu)$, \; for all $f \in L^1(\R^d, \mu)$ and $\alpha>0$.

\item[(c)]
$D(\overline{L})_b \subset D(\mathcal{E}^0)$ and  $\text{ for all }  u \in D(\overline{L})_b,\, v \in \widehat{H}^{1,2}_0(\R^d, \mu)_{0,b}$ it holds 
\begin{eqnarray*}
\mathcal{E}^0(u,u)  &\leq& -\int_{\R^d} \overline{L}u\cdot u d \mu,  \qquad \qquad \\
\hspace{-5em}\mathcal{E}^{0}(u,v) - \int_{\R^d} \langle \mathbf{B}, \nabla u \rangle v d\mu &=& - \int_{\R^d} \overline{L} u\cdot v d \mu. \quad  
\end{eqnarray*}
\end{itemize}
\end{theo}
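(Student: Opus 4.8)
The plan is to construct $(\overline{L},D(\overline{L}))$ as the generator of the resolvent $G_\alpha$ obtained as the monotone limit of the localized resolvents $\overline{G}_\alpha^{U_n}$ from Proposition \ref{first1}, following \cite[Chapter 1]{St99}. Fix a family $(U_n)_{n\ge1}$ as in (b). For $f\in L^1(\R^d,\mu)$ with $f\ge0$, Lemma \ref{increlem} gives that $(\overline{G}_\alpha^{U_n}f)_{n\ge1}$ is nondecreasing, and $\|\alpha\overline{G}_\alpha^{U_n}\|_{L^1(\R^d,\mu)}\le1$ shows it is $L^1$-bounded, so by monotone convergence $\overline{G}_\alpha^{U_n}f\to G_\alpha f:=\sup_n\overline{G}_\alpha^{U_n}f$ in $L^1(\R^d,\mu)$. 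Writing a general $f$ as $f^+-f^-$, this yields an $L^1$-limit $G_\alpha f$ for every $f\in L^1(\R^d,\mu)$; being a pointwise $L^1$-limit of the linear sub-Markovian contractions $\alpha\overline{G}_\alpha^{U_n}$, the family $(G_\alpha)_{\alpha>0}$ consists of linear sub-Markovian operators with $\|\alpha G_\alpha\|_{L^1(\R^d,\mu)}\le1$. Passing to the limit in the resolvent equation for each $\overline{G}_\alpha^{U_n}$ (using the contraction bound to absorb $\overline{G}_\alpha^{U_n}(\overline{G}_\beta^{U_n}f-G_\beta f)$) shows $(G_\alpha)_{\alpha>0}$ satisfies the resolvent equation.

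Next I would identify $(\overline{L},D(\overline{L}))$ and check that it extends $L$. Let $u\in D(L^0)_{0,b}$; once $n$ is so large that $\mathrm{supp}(u)\subset U_n$, one has $u\in D(\overline{L}^{U_n})_b$ with $\overline{L}^{U_n}u=L^{0,U_n}u+\langle\mathbf{B},\nabla u\rangle=L^0u+\langle\mathbf{B},\nabla u\rangle=Lu$, so $\overline{G}_\alpha^{U_n}\big((\alpha-L)u\big)=u$ for all such $n$ and hence $G_\alpha\big((\alpha-L)u\big)=u$. In particular $C_0^\infty(\R^d)\subset D(L^0)_{0,b}$ is contained in $G_\alpha(L^1(\R^d,\mu))$, which is therefore dense in $L^1(\R^d,\mu)$, and for $f\in C_0^\infty(\R^d)$ the identity $\alpha G_\alpha f=f+G_\alpha Lf$ together with $\|G_\alpha Lf\|_{L^1(\R^d,\mu)}\le\alpha^{-1}\|Lf\|_{L^1(\R^d,\mu)}$ gives $\alpha G_\alpha f\to f$, which by $\|\alpha G_\alpha\|_{L^1(\R^d,\mu)}\le1$ and density extends to all of $L^1(\R^d,\mu)$. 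Thus $(G_\alpha)_{\alpha>0}$ is a strongly continuous sub-Markovian contraction resolvent, so by the Hille--Yosida theorem (cf. \cite[I. Theorem 1.12]{MR}) there is a unique sub-Markovian $C_0$-semigroup of contractions $(\overline{T}_t)_{t>0}$ whose generator $(\overline{L},D(\overline{L}))$ satisfies $(\alpha-\overline{L})^{-1}=G_\alpha$; by the above $C_0^\infty(\R^d)\subset D(L^0)_{0,b}\subset D(\overline{L})$ with $\overline{L}u=Lu$ there, and $\overline{L}$ is closed as a generator. This is (a). For (b), the limit does not depend on the exhaustion: given another family $(U_n')$ as in (b), interlacing it with $(U_n)$ and using the monotonicity of Lemma \ref{increlem} gives $\sup_n\overline{G}_\alpha^{U_n}f=\sup_n\overline{G}_\alpha^{U_n'}f$ for $f\ge0$, and the general $f\in L^1(\R^d,\mu)$ follows by decomposition; in particular $\lim_n\overline{G}_\alpha^{U_n}f=(\alpha-\overline{L})^{-1}f$.

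For (c) I would pass to the limit in the $U_n$-level identities of Proposition \ref{first1}(ii). Let $u\in D(\overline{L})_b$ and set $f:=(1-\overline{L})u\in L^1(\R^d,\mu)$, so $u=G_1f=\lim_n\overline{G}_1^{U_n}f$ in $L^1(\R^d,\mu)$. First assume $f\ge0$: then $v_n:=\overline{G}_1^{U_n}f\in D(\overline{L}^{U_n})_b$, $v_n\uparrow u$, and $0\le v_n\le u\in L^\infty(\R^d,\mu)$, whence $v_n\to u$ in $L^2(\R^d,\mu)$ by dominated convergence. Since $v_n\in\widehat{H}^{1,2}_0(U_n,\mu)_b$ we have, extending \eqref{divfree}, $\int_{U_n}\langle\mathbf{B},\nabla v_n\rangle v_n\,d\mu=\tfrac12\int_{U_n}\langle\mathbf{B},\nabla v_n^2\rangle\,d\mu=0$, so Proposition \ref{first1}(ii) yields $\mathcal{E}^0(v_n,v_n)=\mathcal{E}^{0,U_n}(v_n,v_n)=-\int_{U_n}\overline{L}^{U_n}v_n\cdot v_n\,d\mu=-\|v_n\|_{L^2(\R^d,\mu)}^2+\int_{\R^d}fv_n\,d\mu\le\int_{\R^d}fu\,d\mu<\infty$. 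Hence $\mathcal{E}^0(v_n,v_n)$ and $\|v_n\|_{L^2(\R^d,\mu)}$ are bounded, so along a subsequence $v_n$ converges $\mathcal{E}^0_1$-weakly in $D(\mathcal{E}^0)$, necessarily to $u$; thus $u\in D(\mathcal{E}^0)$, and letting $n\to\infty$ on the right-hand side (dominated convergence) while using weak lower semicontinuity of $\mathcal{E}^0$ gives $\mathcal{E}^0(u,u)\le-\|u\|_{L^2(\R^d,\mu)}^2+\int_{\R^d}fu\,d\mu=-\int_{\R^d}\overline{L}u\cdot u\,d\mu$. The general $u\in D(\overline{L})_b$ follows by splitting $f$ into its positive and negative parts together with a graph-norm approximation, and the second (non-strict) identity of (c) is obtained in the same spirit by testing the identities of Proposition \ref{first1}(ii) against a fixed $v\in\widehat{H}^{1,2}_0(\R^d,\mu)_{0,b}$ and letting $n\to\infty$, the $\mathbf{B}$-term being controlled by the extension of \eqref{divfree}.

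I expect the passage to the limit in (c) to be the main obstacle: since $f=(1-\overline{L})u$ is only $L^1$, the approximants $\overline{G}_\alpha^{U_n}f$ are not a priori uniformly bounded, which is exactly why one first reduces to $f\ge0$ in order to exploit the monotonicity of Lemma \ref{increlem} and Lebesgue's theorem, and why only the inequality (not an identity) can be expected for $\mathcal{E}^0(u,u)$. A secondary point needing care is verifying that $u\in D(L^0)_{0,b}$ with support in $U_n$ really belongs to $D(\overline{L}^{U_n})_b$ with $\overline{L}^{U_n}u=Lu$, which uses that $\widehat{H}^{1,2}_0(U_n,\mu)$ is the $\widehat{H}^{1,2}_0$-closure of $C_0^\infty(U_n)$ and the compatibility of $L^{0,U_n}$ with $L^0$ on compactly supported functions.
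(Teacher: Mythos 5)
Your construction of $(\overline{L},D(\overline{L}))$ and the proofs of (a) and (b) follow essentially the paper's route: monotone limits $\overline{G}_\alpha^{U_n}f\uparrow G_\alpha f$ via Lemma \ref{increlem}, the resolvent equation and sub-Markov/contraction properties by passage to the limit, strong continuity via $G_\alpha(\alpha-L)u=u$ for $u\in D(L^0)_{0,b}$ and density of $C_0^\infty(\R^d)$, Hille--Yosida, and independence of the exhaustion by comparison. Your treatment of (c) in the special case $f:=(1-\overline{L})u\ge 0$ is also sound (there $0\le v_n=\overline{G}_1^{U_n}f\le u$, so the $v_n$ are uniformly bounded and the bound $\mathcal{E}^0(v_n,v_n)\le \int f\,u\,d\mu<\infty$ is available).

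However, the reduction of (c) to this special case is a genuine gap. For a general $u\in D(\overline{L})_b$ the splitting $f=f^+-f^-$ gives $u=G_1f^+-G_1f^-$, but $G_1f^\pm$ are a priori only in $L^1(\R^d,\mu)$, not bounded: the boundedness of $u$ comes from cancellation and is not inherited by the pieces. Your key estimate $\int f^\pm\,G_1f^\pm\,d\mu<\infty$ therefore fails (or is not available), so you cannot run the energy bound and Banach--Alaoglu argument for each piece; and even if you could, the inequality $\mathcal{E}^0(\cdot,\cdot)\le-\int\overline{L}(\cdot)\cdot(\cdot)\,d\mu$ is quadratic and does not pass to a difference because of uncontrolled cross terms. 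The unspecified ``graph-norm approximation'' does not repair this: e.g.\ truncating $f$ gives graph-norm approximants $G_1f_k$ that need not be uniformly bounded in $L^\infty$. The way the paper handles this is to drop positivity altogether: for $f\in L^1(\R^d,\mu)_b$ (bounded, signed) one has the uniform bound $\mathcal{E}^0_\alpha(\overline{G}_\alpha^{V_n}f,\overline{G}_\alpha^{V_n}f)=\int (f1_{V_n})\,\overline{G}_\alpha^{V_n}(f1_{V_n})\,d\mu\le\alpha^{-1}\|f\|_{L^\infty(\R^d,\mu)}\|f\|_{L^1(\R^d,\mu)}$, whence $\overline{G}_\alpha f\in D(\mathcal{E}^0)$ with $\mathcal{E}^0_\alpha(\overline{G}_\alpha f,\overline{G}_\alpha f)\le\int f\,\overline{G}_\alpha f\,d\mu$ and the identity \eqref{stampale} against $v\in\widehat{H}^{1,2}_0(\R^d,\mu)_{0,b}$; then, for general $u\in D(\overline{L})_b$, one uses the approximants $\alpha G_\alpha u=G_\alpha(\alpha u)$, which are resolvents of the \emph{bounded} functions $\alpha u$ and are themselves uniformly bounded by $\|u\|_{L^\infty(\R^d,\mu)}$ (sub-Markov property), giving $\mathcal{E}^0(\alpha G_\alpha u,\alpha G_\alpha u)\le\|\overline{L}u\|_{L^1(\R^d,\mu)}\|u\|_{L^\infty(\R^d,\mu)}$ uniformly in $\alpha$; Banach--Alaoglu, lower semicontinuity and passage to the limit in \eqref{stampale} then yield both assertions of (c). You should replace your sign-splitting step by this two-stage argument (bounded signed $f$ first, then $\alpha G_\alpha u$).
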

\begin{proof}
Let $f \in L^1(\R^d, \mu)$ with $f \geq 0$. Let $(V_n)_{n \geq 1}$ be a family of bounded open subsets of $\R^d$ satisfying $\overline{V}_n \subset V_{n+1}$ for all $n \in \N$.
Using Lemma \ref{increlem}, we can define for any $\alpha>0$
$$
\overline{G}_{\alpha} f := \lim_{n \rightarrow \infty} \overline{G}_{\alpha}^{V_n} f \quad \; \mu \text{-a.e. on } \R^d. 
$$
Using the $L^1$-contraction property, $\int_{\R^d} \alpha \overline{G}_{\alpha}^{V_n} f d\mu  = \int_{V_n} \alpha \overline{G}_{\alpha}^{V_n} (f1_{V_n}) d\mu \leq \int_{V_n} f d\mu \leq \int_{\R^d} f d\mu$. Thus by monotone integration, $\overline{G}_{\alpha} f \in L^1(\R^d, \mu)$ with
\begin{eqnarray*} \label{contracl1}
\int_{\R^d} \alpha \overline{G}_{\alpha} f d \mu \leq \int_{\R^d} f d\mu,
\end{eqnarray*}
and by Lebesgue's Theorem, $\lim_{n \rightarrow \infty} \overline{G}_{\alpha}^{V_n} f  =  \overline{G}_{\alpha} f$ \; in $L^1(\R^d, \mu)$.\\
For any $f \in L^1(\R^d, \mu)$, define $\overline{G}_{\alpha} f := \overline{G}_{\alpha} f^+ - \overline{G}_{\alpha} f^-$. Then $\alpha \overline{G}_{\alpha}$ is a contraction on $L^1(\R^d,\mu)$, since
$$
\int_{\R^d} | \alpha \overline{G}_{\alpha} f | d \mu \leq \int_{\R^d} (\alpha \overline{G}_{\alpha} f^+ + \alpha \overline{G}_{\alpha} f^- )d \mu \leq \int_{\R^d} f^+ d \mu + \int_{\R^d} f^- d \mu = \int_{\R^d} |f| d\mu.
$$
Thus
$$
\lim_{n \rightarrow \infty} \overline{G}_{\alpha}^{V_n} f  =  \overline{G}_{\alpha} f \; \text{ in  }L^1(\R^d, \mu), \quad \lim_{n \rightarrow \infty} \overline{G}_{\alpha}^{V_n} f = \overline{G}_{\alpha} f \quad \; \mu \text{-a.e. on } \R^d. 
$$ 
Clearly, $(\overline{G}_{\alpha})_{\alpha>0}$ is sub-Markovian, since $(\overline{G}_{\alpha}^{V_n})_{\alpha>0}$ is sub-Markovian on $L^1(V_n, \mu)$ for any $n \geq 1$. By the $L^1(\R^d, \mu)$-contraction property, for any $\alpha, \beta >0$
\begin{equation} \label{limitidt}
\lim_{n \rightarrow \infty} \| \overline{G}_{\alpha}^{V_n} \overline{G}_{\beta} f  - \overline{G}_{\alpha}^{V_n} \overline{G}_{\beta}^{V_n} f \|_{L^1(\R^d, \mu)} \leq \lim_{n \rightarrow \infty} \frac{1}{\alpha} \| \overline{G}_{\beta} f - \overline{G}^{V_n}_{\beta} f  \|_{L^1(\R^d, \mu)} = 0.
\end{equation}
Using \eqref{limitidt} and the resolvent equation for $(\overline{G}^{V_n}_{\alpha})_{\alpha>0}$, we obtain for any $\alpha, \beta>0$
\begin{eqnarray*}
(\beta- \alpha) \overline{G}_{\alpha} \overline{G}_{\beta} f &=& \lim_{n \rightarrow \infty} (\beta- \alpha) \overline{G}_{\alpha}^{V_n} \overline{G}_{\beta} f = \lim_{n \rightarrow \infty} (\beta-\alpha) \overline{G}_{\alpha}^{V_n} \overline{G}_{\beta}^{V_n} f  \\
&=& \lim_{n \rightarrow \infty} \overline{G}^{V_n}_{\alpha} f - \overline{G}_{\beta}^{V_n} f = \overline{G}_{\alpha} f - \overline{G}_{\beta} f \quad\text{ in } L^1(\R^d, \mu).
\end{eqnarray*}
Let $f \in L^1(\R^d, \mu)_b$ and $\alpha>0$.\; By \eqref{stamid}, $\overline{G}_{\alpha}^{V_n}(f 1_{V_n}) \in D(\overline{L}^V)_b \subset \widehat{H}^{1,2}_0(V_n, \mu)_b$. Using \eqref{fkppmeew},
\begin{eqnarray}
\mathcal{E}^0_{\alpha}(\overline{G}_{\alpha}^{V_n}f,  \, \overline{G}_{\alpha}^{V_n}f ) &=& \mathcal{E}^{0,V_n}_{\alpha}\big(\overline{G}_{\alpha}^{V_n}(f 1_{V_n}),  \, \overline{G}_{\alpha}^{V_n}(f 1_{V_n}) \big) \nonumber \\
&=& - \int_{V_n} \overline{L}^{V_n}\overline{G}_{\alpha}^{V_n}(f 1_{V_n}) \cdot \overline{G}_{\alpha}^{V_n}(f1_{V_n}) d \mu + \int_{V_n} \alpha \overline{G}_{\alpha}^{V_n}(f1_{V_n}) \cdot \overline{G}_{\alpha}^{V_n}(f1_{V_n}) d \mu \nonumber \\
&=& \int_{V_n}  (f1_{V_n}) \cdot \overline{G}_{\alpha}^{V_n}(f1_{V_n}) d \mu \nonumber \\
&\leq& \int_{\R^d} f  \cdot \overline{G}_{\alpha} f d \mu \label{boundwq} \\
&\leq& \frac{1}{\alpha} \|f\|_{L^{\infty}(\R^d, \mu)} \|f\|_{L^1(\R^d, \mu)}.  \nonumber
\end{eqnarray}
Observe that $\lim_{ n \rightarrow \infty} \overline{G}^{V_{n}}_{\alpha} f  = \overline{G}_{\alpha} f$  \; in $L^2(\R^d, \mu)$ by Lebesgue's Theorem. Thus by the Banach-Alaoglu Theorem, $\overline{G}_{\alpha} f \in D(\mathcal{E}^0)$ and
there exists subsequence of $(\overline{G}_{\alpha}^{V_n} f)_{n \geq 1}$, say again $(\overline{G}_{\alpha}^{V_n} f)_{n \geq 1}$, such that
\begin{equation} \label{weakcomp}
\lim_{n \rightarrow \infty} \overline{G}^{V_{n}}_{\alpha} f = \overline{G}_{\alpha} f \quad \text{ weakly in } D(\mathcal{E}^0).
\end{equation}
Using the property of weak convergence and \eqref{boundwq}
\begin{equation} \label{weakineqsom}
\mathcal{E}^{0}_{\alpha}(\overline{G}_{\alpha} f, \overline{G}_{\alpha} f ) \leq \liminf_{n \rightarrow \infty} \mathcal{E}^0_{\alpha} (\overline{G}^{V_{n}}_{\alpha} f,\overline{G}^{V_{n}}_{\alpha} f) \leq \int_{\R^d} f \overline{G}_{\alpha} f d\mu.
\end{equation}
Let $v \in \widehat{H}^{1,2}_0(\R^d, \mu)_{0,b}$. Then by Lemma \ref{applempn}, $v \in D(\mathcal{E}^0)$.  Using \eqref{weakcomp},
\begin{eqnarray} 
&&\qquad \mathcal{E}_{\alpha}^{0}(\overline{G}_{\alpha}f, v) - \int_{\R^d} \langle \mathbf{B}, \nabla \overline{G}_{\alpha} f \rangle v \, d \mu  \nonumber \\
&&=\lim_{n \rightarrow \infty}\big( \mathcal{E}_{\alpha}^{0}(\overline{G}^{V_{n}}_{\alpha}f, v) - \int_{\R^d} \langle \rho \psi \mathbf{B}, \nabla \overline{G}^{V_{n}}_{\alpha} f \rangle v \, dx \big)   \nonumber \\
&&=\lim_{n \rightarrow \infty}\big( \mathcal{E}_{\alpha}^{0,V_{n}}(\overline{G}^{V_{n}}_{\alpha}(f 1_{V_n}), v) - \int_{V_n} \big \langle \mathbf{B}, \nabla \overline{G}^{V_{n}}_{\alpha} (f1_{V_n}) \big \rangle\, v \, d \mu \big) \nonumber \\
&&= \lim_{n \rightarrow \infty} \int_{V_n} (\alpha-\overline{L}^{V_n}) \overline{G}^{V_n}_{\alpha} (f1_{V_n}) \cdot v d\mu = \lim_{n \rightarrow \infty} \int_{V_n} f v d\mu= \int_{\R^d} f v d\mu.  \label{stampale}
\end{eqnarray}
Let $u \in D(L^0)_{0,b}$ be given and take $j \in \N$ satisfying $\text{supp}\, u \subset V_j$. Then by Lemma \ref{applempn}, $u \in \widehat{H}^{1,2}_0(V_j, \mu)$. Observe that $\text{supp}\, (Lu) \subset V_j$ and for any $n \geq j$, $u1_{V_n} \in D(L^{0, V_n})_b$, $L^{V_n} (u1_{V_n})  = L u$ on  $V_n$, hence $\overline{G}^{V_n}_{\alpha} (\alpha-L) u  = u$ on $\R^d$. Letting $n \rightarrow \infty$ we have
\begin{equation} \label{extiden}
 u = \overline{G}_{\alpha} (\alpha-L) u.
\end{equation}
Note that
\begin{eqnarray}
\| \alpha \overline{G}_{\alpha} u  - u \|_{L^1(\R^d, \mu)} 
&=& \left \| \alpha \overline{G}_{\alpha} u - \overline{G}_{\alpha} (\alpha-L) u  \right \|_{L^1(\R^d, \mu)} \ =\  \left \| \overline{G}_{\alpha} L u \right \|_{L^1(\R^d, \mu)}  \nonumber \\
&\leq& \frac{1}{\alpha} \| Lu \|_{L^1(\R^d, \mu)} \longrightarrow 0 \;\; \text{ as } \alpha \rightarrow \infty. \label{stconr}
\end{eqnarray}
Since $C_0^{\infty}(\R^d) \subset D(L^0)_{0,b}$, \eqref{stconr} extends to all $u \in L^1(\R^d, \mu)$, which shows the strong continuity of $(\overline{G}_{\alpha})_{\alpha>0}$ on $L^1(\R^d, \mu)$.
Let $(\overline{L}, D(\overline{L}))$ be the generator of $(\overline{G}_{\alpha})_{\alpha>0}$. Then \eqref{extiden} implies $\overline{L} u = Lu$  for all $u \in D(L^0)_{0,b}$. Thus $(\overline{L}, D(\overline{L}))$ is a closed extension of $(L, D(L^0)_{0,b})$ on $L^1(\R^d, \mu)$. By the Hille-Yosida Theorem, $(\overline{L}, D(\overline{L}))$ generates a $C_0$-semigroup of contractions $(\overline{T}_t)_{t> 0}$ on $L^1(\R^d, \mu)$. \\
Since $\overline{T}_t u = \lim_{\alpha \rightarrow \infty} \exp \Big( t\alpha (\alpha \overline{G}_{\alpha}u - u ) \Big) $ in $L^1(\R^d, \mu)$,  $(\overline{T}_t)_{t>0}$ is also sub-Markovian, hence (a) is proved.

Next we will show (b). Let $(U_n)_{n \geq 1}$ be a family of bounded open subsets of $\R^d$ such that $\overline{U}_{n} \subset U_{n+1}$ for all $n \in \N$ and $\R^d  = \bigcup_{n \geq 1} U_n$. Let $f \in L^1(\R^d, \mu)$ with $f \geq 0$. By the compactness of $\overline{V}_n$ in $\R^d$, there exists $n_0 \in \N$ such that $\overline{V}_n \subset U_{n_0}$, so that $\overline{G}^{V_n} f \leq \overline{G}^{U_{n_0}} f \leq \lim_{n \rightarrow \infty} \overline{G}^{U_n}_{\alpha}f$. 
Letting $n \rightarrow \infty$, we obtain $\overline{G}_{\alpha} f \leq \lim_{n \rightarrow \infty} \overline{G}^{U_n}_{\alpha}f$. Similarly we have $\lim_{n \rightarrow \infty} \overline{G}^{U_n}_{\alpha}f \leq \overline{G}_{\alpha} f$, which shows (b).

Finally we will show (c). Let $u \in D(\overline{L})_b$ be given. Then by \eqref{weakcomp}, $\alpha \overline{G}_{\alpha} u \in D(\mathcal{E}^0)$ and by \eqref{weakineqsom}
\begin{eqnarray}
\mathcal{E}^{0}(\alpha \overline{G}_{\alpha} u, \alpha \overline{G}_{\alpha} u  ) &\leq& \int_{\R^d} \alpha u \cdot \alpha  \overline{G}_{\alpha} u d \mu - \alpha \int_{\R^d} \alpha \overline{G}_{\alpha} u \cdot \alpha \overline{G}_{\alpha} u d \mu  \nonumber \\
&=& \int_{\R^d} \alpha \big( u - \alpha \overline{G}_{\alpha} u  \big) \cdot \alpha \overline{G}_{\alpha} u \, d \mu  \nonumber \\
&=& \int_{\R^d} - \alpha \overline{L}\,  \overline{G}_{\alpha} u \cdot \alpha \overline{G}_{\alpha} u \,d \mu \nonumber \\
&=& \int_{\R^d} - \alpha \overline{G}_{\alpha} \overline{L} u \cdot \alpha \overline{G}_{\alpha} u d\mu \label{bddjwnvr14} \\
&\leq& \| \overline{L} u \|_{L^1(\R^d, \mu)} \|u \|_{L^{\infty}(\R^d, \mu)}. \nonumber
\end{eqnarray}
Therefore $\sup_{\alpha>0} \mathcal{E}^{0}(\alpha \overline{G}_{\alpha}u, \alpha \overline{G}_{\alpha}u ) < \infty$. By Banach-Alaoglu theorem, there exists a subsequence of $(\alpha \overline{G}_{\alpha}u)_{\alpha>0}$, say again $(\alpha \overline{G}_{\alpha}u)_{\alpha>0}$, such that $u \in D(\mathcal{E}^0)$ and $\lim_{\alpha \rightarrow \infty} \alpha \overline{G}_{\alpha} u = u$ weakly in $D(\mathcal{E}^0)$. Moreover by the property of weak convergence, \eqref{bddjwnvr14} and Lebesgue's Theorem,
\begin{eqnarray*}
\mathcal{E}^{0}(u,u) &\leq& \liminf_{\alpha \rightarrow \infty} \mathcal{E}^{0}(\alpha \overline{G}_{\alpha} u, \alpha \overline{G}_{\alpha} u  )  \leq \liminf_{\alpha \rightarrow \infty} \big(-\int_{\R^d} \alpha \overline{G}_{\alpha} \overline{L} u \cdot \alpha \overline{G}_{\alpha} u d \mu\big)\\
&=& -\int_{\R^d} \overline{L}u \, u d \mu. \;\; \;
\end{eqnarray*}
If $v \in \widehat{H}^{1,2}(\R^d, \mu)_{0,b}$, then by \eqref{stampale}
\begin{eqnarray*}
&&\mathcal{E}^0(u,v) - \int_{\R^d} \langle \mathbf{B}, \nabla u \rangle v d \mu = \lim_{\alpha \rightarrow \infty} \big( \mathcal{E}^0(\alpha \overline{G}_{\alpha} u, v)- \int_{\R^d} \langle  \rho \psi \mathbf{B}, \nabla \alpha \overline{G}_{\alpha} u   \rangle v dx  \big) \\
&& =  \lim_{\alpha \rightarrow \infty} \big( \mathcal{E}_{\alpha}^0(\alpha \overline{G}_{\alpha} u, v)- \int_{\R^d} \langle  \mathbf{B}, \nabla \alpha \overline{G}_{\alpha} u   \rangle v d\mu   - \alpha \int_{\R^d} \alpha \overline{G}_{\alpha} u \cdot v d \mu \big) \\
&& = \lim_{\alpha \rightarrow \infty} \int_{\R^d} \alpha(u- \alpha \overline{G}_{\alpha} u) v d \mu =  \lim_{\alpha \rightarrow \infty} \int_{\R^d} -\alpha \overline{G}_{\alpha} \overline{L} u \cdot v d \mu  = - \int_{\R^d} \overline{L} u \cdot v d \mu,
\end{eqnarray*}
as desired. \vspace{-0.7em}
\end{proof}

\begin{rem} \label{resestdual}
In the same way as in Theorem \ref{mainijcie}, one can construct an $L^1(\R^d, \mu)$ closed extension $(\overline{L}^*, D(\overline{L}^*))$ of $L^0 u + \langle -\mathbf{B}, \nabla u \rangle$, $u \in D(L^0)_{0,b}$ which generates a sub-Markovian  $C_0$-resolvent of contractions $(\overline{G}_{\alpha}^{*} )_{\alpha>0}$ an $L^1(\R^d, \mu)$. Let  $(U_n)_{n \geq 1}$ be as in Theorem \ref{mainijcie}(b). Observe that by Remark \ref{remimpco} 
\begin{equation} \label{localresid}
\int_{\R^d} \overline{G}_{\alpha}^{U_n} u \cdot v \, d \mu  = \int_{\R^d} u \cdot {\overline{G}_{\alpha}^{* \,U_n}} v \,d\mu, \;\; \text{ for all } u,v \in L^1(\R^d, \mu) \cap L^{\infty}(\R^d, \mu),
\end{equation}
where $(\overline{G}_{\alpha}^{* \,U_n})_{\alpha>0}$ is the resolvent associated to $(\overline{L}^{*\,U_n}, D(\overline{L}^{*\,U_n}))$ on $L^1(U_n, \mu)$, which is trivially extended to $\R^d$ as in \eqref{extdefpj}.
\;Letting $n \rightarrow \infty$ in \eqref{localresid}, 
\begin{equation*} \label{glbresid}
\int_{\R^d} \overline{G}_{\alpha} u \, v \, d \mu  = \int_{\R^d} u \, \overline{G}_{\alpha}^{*} v d\mu, \;\; \text{ for all } u,v \in L^1(\R^d, \mu) \cap L^{\infty}(\R^d, \mu). \quad 
\end{equation*}
\end{rem}
\bigskip
The following Theorem \ref{pojjjde} which shows that $D(\overline{L})_b$ is an algebra is one of the ingredients to construct a Hunt process corresponding to the strict capacity (see, {\bf SD3} in \cite{Tr5}). It will be used later. The proof of Theorem \ref{pojjjde} is based on \cite[Remark 1.7 (iii)]{St99}, but we include its proof checking in detail some approximation arguments.
\begin{lem} \label{pojjjde}
$D(\overline{L})_b$ is an algebra and $\overline{L}u^2 = 2u \overline{L}u +  \langle \widehat{A} \nabla u, \nabla u \rangle$ for any $u \in D(\overline{L})_b$.
\end{lem}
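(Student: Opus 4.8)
The plan is to reduce everything, via the polarization $uv=\tfrac12\big((u+v)^2-u^2-v^2\big)$, to the single claim: \emph{for $u\in D(\overline{L})_b$ one has $u^2\in D(\overline{L})$ with $\overline{L}u^2=2u\overline{L}u+\langle\widehat{A}\nabla u,\nabla u\rangle$.} Granting this and that $u+v\in D(\overline{L})_b$, the product $uv$ is a linear combination of such squares, hence in $D(\overline{L})$, and it is obviously bounded, so $D(\overline{L})_b$ is an algebra and the product rule for $uv$ follows by expanding. Note that the target $g:=2u\overline{L}u+\langle\widehat{A}\nabla u,\nabla u\rangle$ lies in $L^1(\R^d,\mu)$: $u$ is bounded, $\overline{L}u\in L^1(\R^d,\mu)$, and $\int_{\R^d}\langle\widehat{A}\nabla u,\nabla u\rangle\,d\mu=2\mathcal{E}^0(u,u)<\infty$ by Theorem \ref{mainijcie}(c).

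I would first prove the local analogue: for every bounded open $V$, $D(\overline{L}^V)_b$ is an algebra and $\overline{L}^V w^2=2w\overline{L}^V w+\langle\widehat{A}\nabla w,\nabla w\rangle$ for $w\in D(\overline{L}^V)_b$. Here $D(\overline{L}^V)_b\subset\widehat{H}^{1,2}_0(V,\mu)_b$ by Proposition \ref{first1}(ii), and since $\widehat{H}^{1,2}_0(V,\mu)\hookrightarrow H^{1,2}(V)$ its bounded elements form an algebra with the Sobolev product and chain rules (cf.\ Remark \ref{gensitce}, \cite[I. Corollary 4.15]{MR}); in particular $w^2,\,wv\in\widehat{H}^{1,2}_0(V,\mu)_b$ for $v\in\widehat{H}^{1,2}_0(V,\mu)_b$, with $\nabla w^2=2w\nabla w$. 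From $\mathcal{E}^{0,V}(w^2,v)=2\mathcal{E}^{0,V}(w,wv)-\int_V\langle\widehat{A}\nabla w,\nabla w\rangle v\,d\mu$ (symmetry of $\widehat{A}$), inserting \eqref{stamid} for $\mathcal{E}^{0,V}(w,wv)$ and using $\langle\mathbf{B},\nabla w^2\rangle=2w\langle\mathbf{B},\nabla w\rangle$, the $\mathbf{B}$-terms cancel and one is left with
\[
\mathcal{E}^{0,V}(w^2,v)-\int_V\langle\mathbf{B},\nabla w^2\rangle v\,d\mu=-\int_V\big(2w\overline{L}^V w+\langle\widehat{A}\nabla w,\nabla w\rangle\big)v\,d\mu
\]
for all $v\in\widehat{H}^{1,2}_0(V,\mu)_b$. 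Since $2w\overline{L}^V w+\langle\widehat{A}\nabla w,\nabla w\rangle\in L^1(V,\mu)$, the characterization of the domain of the generator through the form — exactly as applied to $\overline{L}^{0,V}$ in the proof of Lemma \ref{existlem}(iv) via \cite[I. Lemma 4.2.2.1]{BH}, combined with the adjoint structure of Remark \ref{remimpco} to accommodate the drift $\mathbf{B}$ — yields $w^2\in D(\overline{L}^V)$ with the stated identity.

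Next I would globalize by approximation. Fix $\alpha>0$ and an exhaustion $(V_n)$ with $\overline{V}_n\subset V_{n+1}$ and $\bigcup_n V_n=\R^d$; for $u\in D(\overline{L})_b$ put $u_{\alpha,n}:=\overline{G}^{V_n}_\alpha(\alpha u1_{V_n})\in D(\overline{L}^{V_n})_b$ (so $\|u_{\alpha,n}\|_\infty\le\|u\|_\infty$ by sub-Markovianity) and $u_\alpha:=\alpha\overline{G}_\alpha u\in D(\overline{L})_b$; then $u_{\alpha,n}\to u_\alpha$ in $L^1(\R^d,\mu)$ and, along a subsequence, $\mu$-a.e., by Theorem \ref{mainijcie}(b). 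By the local step $u_{\alpha,n}^2\in D(\overline{L}^{V_n})$, and since $\overline{L}^{V_n}u_{\alpha,n}=\alpha(u_{\alpha,n}-u1_{V_n})$,
\[
(\alpha-\overline{L}^{V_n})u_{\alpha,n}^2=\alpha u_{\alpha,n}^2-2\alpha\big(u_{\alpha,n}^2-u_{\alpha,n}u1_{V_n}\big)-\langle\widehat{A}\nabla u_{\alpha,n},\nabla u_{\alpha,n}\rangle.
\]
The first two groups converge in $L^1(\R^d,\mu)$, to $\alpha u_\alpha^2$ and $2u_\alpha\overline{L}u_\alpha=2\alpha(u_\alpha^2-u_\alpha u)$, by dominated convergence using $\|u_{\alpha,n}\|_\infty\le\|u\|_\infty$; the third converges to $\langle\widehat{A}\nabla u_\alpha,\nabla u_\alpha\rangle$ in $L^1(\R^d,\mu)$ \emph{provided} $u_{\alpha,n}\to u_\alpha$ strongly in $D(\mathcal{E}^0)$ (then apply Cauchy--Schwarz for the semi-inner product $\int\langle\widehat{A}\cdot,\cdot\rangle\,d\mu$). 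Granting this, $(\alpha-\overline{L}^{V_n})u_{\alpha,n}^2\to\zeta$ in $L^1(\R^d,\mu)$ for some $\zeta$, so $u_{\alpha,n}^2=\overline{G}^{V_n}_\alpha\big[(\alpha-\overline{L}^{V_n})u_{\alpha,n}^2\big]\to\overline{G}_\alpha\zeta$ in $L^1(\R^d,\mu)$ by the $L^1$-contractivity of $\overline{G}^{V_n}_\alpha$ and Theorem \ref{mainijcie}(b); comparing with $u_{\alpha,n}^2\to u_\alpha^2$ gives $u_\alpha^2=\overline{G}_\alpha\zeta\in D(\overline{L})$ and $\overline{L}u_\alpha^2=2u_\alpha\overline{L}u_\alpha+\langle\widehat{A}\nabla u_\alpha,\nabla u_\alpha\rangle$. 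Finally, letting $\alpha\to\infty$, $u_\alpha^2\to u^2$ and $2u_\alpha\overline{L}u_\alpha=2u_\alpha\cdot\alpha\overline{G}_\alpha\overline{L}u\to 2u\overline{L}u$ in $L^1(\R^d,\mu)$, and — again by strong convergence $u_\alpha\to u$ in $D(\mathcal{E}^0)$ — $\langle\widehat{A}\nabla u_\alpha,\nabla u_\alpha\rangle\to\langle\widehat{A}\nabla u,\nabla u\rangle$ in $L^1(\R^d,\mu)$; closedness of $(\overline{L},D(\overline{L}))$ then gives $u^2\in D(\overline{L})$ with the asserted product rule.

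The hard part will be the two strong-convergence statements in $D(\mathcal{E}^0)$. Weak convergence $u_{\alpha,n}\rightharpoonup u_\alpha$ and $u_\alpha\rightharpoonup u$ in $D(\mathcal{E}^0)$ is available from the uniform energy bounds already used in the proof of Theorem \ref{mainijcie}, so what is missing is convergence of the $\mathcal{E}^0_\alpha$-norms. From the local energy identity $\mathcal{E}^{0,V_n}(u_{\alpha,n},u_{\alpha,n})=-\int\overline{L}^{V_n}u_{\alpha,n}\cdot u_{\alpha,n}\,d\mu$ (the $\mathbf{B}$-term drops since $u_{\alpha,n}^2\in\widehat{H}^{1,2}_0(V_n,\mu)$ and \eqref{divfree} holds) one gets cleanly $\mathcal{E}^0_\alpha(u_{\alpha,n},u_{\alpha,n})=\alpha\int u1_{V_n}u_{\alpha,n}\,d\mu\to\alpha\int u\,u_\alpha\,d\mu$; to match this with $\mathcal{E}^0_\alpha(u_\alpha,u_\alpha)$ one must test the weak identity of Theorem \ref{mainijcie}(c) against $u_\alpha$ — which is not compactly supported — so one inserts cut-offs $\chi_k\in C_0^\infty(\R^d)$, $0\le\chi_k\le1$, $\chi_k\uparrow 1$, adapted to $(V_n)$, and shows that the ``shell'' terms $\int\langle\mathbf{B},\nabla\chi_k\rangle u_\alpha^2\,d\mu$ and $\int\langle\widehat{A}\nabla u_\alpha,\nabla\chi_k\rangle u_\alpha\,d\mu$ vanish as $k\to\infty$, using $\langle\widehat{A}\nabla u_\alpha,\nabla u_\alpha\rangle\in L^1(\R^d,\mu)$, the hypotheses $\mathbf{B}\in L^2_{loc}(\R^d,\R^d,\mu)$ and $\rho\psi\mathbf{B}\in L^2_{loc}(\R^d,\R^d)$, and the extension of \eqref{divfree} to $\widehat{H}^{1,2}_0(\R^d,\mu)_{0,b}$. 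The identical estimate with $(u,u_\alpha)$ in place of $(u_\alpha,u_{\alpha,n})$ handles $\mathcal{E}^0(u_\alpha,u_\alpha)\to\mathcal{E}^0(u,u)$. This localization/shell-term bookkeeping — forcing the correction term $\langle\widehat{A}\nabla\cdot,\nabla\cdot\rangle$, which is \emph{not} continuous in the graph norm, to pass to the limit — is the genuine technical content; everything else is a routine combination of the local result with the $L^1$-resolvent approximation of Theorem \ref{mainijcie}.
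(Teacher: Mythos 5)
Your reduction to squares, your local identity on a bounded $V$ (which can indeed be closed up via the adjoint resolvent of Remark \ref{remimpco}, in the same duality style the paper uses globally), and your final $\alpha\to\infty$ step are sound; the last one is literally the paper's own Step 2, since $\alpha\overline{G}_\alpha u-u\in D(\overline{L})_b$ and the \emph{inequality} of Theorem \ref{mainijcie}(c) suffices there. The genuine gap is the strong convergence $u_{\alpha,n}\to u_\alpha$ in $D(\mathcal{E}^0)$ at fixed $\alpha$, on which your passage of the non-graph-continuous term $\langle\widehat{A}\nabla u_{\alpha,n},\nabla u_{\alpha,n}\rangle$ to the limit entirely rests. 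What you can prove is $\mathcal{E}^0_\alpha(u_{\alpha,n},u_{\alpha,n})=\alpha\int u1_{V_n}u_{\alpha,n}\,d\mu\to\alpha\int u\,u_\alpha\,d\mu$ (this is \eqref{boundwq}); but $\alpha\int u\,u_\alpha\,d\mu=\int(\alpha-\overline{L})u_\alpha\cdot u_\alpha\,d\mu\ \ge\ \mathcal{E}^0_\alpha(u_\alpha,u_\alpha)$ by Theorem \ref{mainijcie}(c), and no reverse inequality is available. Convergence of norms, hence strong convergence, would force equality in Theorem \ref{mainijcie}(c), i.e. $\int\langle\mathbf{B},\nabla u_\alpha\rangle u_\alpha\,d\mu=0$ for the non-compactly supported $u_\alpha$ — precisely what cannot be had here: $\mathbf{B}$ is only in $L^2_{loc}(\R^d,\R^d,\mu)$ and $\widehat{A}$ obeys no growth condition, so in your cut-off argument the shell terms do not vanish (the term $\int\langle\mathbf{B},\nabla\chi_k\rangle u_\alpha^2\,d\mu$ sees $\mathbf{B}$ on shells escaping to infinity with no global integrability, and for $\int\langle\widehat{A}\nabla u_\alpha,\nabla\chi_k\rangle u_\alpha\,d\mu$ Cauchy–Schwarz leaves the factor $\int\langle\widehat{A}\nabla\chi_k,\nabla\chi_k\rangle u_\alpha^2\,d\mu$, uncontrolled without growth assumptions on $\frac1\psi A$). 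Note that the paper itself, in proving Theorem \ref{mainijcie}, extracts only \emph{weak} $D(\mathcal{E}^0)$-convergence of $\overline{G}^{V_n}_\alpha f$ plus the one-sided bound \eqref{weakineqsom}; the drift pairing $\int\langle\mathbf{B},\nabla u\rangle u\,d\mu$ need not even be well defined for general $u\in D(\overline{L})_b$.

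The paper's proof is designed to avoid exactly this: it dualizes ($u^2\in D(\overline{L})$ follows once $\int(\overline{L}^*\overline{G}_1^*h)\,u^2\,d\mu=\int g\,\overline{G}_1^*h\,d\mu$ for all $h\in L^1(\R^d,\mu)_b$), first treats $u=\overline{G}_1f$, and then passes to the limit in $\int\langle\widehat{A}\nabla u_n,\nabla u\rangle v_n\,d\mu$ with $u_n=\overline{G}_1^{U_n}f$, $v_n=\overline{G}_1^{*\,U_n}h$, $h\ge0$, exploiting the monotonicity $0\le v_n\le v=\overline{G}_1^*h$ (Lemma \ref{increlem} for the adjoint) and a Cauchy–Schwarz estimate whose decisive factor is $\int\langle\widehat{A}\nabla u,\nabla u\rangle(v-v_n)\,d\mu\to0$ by dominated convergence; only weak convergence of $u_n$ and the uniform bound $\sup_n\mathcal{E}^0(u_n,u_n)<\infty$ are used. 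If you wish to keep your local-to-global architecture, you must replace the strong-convergence step by some such monotone-weight/duality device for the quadratic term; as written, that step is a real gap, not a routine technicality.
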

\begin{proof}
Let $u \in D(\overline{L})_b$. Since $D(\overline{L})_b$ is a linear space, it suffices to show $u^2 \in D(\overline{L})_b$. Let $(\overline{L}^*, D(\overline{L}^*))$, $(\overline{G}_{\alpha}^{*} )_{\alpha>0}$ be as in Remark \ref{resestdual} and set $g:=2u \overline{L}u + \langle \widehat{A} \nabla u, \nabla u \rangle$. \,If we can show
\begin{equation} \label{sufficeoce}
\int_{\R^d} (\overline{L}^*\, \overline{G}^*_1 h) \, u^2 d\mu  = \int_{\R^d} g\, \overline{G}^*_1 h \,d\mu, \;\; \text{ for all } h \in L^1(\R^d, \mu)_b,
\end{equation}
then 
\begin{eqnarray*}
\int_{\R^d} \overline{G}_1(u^2-g) \, h d\mu  &=& \int_{\R^d} (u^2-g) \overline{G}^*_{1} h\, d\mu \underset{\eqref{sufficeoce}}{=} \int_{\R^d} u^2  (  \overline{G}^*_{1} h - \overline{L}^* \,\overline{G}_{1}^* h  ) d\mu\\
&=& \int_{\R^d} u^2 h \,d \mu, \;\; \text{ for all } \; h \in L^1(\R^d, \mu)_b,
\end{eqnarray*}
hence $u^2  = \overline{G}_1(u^2-g) \in D(\overline{L})_b$  and $\overline{L}u^2= (1-\overline{L}) \overline{G}_1(g-u^2)- \overline{G}_1(g-u^2) = g-u^2+u^2=g$, as desired. \\
\text{}\\
{\bf Step 1: } To prove \eqref{sufficeoce}, first assume $u = \overline{G}_1 f$ for some $f \in L^1(\R^d, \mu)_b$. Fix $v =\overline{G}^*_1 h$ for some $h \in L^1(\R^d, \mu)_b$ with $h \geq 0$. Let $(U_n)_{n \geq 1}$ be as in Theorem \ref{mainijcie}(b) and $u_n:= \overline{G}^{U_n}_1 f$, \;$v_n:= \overline{G}^{*\, U_n}_{1} h$. By Proposition \ref{first1} and Theorem \ref{mainijcie}
\begin{eqnarray}
&&\int_{\R^d} (\overline{L}^{*\, U_n} v_n) \, u u_n d \mu \nonumber \\ 
&=& - \mathcal{E}^0(v_n,  u u_n) - \int_{\R^d} \langle \mathbf{B}, \nabla v_n \rangle u u_n d \mu, \qquad \text{( since $v_n \in D(\mathcal{E}^0)$ and $u u_n \in D(\mathcal{E}^0)$ )} \nonumber  \\
&=& - \frac12 \int_{\R^d} \langle \widehat{A} \nabla v_n, \nabla u \rangle u_n  d\mu- \frac12 \int_{\R^d} \langle \widehat{A} \nabla v_n, \nabla u_n \rangle u  d\mu  + \int_{\R^d} \langle \mathbf{B}, \nabla (u u_n) \rangle v_n d \mu  \nonumber \\
&=& - \frac12 \int_{\R^d} \langle \widehat{A} \nabla (v_n u_n), \nabla u    \rangle d\mu + \frac12 \int_{\R^d} \langle \widehat{A} \nabla u_n,  \nabla u \rangle v_n d\mu - \frac12\int_{\R^d} \langle \widehat{A} \nabla v_n, \nabla u_n \rangle u  d\mu \nonumber \\
&& \qquad +\int_{\R^d} \langle \mathbf{B}, \nabla u \rangle v_n u_n \,d\mu+ \int_{\R^d} \langle \mathbf{B}, \nabla u_n \rangle v_n u d\mu \nonumber \\
&=& - \frac12 \int_{\R^d} \langle \widehat{A} \nabla u, \nabla (v_n u_n)  \rangle d \mu +\int_{\R^d} \langle \mathbf{B}, \nabla u \rangle v_n u_n \,d\mu  + \frac12 \int_{\R^d} \langle \widehat{A} \nabla u_n, \nabla u \rangle v_n  d\mu  \nonumber \\
&& \qquad  - \frac12 \int_{\R^d} \langle \widehat{A} \nabla u_n, \nabla (v_n u)\rangle d \mu+ \int_{\R^d} \langle \mathbf{B}, \nabla u_n \rangle v_n u d\mu + \frac12 \int_{\R^d} \langle \widehat{A} \nabla u_n, \nabla u \rangle v_n d\mu \nonumber \\
&=& \int_{\R^d} \overline{L} u \cdot v_n u_n d\mu + \int_{\R^d} \overline{L}^{U_n} u_n \cdot v_n u d \mu + \int_{\R^d} \langle \widehat{A} \nabla u_n, \nabla u \rangle v_n d \mu. \label{intepce}
\end{eqnarray}
Observe that
\begin{eqnarray}
&&  \big |\int_{\R^d} \langle \widehat{A} \nabla u, \nabla u   \rangle v d\mu - \int_{\R^d} \langle \widehat{A} \nabla u_n, \nabla u \rangle v_n d \mu \big| \nonumber \\
&& \leq \underbrace{\big | \int_{\R^d} \langle \widehat{A} \nabla (u-u_n), \nabla u \rangle v   d\mu  \big|}_{=:I_n} + \underbrace{\big| \int_{\R^d}  \langle \widehat{A} \nabla u_n, \nabla u    \rangle \, (v-v_n) \, d\mu \big| }_{=:J_n} \label{weakconvlen} 
\end{eqnarray}
Since $\lim_{n \rightarrow \infty} u_n = u$ \, weakly in $D(\mathcal{E}^0)$ and $v$ is bounded on $\R^d$, $\lim_{n \rightarrow \infty} I_n =0$.
Note that $v_n = \overline{G}^{*\,U_n}_1 h \leq \overline{G}^{*}_1 h=v$,\; $\sup_{n \in \N}\mathcal{E}^0(u_n, u_n)<\infty$,\; $|v_n| \leq |v| \in L^{\infty}(\R^d, \mu)$ and 
$$
\lim_{n \rightarrow \infty}u_n = u \; \text{ $\mu$-a.e. on $\R^d$,  }
$$ 
hence we obtain by the Cauchy–Schwarz inequality,
\begin{eqnarray*}
J_n&&\leq   \big(  \int_{\R^d}  \langle \widehat{A} \nabla u_n, \nabla u_n    \rangle \, (v-v_n) \, d\mu \big)^{1/2} \big(  \int_{\R^d}  \langle \widehat{A} \nabla u, \nabla u    \rangle \, (v-v_n) \, d\mu \big)^{1/2} \\
&&\leq \sqrt{2}\|v\|_{L^{\infty}(\R^d, \mu)}^{1/2} \sup_{n \geq 1}\mathcal{E}^0(u_n, u_n)^{1/2} \big(  \int_{\R^d}  \langle \widehat{A} \nabla u, \nabla u    \rangle \, (v-v_n) \, d\mu \big)^{1/2} \\
&&\;\; \longrightarrow 0 \quad \text{ as } n \rightarrow \infty,
\end{eqnarray*}
where the latter convergence to zero followed by Lebesgue's Theorem for which we use
$$
\big|\langle \widehat{A} \nabla u, \nabla u    \rangle \, (v-v_n)  \big| \leq 2\|v\|_{L^{\infty}(\R^d, \mu)}  \langle \widehat{A} \nabla u, \nabla u    \rangle \in L^1(\R^d, \mu),  \; \text{\,  $\mu$-a.e. on $\R^d$  }
$$
and
$$
\lim_{n \rightarrow \infty}  \langle \widehat{A} \nabla u, \nabla u    \rangle \, (v-v_n) = 0, \;\; \text{ $\mu$-a.e. on $\R^d$}.
$$
Therefore it follows by \eqref{weakconvlen} that
\begin{eqnarray} \label{limitcje}
\lim_{n \rightarrow \infty} \int_{\R^d} \langle \widehat{A} \nabla u_n, \nabla u \rangle v_n d \mu =\int_{\R^d} \langle \widehat{A} \nabla u, \nabla u \rangle v d \mu.
\end{eqnarray}
By \eqref{intepce}, \eqref{limitcje} and Lebesgue's Theorem
\begin{eqnarray}
&&\int_{\R^d} \overline{L}^{*}v\cdot u^2 d\mu  \nonumber \\
&= &\int_{\R^d} \big( \overline{G}^{*}_1 h - h\big) u u_n d\mu \nonumber \\
&=& \lim_{n \rightarrow \infty} \int_{\R^d} \big(\overline{G}^{* \, U_n}_1 h - h \big) u u_n \,d\mu \nonumber \\
&=& \lim_{n \rightarrow \infty} \int_{\R^d} (\overline{L}^{*\,U_n} v_n)  u u_n d\mu \nonumber  \\
&\underset{\eqref{intepce}}{=}&\lim_{n \rightarrow \infty} \int_{\R^d} \overline{L} u \cdot v_n u_n d\mu + \int_{\R^d} (\overline{G}_1^{U_n} f -f) \cdot v_n u d \mu +\int_{\R^d} \langle \widehat{A} \nabla u_n, \nabla u \rangle v_n d \mu  \nonumber \\
&\underset{\eqref{limitcje}}{=}&  \int_{\R^d} \overline{L} u \cdot v u d\mu + \int_{\R^d} \overline{L}u\cdot vu d \mu + \int_{\R^d} \langle \widehat{A} \nabla u, \nabla u \rangle v d \mu  \nonumber \\
&=& \int_{\R^d} g v d\mu. \label{gvidendt}
\end{eqnarray}
In the case of general $h \in L^1(\R^d, \mu)_b$, we also obtain \eqref{gvidendt} using $h=h^+-h^-$ and linearity.\\
\centerline{}
{\bf Step 2: } Let $ u \in D(\overline{L})_b$ be arbitrary. Set
$$
g_{\alpha}:= 2(\alpha \overline{G}_{\alpha} u) \overline{L} (\alpha \overline{G}_{\alpha} u) +  \langle \widehat{A} \nabla \alpha  \overline{G}_{\alpha} u, \nabla \alpha \overline{G}_{\alpha} u \rangle, \quad \alpha>0.
$$
By Theorem \ref{mainijcie}(c),
\begin{eqnarray*}
\mathcal{E}^0(\alpha \overline{G}_{\alpha} u - u, \alpha \overline{G}_{\alpha} u - u ) &\leq& -\int_{\R^d} \overline{L} (\alpha \overline{G}_{\alpha} u - u) \cdot (\alpha \overline{G}_{\alpha} u - u) d\mu \\
&\leq& 2\|u\|_{L^{\infty}(\R^d,\mu)} \|\alpha \overline{G}_{\alpha} \overline{L}u -\overline{L}u \|_{L^1(\R^d, \mu)} \\
&& \;\longrightarrow 0 \quad \text{ as } \alpha \rightarrow \infty,
\end{eqnarray*}
hence $\lim_{\alpha \rightarrow \infty} g_{\alpha} = g$ \; in $L^1(\R^d, \mu)$. Observe that by the resolvent equation
\begin{eqnarray*}
\overline{G}_{\alpha} u =\overline{G}_1\big( (1-\alpha) \overline{G}_{\alpha} u + u  \big)
\end{eqnarray*}
and  $(1-\alpha) \overline{G}_{\alpha} u +  u \in L^1(\R^d, \mu)_b$. Replacing $u$ in \eqref{gvidendt} with $\alpha \overline{G}_{\alpha} u$
\begin{eqnarray*}
\int_{\R^d} \overline{L}^{*}v\, \big(\alpha \overline{G}_{\alpha}u \big)^2 d\mu=\int_{\R^d} g_{\alpha} v d\mu.
\end{eqnarray*}
Letting $\alpha \rightarrow \infty$, we finally obtain by Lebesgue's Theorem
$$
\int_{\R^d} \overline{L}^{*}v\cdot u^2 d\mu=\int_{\R^d} g v d\mu,
$$
so that our assertion holds.
\end{proof}

\subsection{Existence of a pre-invariant measure and strong Feller properties} \label{fewfore}

Here we state some conditions which will be used as our assumptions.
\begin{itemize}
\item[\bf (A1)]
$p>d$ is fixed and $A=(a_{ij})_{1 \leq i,j \leq d}$ is a symmetric matrix of functions which is  locally uniformly strictly elliptic on $\R^d$ such that $a_{ij} \in H_{loc}^{1,p}(\R^d) \cap C_{loc}^{0,1-d/p}(\R^d)$ for all $1 \leq i,j \leq d$.\, $\psi \in L_{loc}^1(\R^d)$ is a positive function such that $\frac{1}{\psi} \in L_{loc}^{\infty}(\R^d)$ and  $\mathbf{G}$ is a Borel measurable vector field on $\R^d$ satisfying $\psi \mathbf{G} \in L^p_{loc}(\R^d, \R^d)$.

\item[\bf (A2)]
$\psi \in L_{loc}^q(\R^d)$ with $q\in (\frac{d}{2},\infty]$. Fix $s\in (\frac{d}{2},\infty]$ such that $\frac{1}{q}+ \frac{1}{s}< \frac{2}{d}$.

\item[\bf (A3)]
$q \in [\frac{p}{2} \vee 2,\infty]$.
\end{itemize}

\begin{theo} \label{helholmop}
Under the assumption {\bf(A1)}, there exists $\rho \in H_{loc}^{1,p}(\R^d) \cap C_{loc}^{0,1-d/p}(\R^d)$ satisfying $\rho(x)>0$ for all $x \in \R^d$ such that 
\begin{equation} \label{helm}
\int_{\R^d} \langle \mathbf{G}-\beta^{\rho, A, \psi}, \nabla \varphi \rangle \rho \psi dx =0, \quad \text{for all } \varphi \in C_0^{\infty}(\R^d).
\end{equation}
Moreover $\rho \psi  \,\mathbf{B} \in L_{loc}^p(\R^d, \R^d)$, where $\mathbf{B}:= \mathbf{G}-\beta^{\rho, A, \psi}$.
\end{theo}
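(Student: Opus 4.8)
The plan is to produce $\rho$ as a suitable limit of solutions of the divergence-form elliptic equations associated with $L^*$ restricted to an exhausting sequence of balls, and to get the claimed regularity from the elliptic Hölder estimate (Theorem \ref{holreg}) together with interior $H^{1,p}$-estimates. Concretely, I would first rewrite the desired identity \eqref{helm} in divergence form: since $\beta^{\rho,A,\psi}=\frac{1}{2\psi}\nabla A+\frac{A\nabla\rho}{2\rho\psi}$, the integrand $\langle\mathbf{G}-\beta^{\rho,A,\psi},\nabla\varphi\rangle\rho\psi$ equals $\langle\rho\psi\mathbf{G},\nabla\varphi\rangle-\frac12\langle\rho\nabla A+A\nabla\rho,\nabla\varphi\rangle$, so \eqref{helm} is precisely the statement that $\rho$ is a weak solution (an "invariant density") of
\[
\tfrac12\,\mathrm{div}\!\left(\rho A\nabla\varphi\right)\!\text{-type adjoint equation}, \qquad\text{i.e.}\qquad \int_{\R^d}\Big(\tfrac12\langle A\nabla\rho,\nabla\varphi\rangle+\tfrac12\langle\rho\,\nabla A,\nabla\varphi\rangle-\langle\rho\psi\mathbf{G},\nabla\varphi\rangle\Big)dx=0
\]
for all $\varphi\in C_0^\infty(\R^d)$. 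Under {\bf(A1)} the coefficients $a_{ij}\in H^{1,p}_{loc}\cap C^{0,1-d/p}_{loc}$ are locally bounded and locally uniformly elliptic, $\nabla A\in L^p_{loc}$, and $\psi\mathbf{G}\in L^p_{loc}$ with $p>d$, so this is exactly an equation of the type covered by the existence theory for invariant densities in \cite[Theorem 1.7.4 and related results]{BKRS}, whose conclusion gives a density $\rho$ with $\rho\in H^{1,p}_{loc}(\R^d)$, $\rho>0$ everywhere, and (via the elliptic Hölder estimate, Theorem \ref{holreg}, or directly via Sobolev embedding) $\rho\in C^{0,1-d/p}_{loc}(\R^d)$.

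In more detail I would: (i) for each ball $B_n$ in an exhaustion, use the functional-analytic frame from Section \ref{4.1ok}—in particular the generator $(L^{*\,V},D(L^{*\,V}))$ and its sub-Markovian resolvent on $L^1(B_n,\mu_0)$ with a reference measure $\mu_0$ (e.g. Lebesgue)—to obtain a probability density $\rho_n$ on $B_n$ solving the localized adjoint equation, then normalize; (ii) apply the a priori estimate of Theorem \ref{holreg} (with the zero-order term absent, $f$ coming from the $\nabla A$ and $\psi\mathbf{G}$ terms written in divergence form through Lemma \ref{div}) to get uniform $C^{0,\gamma}(\overline{B}_m)$ bounds on $\rho_n$ for $n\ge m+1$, together with interior $H^{1,p}$-bounds from standard elliptic regularity applied to the equation $\mathrm{div}(A\nabla\rho_n)=\mathrm{div}(\text{something in }L^p_{loc})$; (iii) extract by Arzelà–Ascoli and weak compactness a subsequence converging locally uniformly and weakly in $H^{1,p}_{loc}$ to some $\rho\in H^{1,p}_{loc}(\R^d)\cap C^{0,1-d/p}_{loc}(\R^d)$, and pass to the limit in the weak formulation to obtain \eqref{helm} on all of $\R^d$; (iv) prove strict positivity $\rho(x)>0$ for every $x$ by the Harnack inequality for the local equation (the elliptic Harnack inequality applies since the coefficients are locally bounded and elliptic and the lower-order data are in $L^p_{loc}$, $p>d$), which prevents $\rho$ from vanishing once it is not identically zero, and a Fatou/normalization argument shows it is not identically zero.

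For the final assertion, once $\rho\in C^{0,1-d/p}_{loc}\cap H^{1,p}_{loc}$ with $\rho>0$ everywhere is in hand, I have $\rho,\tfrac1\rho\in L^\infty_{loc}$ and $\nabla\rho\in L^p_{loc}$; since also $\nabla A\in L^p_{loc}$, $A\in L^\infty_{loc}$, $\psi\in L^q_{loc}$ and $\psi\mathbf{G}\in L^p_{loc}$, the vector field
\[
\rho\psi\,\mathbf{B}=\rho\psi\,\mathbf{G}-\rho\psi\,\beta^{\rho,A,\psi}=\rho\psi\,\mathbf{G}-\tfrac{\rho}{2}\nabla A-\tfrac{1}{2}A\nabla\rho
\]
is a sum of products of an $L^\infty_{loc}$ function with an $L^p_{loc}$ function, hence lies in $L^p_{loc}(\R^d,\R^d)$. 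I expect the main obstacle to be step (iv), namely establishing that the limiting $\rho$ is everywhere strictly positive: this requires a genuinely local Harnack inequality for the adjoint (double-divergence) equation with merely $L^p_{loc}$ lower-order coefficients and invoking it uniformly along the approximating sequence so that positivity is not lost in the limit; the quotation of \cite[Theorem 1.7.4]{BKRS} is what carries this, and getting the hypotheses of that theorem to match {\bf(A1)} exactly (in particular the integrability thresholds $p>d$) is the delicate bookkeeping point. The rest—rewriting in divergence form, applying Lemma \ref{div} and Theorem \ref{holreg}, and the compactness passage to the limit—is routine given the tools already developed in Sections \ref{section 3} and \ref{4.1}.
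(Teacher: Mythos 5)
Your algebraic part is exactly the paper's: \eqref{helm} is equivalent to $\int_{\R^d}\langle \tfrac12 A\nabla\rho+(\tfrac12\nabla A-\psi\mathbf{G})\rho,\nabla\varphi\rangle\,dx=0$ for all $\varphi\in C_0^\infty(\R^d)$, and once a strictly positive $\rho\in H^{1,p}_{loc}(\R^d)\cap C^{0,1-d/p}_{loc}(\R^d)$ solving this is available, the identity $\rho\psi\mathbf{B}=\rho\psi\mathbf{G}-\tfrac{\rho}{2}\nabla A-\tfrac12 A\nabla\rho$ gives $\rho\psi\mathbf{B}\in L^p_{loc}(\R^d,\R^d)$ precisely as you argue. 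The difference is that the paper obtains the existence statement in one stroke by citing \cite[Theorem 3.6]{LT19}, which delivers exactly such a $\rho$, whereas you attempt to rebuild that existence from scratch; since the existence is the entire content of the theorem, your sketch has to stand on its own, and as written it does not.

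Two concrete gaps. First, step (i) is circular: the framework of Section \ref{4.1ok} (the forms $\mathcal{E}^{0,V}$, the operators $L^{*\,V}$, condition \eqref{divfree}) is built on the measure $\mu=\rho\psi\,dx$ and on $\mathbf{B}$ being $\mu$-divergence free, i.e.\ it presupposes the very density you want to construct; and even with Lebesgue reference measure, a sub-Markovian resolvent does not hand you a solution of the divergence-form equation for $\rho_n$ on $B_n$ --- you need a separate localized existence result (Fredholm alternative or the corresponding existence statements in \cite{BKRS}; note that \cite[Theorem 1.7.4]{BKRS} is the auxiliary regularity estimate used in Theorem \ref{holreg}, not an existence theorem for invariant densities). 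Relatedly, Theorem \ref{holreg} treats equations with drift term $\langle\mathbf{B},\nabla u\rangle$ and source $f$, not the adjoint equation with $\mathrm{div}(\rho_n\mathbf{b})$, so the uniform interior bounds should come from $H^{1,p}$-estimates of the type \cite[Theorem 1.8.3]{BKRS} plus Morrey embedding rather than from Theorem \ref{holreg} as stated. Second, the nontriviality of the limit is not secured by a ``Fatou/normalization argument'': if you normalize $\rho_n$ as a probability density on $B_n$, the mass may concentrate near $\partial B_n$ and the Harnack chains then force the local limit to vanish identically. The standard remedy (and the one behind \cite[Theorem 3.6]{LT19}) is to normalize pointwise, $\rho_n(x_0)=1$, so that Harnack chains give locally uniform two-sided bounds which pass to the limit and yield $\rho(x)>0$ for every $x$. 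Either carry these steps out in this corrected form, or simply quote \cite[Theorem 3.6]{LT19} as the paper does.
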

\begin{proof}
By \cite[Theorem 3.6]{LT19}, there exists $\rho \in H_{loc}^{1,p}(\R^d) \cap C_{loc}^{0,1-d/p}(\R^d)$ satisfying $\rho(x)>0$ for all $x \in \R^d$ such that 
$$
\int_{\R^d}\big \langle \frac12 A \nabla \rho + (\frac12 \nabla A - \psi \mathbf{G}) \rho, \nabla\varphi \big \rangle dx =0, \quad \text{ for all } \varphi \in C_0^{\infty}(\R^d),
$$
hence
$$
\int_{\R^d} \big \langle \mathbf{G} - \frac{\nabla A}{2\psi} - \frac{A \nabla \rho}{2 \rho \psi} , \nabla \varphi \big \rangle \rho \psi dx =0, \quad \text{ for all } \varphi \in C_0^{\infty}(\R^d),
$$
and moreover
$$
\rho \psi\, \mathbf{B} = \rho \psi \mathbf{G}-\frac{\rho}{2} \nabla A-\frac{A \nabla \rho}{2} \in L_{loc}^p(\R^d, \R^d).
$$
\end{proof}\\ \\
{\bf From now on}, we assume that {\bf (A1)} holds and fix $A$, $\psi$, $\rho$, $\mathbf{B}$ as in Theorem \ref{helholmop}. Then $A$, $\psi$, $\rho$, $\mathbf{B}$ satisfy all assumptions of Section \ref{4.1ok}. Set as in Section \ref{4.1ok} $\mu:=\rho \psi \,dx$, $\widehat{A}:= \frac{1}{\psi} A$, $\widehat{\rho}:= \rho \psi$, $\widehat{a}_{ij}=\frac{1}{\psi}a_{ij}$ for all $1 \leq i,j \leq d$.

\begin{rem} \label{repremeasno}
If $\psi \in H^{1,2}(V) \cap L^{\infty}(V)$ \,for some bounded open set $V$ in $\R^d$, then by the chain and product rules for weakly differentiable functions,
$$
\frac12 \nabla \widehat{A} =  \frac{\nabla A }{2\psi} + \frac{-A \nabla \psi}{2\psi^2}, \qquad \frac{\widehat{A} \nabla \widehat{\rho}}{2 \widehat{\rho}} = \frac{A \nabla \psi}{2\psi^2}+\frac{A \nabla \rho}{2 \rho \psi} \;\;\; \text{  on } V.
$$
Set $\displaystyle \beta^{\widehat{\rho}, \widehat{A}} := \frac12 \nabla \widehat{A}+\frac{\widehat{A}\nabla \widehat{\rho}}{2\widehat{\rho}}$\, on $V$. Then it holds $\beta^{\widehat{\rho}, \widehat{A}} = \beta^{\rho. A, \psi}$\, {\rm({\it a.e.})} on $V$. If we assume $\psi \in H^{1,p}(V)$, then it holds
$$
\widehat{\mathbf{F}}:=\frac{1}{2}\nabla \widehat{A}+\mathbf{G}-2\beta^{\widehat{\rho}, \widehat{A}}\in L^{p}(V,\R^d).
$$
\end{rem}
By Theorem \ref{mainijcie} there exists a closed extension $(\overline{L}, D(\overline{L}))$ of
$$
Lf = L^0 f + \langle \mathbf{B}, \nabla f \rangle, \;\quad  f \in D(L^0)_{0,b},
$$
on $L^1(\R^d, \mu)$ which generates the sub-Markovian $C_0$-semigroup of contractions $(\overline{T}_t)_{t>0}$ on $L^1(\R^d, \mu)$.  Restricting $(\overline{T}_t)_{t> 0}$ to $L^1(\R^d, \mu)_b$, it is well-known by Riesz-Thorin interpolation that $(\overline{T}_t)_{t> 0}$ can be extended to a sub-Markovian $C_0$-semigroup of contractions $(T_t)_{t>0}$ on each $L^r(\R^d, \mu)$, $r\in [1,\infty)$. Denote by $(L_r, D(L_r))$ the corresponding closed generator with graph norm
$$
\|f\|_{D(L_r)}:=\|f\|_{L^r(\R^d,\mu)}+ \|L_r f\|_{L^r(\R^d, \mu)},
$$
and by $(G_{\alpha})_{\alpha>0}$ the corresponding resolvent. Also $(T_t)_{t>0}$ and $(G_{\alpha})_{\alpha>0}$ can be uniquely defined on $L^{\infty}(\R^d,\mu)$, but are no longer strongly continuous there.\\ \\
For $f \in C_0^{\infty}(\R^d)$, we have
$$
Lf = L^0 f + \langle \mathbf{B}, \nabla f \rangle=\frac12 \text{trace}(\widehat{A} \nabla ^2 f) + \langle \mathbf{G}, \nabla f \rangle.
$$
Define
\begin{eqnarray*}
L^*f : &=& L^0 f- \langle \mathbf{B}, \nabla f \rangle= \frac12\text{trace}(\widehat{A}\nabla^2 f)+\langle \mathbf{G}^*, \nabla f \rangle,
\end{eqnarray*}
with
$$
\mathbf{G}^*:=(g^*_1, \cdots, g^*_d)=2\beta^{\rho, A, \psi}-\mathbf{G} = \beta^{\rho, A, \psi}-\mathbf{B} \in L^{2}_{loc}(\R^d, \R^d, \mu).
$$ 
We see that $L$ and $L^*$ have the same structural properties, i.e. they are given as the sum of a symmetric second order elliptic differential operator $L^0$ and a divergence free first order perturbation\,$\langle \mathbf{B}, \nabla \cdot \,\rangle$ \,or $\langle -\mathbf{B}, \nabla \cdot \, \rangle$, respectively, with same integrability condition\,$\rho \psi \mathbf{B} \in L_{loc}^p(\R^d, \R^d)$. Therefore all what will be derived below for $L$ will hold analogously for $L^*$. Denote by $(L^*_r, D(L^*_r))$ the operators corresponding to $L^*$ for the co-generator on $L^r(\R^d,\mu)$, $r\in [1,\infty)$, $(T^*_t)_{t>0}$ for the co-semigroup, $(G^*_{\alpha})_{\alpha>0}$ for the co-resolvent. As in  \cite[Section 3]{St99}, we obtain a corresponding bilinear form with domain 
$D(L_2) \times L^2(\R^d,\mu) \cup L^2(\R^d,\mu) \times D(L^*_2)$ by
\begin{equation} \label{fwpeokce}
{\mathcal{E}}(f,g):= \left\{ \begin{array}{r@{\quad\quad}l}
  -\int_{\R^d} L_2 f \cdot g \,d\mu & \mbox{ for}\ f\in D(L_2), \ g\in L^2(\R^d,\mu),  \\ 
            -\int_{\R^d} f\cdot L^*_2 g \,d\mu  & \mbox{ for}\ f\in L^2(\R^d,\mu), \ g\in D(L^*_2). \end{array} \right .
\end{equation}
 $\mathcal{E}$ is called the {\it generalized Dirichlet form associated with} $(L_2,D(L_2))$. 

\begin{theo} \label{rescondojc}
Assume {\bf (A1)}, {\bf(A2)} and let $f \in \cup_{r \in [s,\infty]} L^r(\R^d,\mu)$. Then $G_{\alpha} f$ has a locally H\"{o}lder continuous $\mu$-version $R_{\alpha} f$ on $\R^d$. Furthermore for any open balls $B$, $B'$ satisfying $\overline{B} \subset B'$, we have the following estimate
\begin{equation} \label{resestko}
\|  R_{\alpha} f \|_{C^{0, \gamma}(\overline{B})} \le c_2 \left (\| f \|_{L^s(B',\mu)} + \| G_{\alpha}f \|_{L^1(B',\mu)}\right ),
\end{equation}
where $c_2>0$, $\gamma \in (0,1)$ are constants which are independent of $f$.
\end{theo}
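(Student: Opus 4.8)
The plan is to read the resolvent identity $(\alpha-\overline L)G_\alpha f=f$ as a weak divergence-form elliptic equation with $L^p_{loc}$ drift and $L^q_{loc}$ zeroth-order term, and then apply the H\"older regularity Theorem \ref{holreg}.

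\textbf{Step 1: reduction to an elliptic equation.} First assume $f\in L^1(\R^d,\mu)\cap L^\infty(\R^d,\mu)$ and set $u:=G_\alpha f=\overline G_\alpha f$. By sub-Markovianity $u\in D(\overline L)_b$, and $\overline L u=\alpha u-f$. By Theorem \ref{mainijcie}(c), $u\in D(\mathcal E^0)$ $\bigl(\subset H^{1,2}_{loc}(\R^d)$, since $\tfrac1{\widehat\rho}=\tfrac1\rho\cdot\tfrac1\psi\in L^\infty_{loc}(\R^d)$ and $A$ is locally uniformly strictly elliptic$\bigr)$, and for all $\varphi\in C_0^\infty(\R^d)$,
\[
\mathcal E^0(u,\varphi)-\int_{\R^d}\langle\mathbf B,\nabla u\rangle\varphi\,d\mu+\alpha\int_{\R^d}u\varphi\,d\mu=\int_{\R^d}f\varphi\,d\mu .
\]
Writing $\mathcal E^0(u,\varphi)=\tfrac12\int\langle\rho A\nabla u,\nabla\varphi\rangle\,dx$ and $d\mu=\widehat\rho\,dx$, this is, localized to any ball $B'$,
\[
\int_{B'}\Bigl\langle\tfrac12\rho A\nabla u,\nabla\varphi\Bigr\rangle dx+\int_{B'}\bigl(\langle-\widehat\rho\mathbf B,\nabla u\rangle+\alpha\widehat\rho u\bigr)\varphi\,dx=\int_{B'}(f\widehat\rho)\varphi\,dx,
\]
i.e.\ equation \eqref{pdedec} with matrix $\tfrac12\rho A$, vector field $-\widehat\rho\mathbf B$, coefficient $c=\alpha\widehat\rho$ and source $f\widehat\rho$.

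\textbf{Step 2: checking the hypotheses and the estimate.} By {\bf(A1)}, $\rho\in C^{0,1-d/p}_{loc}\subset L^\infty_{loc}$ with $\tfrac1\rho\in L^\infty_{loc}$ and $a_{ij}\in L^\infty_{loc}$, so $\tfrac12\rho A$ is bounded and locally uniformly strictly elliptic; by Theorem \ref{helholmop}, $\widehat\rho\mathbf B\in L^p_{loc}(\R^d,\R^d)$, $p>d$; and $\alpha\widehat\rho=\alpha\rho\psi\in L^q_{loc}$ with $q>d/2$ by {\bf(A2)}. For the source, decompose $f\widehat\rho=\rho\cdot\psi^{1-1/s}\cdot(\psi^{1/s}f)$; since $\int_{B'}\psi|f|^s\,dx\le\|\tfrac1\rho\|_{L^\infty(B')}\|f\|^s_{L^s(B',\mu)}$, H\"older's inequality with exponents $\infty,\ \tfrac{qs}{s-1},\ s$ gives $f\widehat\rho\in L^{\widetilde q}(B')$ with $\tfrac1{\widetilde q}=\tfrac1s+\tfrac1q-\tfrac1{qs}<\tfrac1s+\tfrac1q<\tfrac2d$, hence $\widetilde q>d/2$ (replacing $\widetilde q$ by some exponent in $(d/2,d)$ if needed, which is harmless on bounded sets), and $\|f\widehat\rho\|_{L^{\widetilde q}(B')}\le C\|f\|_{L^s(B',\mu)}$. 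Therefore Theorem \ref{holreg} applies: for balls $\overline B\subset B'$ there is a $C^{0,\gamma}(\overline B)$-version $R_\alpha f$ of $G_\alpha f$ with
\[
\|R_\alpha f\|_{C^{0,\gamma}(\overline B)}\le C\bigl(\|u\|_{L^1(B')}+\|f\widehat\rho\|_{L^{\widetilde q}(B')}\bigr)\le c_2\bigl(\|G_\alpha f\|_{L^1(B',\mu)}+\|f\|_{L^s(B',\mu)}\bigr),
\]
where $\tfrac1{\widehat\rho}\in L^\infty_{loc}$ was used to pass from the $dx$- to the $\mu$-norm in the first term.

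\textbf{Step 3: general $f$, and the main difficulty.} For $f\in L^r(\R^d,\mu)$ with $r\in[s,\infty]$, approximate by $f_n:=\bigl((-n)\vee f\wedge n\bigr)\mathbf 1_{\{|x|\le n\}}\in L^1(\R^d,\mu)\cap L^\infty(\R^d,\mu)$ ($\mu$ is Radon since $\widehat\rho\in L^1_{loc}$). When $r<\infty$, $f_n\to f$ in $L^r(\R^d,\mu)$, so $G_\alpha f_n\to G_\alpha f$ in $L^r$, hence in $L^1(B',\mu)$, and $f_n\to f$ in $L^s(B',\mu)$; when $r=\infty$ one uses instead $|G_\alpha f-G_\alpha f_n|\le\|f\|_\infty\,\overline G_\alpha\mathbf 1_{\{|x|>n\}}\to 0$ $\mu$-a.e.\ together with $f_n=f$ on each fixed $B'$ for $n$ large. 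Applying the Step 2 estimate to $f_n-f_m$ shows $(R_\alpha f_n)_n$ is Cauchy in $C^{0,\gamma}(\overline B)$ for every ball $B$; its limit is continuous on $\R^d$, equals $G_\alpha f$ $\mu$-a.e.\ (as $R_\alpha f_n=G_\alpha f_n$ $\mu$-a.e.\ and $G_\alpha f_n\to G_\alpha f$ in $L^1_{loc}(\mu)$), and inherits \eqref{resestko} by passing to the limit; this defines $R_\alpha f$. I expect the only genuine work to be the exponent bookkeeping of Step 2 — producing $\widetilde q>d/2$ for $f\widehat\rho$, which is exactly where the condition $\tfrac1q+\tfrac1s<\tfrac2d$ of {\bf(A2)} enters — together with the (straightforward but essential) observation that Theorem \ref{mainijcie}(c) turns the abstract equation for $G_\alpha f$ into the weak elliptic equation \eqref{pdedec} covered by Theorem \ref{holreg}; the truncation in Step 3 and the $dx$-versus-$\mu$ norm comparisons are routine.
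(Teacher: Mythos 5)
Your proposal is correct and follows essentially the same route as the paper: Theorem \ref{mainijcie}(c) turns $(\alpha-\overline L)G_\alpha f=f$ into the weak divergence-form equation with matrix $\tfrac12\rho A$, drift $-\rho\psi\mathbf B$, zeroth-order term $\alpha\rho\psi$ and source $\rho\psi f$, to which Theorem \ref{holreg} applies with a source exponent $\widetilde q>\tfrac d2$ furnished by {\bf (A2)}, followed by an approximation argument for general $f\in\bigcup_{r\in[s,\infty]}L^r(\R^d,\mu)$. The only cosmetic differences are that the paper starts from $f\in C_0^\infty(\R^d)$ and splits $\rho\psi f$ by H\"older with exponents $(q,s)$ (giving $\widetilde q=(\tfrac1q+\tfrac1s)^{-1}$), while you start from $f\in L^1\cap L^\infty$ and use a three-factor decomposition; both yield the same estimate \eqref{resestko}.
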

\begin{proof}
Let $f \in C_0^{\infty}(\R^d)$ and $\alpha>0$. Then by Theorem \ref{mainijcie}, $G_{\alpha} f  \in D(\overline{L})_b \subset D(\mathcal{E}^0)$ and
\begin{eqnarray}
&&\mathcal{E}^0(G_{\alpha} f, \varphi) - \int_{\R^d} \langle \mathbf{B}, \nabla G_{\alpha} f \rangle  \varphi d \mu \nonumber \\
&&\quad =  -\int_{\R^d} \left (\overline{L}\, \overline{G}_{\alpha} f \right) \varphi \,d \mu \nonumber  \\
&& \quad = \int_{\R^d} (f - \alpha G_{\alpha} f)\varphi \,d\mu, \quad \text{ for all } \varphi \in C_0^{\infty}(\R^d). \label{dfeefes}
\end{eqnarray}
Thus \eqref{dfeefes} implies
\begin{eqnarray}
&&\int_{\R^d} \big \langle \frac12 \rho A \nabla G_{\alpha}f, \nabla \varphi \big \rangle dx - \int_{\R^d} \langle \rho \psi \mathbf{B}, \nabla G_{\alpha} f \rangle \varphi\, dx  +\int_{\R^d} (\alpha \rho \psi G_{\alpha} f) \,\varphi dx  \nonumber \\
&& = \int_{\R^d} (\rho \psi f) \,\varphi dx, \quad \; \text{ for all } \varphi \in C_0^{\infty}(\R^d). \label{feppwikmee}
\end{eqnarray}
Note that 
$\rho$ is locally bounded below and above on $\R^d$ and $\rho \psi \mathbf{B} \in L_{loc}^p(\R^d, \R^d)$,  $\alpha \rho \psi \in L_{loc}^q(\R^d)$. Let $B$, $B'$ be open balls in $\R^d$ satisfying $\overline{B} \subset B'$. Since $\frac{1}{\psi} \in L^{\infty}(B')$, $G_{\alpha} f \in H^{1,2}(B')$. Thus by Theorem \ref{holreg}, there exists a H\"{o}lder continuous $\mu$-version $R_{\alpha} f$ of $G_{\alpha} f$ on $\R^d$ and constants $\gamma \in (0,1)$, $c_1>0$, which are independent of $f$ such that 
\begin{eqnarray}
\|R_{\alpha} f \|_{C^{0,\gamma}(\overline{B})} &\leq& c_1 \big( \|G_{\alpha} f \|_{L^1(B')}+ \| \rho \psi  f\|_{L^{  (\frac{1}{q}+\frac{1}{s})^{-1}}(B')}    \big) \nonumber \\ 
&\leq& c_2 \big( \|G_{\alpha} f \|_{L^1(B', \mu)}+\| f\|_{L^{s}(B', \mu)}    \big), \label{jfpojen} 
\end{eqnarray}
where $c_2:=c_1\big(\frac{1}{\inf_{B'}\rho\psi} \vee  \frac{\|\rho \psi \|_{L^q(B')}}{(\inf_{B'}\rho\psi)^{1/s}} \big)$. Using the H\"{o}lder inequality and the contraction property, \eqref{jfpojen} extends to $f \in \cup_{r \in [s, \infty)}L^r(\R^d, \mu)$. In order to extend \eqref{jfpojen} to  $f \in L^{\infty}(\R^d, \mu)$, let $f_n:=1_{B_n}  \cdot f \in L^{q}(\R^d, \mu)_0$,  $n\ge 1$. Then 
$\|f-f_n\|_{L^s(B', \mu)} + \| G_{\alpha}(f-f_n) \|_{L^1(B',\mu)} \to 0$ as $n\to \infty$ by Lebesgue's Theorem. Hence \eqref{jfpojen} also extends  to $f \in L^\infty(\R^d,\mu)$.
\end{proof}\\ \\
Let  $f \in  D(L_r)$ for some $r \in [s, \infty)$. Then $f = G_1(1-L_r) f$, hence by Theorem \ref{rescondojc}, $f$ has a locally H\"{o}lder continuous $\mu$-version on $\R^d$ and
\begin{eqnarray*}
\|f\|_{C^{0,\gamma}(\overline{B})}
&\leq& c_3 \|f\|_{D(L_r)},
\end{eqnarray*}
where $c_3>0$, $\gamma \in (0,1)$ are constants, independent of $f$. In particular, $T_t f \in D(L_r)$ and $T_t f$  has hence a continuous $\mu$-version, say $P_{t}f$, with
\begin{equation} \label{ptgraphnorm}
\|P_t f\|_{C^{0, \gamma}(\overline{B})} \leq c_3\|P_t f \|_{D(L_r)}.
\end{equation}
Note that $c_3$ is independent of $t\geq 0$ as well as of $f$.
The following Lemma will be quite important for later to show joint continuity of $P_{\cdot}g(\cdot)$ for $g \in \cup_{\nu \in[\frac{2p}{p-2},\infty]} L^\nu(\R^d, \mu)$.

\begin{lem} \label{contiiokc}
Assume {\bf (A1)}, {\bf(A2)}.
For any $f\in \bigcup_{r\in [s,\infty)} D(L_r)$ the map
$$
(x,t)\mapsto P_t f(x)
$$
is continuous on $ \R^d\times [0,\infty)$.
\end{lem}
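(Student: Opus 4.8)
The plan is to deduce the asserted joint continuity from the H\"older bound
\[
\|g\|_{C^{0,\gamma}(\overline B)}\ \le\ c_3\,\|g\|_{D(L_r)},\qquad g\in D(L_r),
\]
established above for every open ball $B$ (cf. Theorem \ref{rescondojc} and \eqref{ptgraphnorm}), together with the contraction and commutation properties of the semigroup $(T_t)_{t>0}$ on $L^r(\R^d,\mu)$.

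Fix $r\in[s,\infty)$ with $f\in D(L_r)$ and let $B$ be an arbitrary open ball in $\R^d$. Since $(T_t)_{t>0}$ is a $C_0$-semigroup of contractions on $L^r(\R^d,\mu)$ with generator $(L_r,D(L_r))$, we have $T_t(D(L_r))\subset D(L_r)$, $L_rT_tg=T_tL_rg$, and hence $\|T_tg\|_{D(L_r)}\le\|g\|_{D(L_r)}$ for all $t\ge0$ and $g\in D(L_r)$. First I would record the uniform-in-time bound
\[
\sup_{t\ge0}\|P_tf\|_{C^{0,\gamma}(\overline B)}\ \le\ c_3\sup_{t\ge0}\|T_tf\|_{D(L_r)}\ \le\ c_3\,\|f\|_{D(L_r)}<\infty .
\]

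Next I would show that $t\mapsto P_tf$ is continuous from $[0,\infty)$ into $C(\overline B)$ with the supremum norm. For $t,t_0\ge0$ set $h:=|t-t_0|$ (and let $P_0f$ be the continuous $\mu$-version of $f$); then $T_tf-T_{t_0}f=\pm\,T_{t\wedge t_0}\big(T_hf-f\big)\in D(L_r)$ and $P_tf-P_{t_0}f$ is a continuous $\mu$-version of it, so the H\"older bound applied to $T_tf-T_{t_0}f$ and the contraction property give
\[
\|P_tf-P_{t_0}f\|_{C^{0,\gamma}(\overline B)}\ \le\ c_3\,\|T_hf-f\|_{D(L_r)}\ =\ c_3\big(\|T_hf-f\|_{L^r(\R^d,\mu)}+\|T_hL_rf-L_rf\|_{L^r(\R^d,\mu)}\big).
\]
Both terms on the right tend to $0$ as $h\to0+$, because $f,L_rf\in L^r(\R^d,\mu)$ and $(T_t)_{t>0}$ is strongly continuous on $L^r(\R^d,\mu)$; this covers the limit at $t_0=0$ as well as both one-sided limits at any $t_0>0$. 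Hence $t\mapsto P_tf$ is continuous as a map into $C(\overline B)$.

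Joint continuity on $\overline B\times[0,\infty)$ then follows from
\[
|P_tf(x)-P_{t_0}f(x_0)|\ \le\ \sup_{y\in\overline B}|P_tf(y)-P_{t_0}f(y)|\ +\ |P_{t_0}f(x)-P_{t_0}f(x_0)| ,
\]
since the first term tends to $0$ as $t\to t_0$ by the preceding step and the second tends to $0$ as $x\to x_0$ by the (H\"older) continuity of $P_{t_0}f$ on $\overline B$. As $B$ is an arbitrary ball and $\R^d=\bigcup_n B_n$ for an increasing sequence of balls $B_n$, we conclude that $(x,t)\mapsto P_tf(x)$ is continuous on $\R^d\times[0,\infty)$. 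I do not expect a real obstacle here: everything rests on the uniform H\"older bound of the second paragraph, whose only substantial input is \eqref{ptgraphnorm}. The one point deserving care is obtaining continuity in $t$ in a norm strong enough to be combined with spatial continuity for fixed time, which is exactly what the identity $T_tf-T_{t_0}f=\pm\,T_{t\wedge t_0}(T_{|t-t_0|}f-f)$ together with strong continuity on $L^r(\R^d,\mu)$ supplies.
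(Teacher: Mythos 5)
Your proof is correct and is essentially the paper's own argument: both rest on the graph-norm estimate \eqref{ptgraphnorm} applied to $T_tf-T_{t_0}f\in D(L_r)$ (using $L_rT_t=T_tL_r$ and strong continuity of $(T_t)_{t>0}$ on $L^r(\R^d,\mu)$) to get uniform convergence on $\overline B$, combined with the triangle-inequality split into a time term and a space term handled by continuity of $P_{t_0}f$. The extra identity $T_tf-T_{t_0}f=\pm T_{t\wedge t_0}(T_{|t-t_0|}f-f)$ is just a standard way of phrasing strong continuity at $t_0$ and does not change the argument.
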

\begin{proof}
Let $f\in D(L_r)$ for some $r\ge s$ and $\left ((x_n,t_n)\right )_{n\ge 1}$ be a sequence in $\R^d\times [0,\infty)$ that converges to $(x_0,t_0)\in \R^d\times [0,\infty)$. Note that $P_{t_0}f \in C(\R^d)$. Then there exists an open ball $B$ such that $x_n\in \overline{B}$ for all $n\ge 0$ and using \eqref{ptgraphnorm}
\begin{eqnarray*}
\left | P_{t_n} f(x_n)-P_{t_0} f(x_0) \right | &\leq& \left | P_{t_n} f(x_n)-P_{t_0} f(x_n) \right |+\left | P_{t_0} f(x_n)-P_{t_0} f(x_0) \right | \\[3pt]
&\leq& \| P_{t_n} f- P_{t_0}f \,\|_{C(\overline{B})} +\left | P_{t_0} f(x_n)-P_{t_0} f(x_0) \right | \\[3pt]
&\leq& c_3 \| P_{t_n} f- P_{t_0}f \,\|_{L^r(\R^d,\mu)} + c_3 \| P_{t_n} L_r f- P_{t_0} L_r f \,\|_{L^r(\R^d, \mu)}\\[3pt]
&&\;+\left | P_{t_0} f(x_n)-P_{t_0} f(x_0) \right | \longrightarrow 0  \;\;  \text{ as } n\to \infty.
\end{eqnarray*}
\end{proof}
\begin{rem}
If $(\mathcal{E}, C_0^{\infty}(\R^d))$ satisfies the weak sector condition, then $(T_t)_{t >0}$ is an analytic semigroup on $L^r(\R^d, \mu)$, $r \in [2, \infty)$ by Stein interpolation. If $f \in D(L_r)$ with $r \in [2, \infty)$, then 
\[
T_t f \in D(L_r),\ \ \ \text{and }\ \ \  \|L_r T_t f \|_{L^r(\R^d,\mu)} \le \frac{c}{t}\|f\|_{L^r(\R^d,\mu)},
\]
where $c>0$ is a constant which is independent of $f$ and $t>0$. 
Thus for any $r\in [s\vee 2,\infty)$, $t>0$, $f\in L^r(\R^d,\mu)$ and any open ball $B$
\begin{eqnarray*}
\|P_t f\|_{C^{0,\beta}(\overline{B})} &\le& c_3 \left(\|P_t f \;\|_{L^r(\R^d, \mu)} + \|L_r P_t f \;\|_{L^r(\R^d, \mu)}  \right) \nonumber \\
&\leq& c_3 \left( 1+\frac{c}{t}  \right) \|f\|_{L^r(\R^d, \mu)}. \label{tt2-3} \vspace{1em}
\end{eqnarray*}
However, it is in general difficult to show a weak sector condition and moreover it does not need to hold.
 Thus we have to develop another way to show the joint continuity of $P_{\cdot} f(\cdot)$ where $f$ is in some suitable class.
\end{rem}
\begin{theo}\label{1-3reg3}
Assume {\bf (A1)}, {\bf (A2)}, {\bf (A3)} and let $ f \in \bigcup_{\nu \in [\frac{2p}{p-2}, \infty]}L^\nu(\R^d, \mu)$, $t>0$. Then $T_t f$ has a continuous $\mu$-version $P_t f$ on $\R^d$ and furthermore $P_{\cdot} f(\cdot) $ is continuous on $\R^d \times (0, \infty)$. For any bounded open sets $U$, $V$ in $\R^d$ with $\overline{U} \subset V$ and $0<\tau_3<\tau_1<\tau_2<\tau_4$, i.e. $[\tau_1, \tau_2] \subset (\tau_3, \tau_4)$, we have the following estimate for all $f \in \cup_{\nu \in[\frac{2p}{p-2},\infty]} L^\nu(\R^d, \mu)$
\begin{equation} \label{thm main est}
\|P_{\cdot} f(\cdot)\|_{C(\overline{U} \times [\tau_1, \tau_2])} \leq  C_1 \| P_{\cdot} f(\cdot) \|_{L^{\frac{2p}{p-2},2}( V \times (\tau_3, \tau_4)) },
\end{equation}
where $C_1$ is a constant that depend on $\overline{U} \times [\tau_1, \tau_2], V \times (\tau_3, \tau_4)$, but is independent of $f$. 
\end{theo}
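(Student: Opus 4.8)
The plan is to reduce the statement to the parabolic sup-estimate of Theorem~\ref{weoifo9iwjeof} plus the already-established regularity of the resolvent (Theorem~\ref{rescondojc}) and the joint continuity lemma (Lemma~\ref{contiiokc}), exploiting the density of $\cup_{r\in[s,\infty)}D(L_r)$ in the $L^\nu$-spaces. First I would treat the case $f\in D(L_r)$ for some $r\ge s$: then $T_t f\in D(L_r)$ has the continuous version $P_t f$ by \eqref{ptgraphnorm}, and $(x,t)\mapsto P_t f(x)$ is continuous on $\R^d\times[0,\infty)$ by Lemma~\ref{contiiokc}. I would next observe that, for fixed $f\in D(L_r)$, the function $u(x,t):=P_t f(x)$ is a weak solution of the parabolic equation \eqref{baseq} (equivalently \eqref{baseq23}) on any bounded cylinder $V\times(\tau_3,\tau_4)$ with $\overline U\subset V$: indeed, for $\varphi\in C_0^\infty(V\times(\tau_3,\tau_4))$ one has, using $\partial_t T_t f=L_r T_t f$ in $L^r$ and the identity $\mathcal E^0(T_tf,\varphi)-\int\langle\mathbf B,\nabla T_tf\rangle\varphi\,d\mu=-\int L T_tf\cdot\varphi\,d\mu$ from Theorem~\ref{mainijcie}(c), that
$$
-\iint (\partial_t u)\varphi\,\widehat\rho\,dx\,dt=\iint\Big(\tfrac12\langle \rho A\nabla u,\nabla\varphi\rangle+\langle \rho\psi\mathbf B,\nabla u\rangle\varphi\Big)dx\,dt,
$$
which is exactly \eqref{baseq23} with the data $A\rightsquigarrow\tfrac12\widehat A$ (rescaled, still uniformly elliptic and bounded on $V$ since $0<\inf_V\rho\le\sup_V\rho<\infty$ and $\tfrac1\psi\in L^\infty(V)$), $\mathbf B\rightsquigarrow\rho\psi\mathbf B\in L^p(V,\R^d)$ by Theorem~\ref{helholmop}, and weight $\psi\rightsquigarrow\widehat\rho=\rho\psi$. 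One must check the integrability required in assumption \textbf{(I)}: $\widehat\rho\in L^q(V)$ with $q\in[2\vee\frac p2,\infty)$ by \textbf{(A3)} together with $\rho\in L^\infty_{loc}$, and $\widehat\rho\ge c_0>0$ on $V$; also $u\in H^{1,2}(V\times(\tau_3,\tau_4))\cap L^\infty$ because $G_1$-regularity gives $P_tf$ bounded in $D(L_r)$ uniformly in $t$ on compact time intervals bounded away from $0$, whence $u\in L^\infty$, and $\nabla u,\partial_t u\in L^2$ follows from $\mathcal E^0(T_tf,T_tf)<\infty$ and $\partial_t u=L_rT_tf\in L^r\subset L^2_{loc}$.

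With $u=P_\cdot f(\cdot)$ identified as a weak solution satisfying \textbf{(I)} on cylinders inside $V\times(\tau_3,\tau_4)$, I would cover $\overline U\times[\tau_1,\tau_2]$ by finitely many parabolic cubes $Q(r)$ with $Q(3r)\subset V\times(\tau_3,\tau_4)$ and apply Theorem~\ref{weoifo9iwjeof} on each, obtaining
$$
\|P_\cdot f(\cdot)\|_{L^\infty(Q(r))}\le C\,\|P_\cdot f(\cdot)\|_{L^{\frac{2p}{p-2},2}(Q(2r))}\le C\,\|P_\cdot f(\cdot)\|_{L^{\frac{2p}{p-2},2}(V\times(\tau_3,\tau_4))},
$$
with $C$ depending only on the geometry, $\lambda$, $M$ and $\|\rho\psi\mathbf B\|_{L^p}$ on the relevant balls — crucially independent of $f$. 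Summing over the finite cover yields \eqref{thm main est} for $f\in D(L_r)$. This is where the bulk of the work lies, but it is essentially a citation of Theorem~\ref{weoifo9iwjeof} once the setup is verified, so I do not expect a genuine obstacle here.

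The last step is the density argument extending everything to $f\in\cup_{\nu\in[\frac{2p}{p-2},\infty]}L^\nu(\R^d,\mu)$. Fix such an $f$ and a cylinder; pick $f_n\in D(L_r)$ with $r\ge s\vee\frac{2p}{p-2}$ and $f_n\to f$ in $L^\nu$ (on $L^\infty$ one uses $f_n:=1_{B_n}f$ and dominated convergence locally, as in the proof of Theorem~\ref{rescondojc}). Since $(T_t)_{t>0}$ is an $L^\nu$-contraction, $\|T_tf_n-T_tf\|_{L^\nu}\to0$, and by the estimate just proved the sequence $(P_\cdot f_n(\cdot))_n$ is Cauchy in $C(\overline U\times[\tau_1,\tau_2])$ for every such compact set, hence converges uniformly on compacta to a continuous function which must be the $\mu$-version of $T_\cdot f(\cdot)$; call it $P_\cdot f(\cdot)$. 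Passing to the limit in \eqref{thm main est} for $f_n$ — the right-hand side converges because $\|P_\cdot f_n(\cdot)-P_\cdot f(\cdot)\|_{L^{\frac{2p}{p-2},2}(V\times(\tau_3,\tau_4))}\le|V\times(\tau_3,\tau_4)|^{1/2}\cdot\sup\,|P_\cdot(f_n-f)(\cdot)|\to0$ on a slightly larger cylinder, or directly by the same sup-estimate applied to $f_n-f$ — gives the bound for $f$. The main obstacle, if any, is purely bookkeeping: verifying that the rescaled data $(\tfrac12\widehat A,\rho\psi\mathbf B,\widehat\rho)$ genuinely meet hypothesis \textbf{(I)} of Subsection~\ref{7.1sections} on the chosen cylinders — in particular the integrability exponent constraints on the weight, which is exactly why assumption \textbf{(A3)} ($q\in[\frac p2\vee 2,\infty]$) is imposed — and that the $u\in L^\infty$ requirement holds on time intervals $(\tau_3,\tau_4)$ bounded away from $0$, which is where one uses that $P_tf$ has a continuous version for each $t>0$ and that continuity in $t$ (Lemma~\ref{contiiokc}) upgrades pointwise to local uniform bounds.
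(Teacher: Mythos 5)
Your overall architecture is the paper's: identify $u=P_\cdot f(\cdot)$ as a weak solution of the weighted divergence-form equation \eqref{baseq23} on cylinders with data $(\tfrac12\rho A,\ \rho\psi\mathbf{B},\ \widehat\rho)$, apply the local boundedness estimate of Theorem \ref{weoifo9iwjeof} over a finite covering of $\overline U\times[\tau_1,\tau_2]$ by parabolic cubes, and then extend by approximation (the paper uses $nG_nf$ for $f\in L^1\cap L^\infty(\R^d,\mu)$, then density in $L^\nu$ for finite $\nu$, then $f_n=1_{B_n}f$ for $\nu=\infty$ — essentially your scheme). The only structural difference is that the paper first verifies the weak identity on product test functions $\varphi_1\varphi_2$ and extends to all of $C_0^\infty$ via Lemma \ref{stoneweier}, whereas you assert it directly; that is harmless.

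The genuine gap is the class on which you run the first step. For $f\in D(L_r)$ with merely $r\ge s$, the two ingredients you invoke are not available. The identity $\mathcal{E}^0(T_tf,\varphi)-\int\langle\mathbf{B},\nabla T_tf\rangle\varphi\,d\mu=-\int \overline{L}\,\overline{T}_tf\cdot\varphi\,d\mu$ is Theorem \ref{mainijcie}(c), whose hypothesis is $T_tf\in D(\overline{L})_b$, i.e.\ a \emph{bounded} element of the $L^1(\R^d,\mu)$-generator domain; for a general $f\in D(L_r)$ (possibly unbounded and not in $L^1(\R^d,\mu)$) nothing in the paper provides this. Likewise your claim that $\nabla u\in L^2$ "follows from $\mathcal{E}^0(T_tf,T_tf)<\infty$" presupposes $T_tf\in D(\mathcal{E}^0)$, which is only proved (again via Theorem \ref{mainijcie}(c)) for elements of $D(\overline{L})_b$ — recall $\mathcal{E}$ is a generalized Dirichlet form, so no sector condition gives this for free — and $\partial_t u=L_rT_tf\in L^2_{loc}$ can fail when $r<2$ (possible for $d\le 3$ since $s>\tfrac d2$ may be below $2$). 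This is exactly why the paper proves \eqref{thm main est} first for $f\in D(\overline{L})_b\cap D(L_s)\cap D(L_2)$, where Theorem \ref{pjpjwoeij} supplies the $H^{1,2}\cap L^\infty$ regularity on cylinders demanded by hypothesis {\bf (I)}, and only afterwards approximates (note $nG_nf$ lies in that class for $f\in L^1\cap L^\infty(\R^d,\mu)$). If you shrink your first-step class accordingly — your approximants in the density step can be chosen there anyway — the remainder of your argument coincides with the paper's proof.
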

\begin{proof}
First assume $f \in D(\overline{L})_b \cap D(L_s) \cap D(L_2)$. By means of Lemma \ref{contiiokc}, define $u \in C_b(\R^d \times [0, \infty))$ by $u(x,t) := P_t f(x)$. Note that for any bounded open set $O\subset \R^d$ and $T>0$,  it holds $u \in H^{1,2}(O\times (0,T))$ by Theorem \ref{pjpjwoeij} below.  Let $\varphi_1 \in C_{0}^{\infty}(\R^d)$, $\varphi_{2} \in C_{0}^{\infty}\left((0,T)\right)$. Observe that $T_t f \in D(\overline{L})_b$, hence
\begin{align}
&\iint_{\R^d \times (0,T)} \langle \frac{1}{2} \rho A  \nabla u,  \nabla (\varphi_1 \varphi_2)  \rangle-  \langle  \rho \psi \bold{B}, \nabla \left(T_t f \right)  \rangle  \varphi_1 \varphi_2 \;dx dt  \nonumber \\
=&\int_{0}^{T} \varphi_2\big( \int_{\R^d}  \langle \frac{1}{2} \rho A  \nabla \left( T_t f \right),  \nabla \varphi_1   \rangle-  \big \langle  \rho \psi \bold{B}, \nabla \left(T_t f \right) \big \rangle  \varphi_1 \;dx \big) dt \nonumber \\
=&\int_{0}^{T} \varphi_2 ( \mathcal{E}^{0}(T_t f, \varphi_1)- \int_{\R^d} \langle \bold{B}, \nabla T_t f \rangle \varphi_1\, d\mu) dt \nonumber \\
=& \int_{0}^{T} - \varphi_2  (  \int_{\R^d}  \varphi_1  \overline{L}\, \overline{T}_t f \, d\mu  ) dt \nonumber \\
=&  \int_{0}^{T}  -\varphi_2 \big( \frac{d}{dt} \int_{\R^d} \varphi_1 T_t f \;\rho \psi dx  \big) dt \nonumber \\
=&  \int_{0}^{T}  ( \frac{d}{dt} \varphi_2 ) \big (  \int_{\R^d} \varphi_1 T_t f \, \rho\psi dx \big ) dt  \nonumber \\
=&\iint_{\R^d \times (0,T)} u \;\partial_t (  \varphi_1 \varphi_2  ) \rho \psi dx dt. \label{maindivdpl}
\end{align}
By Lemma \ref{stoneweier}, \eqref{maindivdpl} extends to
\begin{eqnarray}
&&\iint_{\R^d \times (0,T)}  \langle \frac{1}{2} \rho A  \nabla u,  \nabla \varphi\rangle-  \langle  \rho \psi \bold{B}, \nabla \left(T_t f \right) \rangle  \varphi  \;dx dt \nonumber  \\
&&\qquad  =\iint_{\R^d \times (0,T)} u \;\partial_t  \varphi  \cdot \rho \psi dx dt \quad \text{ for all } \varphi \in C_0^{\infty}(\R^d \times (0,T)). \label{maindivkjbie-3}
\end{eqnarray}
Let $\tau_2^*:=\frac{\tau_2+\tau_4}{2}$ and take $r>0$ so that 
$$
r<\frac{\sqrt{\tau_1 - \tau_3}}{2} \quad \text{ and  }\quad R_{\bar{x}}(2r)\subset V, \ \forall \bar{x}\in \overline{U}.
$$ 
Then for all $(\bar{x}, \bar{t}) \in \overline{U} \times [\tau_1, \tau_2^*]$, we have $R_{\bar{x}}(2r) \times (\bar{t}-(2r)^2, \bar{t})\subset V \times (\tau_3, \tau_4)$.
Using the compactness of $\overline{U} \times [\tau_1, \tau_2]$, there exist $(x_i, t_i) \in  \overline{U} \times [\tau_1, \tau^{*}_2]$, $i=1, \dots, N$, such that
\[
\overline{U} \times [\tau_1, \tau_2] \subset \bigcup_{i=1}^{N} R_{x_i}(r) \times (t_i-r^2, t_i). 
\]
Using Theorem \ref{weoifo9iwjeof},
\begin{eqnarray*}
\|u\|_{C(\overline{U} \times [\tau_1, \tau_2])} &=& \sup_{\overline{U} \times [\tau_1, \tau_2]} |u| \\
&\leq&  \max_{i=1,\dots,N}\, \sup_{R_{x_i}(r) \times (t_i-r^2, t_i)} |u| \\
&\leq&  \max_{i=1,\dots, N}\, c_i\|u\|_{L^{\frac{2p}{p-2},2}\big(R_{x_i}(2r) \times (t_i-(2r)^2, t_i)\big)} \\
&\leq& \underbrace{(\max_{i=1,\dots,N}\, c_i)}_{=:C_1} \,\|u\|_{L^{\frac{2p}{p-2},2}\big(V \times (\tau_3, \tau_4)\big)},
\end{eqnarray*}
where $c_i>0$ ($1 \leq i \leq N$) are constants which are independent of $u$. Thus for $\nu \geq \frac{2p}{p-2}$
\begin{eqnarray}
\|P_{\cdot }f\|_{C(\overline{U} \times [\tau_1, \tau_2])}  &\leq& C_1 \|P_{\cdot} f \|_{L^{\frac{2p}{p-2},2}\big(V \times (\tau_3, \tau_4)\big)}   \label{firstjkcme} \\
&=& C_1 \big( \int_{\tau_3}^{\tau_4} \big(  \int_{V} |T_t f |^{\frac{2p}{p-2}} dx \big)^{\frac{p-2}{p}}dt   \big)^{1/2} \nonumber \\
&\leq&  C_1 \big( \frac{1}{ \inf_{V} \rho \psi} \big)^{\frac{p-2}{2p}}  \big( \int_{\tau_3}^{\tau_4} \big(  \int_{V} |T_t f |^{\frac{2p}{p-2}} d\mu \big)^{\frac{p-2}{p}}dt   \big)^{1/2} \nonumber   \\
&\leq&   C_1 \big( \frac{1}{ \inf_{V} \rho \psi} \big)^{\frac{p-2}{2p}} \big(\int_{\tau_3}^{\tau_4} \|T_t f \|^2_{L^{\frac{2p}{p-2}}(V, \mu)} dt \big)^{1/2} \nonumber \\
&\leq&  C_1 \underbrace{\big( \frac{1}{ \inf_{V} \rho \psi} \big)^{\frac{p-2}{2p}} \mu(V)^{\frac{1}{2}-\frac{1}{p}-\frac{1}{\nu}} }_{=:C_2} \,\big(\int_{\tau_3}^{\tau_4} \|T_t f \|^2_{L^{\nu}(V, \mu)} dt \big)^{1/2} \nonumber \\
&\leq& C_1 C_2 (\tau_4-\tau_3)^{1/2} \|f\|_{L^{\nu}(\R^d, \mu)}. \label{conestlmet}
\end{eqnarray}
Now assume $f \in L^1(\R^d, \mu) \cap L^{\infty}(\R^d, \mu)$. Then $nG_n f \in D(\overline{L})_b \cap D(L_s) \cap D(L_2)$ for all $ n \in \N$ and $\lim_{n \rightarrow \infty} nG_n f = f$ \; in $L^{\nu}(\R^d, \mu)$. Thus \eqref{conestlmet} extends to all $f \in L^1(\R^d, \mu) \cap L^{\infty}(\R^d, \mu)$. If $\nu \in [\frac{2p}{p-2}, \infty)$, the above  again extends to all $f \in L^\nu(\R^d, \mu)$ using the denseness of $L^1(\R^d, \mu) \cap L^{\infty}(\R^d, \mu)$ in $L^\nu(\R^d, \mu)$. Finally assume $f \in L^{\infty}(\R^d, \mu)$ and let $f_n:= 1_{B_n}\cdot  f$ for $n \geq 1$. Then $\lim_{n \rightarrow \infty} f_n = f$ \; $\mu$-a.e. on $\R^d$ and
\begin{equation}\label{ae2-3}
T_t f= \lim_{n \rightarrow \infty}T_t f_n=\lim_{n \rightarrow \infty}P_t f_n, \; \mu \text{-a.e.\; on } \R^d.  
\end{equation}
 Thus using the sub-Markovian property and Lebesgue's Theorem in \eqref{firstjkcme}, $(P_{\cdot} f_n (\cdot ))_{n\ge 1}$ is a Cauchy sequence in 
$C(\overline{U} \times [\tau_1, \tau_2])$.
Hence we can again define
$$
P_{\cdot} f:=\lim_{n\to \infty}P_{\cdot} f_n(\cdot) \ \text{ in }\  C(\overline{U} \times [\tau_1, \tau_2]).
$$
For each $t>0$, $P_t f_n$ converges uniformly to $P_t f$ in $U$, hence in view of \eqref{ae2-3}, $T_t f$ has continuous $\mu$-version $P_t f$ and $P_{\cdot}f \in  C(\overline{U} \times [\tau_1, \tau_2])$. Therefore \eqref{conestlmet}  extends to all $f \in L^{\infty}(\R^d, \mu)$. Since $U$ and $[\tau_1, \tau_2]$ were arbitrary, it holds for any $f\in \cup_{\nu\in[\frac{2p}{p-2},\infty]} L^{\nu}(\R^d, \mu)$, $P_{\cdot} f(\cdot)$ is continuous on $\R^d \times (0, \infty)$ and for each $t>0$, $P_t f = T_t f$ \,$\mu$-a.e. on $\R^d$.  \vspace{-1.11em}
\end{proof}
\begin{rem}
\begin{itemize}
\item[(i)] By Theorem \ref{rescondojc}, we get a resolvent kernel and a resolvent kernel density for any $x\in \R^d$. 
Indeed, for any $\alpha>0$, $x\in \R^d$, \eqref{resestko} implies that
\begin{eqnarray*}\label{resker}
R_{\alpha}(x,A):=\lim_{l\to \infty}R_{\alpha}(1_{B_l\cap A})(x), \;\; A\in  \mathcal{B}(\R^d).
\end{eqnarray*}
defines a sub-probability measure $\alpha R_{\alpha}(x,dy)$ on $(\R^d, \mathcal{B}(\R^d))$ that is absolutely continuous with respect to $\mu$. Using the Radon-Nikodym derivative, the resolvent kernel density is defined by
\begin{eqnarray*}\label{reskernext}
r_\alpha (x, \cdot):=\frac{R_{\alpha}(x,dy)}{\mu(dy)}, \qquad x \in \R^d.
\end{eqnarray*} 
\item[(ii)]
By Theorem \ref{1-3reg3}, we also get a heat kernel and a heat kernel density for any $x\in \R^d$. 
Indeed, for any $t>0$, $x\in \R^d$, (\ref{tt2-3}) implies that
\begin{eqnarray*}\label{heatker-3}
P_{t}(x,A):=\lim_{l\to \infty}P_{t}(1_{B_l\cap A})(x), \;\; A\in  \mathcal{B}(\R^d),
\end{eqnarray*}
defines a sub-probability measure $P_{t}(x,dy)$ on $(\R^d, \mathcal{B}(\R^d))$ that is absolutely continuous with respect to $\mu$. Using the Radon-Nikodym derivative, the heat kernel density is defined by
\begin{eqnarray*}\label{rnd}
p_t (x, \cdot):=\frac{P_{t}(x,dy)}{\mu(dy)}, \qquad x \in \R^d.
\end{eqnarray*}
\end{itemize}
\end{rem}
\begin{prop}\label{regular2-3}
Assume {\bf (A1)}, {\bf(A2)}, {\bf(A3)} and let $t, \alpha>0$. Then it holds:
\begin{itemize}
\item[(i)] $G_{\alpha}g$ has a locally H\"older continuous $\mu$-version
\begin{eqnarray}\label{tgyrr-3}
R_{\alpha}g=\int_{\R^d}g(y) R_{\alpha}(\cdot,dy)=\int_{\R^d}g(y)r_{\alpha}(\cdot,y)\mu(dy)  
, \ \  \forall g\in \bigcup_{r\in [s,\infty]} L^r(\R^d,\mu). \qquad 
\end{eqnarray}
In particular,  \eqref{tgyrr-3} extends by linearity  to all $g\in L^s(\R^d,\mu)+L^\infty(\R^d,\mu)$, i.e.  $(R_{\alpha})_{\alpha>0}$ is $L^{[s,\infty]}(\R^d,\mu)$-strong Feller.
\item[(ii)] $T_t f$ has a continuous $\mu$-version   
\begin{eqnarray}\label{tgy2-3}
P_t f= \int_{\R^d} f(y) P_t(\cdot,dy)=\int_{\R^d}f(y)p_{t}(\cdot,y)\mu(dy)
,\ \  \forall f\in \bigcup_{\nu \in [\frac{2p}{p-2},\infty]}L^\nu(\R^d,\mu).  \qquad
\end{eqnarray}
In particular,  \eqref{tgy2-3} extends by linearity  to all $f\in L^{\frac{2p}{p-2}}(\R^d,\mu)+L^\infty(\R^d,\mu)$, i.e.   $(P_{t})_{t>0}$ is $L^{[\frac{2p}{p-2},\infty]}(\R^d,\mu)$-strong Feller.
\end{itemize}
Finally, for any $\alpha>0, x\in \R^d$, $g\in L^s(\R^d,\mu)+L^\infty(\R^d,\mu)$
$$
R_{\alpha}g(x)=\int_0^{\infty} e^{-\alpha t} P_t g(x)\,dt.
$$
\end{prop}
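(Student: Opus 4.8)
The plan is to prove the identity first for $g$ in the convenient dense class $L^1(\R^d,\mu)\cap L^\infty(\R^d,\mu)$, then to promote it to the level of kernels, and finally to recover the general case by monotone convergence and linearity, so that the only analytically substantial point is concentrated in the first step.

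\textbf{Step 1 (the crux: $g\in L^1(\R^d,\mu)\cap L^\infty(\R^d,\mu)$).} For such $g$ we have $g\in L^r(\R^d,\mu)$ for every $r\in[1,\infty]$, in particular $g\in L^{s}(\R^d,\mu)$ and $g\in L^{\frac{2p}{p-2}}(\R^d,\mu)$, so by Theorem \ref{rescondojc} and Theorem \ref{1-3reg3} the functions $R_\alpha g$ and $P_t g$ ($t>0$) are the canonical continuous $\mu$-versions of $G_\alpha g$ and $T_t g$, and $(x,t)\mapsto P_t g(x)$ is continuous on $\R^d\times(0,\infty)$. Since $(T_t)_{t>0}$ is a $C_0$-semigroup of contractions on $L^2(\R^d,\mu)$ with resolvent $(G_\alpha)_{\alpha>0}$, the Laplace transform representation $G_\alpha g=\int_0^\infty e^{-\alpha t}T_t g\,dt$ holds as a Bochner integral in $L^2(\R^d,\mu)$. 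Pairing with an arbitrary $h\in L^1(\R^d,\mu)\cap L^\infty(\R^d,\mu)$ with $h\ge 0$, using $T_t g=P_t g$ $\mu$-a.e.\ for each $t>0$, and applying Fubini--Tonelli (legitimate because $e^{-\alpha t}|P_t g(x)|\,h(x)\le e^{-\alpha t}\|g\|_{L^\infty(\R^d,\mu)}h(x)$ is integrable on $(0,\infty)\times\R^d$ with respect to $dt\otimes\mu$), we obtain
$$
\int_{\R^d} R_\alpha g\cdot h\,d\mu=\int_0^\infty e^{-\alpha t}\Big(\int_{\R^d}P_t g\cdot h\,d\mu\Big)dt=\int_{\R^d}\Big(\int_0^\infty e^{-\alpha t}P_t g(x)\,dt\Big)h(x)\,\mu(dx).
$$
As $h$ ranges over this class, it follows that $R_\alpha g(x)=\int_0^\infty e^{-\alpha t}P_t g(x)\,dt$ for $\mu$-a.e.\ $x\in\R^d$. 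Now $R_\alpha g$ is continuous by Theorem \ref{rescondojc}, and $x\mapsto\int_0^\infty e^{-\alpha t}P_t g(x)\,dt$ is continuous by dominated convergence applied to the continuous integrand $t\mapsto P_t g(x)$, dominated by $e^{-\alpha t}\|g\|_{L^\infty(\R^d,\mu)}$. Since $\mu=\rho\psi\,dx$ with $\rho(x)>0$ for all $x$ (Theorem \ref{helholmop}) and $\psi>0$ a.e., $\mu$ has full topological support on $\R^d$; hence the two continuous functions agree everywhere, which proves the claim for $g\in L^1(\R^d,\mu)\cap L^\infty(\R^d,\mu)$.

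\textbf{Step 2 (kernels and the general case).} Applying Step 1 with $g=1_{B_l\cap A}$, $A\in\mathcal B(\R^d)$, $l\in\N$, and letting $l\to\infty$, monotone convergence on both sides — using the positivity of $P_t$ and $R_\alpha$ and the definitions of $P_t(x,dy)$ and $R_\alpha(x,dy)$ from the Remark following Theorem \ref{1-3reg3} — yields
$$
R_\alpha(x,A)=\int_0^\infty e^{-\alpha t}P_t(x,A)\,dt,\qquad\text{for all }x\in\R^d,\ A\in\mathcal B(\R^d);
$$
in particular the right-hand side defines a measure $\nu_x$ on $\mathcal B(\R^d)$ equal to $R_\alpha(x,\cdot)$. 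Now let $g\in L^s(\R^d,\mu)+L^\infty(\R^d,\mu)$. For $g\ge 0$, Tonelli gives
$$
\int_0^\infty e^{-\alpha t}P_t g(x)\,dt=\int_0^\infty e^{-\alpha t}\Big(\int_{\R^d}g\,dP_t(x,\cdot)\Big)dt=\int_{\R^d}g\,d\nu_x=\int_{\R^d}g\,dR_\alpha(x,\cdot)=R_\alpha g(x),
$$
where $P_t g(x):=\int_{\R^d}g\,dP_t(x,\cdot)$ and both integrals on the right are finite because $(R_\alpha)_{\alpha>0}$ is $L^{[s,\infty]}$-strong Feller by Proposition \ref{regular2-3}(i). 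For general $g$, write $g=g^+-g^-$; since $R_\alpha|g|(x)<\infty$ by the same strong Feller property, the identity for $g^+$ and $g^-$ subtracts to give the assertion.

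\textbf{Main obstacle.} The only genuinely delicate point is the passage from ``$\mu$-a.e.'' to ``everywhere'' in Step 1: it is precisely here that the joint continuity of $P_\cdot g(\cdot)$ on $\R^d\times(0,\infty)$ from Theorem \ref{1-3reg3} is indispensable, and it must be combined with a uniform-in-$t$ domination in order to carry the continuity through the Laplace integral, together with the full support of $\mu$. Everything else is routine measure-theoretic bookkeeping — Fubini--Tonelli and monotone convergence — built on the kernel representations already established in Proposition \ref{regular2-3} and the preceding Remark.
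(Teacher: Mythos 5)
The paper states this proposition without a written proof, presenting it as a consequence of Theorems \ref{rescondojc}, \ref{1-3reg3} and the preceding Remark defining the kernels, and your overall route is the natural, intended one: your Step 1 --- the Laplace-transform identity on the dense class $L^1(\R^d,\mu)\cap L^\infty(\R^d,\mu)$ via the Bochner-integral representation of $G_\alpha$ on $L^2(\R^d,\mu)$, duality against nonnegative $h\in L^1(\R^d,\mu)\cap L^\infty(\R^d,\mu)$, Fubini, and then the upgrade from $\mu$-a.e.\ to everywhere using the joint continuity from Theorem \ref{1-3reg3}, the uniform bound $|P_tg|\le\|g\|_{L^\infty(\R^d,\mu)}$ and the full topological support of $\mu=\rho\psi\,dx$ --- is correct, and it isolates precisely the substantive part of the final assertion. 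The monotone passage to the kernel identity $R_\alpha(x,A)=\int_0^\infty e^{-\alpha t}P_t(x,A)\,dt$ is also fine.

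There is, however, a gap concerning items (i) and (ii) themselves. Theorems \ref{rescondojc} and \ref{1-3reg3} only give the existence of the continuous $\mu$-versions $R_\alpha g$, $P_tf$ together with the estimates \eqref{resestko} and \eqref{thm main est}; they do not give the kernel representations \eqref{tgyrr-3}, \eqref{tgy2-3} for general $g$, nor the resulting strong Feller statements, and these are exactly what the proposition adds to the Remark. Your Step 2 then invokes ``Proposition \ref{regular2-3}(i)'' to pass from $\int g\,dR_\alpha(x,\cdot)$ to $R_\alpha g(x)$ and to obtain finiteness, i.e.\ you use the statement being proved; as written this is circular. The missing piece is routine but must be supplied before Step 2: first check \eqref{tgyrr-3} for bounded $g$ supported in some $B_n$ (linearity on simple functions, noting $R_\alpha(x,A\cap B_n)=R_\alpha(1_{A\cap B_n})(x)$, then uniform approximation, using \eqref{resestko} on one side and dominated convergence against the finite measure $R_\alpha(x,\cdot)$ on the other); next, for $0\le g\in L^r(\R^d,\mu)$, $r\in[s,\infty)$, take $g_n=(g\wedge n)1_{B_n}$ and combine \eqref{resestko} with the $L^r$-contraction of $G_\alpha$ to get locally uniform convergence $R_\alpha g_n\to R_\alpha g$, while monotone convergence handles $\int g_n\,dR_\alpha(x,\cdot)\to\int g\,dR_\alpha(x,\cdot)$; for $r=\infty$ use the truncation $f_n=1_{B_n}f$ exactly as at the end of the proofs of Theorems \ref{rescondojc} and \ref{1-3reg3}. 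The same scheme with \eqref{thm main est} yields \eqref{tgy2-3}. Once (i) and (ii) are established this way, your Step 2 is no longer circular and the argument is complete.
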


\section{Some auxiliary results}
In this Section, 
we use all notations and assumptions from Section \ref{l1existencere}
\begin{prop} \label{extlempt}
$(T^0_t)_{t>0}$ restricted to $L^1(\R^d, \mu) \cap L^{\infty}(\R^d, \mu)$ can be extended to a sub-Markovian $C_0$-semigroup of contractions $(\overline{T_t}^0)_{t>0}$ with generator $(\overline{L}^0, D(\overline{L}^0) )$  on $L^1(\R^d, \mu)$. If $f \in D(L^0)$ and $f, L^0 f \in L^1(\R^d, \mu)$, then $f \in D(\overline{L}^0)$ and $\overline{L}^0 f = L^0 f$. Set $\mathcal{A}:= \{ u \in D(L^0) \cap L^1(\R^d, \mu) \mid L^0 u \in L^1(\R^d, \mu) \}$. Then $(\overline{L}^0, D(\overline{L}^0))$ is  the closure of $(L^0, \mathcal{A})$ on $L^1(\R^d, \mu)$. \\
Similarly, for a bounded open subset $V$ of $\R^d$, $(T^{0,V}_t)_{t>0}$ restricted to $L^1(V, \mu) \cap L^{\infty}(V, \mu)$ can be extended to a sub-Markovian $C_0$-semigroup of contractions $(\overline{T}^{0,V}_t)_{t>0}$  on $L^1(V, \mu)$. Also if $f \in D(L^{0,V})$ and $f, \, L^{0,V} f \in L^1(V, \mu)$, then $f \in D(\overline{L}^{0,V})$ and $\overline{L}^{0,V} f = L^{0,V} f$. Finally \,$(\overline{L}^{0,V}, D(\overline{L}^{0,V}))$ is the closure of $(L^{0,V}, D(L^{0,V}))$ on $L^1(V, \mu)$.
\end{prop}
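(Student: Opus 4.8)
The plan is to construct $(\overline{T_t^0})_{t>0}$ on $L^1(\R^d,\mu)$ in three moves — extension plus contractivity, strong continuity, and generator identification — and then to deduce both the closure statement and its $V$-analogue from the same reasoning.

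\emph{Extension and contractivity.} Since $(T_t^0)_{t>0}$ is sub-Markovian on $L^2(\R^d,\mu)$, it contracts the $L^\infty$-norm on $L^2\cap L^\infty$ ($0\le g\le 1$ forces $0\le T_t^0 g\le 1$), and since it is symmetric, duality gives, for $f\in L^1\cap L^2$, $\|T_t^0 f\|_{L^1}=\sup\{\int_{\R^d}(T_t^0 f)g\,d\mu:\,g\in L^2\cap L^\infty,\,\|g\|_{L^\infty}\le 1\}=\sup\{\int_{\R^d}f\,(T_t^0 g)\,d\mu:\dots\}\le\|f\|_{L^1}$. Approximating $h\in L^1\cap L^2$ by truncations simultaneously in $L^1$ and $L^2$ shows that $T_t^0$ is $\mu$-a.e.\ unambiguously defined on $L^1\cap L^2$ and agrees there with $T_t^0|_{L^1\cap L^\infty}$; hence the latter extends uniquely to a contraction $\overline{T_t^0}$ on $L^1(\R^d,\mu)$, and the semigroup law and the sub-Markovian property pass to the $L^1$-limit (for the latter using a.e.\ subsequential limits).

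\emph{Strong continuity.} This is the only genuinely non-formal step, and the main obstacle. For $h\in L^1\cap L^2$ with $h\ge 0$, testing $T_s^0 h$ against an increasing sequence $g_k\uparrow 1$ in $L^2\cap L^\infty$ and invoking sub-Markovianity and monotone convergence yields $\int_{\R^d}T_s^0 h\,d\mu\le\int_{\R^d}h\,d\mu$; applied along the semigroup this makes $t\mapsto\int_{\R^d}T_t^0 f\,d\mu$ non-increasing for $f\in L^1\cap L^2$, $f\ge 0$. Combining its monotone limit as $t\downarrow 0$ with Fatou's lemma along a subsequence on which $T_{t_n}^0 f\to f$ $\mu$-a.e.\ (available since $T_t^0 f\to f$ in $L^2$) gives $\lim_{t\downarrow 0}\int_{\R^d}T_t^0 f\,d\mu=\int_{\R^d}f\,d\mu$, whence Scheff\'e's lemma yields $T_{t_n}^0 f\to f$ in $L^1$; since every null sequence admits such a subsequence with the same limit, $\overline{T_t^0}f\to f$ in $L^1$, and writing $f=f^+-f^-$ removes the sign restriction. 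Density of $L^1\cap L^2$ in $L^1$ together with the uniform contraction bound upgrades this to strong continuity on all of $L^1(\R^d,\mu)$. Let $(\overline{L}^0,D(\overline{L}^0))$ denote the generator of $(\overline{T_t^0})_{t>0}$.

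\emph{Generator and closure.} If $f\in D(L^0)$ with $f,L^0 f\in L^1$, set $h:=(1-L^0)f\in L^1\cap L^2$; then $f=G^0_1 h$ in $L^2$, while the $L^1$-resolvent $\overline{G}^0_1=\int_0^\infty e^{-t}\overline{T_t^0}\,dt$ applied to $h$ equals the same $\mu$-a.e.\ function, because $\overline{T_t^0}$ and $T_t^0$ agree a.e.\ on $L^1\cap L^2$ and the Bochner integrals in $L^1$ and $L^2$ coincide; hence $f\in D(\overline{L}^0)$ and $\overline{L}^0 f=L^0 f$. In particular $\mathcal{A}\subset D(\overline{L}^0)$ with $\overline{L}^0=L^0$ on $\mathcal{A}$, so the $L^1$-closure of $(L^0,\mathcal{A})$ is contained in $(\overline{L}^0,D(\overline{L}^0))$. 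Conversely, $(1-L^0)(\mathcal{A})\supset L^1\cap L^2\cap L^\infty$ (take $u=G^0_1 h$ with $h\in L^1\cap L^2\cap L^\infty$, so $u$ and $L^0 u=u-h$ lie in $L^1$), which is dense in $L^1$, and $(L^0,\mathcal{A})$ is densely defined ($C_0^\infty(\R^d)\subset\mathcal{A}$ by the local regularity of $A,\rho,\psi$ and local finiteness of $\mu$) and dissipative (as a restriction of $\overline{L}^0$); by the Lumer--Phillips theorem its closure generates a contraction $C_0$-semigroup, and injectivity of $1-\overline{L}^0$ forces that closure to be $(\overline{L}^0,D(\overline{L}^0))$. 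Finally, the bounded-domain statement follows by rerunning the whole argument with $(\mathcal{E}^{0,V},D(\mathcal{E}^{0,V}))$ on $L^2(V,\mu)$; here $\mu(V)<\infty$ gives $L^2(V,\mu)\subset L^1(V,\mu)$, so $D(L^{0,V})$ already coincides with the $V$-analogue of $\mathcal{A}$, which is why the statement is phrased directly in terms of $D(L^{0,V})$.
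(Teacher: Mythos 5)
Your proof is correct and establishes every assertion of the proposition, but it runs on different machinery than the paper at almost every step, even though the overall skeleton (contraction extension, strong continuity, generator consistency, closure via dense range of $1-L^0$) is the same. The paper gets the $L^1$-contractivity by invoking the Hunt process associated with the regular Dirichlet form $(\mathcal{E}^0,D(\mathcal{E}^0))$ and Jensen's inequality, together with symmetry against $T^0_t 1_{B_n}$; you obtain it purely analytically from sub-Markovianity, symmetry and $L^1$--$L^\infty$ duality, which avoids the probabilistic input altogether. For strong continuity the paper works with the class $\mathcal{D}-\mathcal{D}$ of bounded functions supported on finite-$\mu$-measure sets and estimates $\int_A|T^0_tf-f|\,d\mu\le \mu(A)^{1/2}\|T^0_tf-f\|_{L^2(\R^d,\mu)}$ plus a vanishing-mass argument outside $A$, whereas you use sub-invariance of the mass, Fatou along an a.e.\ convergent subsequence and Scheff\'e; both are standard and correct. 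For the generator consistency the paper uses the vector-valued fundamental theorem of calculus, $\frac{1}{t}\int_0^t \overline{T}^0_s L^0f\,ds\to L^0f$ in $L^1$, while you identify $f=\overline{G}^0_1(1-L^0)f$ through the a.e.\ agreement of the $L^1$- and $L^2$-resolvents; your justification of that agreement is terse but standard (test against bounded functions with finite-measure support). Finally, for the closure statement the paper exhibits the core $\mathcal{C}=\{\overline{G}^0_1 g: g\in C_0^\infty(\R^d)\}\subset\mathcal{A}$ and asserts graph-norm density, while you argue via Lumer--Phillips applied to $(L^0,\mathcal{A})$ together with surjectivity of $1$ minus the closure and injectivity of $1-\overline{L}^0$; this is a complete and slightly more detailed route to the same conclusion, and your range computation $(1-L^0)(\mathcal{A})\supset L^1\cap L^2\cap L^\infty$ plays exactly the role of the paper's $\mathcal{C}$. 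Your closing remark that for bounded $V$ one has $L^2(V,\mu)\subset L^1(V,\mu)$, so the $V$-analogue of $\mathcal{A}$ is all of $D(L^{0,V})$, correctly explains why the paper phrases the bounded-domain statement directly in terms of $D(L^{0,V})$.
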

\begin{proof}
Since the proof for the case of  $(T^{0,V}_t)_{t>0}$ is exactly same with the case of $(T^{0}_t)_{t>0}$, we will only prove the case of $(T^{0}_t)_{t>0}$. Since $(\mathcal{E}^0, D(\mathcal{E}^0))$ is a regular Dirichlet from, there exists a Hunt process 
$$
\mathbb{M}^0 = (\Omega^0, \mathcal{F}^0, (\mathcal{F}^0_{t})_{t\geq0}, (X^0_t)_{t\geq0}, (\mathbb{P}^0_x)_{x \in \R^d \cup \Delta} )
$$
with life time $\zeta^0 = \inf \{ t>0 \mid X^0_t = \Delta \}$ such that for any $g \in L^2(\R^d, \mu)$
$$
x \mapsto \mathbb{E}^0_x \left[ g(X^0_t)  \right] \; \text{ is a quasi-continuous $\mu$-version of } T^0_t g.
$$ 
Let $f \in L^1(\R^d, \mu) \cap L^{\infty}(\R^d, \mu)$. Using Jensen inequality and sub-Markovian property of $(T^0_t)_{t>0}$
\begin{eqnarray*}
\int_{\R^d} |T^0_t f| d \mu  &=& \int_{\R^d}  \left | \mathbb{E}^0_{\cdot} \left[ f(X^0_t)  \right]  \right |d \mu \\
&\leq&  \int_{\R^d}  \mathbb{E}^0_{\cdot} \left[  \left | f(X^0_t) \right | \right]  d \mu \\
&=& \lim_{n \rightarrow \infty} \int_{\R^d} T^0_t |f| \cdot 1_{B_n}d \mu \\
&=& \lim_{n \rightarrow \infty} \int_{\R^d} |f| \cdot T_t^0 1_{B_n} d \mu \\
&\leq& \int_{\R^d} |f| d \mu.
\end{eqnarray*}
Since $L^1(\R^d, \mu) \cap L^{\infty}(\R^d, \mu)$ is dense in $L^1(\R^d, \mu)$, \;$(T_t^0)_{t>0}$ restricted to $L^1(\R^d, \mu) \cap L^{\infty}(\R^d, \mu)$ uniquely extend to the sub-Markovian contraction semigroup $(\overline{T}^0_t)_{t>0}$
on $L^1(\R^d, \mu)$.
Define 
\begin{eqnarray*}
&& \mathcal{D}:= L^{\infty}(\R^d, \mu) \cap \{g \mid  g \geq 0 \text{ and there exists } A \in \mathcal{B}(\R^d) \\
&&\hspace{12em }\text{ with  }\mu(A) < \infty  \text{ and } g = 0  \text{ on } \R^d \setminus A \}.
\end{eqnarray*}
Since $ \mathcal{D}$ is dense in $L^1(\R^d, \mu)^+$,\, $\mathcal{D}-\mathcal{D}$ is dense in $L^1(\R^d, \mu)$. Let $f \in \mathcal{D}-\mathcal{D}$. Then there exists $A \in \mathcal{B}(\R^d)$ with $\mu(A)< \infty$ such that $\text{supp}(f) \subset A$ and $f \in L^1(\R^d, \mu) \cap L^{\infty}(\R^d, \mu)$. By strong continuity of $(T^0_t)_{t>0}$ on $L^2(\R^d, \mu)$
$$
\lim_{t \rightarrow 0+}\int_{\R^d} 1_A |T^0_t f| d \mu =  \int_{\R^d} 1_A  |f|  d\mu = \|f\|_{L^1(\R^d, \mu)},
$$
hence using the contraction property on $L^1(\R^d, \mu)$,
\begin{eqnarray*}
0 &\leq& \int_{\R^d} 1_{\R^d \setminus A} \,|T^0_t f| d \mu  =  \int_{\R^d} |T^0_t f | d\mu - \int_{\R^d} 1_{A} |T^0_t f| d \mu  \\
&\leq& \|f\|_{L^1(\R^d, \mu)} - \int_{\R^d} 1_A |T^0_t f|d \mu \longrightarrow 0 \;\; \text{ as } t \rightarrow 0+.
\end{eqnarray*}
Therefore
\begin{eqnarray*}
\lim_{t \rightarrow 0+} \int_{\R^d} |T^0_t f -f | d \mu &=& \lim_{t \rightarrow 0+} \big(\int_{\R^d} 1_A |T^0_t f - f| d \mu + \int_{\R^d} 1_{\R^d \setminus A} |T^0_t f | d \mu \big) \\
&\leq& \mu(A)^{1/2} \lim_{t \rightarrow 0+} \|T_t f - f \|_{L^2(\R^d, \mu)} =0. 
\end{eqnarray*}
By the denseness of $\mathcal{D}-\mathcal{D}$ in $L^1(\R^d, \mu)$, we get the strong continuity of $(\overline{T}_t^0)_{t>0}$ on $L^1(\R^d, \mu)$.
Now let $f \in D(L^0)$ and $f,\, L^0 f \in L^1(\R^d, \mu)$. Then $f \in L^{1}(\R^d, \mu) \cap L^2(\R^d, \mu)$, $L^0 f \in  L^{1}(\R^d, \mu) \cap L^2(\R^d, \mu)$, hence we get \,$\overline{T}_t^0 f = T^0_t f$, \,  $\overline{T}_t^0 L^0f = T^0_t L^0f$\; for every $t>0$. Using the Fundamental Theorem of Calculus (on Banach space) and strong continuity of $(\overline{T}_t^0)_{t>0}$ on $L^1(\R^d, \mu)$
\begin{eqnarray*}
\frac{\overline{T}_t^0 f- f}{t} &=& \frac{T_t^0 f- f}{t}  = \frac{1}{t} \int_{0}^t T^0_s L^0 f \,ds  \\
&=& \frac{1}{t} \int_{0}^t \overline{T}_s^0 L^0 f \,ds   \longrightarrow L^0 f \quad \text{ in } L^1(\R^d, \mu) \;\,\text{ as } t \rightarrow 0+.
\end{eqnarray*}
Consequently, $f \in D(\overline{L}^0)$ and $\overline{L}^0 f = L^0 f$. \\
\centerline{}
\text{} \quad Let $(\overline{G}_{\alpha}^0)_{\alpha>0}$ be the resolvent generated by $(\overline{L}^0, D(\overline{L}^0))$.\;
Set $\mathcal{C}:=\big \{ \overline{G}_{1}^0 g \mid  g \in C_0^{\infty}(\R^d) \big \}$. Then $\mathcal{C} \subset \mathcal{A}$ and one can directly check that $\mathcal{C}  \subset D(\overline{L}^0)$ is dense with respect to graph norm $\| \cdot \|_{D(\overline{L}^0)}$, hence it completes our proof.
\end{proof}

\begin{lem}\label{bddapxlem}
Let $V$ be a bounded open subset of $\R^d$ and $f \in \widehat{H}^{1,2}_0(V, \mu)_b$. Then there exists a sequence $(f_n)_{n \geq 1} \subset C_0^{\infty}(V)$ and a constant $M>0$ such that $\|f_n\|_{L^{\infty}(V)} \leq M$ for all $n \geq 1$ and
\begin{eqnarray*}
&&\lim_{n \rightarrow \infty} f_n = f \quad \text{ in } \; \widehat{H}^{1,2}_0(V, \mu), \;\qquad \lim_{ n \rightarrow \infty} f_n =f \quad \mu \; \text{-a.e. on } V.
\end{eqnarray*}
\end{lem}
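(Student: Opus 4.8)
The plan is to take the defining approximating sequence of $f$ in $\widehat{H}^{1,2}_0(V,\mu)$ and repair it by a smooth one-dimensional truncation so that it becomes uniformly bounded without losing the convergence. First I would recall, from the discussion following the definition of $\widehat{H}^{1,2}_0(V,\mu)$ in Section~\ref{4.1ok}, that since $f\in\widehat{H}^{1,2}_0(V,\mu)_b$ there is $(g_n)_{n\ge1}\subset C_0^\infty(V)$ with $g_n\to f$ in $H^{1,2}_0(V)$ and in $L^2(V,\mu)$; in particular $g_n\to f$ in $L^2(V)$, $\nabla g_n\to\nabla f$ in $L^2(V)$, and $g_n\to f$ in $L^2(V,\mu)$. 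Passing to a subsequence, still denoted $(g_n)$, I may in addition assume $g_n\to f$ $\mu$-a.e.\ on $V$; since $\widehat{\rho}=\rho\psi$ is a.e.\ strictly positive and a.e.\ finite on $V$, this is equivalent to $g_n\to f$ $dx$-a.e.\ on $V$.

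Next I would fix the truncation. Put $K:=\|f\|_{L^\infty(V)}$ and choose $\phi\in C^\infty(\mathbb{R})$ with $\phi(0)=0$, $\phi(t)=t$ for $|t|\le K$, $0\le\phi'\le1$ on $\mathbb{R}$, and $\|\phi\|_{L^\infty(\mathbb{R})}\le K+1=:M$; such a $\phi$ is easily constructed by mollifying a piecewise linear cutoff. Set $f_n:=\phi\circ g_n$. Since $g_n\in C_0^\infty(V)$ and $\phi$ is smooth with $\phi(0)=0$, $f_n$ is smooth and its support is contained in that of $g_n$, hence $f_n\in C_0^\infty(V)$, and $\|f_n\|_{L^\infty(V)}\le\|\phi\|_{L^\infty(\mathbb{R})}\le M$. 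Because $|f|\le K$ a.e., we have $\phi(f)=f$ $\mu$-a.e., and combining this with $g_n\to f$ $dx$-a.e.\ gives $f_n=\phi(g_n)\to\phi(f)=f$ $\mu$-a.e.\ (and $dx$-a.e.) on $V$.

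It then remains to verify convergence in the $\widehat{H}^{1,2}_0(V,\mu)$-norm. For the $L^2(V,\mu)$ part, $\phi$ being $1$-Lipschitz yields $\|f_n-f\|_{L^2(V,\mu)}=\|\phi(g_n)-\phi(f)\|_{L^2(V,\mu)}\le\|g_n-f\|_{L^2(V,\mu)}\to0$. For the gradient part, $\nabla f_n=\phi'(g_n)\nabla g_n$, and I would write
\[
\phi'(g_n)\nabla g_n-\nabla f=\phi'(g_n)\big(\nabla g_n-\nabla f\big)+\big(\phi'(g_n)-1\big)\nabla f.
\]
The first term has $L^2(V)$-norm at most $\|\nabla g_n-\nabla f\|_{L^2(V)}\to0$ since $|\phi'|\le1$; for the second, $\phi'(g_n)\to\phi'(f)=1$ $dx$-a.e.\ on $V$ (using $|f|\le K$ and $\phi'\equiv1$ on $[-K,K]$) while $|(\phi'(g_n)-1)\nabla f|\le2|\nabla f|\in L^2(V)$, so dominated convergence forces it to $0$. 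Hence $\nabla f_n\to\nabla f$ in $L^2(V)$, and together with the $L^2(V,\mu)$-convergence this gives $f_n\to f$ in $\widehat{H}^{1,2}_0(V,\mu)$. Combined with the uniform bound $\|f_n\|_{L^\infty(V)}\le M$ and $f_n\to f$ $\mu$-a.e., this is exactly the assertion.

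There is essentially no deep obstacle here; the only points demanding a little care are (i) producing a genuinely smooth $\phi$ that is the identity on $[-K,K]$ and globally bounded, so that $f_n=\phi(g_n)$ remains in $C_0^\infty(V)$ with a uniform sup-norm bound, and (ii) the passage between $\mu$-a.e.\ and $dx$-a.e.\ convergence, which is legitimate precisely because $\widehat{\rho}$ is a.e.\ finite and strictly positive on $V$. Everything else is a routine application of dominated convergence.
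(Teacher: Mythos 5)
Your proof is correct. The underlying idea is the same as the paper's: compose the defining approximating sequence $(g_n)\subset C_0^\infty(V)$ with a smooth bounded function that vanishes at $0$ and equals the identity on the essential range of $f$, so that the compositions stay in $C_0^\infty(V)$, are uniformly bounded, and still converge to $f$. Where you diverge is in how convergence in $\widehat{H}^{1,2}_0(V,\mu)$ is recovered. The paper only records the uniform bound $\sup_n\|\nabla\varphi(g_n)\|_{L^2(V,\R^d)}<\infty$, invokes Banach--Alaoglu and Banach--Saks, and takes Cesaro means of a subsequence as the final approximating sequence. You instead build the monotonicity constraints $0\le\phi'\le1$ and $\phi'\equiv1$ on $[-K,K]$ into the truncation, exploit the strong convergence $\nabla g_n\to\nabla f$ in $L^2(V)$ (available by the remark after the definition of $\widehat{H}^{1,2}_0$, since $\rho\psi$ is locally bounded away from $0$ on the bounded set $V$), and conclude by the splitting $\phi'(g_n)(\nabla g_n-\nabla f)+(\phi'(g_n)-1)\nabla f$ together with dominated convergence; the passage between $\mu$-a.e.\ and $dx$-a.e.\ is justified exactly as you say. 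This buys a shorter, more elementary argument that delivers $\phi(g_n)$ itself (no Cesaro means), at the cost of leaning on the strong $H^{1,2}_0(V)$-convergence of the defining sequence; the paper's softer route needs only boundedness of $\varphi'$ and weak compactness, which is why it generalizes more readily to situations (cf.\ Remark \ref{gensitce}) where such strong Sobolev information is not at hand. Your construction of $\phi$ (mollified truncation at level $K+1$) is fine, and the support and sup-norm claims for $\phi(g_n)$ are exactly as in the paper.
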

\begin{proof}
Take $(g_n)_{n \geq 1} \subset C_0^{\infty}(V)$ such that 
\begin{equation} \label{appchar}
\lim_{n \rightarrow \infty} g_n = f \; \text{ in } \widehat{H}^{1,2}_0(V, \mu)  \;\; \text{ and  }\; \lim_{n \rightarrow \infty} g_n  = f \;\; \;\mu \text{ -a.e. on  } V.
\end{equation}
Define $\varphi \in C^{\infty}_0(\R)$ such that $\varphi(t) = t$\, if\, $|t| \leq \|f\|_{L^{\infty}(\R^d)}+1$ and $\varphi(t)= 0$ \, if \, $|t|\geq \|f\|_{L^{\infty}(\R^d)}+2$. \; Let $M:= \|\varphi \|_{L^{\infty}(\R)}$ and $\widetilde{f}_n:= \varphi(g_n)$. Then $\widetilde{f}_n \in C_0^{\infty}(V)$ and \;$\| \widetilde{f}_n\|_{L^{\infty}(V)} \leq M$ for all $n \geq 1$.
By Lebesgue's Theorem and \eqref{appchar}, 
$$
\; \quad  \lim_{n \rightarrow \infty} \widetilde{f}_n =\lim_{n \rightarrow \infty} \varphi (g_n) = \varphi(f)  = f \quad \text{ in } L^2(V, \mu). \; 
$$
Using the chain rule and \eqref{appchar}
\begin{eqnarray*}
\sup_{n \geq 1} \| \nabla \widetilde{f}_n \|_{L^{2}(V, \R^d)} &=&  \sup_{n \geq 1} \|\nabla  \varphi(g_n) \|_{L^2(V, \R^d)} \\
&\leq& \|\varphi'\|_{L^{\infty}(\R)}  \sup_{n \geq 1} \|\nabla g_n \|_{L^2(V, \R^d)} \\
&<& \infty.
\end{eqnarray*}
Thus by the Banach-Alaoglu Theorem and the Banach-Saks Theorem, there exists a subsequence of $(\widetilde{f}_n)_{n \geq 1}$, say again $(\widetilde{f}_n)_{n \geq 1}$, such that for the Cesaro mean
$$
f_N:= \frac{1}{N} \sum_{n=1}^{N} \widetilde{f}_n \longrightarrow f \;\; \;\text{ in } \;  \widehat{H}^{1,2}_0(V, \mu) \quad \text{ as } N \rightarrow \infty.
$$
Note that $f_N \in C_0^{\infty}(V)$, $\|f_N\|_{L^{\infty}(V)} \leq M$ for all $N \in \N$. \;Since the Cesaro mean of a convergent sequence in $\R$ is also converges, $(f_n)_{n \geq 1}$ is the desired sequence.
\end{proof}

\begin{lem}\label{applempn}
Let $f \in \widehat{H}^{1,2}_0(\R^d, \mu)_{0,b}$ and $V$ be a bounded open subset of $\R^d$ with $\text{supp}(f) \subset V$. Then $f \in \widehat{H}^{1,2}_0(V, \mu)_b$. Moreover there exists $(f_n)_{n \geq 1} \subset C_0^{\infty}(\R^d)$ and a constant $M>0$ such that $\text{supp}(f_n) \subset V$, $\|f_n\|_{L^{\infty}(V)} \leq M$ for all $n \geq 1$ and
\begin{eqnarray*}
&&\lim_{n \rightarrow \infty} f_n = f \quad \text{ in } \; \widehat{H}^{1,2}_0(\R^d, \mu), \;\qquad \lim_{ n \rightarrow \infty} f_n =f \quad \mu \; \text{-a.e. on } \R^d.
\end{eqnarray*}
\end{lem}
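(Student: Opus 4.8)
The plan is to first establish the membership claim $f\in\widehat{H}^{1,2}_0(V,\mu)_b$ by a cut-off argument, and then to read off the uniformly bounded approximating sequence from Lemma \ref{bddapxlem}.

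First I would use the defining characterization \eqref{closablet} of $\widehat{H}^{1,2}_0(\R^d,\mu)$ to pick $(g_n)_{n\geq1}\subset C_0^\infty(\R^d)$ with $g_n\to f$ in $L^2(\R^d,\mu)$ and $(\nabla g_n)_{n\geq1}$ Cauchy in $L^2(\R^d,\R^d)$; passing to a subsequence, also $g_n\to f$ $\mu$-a.e. Since $\mathrm{supp}(f)$ is compact and contained in the open set $V$, fix $\chi\in C_0^\infty(V)$ with $0\leq\chi\leq1$ and $\chi\equiv1$ on a neighbourhood of $\mathrm{supp}(f)$, so that $\chi f=f$ everywhere. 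Then $\chi g_n\in C_0^\infty(V)$, and I claim $(\chi g_n)_{n\geq1}$ is Cauchy for $\|\cdot\|_{\widehat{H}^{1,2}_0(V,\mu)}$. The $L^2(V,\mu)$-part is immediate from $\|\chi g_n-\chi g_m\|_{L^2(V,\mu)}\leq\|\chi\|_{\infty}\|g_n-g_m\|_{L^2(\R^d,\mu)}$. For the gradient part I write $\nabla(\chi g_n-\chi g_m)=\chi\nabla(g_n-g_m)+(g_n-g_m)\nabla\chi$: the first term goes to $0$ in $L^2(\R^d,\R^d)$ because $(\nabla g_n)$ is $L^2$-Cauchy, and the second goes to $0$ because $\nabla\chi$ is bounded with support in $V$ and $\mu$ is bounded below by a positive constant on the bounded set $V$ (as $\frac{1}{\widehat{\rho}}=\frac1\rho\cdot\frac1\psi\in L^{\infty}_{loc}(\R^d)$), so $L^2(V,\mu)$-convergence of $g_n-g_m$ forces $L^2(\mathrm{supp}\,\chi,dx)$-convergence. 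Hence $\chi g_n\to f$ in $\widehat{H}^{1,2}_0(V,\mu)$, which gives $f\in\widehat{H}^{1,2}_0(V,\mu)$, and since $f$ is bounded, $f\in\widehat{H}^{1,2}_0(V,\mu)_b$.

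Next I would apply Lemma \ref{bddapxlem} to $f\in\widehat{H}^{1,2}_0(V,\mu)_b$, obtaining $(f_n)_{n\geq1}\subset C_0^\infty(V)$ and $M>0$ with $\|f_n\|_{L^{\infty}(V)}\leq M$, $f_n\to f$ in $\widehat{H}^{1,2}_0(V,\mu)$ and $f_n\to f$ $\mu$-a.e. on $V$. Extending each $f_n$ by zero turns it into an element of $C_0^\infty(\R^d)$ with $\mathrm{supp}(f_n)\subset V$. Since $f$ has compact support in $V$ and each $f_n$ vanishes near $\partial V$, the function $f_n-f$ vanishes on a neighbourhood of $\R^d\setminus V$; hence its weak gradient vanishes there, so the local and global weighted Sobolev norms agree on $f_n-f$ and $\|f_n-f\|_{\widehat{H}^{1,2}_0(\R^d,\mu)}=\|f_n-f\|_{\widehat{H}^{1,2}_0(V,\mu)}\to0$. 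Finally $f_n=0=f$ on $\R^d\setminus V$ combined with $f_n\to f$ $\mu$-a.e. on $V$ yields $f_n\to f$ $\mu$-a.e. on $\R^d$. Thus $(f_n)_{n\geq1}$ is the required sequence.

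The only point that is not completely routine is the cut-off step: one must verify that multiplying the $g_n$ by $\chi$ preserves the Cauchy property of the gradients, and this is precisely where the local lower bound on $\mu$ enters, converting the $L^2(\mu)$-control on $g_n-g_m$ into the $L^2(dx)$-control needed for the error term $(g_n-g_m)\nabla\chi$. The remaining steps — identifying the local and global $\widehat{H}^{1,2}_0$-norms on functions supported in $V$, and keeping track of supports for the a.e.\ statement — are straightforward bookkeeping.
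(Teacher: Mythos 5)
Your proof is correct and follows essentially the same route as the paper: cut off a global approximating sequence to get $f\in\widehat{H}^{1,2}_0(V,\mu)_b$, then invoke Lemma \ref{bddapxlem} and extend the resulting sequence trivially to $C_0^{\infty}(\R^d)$. The only (harmless) difference is that you prove the cut-off sequence $\chi g_n$ is Cauchy, hence strongly convergent, in $\widehat{H}^{1,2}_0(V,\mu)$, while the paper only establishes the uniform bound $\sup_n\|\nabla(\chi\widetilde{g}_n)\|_{L^2(V,\R^d)}<\infty$ and concludes membership via weak compactness; both arguments rest on the same local lower bound for $\rho\psi$ on the bounded set $V$.
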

\begin{proof}
Let $W$ be an open subset of $\R^d$ satisfying $\text{supp}(f) \subset W \subset \overline{W} \subset V$.
Take a cut-off function $\chi \in C_0^{\infty}(\R^d)$ satisfying $\text{supp}(\chi) \subset V$ and $\chi \equiv 1$ on $W$. Since $f \in \widehat{H}^{1,2}_0(\R^d, \mu)$, there exists $\widetilde{g}_n \in C_0^{\infty}(\R^d)$ such that
$$
\lim_{n \rightarrow \infty} \widetilde{g}_n =f \;\; \text{ in } \widehat{H}^{1,2}_0(\R^d, \mu).
$$
Thus $\chi \widetilde{g}_n \in C_0^{\infty}(\R^d)$ with  $\text{supp}(\chi \widetilde{g}_n) \subset V$ and
\begin{eqnarray*}
\|\chi \widetilde{g}_n -  f  \|_{L^{2}(\R^d, \mu)} &=& \|\chi \widetilde{g}_n -  \chi f  \|_{L^{2}(\R^d, \mu)} \\
&\leq& \| \chi \|_{L^{\infty}(\R^d)} \|\widetilde{g}_n - f \|_{L^2(\R^d, \mu)} \longrightarrow 0 \;\; \text{ as } n \rightarrow \infty.
\end{eqnarray*}
Note that $\chi \widetilde{g}_n \in C_0^{\infty}(\R^d) \subset \widehat{H}^{1,2}_0(V,\mu)$ and
\begin{eqnarray*}
\sup_{n \geq 1} \|\nabla (\chi \widetilde{g}_n) \|_{L^2(V, \R^d)} &=& \sup_{n \geq 1} \left(\| \widetilde{g}_n \nabla \chi  \|_{L^2(V, \R^d)} + \|\chi \nabla \widetilde{g}_n  \|_{L^2(V, \R^d)}\right) \\
&\leq& \sup_{n \geq 1} \big(\frac{\| \nabla \chi \|_{L^{\infty}(V, \R^d)}  }{\inf (\rho \psi)} \|g_n\|_{L^2(\R^d, \mu)} + \| \chi \|_{L^{\infty}(\R^d)} \|\nabla \widetilde{g}_n\|_{L^2(\R^d, \R^d)}\big) \\
&<& \infty.
\end{eqnarray*}
Since bounded sequences in Hilbert spaces have a weakly convergent subsequence, $f \in \widehat{H}^{1,2}_0(V, \mu)$. Taking $(f_n)_{n \geq 1} \subset C_0^{\infty}(V)$ as in Lemma \ref{bddapxlem} and extending it trivially to $C_0^{\infty}(\R^d)$, our assertion holds.
\end{proof}

\begin{lem} \label{tecpjp}
Let $V_1$, $V_2$ be bounded open subsets of $\R^d$ satisfying $\overline{V}_1 \subset V_2$. Assume $f \in \widehat{H}^{1,2}_0(V_2, \mu)$, $g \in \widehat{H}_0^{1,2}(V_1, \mu)$ with $g =0$ on $V_2 \setminus V_1$. If $0 \leq f \leq g$, then $f \in \widehat{H}^{1,2}_0(V_1, \mu)$.
\end{lem}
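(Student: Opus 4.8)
The plan is to exhibit $f$ as a limit, in the $\widehat H_0^{1,2}(V_2,\mu)$-norm, of functions lying in $\widehat H_0^{1,2}(V_1,\mu)$, and then to use that $\widehat H_0^{1,2}(V_1,\mu)$ sits as a \emph{closed} subspace of $\widehat H_0^{1,2}(V_2,\mu)$. This last fact is easy: for $u\in C_0^\infty(V_1)\subset C_0^\infty(V_2)$ the two norms of $u$ coincide (because $\nabla u$ and $u$ vanish off $V_1$), so an $\widehat H_0^{1,2}(V_1,\mu)$-Cauchy sequence in $C_0^\infty(V_1)$ is $\widehat H_0^{1,2}(V_2,\mu)$-Cauchy with the same limit; completeness of $\widehat H_0^{1,2}(V_1,\mu)$ then makes its (isometric, zero-extension) image closed in $\widehat H_0^{1,2}(V_2,\mu)$. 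Note also that $0\le f\le g$ and $g=0$ on $V_2\setminus V_1$ force $f=0$ a.e.\ on $V_2\setminus V_1$, so a positive answer is at least consistent.

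First I would pass to a convenient approximation of $g$. Since $g\ge f\ge 0$ a.e., I take $\widetilde g_k\in C_0^\infty(V_1)$ with $\widetilde g_k\to g$ in $\widehat H_0^{1,2}(V_1,\mu)$ and set $g_k:=\widetilde g_k^{+}$. Using that $t\mapsto t^{+}$ is a continuous normal contraction on $H^{1,2}$ (cf.\ \cite[Theorem 4.4]{EG15}) and $1$-Lipschitz, one gets $g_k\to g^{+}=g$ in $\widehat H_0^{1,2}(V_1,\mu)$; after passing to a subsequence I may also assume $g_k\to g$ a.e.\ (recall $\psi>0$ a.e., so this is unambiguous). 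Each $g_k$ is nonnegative, bounded, has compact support in $\text{supp}(\widetilde g_k)\subset V_1$, and lies in $H^{1,2}(\R^d)$.

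Next I would set $h_k:=f\wedge g_k$. Since $f\ge 0$ and $g_k\ge 0$, we have $0\le h_k\le f$ and $0\le h_k\le g_k$; in particular $h_k$ is bounded, its support is contained in $\text{supp}(\widetilde g_k)\subset V_1$, and $h_k\in H^{1,2}(\R^d)$ as a lattice combination of $f\in H_0^{1,2}(V_2)\subset H^{1,2}(\R^d)$ and $g_k$. A bounded $H^{1,2}(\R^d)$-function with compact support in $V_1$ belongs to $\widehat H_0^{1,2}(V_1,\mu)$: mollifying gives $C_0^\infty(V_1)$-functions that converge to it in $H^{1,2}(\R^d)$ with a uniform sup-bound and supports in one fixed compact subset of $V_1$, hence also converge in $L^2(V_1,\mu)$ because $\mu$ is finite on compacts. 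Thus $h_k\in\widehat H_0^{1,2}(V_1,\mu)$, and by the first paragraph it remains only to show $h_k\to f$ in $\widehat H_0^{1,2}(V_2,\mu)$.

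For the convergence: since $g_k\to g$ a.e.\ and $f\le g$ a.e., $h_k=f\wedge g_k\to f\wedge g=f$ a.e., and $0\le h_k\le f\in L^2(\R^d,\mu)$, so $h_k\to f$ in $L^2(V_2,\mu)$ by Lebesgue's Theorem. For the gradients, $\nabla h_k-\nabla f=1_{\{f\ge g_k\}}(\nabla g_k-\nabla f)$ a.e., hence
\[
\int_{\R^d}\|\nabla h_k-\nabla f\|^2\,dx\ \le\ 2\int_{\R^d}\|\nabla g_k-\nabla g\|^2\,dx+2\int_{\R^d}1_{\{f\ge g_k\}}\,\|\nabla g-\nabla f\|^2\,dx .
\]
The first integral tends to $0$. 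In the second, $\|\nabla g-\nabla f\|^2=0$ a.e.\ on $\{f=g\}$ (because $\nabla(f-g)=0$ a.e.\ on $\{f-g=0\}$), while $1_{\{f\ge g_k\}}\to 0$ a.e.\ on $\{f<g\}$ (as $g_k\to g>f$ there) and $\{f>g\}$ is null; since the integrand is dominated by $\|\nabla g-\nabla f\|^2\in L^1(\R^d)$, Lebesgue's Theorem gives that the second integral tends to $0$ as well. Therefore $h_k\to f$ in $\widehat H_0^{1,2}(V_2,\mu)$, and one concludes $f\in\widehat H_0^{1,2}(V_1,\mu)$. The one delicate point is this final gradient estimate — essentially the routine but fiddly bookkeeping for lattice operations on Sobolev functions, with the usual care needed on the contact set $\{f=g\}$.
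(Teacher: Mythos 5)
Your proof is correct, and it shares the paper's central device: approximate $g$ by smooth functions supported in $V_1$ and work with the truncations $f\wedge g_k$, whose supports lie in $V_1$ and which belong to $\widehat H^{1,2}_0(V_1,\mu)$ (you verify this by a direct mollification argument, where the paper invokes its Lemma \ref{applempn}). Where you genuinely diverge is in how membership of $f$ is concluded. The paper proves only that $f\wedge g_n \to f$ in $L^2(V_1,\mu)$ together with a \emph{uniform bound} on $\|f\wedge g_n\|_{\widehat H^{1,2}_0(V_1,\mu)}$ — obtained cheaply from writing $f\wedge g_n=\tfrac{f+g_n}{2}-\tfrac{|f-g_n|}{2}$ and the normal contraction property of the Dirichlet norm — and then finishes with Banach--Alaoglu: a weakly convergent subsequence exists and its limit must be $f$. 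You instead prove \emph{strong} convergence $f\wedge g_k\to f$ in the full $\widehat H^{1,2}_0(V_2,\mu)$-norm, via the identity $\nabla(f\wedge g_k)-\nabla f=1_{\{f\ge g_k\}}(\nabla g_k-\nabla f)$ and the contact-set observation that $\nabla f=\nabla g$ a.e.\ on $\{f=g\}$, and then use that zero-extension embeds $\widehat H^{1,2}_0(V_1,\mu)$ as a closed subspace of $\widehat H^{1,2}_0(V_2,\mu)$. This buys a more self-contained, hands-on argument (no weak compactness is needed), at the price of the delicate gradient bookkeeping and of the continuity of $u\mapsto u^+$ in the $H^{1,2}$-norm, which you assert rather quickly; that continuity is a standard fact and is in any case provable by exactly the indicator/dominated-convergence scheme of your last paragraph (on $\{g>0\}$ the indicators stabilize along an a.e.\ convergent subsequence, on $\{g=0\}$ one has $\nabla g=0$ a.e.), so this is a compressed step rather than a gap. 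The paper's route is slicker precisely because weak compactness lets it avoid proving any strong convergence of lattice operations, while yours makes the convergence explicit.
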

\begin{proof}
Take $(g_n)_{n \geq 1} \subset C_0^{\infty}(V_2)$ satisfying $\text{supp}(g_n) \subset V_1$ for all $n \in \N$ and 
$$
\lim_{n \rightarrow \infty} g_n = g \quad \text{ in } \widehat{H}^{1,2}_0(V_2, \mu).
$$
\;Observe that for all $n \in \N$
$$
\text{supp}(f \wedge g_n) \subset V_1 \;\text{ and }\; f \wedge g_n = \frac{f+g_n}{2}-\frac{|f-g_n|}{2}\in \widehat{H}^{1,2}_0(V_2, \mu).
$$
By Lemma \ref{applempn}, $f \wedge g_n \in \widehat{H}^{1,2}_0(V_1, \mu)$ for all $n \in \N$.
\;Moreover 
\begin{eqnarray*}
\lim_{n \rightarrow \infty} f \wedge g_n  = \lim_{n \rightarrow \infty} \big(\frac{f+g_n}{2}-\frac{|f-g_n|}{2} \big)=\frac{f+g}{2}-\frac{|f-g|}{2} = f \wedge g = f  \;\; \text{ in }  L^2(V_1, \mu).
\end{eqnarray*}
Since $\big(  \langle \cdot,  \cdot \rangle_{\widehat{H}^{1,2}_0(V_2, \mu)}, \widehat{H}^{1,2}_0(V_2, \mu)  \big)$ is a Dirichlet form,
\begin{eqnarray*}
&\, & \sup_{n \geq 1} \|f \wedge g_n \|_{\widehat{H}^{1,2}_0(V_1, \mu)} \\
&=&  \sup_{n \geq 1} \|f \wedge g_n \|_{\widehat{H}^{1,2}_0(V_2, \mu)}  \\
&=&  \sup_{n \geq 1} \big\| \frac{f+g_n}{2}-\frac{|f-g_n|}{2}   \big \|_{\widehat{H}^{1,2}_0(V_2, \mu)} \\
&\leq& \frac{1}{2} \sup_{n \geq 1} \big( \|f\|_{\widehat{H}^{1,2}_0(V_2, \mu)}  +\|g_n\|_{\widehat{H}^{1,2}_0(V_2, \mu)} +\big \| |f| \big\|_{\widehat{H}^{1,2}_0(V_2, \mu)} +\big \||g_n| \big \|_{\widehat{H}^{1,2}_0(V_2, \mu)}    \big)   \\
&\leq& \sup_{n \geq 1} \big( \|f\|_{\widehat{H}^{1,2}_0(V_2, \mu)}  +\|g_n\|_{\widehat{H}^{1,2}_0(V_2, \mu)}  \big) < \infty.
\end{eqnarray*}
Thus by the Banach-Alaoglu Theorem,  $f \in \widehat{H}^{1,2}_0(V_1, \mu)$. \vspace{-0.4em}
\end{proof}
\\[10pt]
For a bounded open set $U$ in $\R^d$ and $T>0$, $C^{2}(\overline{U} \times [0,T])$ denotes the space of all twice continuously differentiable functions on $\overline{U} \times [0,T]$ with the norm defined by
$$
\|u\|_{C^{2}(\overline{U} \times [0,T])}:= \| u\|_{C(\overline{U} \times [0,T])} + \sum_{i=1}^{d+1} \|\partial_i u\|_{C(\overline{U} \times [0,T])} + \sum_{i,j=1}^{d+1} \|\partial_i \partial_j u \|_{C(\overline{U} \times [0,T])}.
$$
\begin{lem} \label{stoneweier}
Let $U$ be a bounded open subset of $\R^d$ and $T>0$. Set
\begin{eqnarray*}
&&\mathcal{S}:= \big \{h \in C_0^{\infty}(U \times (0,T))\mid \text{there exists $N \in \N$ such that $h= \sum_{i=1}^N f_i g_i$,} \\
&& \qquad \qquad \qquad \qquad \text{where $f_i \in C_0^{\infty}(U)$, $g_i \in C_0^{\infty}((0,T))$ for all i=1,\dots, N }    \big \}.
\end{eqnarray*}
Then $C_0^{2}(U \times (0,T)) \subset \overline{\mathcal{S}}|_{C^2(\overline{U} \times [0,T])}$.
\end{lem}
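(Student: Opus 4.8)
The plan is to reduce the statement to the classical fact that polynomials are dense in $C^2$ of a closed box, and then to transplant such an approximation into $\mathcal S$ by means of a product cut-off. The key elementary remark is that if $\chi\in C_0^\infty(U)$, $\theta\in C_0^\infty((0,T))$ and $p(x,t)=\sum_{\text{finite}}c_{\alpha,k}\,x^\alpha t^k$ is an ordinary polynomial in $(x,t)$, then the function $(x,t)\mapsto\chi(x)\theta(t)p(x,t)=\sum_{\alpha,k}c_{\alpha,k}\big(\chi(x)x^\alpha\big)\big(\theta(t)t^k\big)$ lies in $\mathcal S$, since $\chi(x)x^\alpha\in C_0^\infty(U)$ and $\theta(t)t^k\in C_0^\infty((0,T))$.

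Given $h\in C_0^2(U\times(0,T))$, I would put $K:=\text{supp}(h)$ and extend $h$ by zero to $C_0^2(\R^{d+1})$. Then I would fix $M>0$ with $K\subset B:=(-M,M)^{d+1}$ and write $B=B_x\times(-M,M)$, $B_x:=(-M,M)^d$. Denoting by $K_x$, $K_t$ the (compact) projections of $K$ onto $\R^d$ and $\R$, one has that $K_x$ is a compact subset of the open set $U\cap B_x$ and $K_t$ a compact subset of the open set $(0,T)\cap(-M,M)$; so I may choose $\chi\in C_0^\infty(U\cap B_x)$ with $\chi\equiv1$ on a neighbourhood of $K_x$, $\theta\in C_0^\infty((0,T)\cap(-M,M))$ with $\theta\equiv1$ on a neighbourhood of $K_t$, and set $\Psi(x,t):=\chi(x)\theta(t)$. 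By construction $\Psi\equiv1$ on a neighbourhood of $K_x\times K_t\supset K$, hence $h=\Psi h$ on $\R^{d+1}$, while $\text{supp}(\Psi)=\text{supp}(\chi)\times\text{supp}(\theta)$ is a compact subset of both $B$ and $U\times(0,T)$.

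Next I would invoke the classical approximation theorem — e.g. convergence of the multivariate Bernstein operators after an affine identification of $\overline B$ with $[0,1]^{d+1}$ — to produce polynomials $p_n$ on $\R^{d+1}$ with $\|p_n-h\|_{C^2(\overline B)}\to0$. By the remark of the first paragraph, $h_n:=\Psi p_n\in\mathcal S$, and $\text{supp}(h_n)\subset\text{supp}(\Psi)$ is a compact subset of $U\times(0,T)$. Since $h=\Psi h$, we have $h_n-h=\Psi(p_n-h)$, a $C^2$ function on $\R^{d+1}$ supported in $\text{supp}(\Psi)\subset\overline B\cap(\overline U\times[0,T])$; by the Leibniz rule this yields
\[
\|h_n-h\|_{C^2(\overline U\times[0,T])}\;\le\;\|\Psi(p_n-h)\|_{C^2(\overline B)}\;\le\;C_\Psi\,\|p_n-h\|_{C^2(\overline B)}\;\longrightarrow\;0,
\]
with $C_\Psi$ depending only on $\|\Psi\|_{C^2(\overline B)}$ and on $d$. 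Hence $h$ belongs to the $\|\cdot\|_{C^2(\overline U\times[0,T])}$-closure of $\mathcal S$, and as $h\in C_0^2(U\times(0,T))$ was arbitrary the lemma follows.

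I expect the only delicate points to be organizational rather than substantial: the auxiliary box must be chosen of product form $B_x\times(-M,M)$ so that the single cut-off $\Psi=\chi\otimes\theta$ can simultaneously be taken equal to $1$ near $K$ (which forces $h=\Psi h$) and supported inside $B$ (so the error estimate localizes cleanly to $\overline B$); and one uses that multiplication by the fixed smooth $\Psi$ is bounded on $C^2$, so that the polynomial approximation on $\overline B$ transfers to the target norm without loss. If one prefers not to quote Bernstein-type $C^2$-convergence, one may first mollify $h$ to reduce to $h\in C_0^\infty$ and then replace $p_n$ by a rectangular Fourier partial sum of $h$ periodized over a box containing $\text{supp}(h)$ (automatically a finite sum of products of one-variable exponentials, converging in every $C^m$-norm on that box), at the cost of a slightly longer argument.
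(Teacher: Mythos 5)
Your argument is correct, but it takes a genuinely different route from the paper. The paper first proves only sup-norm density: it observes that the restriction of $\mathcal{S}$ (on a slightly smaller product region containing $\mathrm{supp}(h)$) is a subalgebra of $C_{\infty}$ of that region which separates points and vanishes nowhere, so Stone--Weierstrass gives approximation of $h$ in the $C^0$-norm; it then upgrades this to the $C^2$-norm by convolving both $h$ and the approximant with a \emph{product} mollifier $\xi_n=\eta_n\theta_n$, using Young's inequality $\|(h-h_\varepsilon)*\xi_{n_\varepsilon}\|_{C^2}\le\|\xi_{n_\varepsilon}\|_{C^2}\|h-h_\varepsilon\|_{C^0}$ and the fact that convolving a separated sum $\sum f_ig_i$ with $\eta_n\theta_n$ again yields a separated sum, hence stays in $\mathcal{S}$. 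You instead approximate $h$ directly in $C^2(\overline B)$ by polynomials (Bernstein operators, or your mollify-then-Fourier fallback) and multiply by a separated cut-off $\Psi=\chi\otimes\theta$ equal to $1$ near $\mathrm{supp}(h)$, noting that $\chi(x)x^\alpha\theta(t)t^k$ factors as required, and then conclude by the Leibniz rule since $h=\Psi h$. The trade-off: the paper only needs the softest tool (sup-norm Stone--Weierstrass) but pays with the two-parameter mollification trick, where the accuracy of the $C^0$-approximation must be chosen after, and in dependence on, $\|\xi_{n_\varepsilon}\|_{C^2}$; your proof is structurally simpler (one approximation step, one fixed cut-off) but leans on the stronger, though still classical, fact that polynomial (or trigonometric) approximation can be taken simultaneously in all derivatives up to order two. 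Your choice of the box in product form $B_x\times(-M,M)$ and of $\Psi$ supported in $B\cap(U\times(0,T))$ with $\Psi\equiv1$ near $\mathrm{supp}(h)$ makes the localization and the estimate $\|\Psi(p_n-h)\|_{C^2(\overline U\times[0,T])}\le C_\Psi\|p_n-h\|_{C^2(\overline B)}$ go through correctly, so there is no gap.
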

\begin{proof} {\bf Step 1}: Let $V$ be a bounded open set in $\R^d$ and $T_1, T_2 \in \R$ with $T_1<T_2$.
Define 
\begin{eqnarray*}
&&\mathcal{R}:= \Big \{h \in C_0^{\infty}(V \times (T_1,T_2))\mid \text{there exists $N \in \N$ such that $h= \sum_{i=1}^N f_i g_i$,} \\
&& \qquad \qquad \qquad \qquad \text{where $f_i \in C_0^{\infty}(V)$, $g_i \in C_0^{\infty}((T_1,T_2))$ for all $i=1,\dots, N$} \Big \}   
\end{eqnarray*}
We claim that 
\begin{equation} \label{step1fr}
C_0^{2}(V \times (T_1,T_2)) \subset \overline{\mathcal{R}}|_{C(\overline{V} \times [T_1,T_2])}.
\end{equation}
Note that $V \times (T_1,T_2)$ is a locally compact space and $\overline{\mathcal{R}}|_{C(\overline{V} \times [T_1,T_2])}$ is a closed subalgebra of $C_{\infty}(V \times (T_1,T_2))$. We can easily check that for each $(x,t) \in V \times (T_1,T_2)$, there exists $\widetilde{h} \in \mathcal{R}$ such that $\widetilde{h}(x,t) \neq 0$. For $(x,t), (y,s) \in V \times (T_1,T_2)$ and $(x,t) \neq (y,s)$, there exists $\widehat{h} \in \mathcal{R}$ such that $\widehat{h}(x,t)=1$ and $\widehat{h}(y,s)=0$. Therefore by \cite[Chapter V, 8.3 Corollary]{Conw},  we obtain $\overline{\mathcal{R}}|_{C(\overline{V} \times [T_1,T_2])}= C_{\infty}(V \times (T_1,T_2))$ (the continuous functions on $V \times (T_1,T_2)$ that vanish at infinity, i.e. given $\varepsilon>0$, there exists a compact set $K \subset V \times (T_1,T_2)$ such that $|f(x)|< \varepsilon $ for all $x \in V \times (T_1,T_2) \setminus K$), so that our claim \eqref{step1fr} holds. \\
\centerline{}
{\bf Step 2}: $C_0^{2}(U \times (0,T)) \subset \overline{\mathcal{S}}|_{C^2(\overline{U} \times [0,T])}$. \\
For $n \in \N$, let $\eta_{n}$ be a standard mollifier on $\R^d$ and $\theta_{n}$ be a standard mollifier on $\R$. Then $\xi_{n}:= \eta_{n} \theta_{n}$ is a standard mollifier on $\R^d \times \R$. Let $h \in C_0^{2}(U \times (0,T))$ be given. Then there exists a bounded open subset $V$ of $\R^d$ and $T_1, T_2 \in \R$ with $0<T_1<T_2$ such that
$$
\text{supp}(h) \subset V \times (T_1, T_2) \subset \overline{V} \times [T_1, T_2] \subset U \times (0,T).
$$
Take $N \in \N$ such that $f*\xi_N \in C_0^{\infty}(U \times (0,T))$ for all $f \in C_0^{\infty}(V \times (T_1,T_2))$.\\
Note that by \cite[Proposition 4.20]{BRE}, it holds
\begin{eqnarray*}
&&\partial_t(h * \xi_{\varepsilon}) = \partial_t  h * \xi_{\varepsilon}, \; \partial^2_t (h * \xi_{\varepsilon}) = \partial^2_t  h * \xi_{\varepsilon}, \; \partial_t \partial_i  (h * \xi_{\varepsilon}) = \partial_t \partial_i  h * \xi_{\varepsilon},  \\
&& \partial_i (h * \xi_{\varepsilon}) = \partial_i  h * \xi_{\varepsilon}, \,\;\; \partial_i \partial_j (h * \xi_{\varepsilon}) =  \partial_i \partial_j  h * \xi_{\varepsilon}\, \text{ for any $1 \leq i,j \leq d$.}
\end{eqnarray*}
Hence by \cite[Proposition 4.21]{BRE}, $\lim_{n \rightarrow \infty}h *\xi_{\varepsilon} = h$ in $C^2(\overline{U} \times [0,T])$. 
Thus given $\varepsilon>0$, there exists $n_{\varepsilon} \in \N$ with $n_{\varepsilon} \geq N$ such that
$$
\|h - h* \xi_{n_{\varepsilon}}\|_{C^2(\overline{U} \times [0,T])} < \frac{\varepsilon}{2}.
$$
Let $\mathcal{R}$ be as in Step 1. By \eqref{step1fr}, there exists $h_{\varepsilon} \in \mathcal{R} \subset  C_0^{\infty}(V \times (T_1,T_2))$ such that
$$
\|h-h_{\varepsilon}\|_{C(\overline{U} \times [0,T])} < \frac{\varepsilon}{2 \|\xi_{n_{\varepsilon}}\|_{C^2(\overline{U} \times [0,T])}}.
$$
Thus using \cite[Proposition 4.20]{BRE} and Young's inequality,
$$
\|h * \xi_{n_{\varepsilon}} - h_{\varepsilon}* \xi_{n_{\varepsilon}}\|_{C^2(\overline{U} \times [0,T])} \leq \|\xi_{n_{\varepsilon}}\|_{C^2(\overline{U} \times [0,T])}\|h-h_{\varepsilon}\|_{C(\overline{U} \times [0,T])}< \frac{\varepsilon}{2}.
$$
Therefore
$$
\|h-h_{\varepsilon}* \xi_{n_{\varepsilon}}\|_{C^2(\overline{U} \times [0,T])} <\varepsilon.
$$
Since $h_{\varepsilon}* \xi_{n_{\varepsilon}} \in \mathcal{S}$, we have $h \in  \overline{\mathcal{S}}|_{C^2(\overline{U} \times [0,T])}$, as desired. 
\end{proof}

\section{Well-posedness} \label{effwegewe}
\subsection{Weak existence}
The following assumption will in particular be necessary to obtain a Hunt process with transition function $(P_t)_{t\ge 0}$ (and consequently a weak solution to the corresponding SDE for every starting point). It will be first used in Theorem \ref{existhunt4} below.
\begin{itemize}
\item[\bf(A4)] 
$\mathbf{G} \in L_{loc}^s(\R^d, \R^d, \mu)$, where $s$ is as in {\bf (A2)}.
\end{itemize}
The condition {\bf (A4)} is not necessary to get a Hunt processes (and consequently a weak solution to the corresponding SDE for merely quasi-every starting point) as in  the following proposition.
\begin{prop}\label{Huntex}
There exists a Hunt process 
$$
\tilde{\M} = (\tilde{\Omega}, \tilde{\F}, (\tilde{\F})_{t \ge 0}, (\tilde{X}_t)_{t \ge 0}, (\tilde{\P}_x)_{x \in \R^d \cup \{ \Delta \} })
$$ 
with life time $\tilde\zeta:=\inf\{t\ge 0\,|\,\tilde{X}_t=\Delta\}$ and cemetery $\Delta$ such that $\mathcal{E}$ is (strictly properly) associated with $\tilde{\M}$ and for strictly $\mathcal{E}$-q.e. $x \in \R^d$,
$$
\tilde{\P}_{x} \big( \big\{ \omega \in \tilde{\Omega} \mid  \tilde{X}_{\cdot}(\omega) \in C\big([0, \infty), \R^d_{\Delta}\big),\, \tilde{X}_{t}(\omega) = \Delta, \, \forall t \geq \zeta(\omega)   \big\} \big) =1.
$$
\end{prop}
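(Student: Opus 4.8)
The plan is to invoke the general theory of generalized Dirichlet forms, specifically the existence theorem for an associated Hunt process, in the form used in \cite{Tr5}. First I would recall that by Theorem \ref{mainijcie} we have a sub-Markovian $C_0$-semigroup of contractions $(\overline{T}_t)_{t>0}$ on $L^1(\R^d,\mu)$ whose restriction interpolates to semigroups $(T_t)_{t>0}$ on every $L^r(\R^d,\mu)$, $r\in[1,\infty)$, and correspondingly a co-semigroup $(T^*_t)_{t>0}$ built from the operator $L^*$, which has the same structural properties (same type of symmetric part $L^0$ and an $L^p_{loc}$-integrable divergence-free perturbation). The bilinear form $\mathcal{E}$ of \eqref{fwpeokce} with domain $D(L_2)\times L^2(\R^d,\mu)\cup L^2(\R^d,\mu)\times D(L^*_2)$ is then a generalized Dirichlet form in the sense of Stannat. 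Since the symmetric part $\mathcal{E}^0$ is a regular symmetric Dirichlet form (its domain contains $C_0^\infty(\R^d)$ as a dense subset), $\mathcal{E}$ is a generalized Dirichlet form whose associated co-form part is given by a regular Dirichlet form; this is exactly the structural setting of \cite{Tr5}. The strategy is to verify the hypotheses (the quasi-regularity plus the structural conditions {\bf SD1}--{\bf SD3} of \cite{Tr5}, in particular the $D(\overline{L})_b$-algebra property) and then quote \cite[Theorem 6]{Tr5} to obtain a Hunt process $\tilde{\M}$ that is strictly properly associated with $\mathcal{E}$.

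The key steps, in order: (1) Identify $\mathcal{E}$ as a generalized Dirichlet form associated with $(L_2,D(L_2))$ and check that the associated resolvent $(G_\alpha)_{\alpha>0}$ and co-resolvent $(G^*_\alpha)_{\alpha>0}$ are sub-Markovian and mutually adjoint on $L^2(\R^d,\mu)$ --- this follows from Theorem \ref{mainijcie}, Remark \ref{resestdual}, and the fact that $L^*$ satisfies all assumptions of Section \ref{4.1ok}. (2) Verify quasi-regularity and the structural condition {\bf D3} of Stannat: here the point is that the symmetric part $\mathcal{E}^0$ is a regular symmetric Dirichlet form with $C_0^\infty(\R^d)$ a special standard core, so the associated capacity is tight and $\mathcal{E}$ is quasi-regular. (3) Check condition {\bf SD3} of \cite{Tr5}, which requires that $D(\overline{L})_b$ (or the analogous resolvent image set) is an algebra; this is precisely the content of Lemma \ref{pojjjde}, where we proved $D(\overline{L})_b$ is an algebra with the carré-du-champ type identity $\overline{L}u^2=2u\overline{L}u+\langle\widehat{A}\nabla u,\nabla u\rangle$. (4) Apply \cite[Theorem 6]{Tr5} to conclude the existence of a Hunt process $\tilde{\M}=(\tilde\Omega,\tilde\F,(\tilde\F_t)_{t\ge0},(\tilde X_t)_{t\ge0},(\tilde\P_x)_{x\in\R^d\cup\{\Delta\}})$ with life time $\tilde\zeta$ that is strictly properly associated with $\mathcal{E}$, meaning $p_t f(x):=\tilde{\E}_x[f(\tilde X_t)]$ is a strictly $\mathcal{E}$-quasi-continuous $\mu$-version of $T_t f$ for all $f\in L^2(\R^d,\mu)_b$ and all $t>0$, and likewise for the resolvent.

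For the sample-path continuity statement, the plan is as follows. The symmetric Dirichlet form $(\mathcal{E}^0,D(\mathcal{E}^0))$ is strongly local: since $a_{ij}\in H^{1,2}_{loc}(\R^d)$ and the form is $\tfrac12\int\langle\widehat A\nabla f,\nabla g\rangle\,d\mu$ with no jump or killing part, the energy measure of any $f\in D(\mathcal{E}^0)_{loc}$ has no singular jump component. By the diffusion criterion for generalized Dirichlet forms associated with processes (the analogue of the Fukushima--Oshima--Takeda characterization --- see the continuity results quoted from \cite{Tr5} and the references therein, which reduce the path-continuity of $\tilde\M$ to strong locality of the symmetric part together with the absence of a killing term inside $\R^d$), one obtains that for strictly $\mathcal{E}$-q.e. $x\in\R^d$ the sample path $t\mapsto\tilde X_t(\omega)$ is continuous on $[0,\infty)$ in the one-point compactification $\R^d_\Delta$ and stays at $\Delta$ after the life time $\tilde\zeta(\omega)$. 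Concretely, one uses that $C_0^\infty(\R^d)\subset D(\overline L)_b$ separates points and that for $f\in C_0^\infty(\R^d)$ the process $f(\tilde X_t)-\int_0^t \overline L f(\tilde X_s)\,ds$ is a (local) martingale additive functional with continuous sharp bracket given by $\int_0^t\langle\widehat A\nabla f,\nabla f\rangle(\tilde X_s)\,ds$ by Lemma \ref{pojjjde}; an approximation and a standard argument using the strict quasi-continuity of the resolvent kernels from Theorem \ref{rescondojc} then upgrades this to the asserted continuity of $\tilde X_\cdot$ outside a strictly $\mathcal{E}$-exceptional set.

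The main obstacle I anticipate is not any single computation but the careful bookkeeping needed to check the \emph{strict} versions of the quasi-notions (strict quasi-continuity, strict capacity, strict association) rather than the ordinary ones, since \cite{Tr5} works with the strict capacity generated by the bounded resolvent images; here one must make essential use of the fine regularity of the resolvent $(R_\alpha)_{\alpha>0}$ established in Theorem \ref{rescondojc}, namely that $R_\alpha f$ is genuinely (locally H\"older) continuous for $f\in\cup_{r\in[s,\infty]}L^r(\R^d,\mu)$, to guarantee that the relevant nest of compact sets is a strict nest and that the exceptional set in the final statement is strictly $\mathcal{E}$-exceptional. The verification that $\mathcal{E}$ genuinely fits the axiomatic framework of \cite{Tr5} (in particular condition {\bf SD3}) is where Lemma \ref{pojjjde} does the real work, so the proof is mostly a matter of assembling the already-established ingredients and citing \cite[Theorem 6]{Tr5}.
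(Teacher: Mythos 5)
Your proposal is correct and takes essentially the same route as the paper: quasi-regularity of the generalized Dirichlet form and the existence of a $\mu$-tight special standard process are obtained as in Stannat's Theorem 3.5, the algebra property of $D(\overline{L})_b$ from Lemma \ref{pojjjde} supplies the structural condition {\bf SD3}, and both the strict proper association and the path regularity for strictly $\mathcal{E}$-q.e. starting point are then quoted from \cite[Theorem 6]{Tr5}. The only detail the paper adds beyond your sketch is a corrected definition of the auxiliary space $\mathcal{Y}$ used in Stannat's proof (together with the choice $E_k\equiv\R^d$ when invoking \cite[Theorem 6]{Tr5}), which is a quotation-level fix rather than a different idea.
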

\begin{proof}
First one shows the quasi-regularity of the generalized Dirichlet form $(\mathcal{E},D(L_2))$ associated with $(L_2,D(L_2))$, and the existence of an $\mu$-tight special standard process associated with $(\mathcal{E},D(L_2))$. This can be done exactly as in \cite[Theorem 3.5]{St99}. One only has to take care that the space $\cal{Y}$ as defined in the proof of \cite[Theorem 3.5]{St99} is replaced because of a seemingly uncorrected version of the paper by the following one
$$
\text{}\; {\cal{Y}}:=\{u\in D(\overline{L}))_b\,|\, \exists f,g\in L^1(\R^d,\mu)_b,\ f,g\ge 0, \text{ such that } u\le G_1f \text{ and } -u\le G_1g \}
$$
in order to guarantee the convergence at the end of the proof. Then the assertion will follow  exactly as in \cite[Theorem 6]{Tr5}, using for the proof instead $\cal{G}$ there the space $\cal{Y}$ defined above and defining $E_k\equiv \R^d$, $k\ge 1$.
\end{proof}
\text{}\\
\begin{rem}
\begin{itemize}
\item[(i)]
Assume  {\bf (A1)}, {\bf (A2)}, {\bf (A3)} and $\mathbf{G} \in L^{\frac{sq}{q-1}}_{loc}(\R^d,\R^d)$. Then for any bounded open subset $V$ of $\R^d$, it holds
$$
\int_{V}\|\mathbf{G}\|^{s} d\mu \leq \|\mathbf{G}\|^s_{L^\frac{sq}{q-1}(V)}\|\rho \psi \|_{L^q(V)},
$$
hence {\bf (A4)} is satisfied. 
\item[(ii)]
Two simple examples where  {\bf (A1)}, {\bf (A2)}, {\bf (A3)}, {\bf (A4)} are satisfied are given as follows: for the first example let $A$, $\psi$ satisfy the assumptions of {\bf (A1)}, $\psi \in L^p_{loc}(\R^d)$, $s=\frac{dp}{2p-d}+\varepsilon$, and $\mathbf{G} \in L^{\infty}_{loc}(\R^d, \R^d)$ and for the second let $A$, $\psi$ satisfy the assumptions of {\bf (A1)}, $\psi \in L^{2p}_{loc}(\R^d)$, $s=\frac{2pd}{4p-d}+\varepsilon$ and $\mathbf{G} \in L^{2p}_{loc}(\R^d, \R^d)$. In both cases $\varepsilon>0$ can be chosen to be arbitrarily small.
\end{itemize}
\end{rem}
\centerline{}
Analogously to \cite[Theorem 3.12]{LT18}, we obtain:
\begin{theo}\label{existhunt4}
Under the assumptions {\bf(A1)}, {\bf(A2)}, {\bf(A3)}, {\bf(A4)}, there exists a Hunt process
\[
\M =  (\Omega, \F, (\F_t)_{t \ge 0}, (X_t)_{t \ge 0}, (\P_x)_{x \in \R^d\cup \{\Delta\}}   )
\]
with state space $\R^d$ and life time 
$$
\zeta=\inf\{t\ge 0\,:\,X_t=\Delta\}=\inf\{t\ge 0\,:\,X_t\notin \R^d\}, 
$$
having the transition function $(P_t)_{t \ge 0}$ as transition semigroup, such that $\M$ has continuous sample paths in the one point compactification $\R^d_{\Delta}$ of $\R^d$ with the cemetery $\Delta$ as point at infinity, i.e. for any $x \in \R^d$,
$$
\P_{x} \big( \left\{ \omega \in \Omega \mid  X_{\cdot}(\omega) \in C\big([0, \infty), \R^d_{\Delta}\big) ,\, X_{t}(\omega) = \Delta, \, \forall t \geq \zeta(\omega) \right\} \big) =1.
$$
\end{theo}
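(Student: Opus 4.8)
The plan is to obtain $\M$ by removing the exceptional set from the Hunt process $\tilde{\M}$ of Proposition \ref{Huntex}, using crucially the strong Feller property and the absolute continuity of the transition kernels established in Section \ref{fewfore}. First I would collect the ingredients. By Proposition \ref{regular2-3}, for every $t,\alpha>0$ and every $x\in\R^d$ there are sub-probability kernels $P_t(x,dy)=p_t(x,y)\,\mu(dy)$ and $R_\alpha(x,dy)=r_\alpha(x,y)\,\mu(dy)$, absolutely continuous with respect to $\mu$, with $R_\alpha g(x)=\int_0^\infty e^{-\alpha t}P_t g(x)\,dt$; by Theorem \ref{1-3reg3} the map $(x,t)\mapsto P_tf(x)$ is continuous on $\R^d\times(0,\infty)$ for $f\in\bigcup_{\nu\in[\frac{2p}{p-2},\infty]}L^\nu(\R^d,\mu)$, and $(P_t)_{t>0}$ is $L^{[\frac{2p}{p-2},\infty]}$-strong Feller. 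Since the semigroup identity $T_{t+s}=T_tT_s$ holds in $L^r(\R^d,\mu)$, the strong Feller property upgrades it to the pointwise Chapman--Kolmogorov equation $P_{t+s}f(x)=P_t(P_sf)(x)$ for all $x\in\R^d$ and bounded Borel $f$; together with $P_t1\le1$ and the joint measurability just recalled this shows that $(P_t)_{t\ge0}$ is an honest sub-Markovian transition function on $(\R^d,\mathcal{B}(\R^d))$, and that $P_tf(x)\to f(x)$ as $t\downarrow0$ for $f\in C_b(\R^d)$ (using strong continuity of $(T_t)$ on $L^r$ together with the estimate \eqref{thm main est}).

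Next I would identify $(P_t)$ with the transition function of $\tilde{\M}$ off an exceptional set. Since $\mathcal{E}$ is strictly properly associated with $\tilde{\M}$, there is a strictly $\mathcal{E}$-exceptional Borel set $N\subset\R^d$ with $\tilde{\E}_x[f(\tilde X_t)]=P_tf(x)$ for all $x\in\R^d\setminus N$, $t>0$, bounded Borel $f$ (both sides now being genuine functions, not $\mu$-classes). A strictly exceptional set has zero capacity, hence $\mu(N)=0$, and therefore by absolute continuity $P_t(x,N)=\int_N p_t(x,y)\,\mu(dy)=0$ for every $x\in\R^d$, $t>0$. This is the decisive point: although $\tilde{\M}$ need not be well defined when started in $N$, from any $x\in\R^d$ the kernel $P_\varepsilon(x,\cdot)$ charges only $\R^d\setminus N$, where $\tilde{\M}$ is a Hunt process with continuous sample paths in $\R^d_\Delta$.

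I would then construct $\M$ for every starting point. For $x\in\R^d\setminus N$ put $\P_x:=\tilde{\P}_x$ on path space; for $x\in N$ define $\P_x$ by prescribing its finite-dimensional distributions through the kernels $P_t(x,\cdot)$ --- consistent by Chapman--Kolmogorov --- and realize it on continuous paths by writing it, for each $\varepsilon>0$, as the image under time-shift of $\int_{\R^d}P_\varepsilon(x,dy)\,\tilde{\P}_y$ and letting $\varepsilon\downarrow0$; the fact that $P_\varepsilon(x,\cdot)$ is carried by $\R^d\setminus N$ makes each of these well defined, and the normality $\P_x(X_0=x)=1$ follows from $P_tf(x)\to f(x)$. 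The Markov and strong Markov properties, and path continuity in $\R^d_\Delta$ for $t>0$, transfer from $\tilde{\M}$, since after any positive time the process lives where $\tilde{\M}$ is a Hunt process; continuity at $t=0$ for $x\in N$ then follows from the normality just established together with a standard argument. Finally, the life time is $\zeta=\inf\{t\ge0:X_t=\Delta\}$, and since the state space is all of $\R^d$ with $\Delta$ the point at infinity of the one-point compactification, $X_t=\Delta$ for $t\ge\zeta$, while $(P_t)_{t\ge0}$ is the transition semigroup of $\M$ by construction. This is, \emph{mutatis mutandis}, the argument of \cite[Theorem 3.12]{LT18}, and assumption {\bf (A4)} enters exactly as there: it is the extra integrability of $\mathbf{G}$ that makes the exceptional set removable and the process realizable on $\R^d$ for every starting point.

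The main obstacle is precisely this passage from the regime of strictly $\mathcal{E}$-q.e.\ starting points to that of every starting point in $\R^d$: one has to turn the a.e.-defined $L^r$-semigroup into a genuine transition function with pointwise Chapman--Kolmogorov, and then show that the exceptional set, even though it may not be a legitimate set of starting points, is left instantaneously and never revisited. Both steps rest entirely on the two facts proved in Section \ref{fewfore} --- the $L^{[\frac{2p}{p-2},\infty]}$-strong Feller property together with joint continuity of $(t,x)\mapsto P_tf(x)$, and the absolute continuity $P_t(x,\cdot)\ll\mu$ for all $x$ --- so once these are invoked the remaining argument is the by-now-standard removal-of-exceptional-set procedure, carried out as in \cite{LT18}.
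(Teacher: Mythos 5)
Your outline is, in substance, the paper's own route: the paper proves this theorem by invoking the construction of \cite[Theorem 3.12]{LT18}, i.e. exactly the combination you describe of the strictly-q.e.\ Hunt process from Proposition \ref{Huntex}, the joint continuity and $L^{[\frac{2p}{p-2},\infty]}$-strong Feller property of $(P_t)_{t>0}$ from Theorem \ref{1-3reg3} and Proposition \ref{regular2-3}, the absolute continuity $P_t(x,\cdot)\ll\mu$ (so that the $\mu$-negligible strictly exceptional set is never charged from any starting point), pointwise Chapman--Kolmogorov, and the standard removal-of-the-exceptional-set procedure.

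There is, however, one step whose justification as you wrote it would fail, and it is precisely the step where {\bf (A4)} actually does its work: the normality $\lim_{t\downarrow 0}P_tf(x)=f(x)$ for \emph{every} $x\in\R^d$, in particular for $x$ in the exceptional set. You claim this follows from strong continuity of $(T_t)_{t>0}$ on $L^r(\R^d,\mu)$ together with \eqref{thm main est}; but \eqref{thm main est} only controls $P_\cdot f$ on $\overline{U}\times[\tau_1,\tau_2]$ with $\tau_1\ge\tau_3>0$, so it gives no information as $t\downarrow 0$, and $L^r$-convergence says nothing pointwise at a fixed (possibly $\mu$-negligible) point $x$. The correct mechanism is: for $f\in C_0^{\infty}(\R^d)$, assumption {\bf (A4)} together with $\frac{1}{\psi}\in L^{\infty}_{loc}$ gives $Lf=\frac12\mathrm{trace}(\widehat{A}\nabla^2 f)+\langle\mathbf{G},\nabla f\rangle\in L^{s}(\R^d,\mu)$, so $P_sLf$ is defined pointwise and one can pass the $L^r$-identity $T_tf-f=\int_0^tT_sL f\,ds$ to continuous versions, obtaining $P_tf(x)-f(x)=\int_0^tP_sLf(x)\,ds$ for all $x$; the local estimate of type \eqref{sfpolkd} (equivalently the $L^{[s,\infty]}$-regularity of the resolvent in Theorem \ref{rescondojc}) then yields $P_tf(x)\to f(x)$ locally uniformly. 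This is exactly how \cite{LT18} argues, and it is the reason {\bf (A4)} is listed as an assumption here while Proposition \ref{Huntex} does without it; your later remark that {\bf (A4)} ``enters exactly as there'' points at the right place, but the parenthetical argument you give for normality should be replaced by the one above. With that correction, the rest of your construction (distributions of $\P_x$ for exceptional $x$ via $\int P_\varepsilon(x,dy)\,\tilde{\P}_y$ and $\varepsilon\downarrow0$, transfer of the Hunt and path-continuity properties) matches the cited proof.
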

\begin{rem}\label{mostresults4}
The analogous results to \cite[Lemma 3.14, Lemma 3.15, Proposition 3.16, Proposition 3.17, Theorem 3.19]{LT18}  hold in the situation of this paper. One of the main differences is that $q=\frac{dp}{d+p}>\frac{d}{2}$  of \cite{LT18} is replaced by $s>\frac{d}{2}$ of {\bf (A2)}.  Especially, a Krylov type estimate for $\M$ of Theorem \ref{existhunt4} holds as stated in \eqref{sfpolkd} right below.  Let $g\in L^r(\R^d,\mu)$ for some $r\in [s,\infty]$ be given. Then for any ball $B$, there exists a constant $C_{B,r}$, depending in particular on $B$ and $r$, such that for all $t \ge  0$,
\begin{equation} \label{sfpolkd}
\sup_{x\in \overline{B}}\E_x\left [ \int_0^t |g|(X_s) \, ds \right ] < e^t C_{B,r} \|g\|_{L^r(\R^d, \mu)}.
\end{equation} 
The derivation of \eqref{sfpolkd} is based on Theorem \ref{rescondojc}, whose proof  uses the elliptic H\"{o}lder estimate of Theorem \ref{holreg}. This differs from the proof of the Krylov type estimates in \cite{LT18} and \cite{LT19}, which are based on an elliptic $H^{1,p}$-estimate.  Finally, one can get the analogous conservativeness and moment inequalities to \cite[Theorem 4.2, Theorem 4.4(i)]{LT18} in the situation of this paper. 
\end{rem}
The following theorem can be proved exactly as in \cite[Theorem 3.19]{LT18}.
\begin{theo}\label{weakexistence4}
Assume {\bf (A1)}, {\bf (A2)}, {\bf (A3)}, {\bf (A4)} are satisfied. Consider the Hunt process $\M$ from Theorem \ref{existhunt4} with coordinates $X_t=(X_t^1,...,X_t^d)$. Let $(\sigma_{ij})_{1 \le i \le d,1\le j \le m}$, $m\in \N$ arbitrary but fixed, be any locally uniformly strictly elliptic matrix consisting of continuous functions for all $1 \leq i \leq d$, $1 \leq j \leq m$,  such that $A=\sigma \sigma^T$,  i.e. 
$$
a_{ij}(x)=\sum_{k=1}^m \sigma_{ik}(x) \sigma_{jk}(x), \ \  \forall x\in \R^d, \ 1\le i,j\le d.
$$
Set 
$$
\widehat{\sigma}=\sqrt{\frac{1}{\psi}}\cdot \sigma\text{ , i.e. }\widehat{\sigma}_{ij}=\sqrt{\frac{1}{\psi}}\cdot \sigma_{ij}, \ 1 \leq i \leq d,\ 1 \leq j \leq m.
$$
(Recall that the expression $\frac{1}{\psi}$ denotes an arbitrary Borel measurable function satisfying $\psi\cdot \frac{1}{\psi}=1$ a.e.).\\
Then on a standard extension 
of $(\Omega, \F, (\F_t)_{t\ge 0}, \P_x )$, $x\in \R^d$, that we denote for notational convenience again 
by $(\Omega, \F, (\F_t)_{t\ge 0}, \P_x )$, $x\in \R^d$, there exists a standard  $m$-dimensional Brownian motion $W = (W^1,\dots,W^m)$ starting from zero such that 
$\P_x$-a.s. for any $x=(x_1,...,x_d)\in \R^d$, $i=1,\dots,d$
\begin{equation}\label{weaksolutiondc-3}
X_t^i = x_i+ \sum_{j=1}^m \int_0^t \widehat{\sigma}_{ij} (X_s) \, dW_s^j +   \int^{t}_{0}   g_i(X_s) \, ds, \quad 0\le  t <\zeta,
\end{equation}
in short
$$
X_t = x+  \int_0^t \widehat{\sigma} (X_s) \, dW_s+   \int^{t}_{0}   \mathbf{G}(X_s) \, ds, \quad 0\le  t <\zeta.
$$
If \eqref{consuniquelaw} holds a.e. outside an arbitrarily large compact set, then $\mathbb{P}_x(\zeta=\infty)=1$ for all $x\in \mathbb{R}^d$ (cf. \cite[Theorem 4.2]{LT18}). 
\end{theo}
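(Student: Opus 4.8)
\emph{Plan.} We follow \cite[Theorem 3.19]{LT18}: first show that the Hunt process $\M$ of Theorem \ref{existhunt4} solves the martingale problem for $L=\frac12\mathrm{trace}(\widehat A\nabla^2\,\cdot\,)+\langle \mathbf G,\nabla\,\cdot\,\rangle$ with respect to \emph{every} starting point $x\in\R^d$, and then invoke a standard martingale representation theorem to produce $W$. The natural starting point is the resolvent. Fix $\alpha>0$ and $g\in L^s(\R^d,\mu)+L^\infty(\R^d,\mu)$, and let $u:=R_\alpha g$ be the strong Feller $\mu$-version of $G_\alpha g$ from Theorem \ref{rescondojc} and Proposition \ref{regular2-3}. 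Using the identity $R_\alpha g(x)=\int_0^\infty e^{-\alpha r}P_r g(x)\,dr$ (and the analogous one for $P_t R_\alpha g$) from Proposition \ref{regular2-3}, the Markov property of the transition function $(P_t)_{t\ge0}$ with respect to $\M$, and --- when $g\in L^s(\R^d,\mu)$ --- the Krylov type estimate \eqref{sfpolkd} of Remark \ref{mostresults4} to control $\E_x\big[\int_0^t|g|(X_s)\,ds\big]$ locally uniformly in $x$, one checks by conditioning on $\F_u$ that
\[
t\longmapsto e^{-\alpha t}u(X_t)+\int_0^t e^{-\alpha s}g(X_s)\,ds
\]
is a $\P_x$-martingale for every $x\in\R^d$.

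Given $f\in C_0^\infty(\R^d)$, put $g:=\alpha f-Lf$. Then $g$ has compact support and lies in $L^s(\R^d,\mu)$: indeed $\langle\mathbf G,\nabla f\rangle\in L^s_{loc}(\R^d,\mu)$ by \textbf{(A4)}, while $\mathrm{trace}(\widehat A\nabla^2 f)=\tfrac1\psi\,\mathrm{trace}(A\nabla^2 f)$ is bounded with compact support because $\tfrac1\psi\in L^\infty_{loc}$ and $a_{ij}\in C(\R^d)$. Since $R_\alpha g$ and $f$ are both continuous and coincide $\mu$-a.e., $R_\alpha g=f$, so applying the previous paragraph and integrating by parts in the time variable yields that $M^f_t:=f(X_t)-f(x)-\int_0^t Lf(X_s)\,ds$ is a $\P_x$-martingale for every $x\in\R^d$. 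Choosing $f$ to agree on a large ball with the coordinate functions $y\mapsto y_i$ and with $y\mapsto y_iy_j$, localizing along the exit times of an exhausting sequence of balls, and using that $\M$ has continuous paths in $\R^d_\Delta$ with $X_t=\Delta$ for $t\ge\zeta$ (Theorem \ref{existhunt4}), one obtains that $M^i_t:=X^i_t-x_i-\int_0^t g_i(X_s)\,ds$, $0\le t<\zeta$, is a continuous local martingale under $\P_x$ with $\langle M^i,M^j\rangle_t=\int_0^t\widehat a_{ij}(X_s)\,ds$, where $\widehat a_{ij}=\sum_{k=1}^m\widehat\sigma_{ik}\widehat\sigma_{jk}$; the bound $\widehat a_{ii}=\tfrac1\psi a_{ii}\in L^\infty_{loc}$ guarantees that all occurring time integrals and stochastic integrals are well defined along the paths of $\M$.

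Since $\widehat a=\widehat\sigma\widehat\sigma^T$ with $\widehat\sigma=\sqrt{1/\psi}\,\sigma$ a $d\times m$ matrix (hence possibly non-square and degenerate), the classical martingale representation theorem for such dispersion matrices --- as presented e.g.\ in \cite[Chapter 5]{KaSh}, where one passes to a standard extension of the stochastic basis carrying an auxiliary independent Brownian motion --- provides on such an extension of $(\Omega,\F,(\F_t)_{t\ge0},\P_x)$, $x\in\R^d$, a standard $m$-dimensional Brownian motion $W=(W^1,\dots,W^m)$ starting at $0$ with $M^i_t=\sum_{j=1}^m\int_0^t\widehat\sigma_{ij}(X_s)\,dW^j_s$, $0\le t<\zeta$, which is precisely \eqref{weaksolutiondc-3}; rearranging gives the asserted SDE. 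The main obstacle is \emph{not} this representation step --- which is standard once the bracket has been identified --- but the passage to every fixed starting point in the two preceding paragraphs: this rests essentially on the pointwise strong Feller regularity of $(P_t)_{t\ge0}$ and $(R_\alpha)_{\alpha>0}$ from Section \ref{fewfore} and on the Krylov type estimate \eqref{sfpolkd}, which together make the martingale problem meaningful pathwise for each $x$ despite the full discontinuity and degeneracy of the coefficients.

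Finally, suppose \eqref{consuniquelaw} holds a.e.\ outside a compact set $K$. Take $V(x):=\ln(\|x\|^2+1)$. A direct computation with the product and chain rules gives $LV=\tfrac{2}{\|x\|^2+1}\big[-\tfrac{\langle A x,x\rangle}{\psi(\|x\|^2+1)}+\tfrac{\mathrm{trace}(A)}{2\psi}+\langle\mathbf G,x\rangle\big]$, so that by \eqref{consuniquelaw} one has $LV\le 2M(V+1)$ $\mu$-a.e.\ on $\R^d\setminus K$. Since $V$ is proper (it tends to $+\infty$ at infinity), Khasminskii's non-explosion criterion applies: combining the Lyapunov bound with the martingale property of Step 2 for functions agreeing with $V$ on large balls, the Krylov estimate \eqref{sfpolkd} to absorb the contribution of $LV$ on $K$, the continuity of the paths in $\R^d_\Delta$, and a Gronwall/Fatou argument exactly as in \cite[Theorem 4.2]{LT18} (cf.\ Remark \ref{mostresults4}), one concludes $\P_x(\zeta=\infty)=1$ for all $x\in\R^d$, which completes the proof.
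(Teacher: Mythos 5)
Your proposal is correct and follows essentially the same route as the paper, which simply defers the proof to the analogue of \cite[Theorem 3.19]{LT18} (martingale problem for every starting point via the strong Feller resolvent/semigroup and the Krylov type estimate \eqref{sfpolkd}, then martingale representation on a standard extension) and to \cite[Theorem 4.2]{LT18} together with Remark \ref{mostresults4} for the non-explosion statement via the Lyapunov function $\ln(\|x\|^2+1)$.
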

\begin{exam} \label{imporexam}
Given $p>d$, let $A=(a_{ij})_{1\leq i,j\leq d}$ be a symmetric matrix of functions on $\R^d$ which is locally uniformly strictly elliptic and $a_{ij} \in H^{1,p}_{loc}(\R^d) \cap C_{loc}^{0, 1-d/p}(\R^d)$ for all $1 \leq i,j \leq d$.\, Given $m \in \N$, let $\sigma=(\sigma_{ij})_{1 \le i \le d,1\le j \le m}$ be a matrix of functions satisfying $\sigma_{ij}\in C(\R^d)$ for all $1 \le i \le d,1\le j \le m$, such that $A =\sigma \sigma^T$. Let $\phi \in L^{\infty}_{loc}(\R^d)$ be such that for any open ball $B$, there exist strictly positive constants $c_B$, $C_B$ such that
$$
c_B \leq \phi(x) \leq C_B \quad \text{for every  }x\in  B.
$$
Let $\frac{1}{\psi}(x):=\frac{\|x\|^{\alpha}}{\phi(x)}$, $x\in \R^d$, for some $\alpha>0$ and consider following conditions.
\begin{itemize}
\item[(a)]
$\alpha p<d$,\; $\mathbf{G}\in L^{\infty}(B_{\varepsilon}(\mathbf{0})) \cap L^p_{loc}(\R^d \setminus \overline{B}_{\varepsilon}(\mathbf{0}))$ for some $\varepsilon>0$,
\item[(b)]
$2\alpha p <d$,\; $\mathbf{G}\in L^{2p}(B_{\varepsilon}(\mathbf{0})) \cap L^p_{loc}(\R^d \setminus \overline{B}_{\varepsilon}(\mathbf{0}))$ for some $\varepsilon>0$,
\item[(c)]
$\alpha \cdot (\frac{p}{2} \vee 2)<d$, \,$\mathbf{G}\equiv 0$ on $B_{\varepsilon}(\mathbf{0})$ and $\mathbf{G} \in L^{s}_{loc}(\R^d \setminus \overline{B_{\varepsilon}}(\mathbf{0}))$ for some $\varepsilon>0$, where $s>d$ so that $(\frac{p}{2} \vee 2)^{-1}+\frac{1}{s}<\frac{2}{d}$.
\end{itemize}
Either of the conditions (a), (b), or (c) imply {\bf (A1)}, {\bf (A2)}, {\bf (A3)}, {\bf (A4)}. Indeed, for arbitrary $\varepsilon>0$ take $q=p$, $s=\frac{pd}{2p-d}+\varepsilon$  in the case of (a), $q=2p$, $s=\frac{2pd}{4p-d}+\varepsilon$ in the case of (b), and $q=\frac{p}{2} \vee 2$, $s>d$ defined by (c) in the case of (c).
Assuming (a), (b) or (c), the Hunt process $\M$ as in Theorem \ref{weakexistence4} solves weakly \,$\P_x$-a.s.  for any $x \in \R^d$, 
\begin{equation}\label{weaksolutiondc}
X_t= x+\int_0^t \|X_s\|^{\alpha/2}  \cdot \frac{\sigma}{\sqrt{\phi}}  (X_s) \, dW_s +   \int^{t}_{0}   \mathbf{G}(X_s) \, ds, \quad 0\le  t <\zeta
\end{equation}
and is non-explosive if \eqref{consuniquelaw} holds a.e. outside an arbitrarily large compact set.
\end{exam}

\subsection{Uniqueness in law} \label{uniquenessinlaw3}

Consider 
\begin{itemize}
\item[{\bf (A4)$^\prime$}:] {\bf (A1)} holds with $p=2d+2$, {\bf(A2)} holds with some $q\in (2d+2,\infty]$,
$s\in (\frac{d}{2},\infty]$ is fixed, such that $\frac{1}{q}+ \frac{1}{s}< \frac{2}{d}$, and $\mathbf{G} \in L^{\infty}_{loc}(\R^d,\R^d)$.
\end{itemize}
\begin{defn}\label{weaksolution}
Suppose {\bf (A1)}, {\bf (A2)}, {\bf (A3)}, {\bf (A4)} hold (for instance if {\bf (A4)$^\prime$} holds). Let the expression $\frac{1}{\psi}$ denote an arbitrary but fixed Borel measurable function satisfying $\psi\cdot \frac{1}{\psi}=1$ a.e. and $\frac{1}{\psi}(x)\in [0,\infty)$ for any $x\in \R^d$.
Let
$$
\widetilde{\M} =  
(\widetilde{\Omega}, \widetilde{\F}, (\widetilde{\F}_t)_{t \ge 0}, (\widetilde{X}_t)_{t \ge 0}, (\widetilde{W}_t)_{t \ge 0}, (\widetilde{\P}_x)_{x \in \R^d})
$$
be such that for any $x=(x_1,...,x_d)\in \R^d$
\begin{itemize}
\item[(i)] $(\widetilde{\Omega}, \widetilde{\F}, (\widetilde{\F}_t)_{t \ge 0}, \widetilde{\P}_x)$ is a filtered probability space, satisfying the usual conditions,
\item[(ii)] $(\widetilde{X}_t=(\widetilde{X}_t^1,..., \widetilde{X}_t^d))_{t \ge 0}$ is an $(\widetilde{\F}_t)_{t \ge 0}$-adapted continuous $\R^d$-valued stochastic process, 
\item[(iii)] $(\widetilde{W}_t=(\widetilde{W}^1_t,...,\widetilde{W}^m_t))_{t \ge 0}$ is a standard $m$-dimensional  $((\widetilde{\F}_t)_{t \ge 0},\widetilde{\P}_x)$-Brownian motion starting from zero,
\item[(iv)]  for the (real-valued) Borel measurable functions $\widehat{\sigma}_{ij}, g_i, \frac{1}{\psi}$, $\widehat{\sigma}_{ij}=\sqrt{\frac{1}{\psi}}\sigma_{ij}$, with $\sigma$ is as in Theorem \ref{weakexistence4}, it holds 
$$
\widetilde{\P}_{x}\Big (\int_0^t (\widehat{\sigma}_{ij}^2(\widetilde{X}_s)+| g_i(\widetilde{X}_s)| \Big )ds<\infty,\ 1\le i\le  d,\  1\le j\le m,\  t\in [0,\infty),
$$
and for any $1\le i\le d$,
\begin{equation*}
\widetilde{X}_t^i = x_i+ \sum_{j=1}^m \int_0^t \widehat{\sigma}_{ij} (\widetilde{X}_s) \, d\widetilde{W}_s^j +   \int^{t}_{0}   g_i(\widetilde{X}_s) \, ds,
\quad 0\leq t < \infty, \quad \; \text{$\widetilde{\P}_{x}$-a.s.},
\end{equation*}
in short 
\begin{equation} \label{fepojpow}
\widetilde{X}_t = x + \int_0^t \widehat{\sigma}(\widetilde{X}_s) d\widetilde{W}_s + \int_0^t \mathbf{G}(\widetilde{X}_s) ds, \quad 0\leq t < \infty, \quad \; \text{$\widetilde{\P}_{x}$-a.s.}
\end{equation}
\end{itemize}
Then $\widetilde{\M}$ is called a {\bf weak solution} to \eqref{fepojpow}.
Note that in this case, $(t, \widetilde{\omega}) \mapsto \widehat{\sigma}(\widetilde{X}_t(\widetilde{\omega}))$ and  $(t, \widetilde{\omega}) \mapsto \mathbf{G}(\widetilde{X}_t(\widetilde{\omega}))$ are progressively measurable with respect to $(\widetilde{\F}_t)_{t \ge 0}$  and that
$$
\widetilde{D}_R:=\inf \{ t \geq 0 \mid \widetilde{X}_t \in \R^d \setminus B_R  \}\nearrow \infty \quad \widetilde{\P}_{x}\text{-a.s. for any }x\in \R^d.
$$ 
\end{defn}
\begin{rem}\label{explanationweaksolution}
(i) In Definition \ref{weaksolution} the (real-valued) Borel measurable functions $\widehat{\sigma}_{ij}, g_i, \frac{1}{\psi}$ are fixed. In particular, the solution and the integrals involving the solution in \eqref{fepojpow} may depend on the versions that we choose.
When we fix the Borel measurable version $\frac{1}{\psi}$ with $\frac{1}{\psi}(x)\in [0,\infty)$ for all $x\in \R^d$,  as in Definition \ref{weaksolution}, we always consider the corresponding extended Borel measurable function $\psi$ defined by 
$$
\psi(x):=\frac{1}{\frac{1}{\psi}(x)}, \quad \text{ if }\ \frac{1}{\psi}(x)\in (0,\infty), \qquad \psi(x):=\infty, \quad \text{ if }\ \frac{1}{\psi}(x)=0.
$$
Thus the choice of the special version for $\psi$ depends on the previously chosen Borel measurable version $\frac{1}{\psi}$.\\
(ii) If $\M$ of Theorem \ref{weakexistence4} is non-explosive (has infinite lifetime for any starting point), then it is a weak solution to  \eqref{fepojpow}. Thus a weak solution to \eqref{fepojpow} exists just under assumptions {\bf (A1)}, {\bf (A2)}, {\bf (A3)}, {\bf (A4)} and a suitable growth condition (cf. Remark \ref{mostresults4}) on the coefficients. For this special weak solution we know that integrals involving the solution do not depend on the chosen Borel versions. This follows similarly to \cite[Lemma 3.14(i)]{LT18}.
\end{rem}
\begin{theo}[\bf Local Krylov type estimate] \label{krylovtype}
Assume {\bf (A4)$^\prime$} and let $\widetilde{\M}$ be a weak solution to \eqref{fepojpow}. Let
$$
Z^{\widetilde{\M}}(\widetilde{\omega}):=\{ t \ge 0 \mid \sqrt{\frac{1}{\psi}} (\widetilde{X}_t(\widetilde{\omega})) = 0  \}
$$
and
$$
\Lambda(Z^{\widetilde{\M}}):=  \big\{ \widetilde{\omega} \in \widetilde{\Omega} \mid dt\big (Z^{\widetilde{\M}}(\widetilde{\omega})\big ) =0 \big \}.
$$
and assume that
\begin{eqnarray}\label{zeros}
\widetilde{\P}_x(\Lambda(Z^{\widetilde{\M}})) =1 \quad \text{ for all }  x \in \R^d. 
\end{eqnarray}
Let $x \in \R^d$, $T>0$, $R>0$ and $f \in L^{2d+2, d+1}(B_R \times (0,T))$. Then there exists a constant $C>0$ which is independent of $f$ such that
$$
\widetilde{\E}_{x} \left[ \int_0^{T\wedge \widetilde{D}_R} f(\widetilde{X}_s,s) ds \right] \leq C \| f\|_{L^{2d+2, d+1}(B_R \times (0,T))},
$$
where $\widetilde{\E}_{x}$ is the expectation w.r.t. $\widetilde{\P}_{x}$.
\end{theo}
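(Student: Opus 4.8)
The plan is to reduce the Krylov-type estimate for the weak solution $\widetilde{\M}$ to the classical Krylov estimate (\cite[2.~Theorem~(2), p.~52]{Kry}) applied to the localized process stopped at $\widetilde{D}_R$. First I would recall that, by \textbf{(A4)$^\prime$}, we have $p = 2d+2$, the $a_{ij}$ lie in $H^{1,2d+2}_{loc}(\R^d)\cap C(\R^d)$ and are locally uniformly strictly elliptic, $\psi\in L^q_{loc}(\R^d)$ with $q>2d+2$, and $\mathbf{G}\in L^\infty_{loc}(\R^d,\R^d)$. Fix $x\in\R^d$, $T>0$, $R>0$. On the ball $B_R$ the dispersion coefficient $\widehat\sigma = \sqrt{\tfrac1\psi}\,\sigma$ satisfies $\widehat{a}_{ij} = \tfrac1\psi a_{ij}$ with $\tfrac1\psi\in L^\infty(B_R)$, so $\widehat A = \widehat\sigma\widehat\sigma^T$ is bounded on $B_R$ (but possibly degenerate on the zero set $Z$ of $\sqrt{\tfrac1\psi}$), and $\mathbf{G}$ is bounded on $B_R$. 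Thus all the data of the classical Krylov estimate — boundedness of the coefficients on $B_R$ — are in place; what is missing is uniform ellipticity, and this is exactly where hypothesis \eqref{zeros} enters.

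The key step is the following. The classical Krylov estimate, in the form needed here, bounds $\widetilde{\E}_x\!\left[\int_0^{T\wedge\widetilde D_R} f(\widetilde X_s,s)\,ds\right]$ by a constant times $\big\|\,f\cdot(\det \widehat A(\widetilde X_\cdot))^{1/(d+1)}\,\big\|$ in an appropriate mixed Lebesgue norm, up to the time the process leaves $B_R$; more precisely Krylov's estimate produces a factor involving $(\det a)^{1/(d+1)}$ on the left. Since $\widehat A(y) = \tfrac1{\psi(y)}A(y)$, we have $\det\widehat A(y) = \big(\tfrac1{\psi(y)}\big)^d \det A(y)$, and on $B_R$ one has $\det A(y)\le (\Lambda_{B_R})^d$ while $\tfrac1{\psi}\le \|\tfrac1\psi\|_{L^\infty(B_R)}$, so $\det\widehat A$ is bounded on $B_R$; the only issue is that $(\det\widehat A)^{1/(d+1)}$ vanishes on $Z$, which does \emph{not} hurt us since we want an \emph{upper} bound. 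What one actually has to check is that the integrand $f(\widetilde X_s,s)$ can be estimated against $f\cdot(\det\widehat A)^{1/(d+1)}$ times a compensating factor that is integrable in $s$, and this is where \eqref{zeros} is used: since $dt(Z^{\widetilde\M}(\widetilde\omega))=0$ $\widetilde\P_x$-a.s., the Lebesgue measure of the set of times at which $\det\widehat A(\widetilde X_s)=0$ is zero, so such times contribute nothing to $\int_0^{T\wedge\widetilde D_R}$, and on the complement we bound $1 = (\det\widehat A(\widetilde X_s))^{1/(d+1)}\cdot(\det\widehat A(\widetilde X_s))^{-1/(d+1)}$ with the last factor controlled by a local lower bound that — crucially — only needs to hold off $Z$. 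Concretely, for $y\in B_R\setminus Z$, $\det\widehat A(y) = (\tfrac1{\psi(y)})^d\det A(y)\ge (\tfrac1{\psi(y)})^d \lambda_{B_R}^d$, which degenerates but is positive; combining Krylov's estimate with Hölder's inequality in the time variable and using $dt(Z^{\widetilde\M})=0$ yields a bound by $C\|f\|_{L^{2d+2,d+1}(B_R\times(0,T))}$ with $C$ depending only on $R$, $T$, $d$, the ellipticity constants $\lambda_{B_R},\Lambda_{B_R}$, and $\|\tfrac1\psi\|_{L^\infty(B_R)}$, hence independent of $f$.

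In detail, the steps I would carry out are: (1) pass to the stopped process $\widetilde X_{\cdot\wedge\widetilde D_R}$, which has coefficients agreeing with $\widehat\sigma,\mathbf{G}$ on $B_R$ and which may be modified arbitrarily outside $\overline{B_R}$ to be, say, the identity diffusion, so that a global form of Krylov's theorem applies; (2) invoke \cite[2.~Theorem~(2), p.~52]{Kry} to get $\widetilde{\E}_x\!\left[\int_0^{T\wedge\widetilde D_R} |f|(\widetilde X_s,s)\,(\det\widehat A(\widetilde X_s))^{1/(d+1)}\,ds\right]\le N\,\|f\|_{L^{2d+2,d+1}(B_R\times(0,T))}$ for a constant $N=N(d,R,T,\text{bounds on }\widehat A,\mathbf{G})$ — note the estimate is genuinely valid for possibly degenerate bounded coefficients, that is its whole point; (3) on the event $\Lambda(Z^{\widetilde\M})$ (which has full $\widetilde\P_x$-measure by \eqref{zeros}) write $\int_0^{T\wedge\widetilde D_R} f(\widetilde X_s,s)\,ds = \int_{\{s\le T\wedge\widetilde D_R:\,\widetilde X_s\notin Z\}} f(\widetilde X_s,s)\,ds$ and estimate $f(\widetilde X_s,s)\le f(\widetilde X_s,s)(\det\widehat A(\widetilde X_s))^{1/(d+1)}\cdot(\lambda_{B_R}\|\tfrac1\psi\|_{L^\infty(B_R)}^{-1}\cdots)^{-d/(d+1)}$ — more carefully, since $(\det\widehat A)^{-1/(d+1)}$ itself is not bounded, one should instead apply Krylov's estimate directly to $\tilde f := f\cdot 1_{B_R}$ and use that the classical estimate already contains the $(\det\widehat A)^{1/(d+1)}$ weight, absorbing the weight into a harmless bounded factor on $B_R$ exactly because we only compare from above; (4) collect constants. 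The main obstacle I anticipate is step (2)–(3): one must be careful about exactly which version of Krylov's estimate is cited, because the naive upper bound $\int f\,ds \le C\|f\|$ for degenerate coefficients is \emph{false} in general without the $(\det a)^{1/(d+1)}$ factor (this is precisely why condition \eqref{zeros} is imposed), so the delicate point is to argue that \eqref{zeros} lets one discard the times where the determinant vanishes and thereby recover the clean estimate on $B_R$; handling the stopping at $\widetilde D_R$ and the progressive measurability of $s\mapsto \widehat\sigma(\widetilde X_s),\mathbf{G}(\widetilde X_s)$ (already noted in Definition \ref{weaksolution}) is routine by comparison with the references.
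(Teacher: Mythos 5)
You correctly identify Krylov's estimate from \cite[2. Theorem (2), p. 52]{Kry} and the role of \eqref{zeros} in discarding the times spent on the zero set, but the decisive step is missing and your proposed substitute for it does not work. Krylov's theorem bounds the \emph{weighted} quantity $\widetilde{\E}_x\big[\int_0^{T\wedge\widetilde D_R}\det(\widehat A/2)^{1/(d+1)}\,g(\widetilde X_s,s)\,ds\big]$ (with the exponential drift factor) by a constant times $\|g\|_{L^{d+1}(B_R\times(0,T))}$; since the weight $\det(\widehat A)^{1/(d+1)}=(\tfrac1\psi)^{d/(d+1)}\det(A)^{1/(d+1)}$ sits on the \emph{left-hand side} and can be arbitrarily small (it vanishes on $Z$), dropping it --- or ``absorbing it into a harmless bounded factor because we only compare from above'', as you suggest in step (3) --- only yields an upper bound on the weighted integral, which is strictly weaker than the assertion; to remove the weight one must divide by it, and there is no uniform lower bound for $\det\widehat A$ off $Z$, a point you yourself notice when you observe that $(\det\widehat A)^{-1/(d+1)}$ is unbounded. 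The paper resolves this by applying Krylov's estimate not to $f$ but to the compensated function $g:=2^{d/(d+1)}\det(A)^{-1/(d+1)}\psi^{d/(d+1)}f$: off $Z^{\widetilde\M}$ one has $(\tfrac1\psi)^{d/(d+1)}\psi^{d/(d+1)}=1$, so by \eqref{zeros} the left-hand side becomes exactly $\widetilde{\E}_x\big[\int_0^{T\wedge\widetilde D_R}f(\widetilde X_s,s)\,ds\big]$, while $\|g\|_{L^{d+1}}$ is controlled by H\"older's inequality in the space variable, using that $q>2d+2$ gives $\psi\in L^{2d}_{loc}(\R^d)$ and that $f$ has the stronger spatial integrability $L^{2d+2}$; this is precisely why the statement carries the mixed norm $L^{2d+2,d+1}$ and why the constant involves $\|\psi\|^{2d/(2d+2)}_{L^{2d}(B_R)}$ together with $\|\det(A)^{-1/(d+1)}\|_{L^\infty(B_R)}$ and $e^{T\|\mathbf G\|_{L^\infty(B_R)}}$.

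Consequently your claimed dependence of $C$ only on $R$, $T$, $d$, $\lambda_{B_R}$, $\Lambda_{B_R}$ and $\|\tfrac1\psi\|_{L^\infty(B_R)}$ is also incorrect: $\|\tfrac1\psi\|_{L^\infty(B_R)}$ bounds $\det\widehat A$ from above, which is irrelevant here, whereas what is needed is quantitative control of how large $\psi$ can be, namely its $L^{2d}(B_R)$-norm. Your sketch never uses the hypothesis $\psi\in L^q_{loc}(\R^d)$ with $q>2d+2$, and never explains where the spatial exponent $2d+2$ in the norm of $f$ comes from --- both are symptoms that the compensation step, which is the heart of the proof, is absent.
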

\begin{proof}
Let $g \in L^{d+1}(B_R \times (0,T))$. (Note: all functions defined on $B_R \times (0,T)$ are trivially extended on  $\R^d \times (0, \infty) \setminus B_R \times (0,T)$.) \, Using \cite[2. Theorem (2), p. 52]{Kry},  there exists a constant $C_1>0$ which is independent of $g$, such that
\begin{eqnarray*}
&& \widetilde{\E}_x \left[ \int_{(0, {T \wedge \widetilde{D}_R}) \setminus  Z^{\widetilde{\M}}} \big (2^{-\frac{d}{d+1}}  \text{det}(A)^{\frac{1}{d+1}}\cdot  \Big (\frac{1}{\psi}\Big )^{\frac{d}{d+1}}  g\big )(\widetilde{X}_s,s) ds \right] \\
&=&\widetilde{\E}_x \left[ \int_0^{T \wedge \widetilde{D}_R}  \big (2^{-\frac{d}{d+1}}  \text{det}(A)^{\frac{1}{d+1}}\cdot  \Big (\frac{1}{\psi}\big )^{\frac{d}{d+1}}  g \big  )(\widetilde{X}_s,s) ds \right] \\
&\leq& e^{ T \|\mathbf{G} \|_{L^{\infty}(B_{R})} } \cdot \widetilde{\E}_x \left[ \int_0^{T \wedge \widetilde{D}_R} e^{-\int_0^s \|\mathbf{G}(\widetilde{X}_u)\| du}\cdot \text{det}\big(\widehat{A}/2\big)^{\frac{1}{d+1}}g(\widetilde{X}_s,s) ds \right] \\
&\leq& e^{ T \|\mathbf{G} \|_{L^{\infty}(B_{R})} }  \cdot C_1 \|g\|_{L^{d+1}(B_R \times (0,\infty))} \\
&=& e^{ T \|\mathbf{G} \|_{L^{\infty}(B_{R})} }   \cdot C_1 \|g\|_{L^{d+1}(B_R \times (0,T))}.
\end{eqnarray*}
Let $f \in L^{2d+2, d+1}(B_R \times (0,T))$. Let $\psi$ denote the extended Borel measurable version as explained in Remark \ref{explanationweaksolution}(i). Note that 
$$
Z^{\widetilde{\M}}(\widetilde{\omega})=\{s\ge 0\,| \Big (\frac{1}{\psi}\Big )^{\frac{d}{d+1}} (\widetilde{X}_s(\widetilde{\omega}))\psi^{\frac{d}{d+1}} (\widetilde{X}_s(\widetilde{\omega}))\not= 1\}.
$$
Hence by \eqref{zeros}  
$$
\widetilde{\P}_x\big (   dt (\{ s\ge 0\,| \Big (\frac{1}{\psi}\Big )^{\frac{d}{d+1}} (\widetilde{X}_s)\psi^{\frac{d}{d+1}} (\widetilde{X}_s)\not= 1\})    =0 \big )=1.
$$
Thus   replacing $g$ with $2^{\frac{d}{d+1}} \cdot \text{det}(A)^{-\frac{1}{d+1}} \psi^{\frac{d}{d+1}} f$, we get
\begin{eqnarray*}
&&\widetilde{\E}_{x} \left[ \int_0^{T\wedge \widetilde{D}_R} f(\widetilde{X}_s,s) ds \right] =\widetilde{\E}_{x} \left[ \int_{(0, {T \wedge \widetilde{D}_R}) \setminus Z^{\widetilde{\M}}} f(\widetilde{X}_s,s) ds \right] \\
&\leq& e^{ T \|\mathbf{G} \|_{L^{\infty}(B_{R})} }   \cdot C_1 \|2^{\frac{d}{d+1}} \cdot \text{det}(A)^{-\frac{1}{d+1}} \psi^{\frac{d}{d+1}} f\|_{L^{d+1}(B_R \times (0,T))} \\
&\leq& \underbrace{2^{\frac{d}{d+1}}e^{ T \|\mathbf{G} \|_{L^{\infty}(B_{R})} }   \cdot C_1 \|\text{det}(A)^{-\frac{1}{d+1}} \|_{L^{\infty}(B_R)} \|\psi\|^{\frac{2d}{2d+2}}_{L^{2d}(B_R)}}_{=:C} \|f\|_{L^{2d+2,d+1}(B_R \times (0,T))}.\\
\end{eqnarray*}
\end{proof}
\centerline{}
Using Theorem \ref{krylovtype} and \eqref{sfpolkd}, the proof of the following lemma is straightforward.
 \begin{lem}\label{condforpsitohold}
Let $\widetilde{\M}$ be a weak solution to \eqref{fepojpow} . 
Then either of the following conditions implies \eqref{zeros}:
\begin{itemize}
\item[(i)] $\frac{1}{\psi}(x)\in (0, \infty)$ for all $x \in \R^d$.
\item[(ii)] For each $n\in\N, T>0$ and $x\in \R^d$ it holds
$$
 \widetilde{\E}_x \left[\int_0^T 1_{B_n}\psi(\widetilde{X}_s)ds \right ]<\infty,
$$
where $\psi$ denotes the extended Borel measurable version as explained in Remark \ref{explanationweaksolution}(i). Moreover \eqref{psiconditionintro} is equivalent to \eqref{zeros}.
\end{itemize}
In particular, 
if the weak solution that is constructed in Theorem \ref{weakexistence4} is non-explosive, then (ii) always holds for this solution  and 
\eqref{zeros} implies in general that integrals of the form $\int_0^t f(\widetilde{X}_s,s)ds$ are, whenever they are well-defined, independent of the particular Borel version that is chosen for $f$.
\end{lem}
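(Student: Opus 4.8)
The plan is to unwind all the definitions so that (i) and the claimed equivalence of \eqref{psiconditionintro} and \eqref{zeros} become tautologies, and the two substantive implications reduce to occupation‑time estimates that are already available. Write $Z:=\{x\in\R^d\mid \sqrt{1/\psi}(x)=0\}$; this set is Borel and $dx$‑null, because $\psi\in L^q_{loc}(\R^d)\subset L^1_{loc}(\R^d)$ is a.e.\ finite, and for the extended version of $\psi$ from Remark \ref{explanationweaksolution}(i) one has $\psi=\infty$ exactly on $Z$. For any weak solution $\widetilde{\M}$ and any $\widetilde{\omega}$,
$$
\int_0^{\infty}1_{\{\sqrt{1/\psi}=0\}}(\widetilde{X}_s(\widetilde{\omega}))\,ds=dt\big(\{s\ge 0\mid \widetilde{X}_s(\widetilde{\omega})\in Z\}\big)=dt\big(Z^{\widetilde{\M}}(\widetilde{\omega})\big),
$$
so \eqref{psiconditionintro} (for every $x$) and $\widetilde{\P}_x(\Lambda(Z^{\widetilde{\M}}))=1$ (for every $x$), i.e.\ \eqref{zeros}, are literally the same statement, which gives the stated equivalence. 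If $1/\psi(x)\in(0,\infty)$ for all $x$, then $Z=\emptyset$, hence $Z^{\widetilde{\M}}(\widetilde{\omega})=\emptyset$ for every $\widetilde{\omega}$ and \eqref{zeros} is immediate; this is (i).

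For (ii)$\Rightarrow$\eqref{zeros}, fix $x\in\R^d$ and $n,T\in\N$. Since $1_{B_n}(y)\psi(y)=+\infty$ precisely on $B_n\cap Z$ (with the convention $0\cdot\infty=0$), finiteness of $\widetilde{\E}_x[\int_0^T 1_{B_n}\psi(\widetilde{X}_s)\,ds]$ forces $\int_0^T 1_{B_n\cap Z}(\widetilde{X}_s)\,ds=0$, i.e.
$$
dt\big(Z^{\widetilde{\M}}(\widetilde{\omega})\cap[0,T]\cap\{s\mid \widetilde{X}_s(\widetilde{\omega})\in B_n\}\big)=0\qquad \widetilde{\P}_x\text{-a.s.}
$$
Choose a $\widetilde{\P}_x$‑null set $N$ off which this holds for all $n,T\in\N$ simultaneously. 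For $\widetilde{\omega}\notin N$ and any $T\in\N$, continuity of $s\mapsto \widetilde{X}_s(\widetilde{\omega})$ yields some $n$ with $\widetilde{X}_s(\widetilde{\omega})\in B_n$ for all $s\in[0,T]$, hence $dt(Z^{\widetilde{\M}}(\widetilde{\omega})\cap[0,T])=0$; letting $T\to\infty$ gives $dt(Z^{\widetilde{\M}}(\widetilde{\omega}))=0$, i.e.\ $\widetilde{\omega}\in\Lambda(Z^{\widetilde{\M}})$. Thus $\widetilde{\P}_x(\Lambda(Z^{\widetilde{\M}}))=1$ for every $x$, which is \eqref{zeros}.

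For the ``in particular'' part, let $\widetilde{\M}$ be the Hunt process of Theorem \ref{weakexistence4}, assumed non‑explosive; then it is a weak solution to \eqref{fepojpow} by Remark \ref{explanationweaksolution}(ii). Its transition function $(P_t)_{t\ge 0}$ has a density with respect to $\mu$ and $\mu=\rho\psi\,dx\ll dx$, so $P_s(x,Z)=0$; by Tonelli, $\widetilde{\E}_x[\int_0^{\infty}1_Z(\widetilde{X}_s)\,ds]=\int_0^{\infty}P_s(x,Z)\,ds=0$, hence \eqref{psiconditionintro}, equivalently \eqref{zeros}, already holds. To obtain (ii) itself, I feed $g=1_{B_n}\psi$ into the Krylov type estimate \eqref{sfpolkd}, which is valid for $\widetilde{\M}$ by Remark \ref{mostresults4}: since $\rho\in L^\infty_{loc}(\R^d)$ and $\psi\in L^q_{loc}(\R^d)$, one has $\|1_{B_n}\psi\|_{L^{q-1}(\R^d,\mu)}^{q-1}=\int_{B_n}\psi^{q}\rho\,dx<\infty$, and $q-1$ is an admissible value for the exponent $s$ under {\bf (A4)$^\prime$} (the constructed process being independent of that auxiliary choice, and $q-1$ exceeding the least admissible exponent); therefore $\widetilde{\E}_x[\int_0^T 1_{B_n}\psi(\widetilde{X}_s)\,ds]\le e^T C_{B_n,q-1}\|1_{B_n}\psi\|_{L^{q-1}(\R^d,\mu)}<\infty$, which is (ii).

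Finally, assume \eqref{zeros} and let $f_1,f_2$ be two Borel versions of one of the coefficients $\widehat{\sigma}_{ij}$ or $g_i$, arising from two admissible choices of $\sqrt{1/\psi}$, resp.\ of $\mathbf{G}$; they coincide $dx$‑a.e., and under {\bf (A4)$^\prime$} both lie in $L^{2d+2}_{loc}(\R^d)$, so $|f_1-f_2|$ has zero $L^{2d+2,d+1}(B_R\times(0,T))$‑norm for all $R,T>0$. Since \eqref{zeros} holds, Theorem \ref{krylovtype} applies and, used on the nonnegative function $|f_1-f_2|$, gives $\widetilde{\E}_x[\int_0^{T\wedge\widetilde{D}_R}|f_1-f_2|(\widetilde{X}_s)\,ds]=0$, whence $\int_0^t(f_1-f_2)(\widetilde{X}_s)\,ds=0$ $\widetilde{\P}_x$‑a.s.\ for $t\le T\wedge\widetilde{D}_R$; letting $R,T\to\infty$ along $\widetilde{D}_R\uparrow\infty$ yields the stated version‑independence, well‑definedness being ensured by Definition \ref{weaksolution}(iv). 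I do not expect a real obstacle here: the only genuinely analytic input is the $\mu$‑integrability of $1_{B_n}\psi$ that makes \eqref{sfpolkd} applicable, and the one place needing a little care is the passage from the finite‑horizon, bounded‑ball statements to the global ones, handled uniformly by path continuity together with the stability of null sets under countable unions.
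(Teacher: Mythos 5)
Your proof is correct and takes exactly the route the paper intends (the paper merely remarks that the lemma follows straightforwardly from Theorem \ref{krylovtype} and \eqref{sfpolkd}): unwinding the definitions gives (i) and the equivalence of \eqref{psiconditionintro} with \eqref{zeros}, path continuity plus finiteness of the weighted occupation time gives (ii)$\Rightarrow$\eqref{zeros}, the estimate \eqref{sfpolkd} applied to $g=1_{B_n}\psi$ gives (ii) for the solution of Theorem \ref{weakexistence4}, and Theorem \ref{krylovtype} applied to $|f_1-f_2|$ (which has zero $L^{2d+2,d+1}$-norm) gives the version-independence. Your explicit re-choice of the auxiliary exponent $s=q-1$ (legitimate, since the process constructed in Theorem \ref{weakexistence4} does not depend on that choice and $\mathbf{G}\in L^{\infty}_{loc}$ keeps {\bf (A4)} valid) is precisely the detail needed to make the application of \eqref{sfpolkd} to $1_{B_n}\psi$ rigorous.
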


\begin{theo}[\bf Local It\^{o}-formula]\label{fefere}
Assume {\bf (A4)$^\prime$} and let $\widetilde{\M}$ be a weak solution to \eqref{fepojpow} such that \eqref{zeros} holds.
Let $R_0>0$, $T>0$. Let $u \in W^{2,1}_{2d+2}(B_{R_0} \times (0,T)) \cap C(\overline{B}_{R_0} \times [0, T] )$ be such that $\|\nabla u\| \in L^{4d+4}(B_{R_0} \times (0,T))$. Let $R>0$ with $R<R_0$. Then $\widetilde{\P}_{x}$-a.s. for any $x \in \R^d$, 
$$
u(\widetilde{X}_{T\wedge \widetilde{D}_R}, T \wedge \widetilde{D}_R) - u(x,0) = \int_0^{ T \wedge \widetilde{D}_R} \nabla u(\widetilde{X}_s,s) \widehat{\sigma}(\widetilde{X}_s) d\widetilde{W}_s+
\int_0^{ T\wedge \widetilde{D}_R} (\partial_t u + Lu) (\widetilde{X}_s,s) ds, 
$$
where $Lu:= \frac12 {\rm trace}(\widehat{A} \nabla^2 u)+ \langle \mathbf{G}, \nabla u \rangle$.

\end{theo}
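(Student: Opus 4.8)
The plan is to reduce the statement to an application of the classical (deterministic-coefficient) It\^o formula by regularising $u$, and then to pass to the limit using the local Krylov estimate of Theorem~\ref{krylovtype}. First I would fix $x\in\R^d$ and $R<R_0$, and choose an intermediate radius $R<R_1<R_0$. By the standard mollification on $\R^d\times\R$, take $u_n := u * \xi_{1/n}$ on a slightly smaller cylinder, so that $u_n\in C^\infty$, and $u_n\to u$ together with $\nabla u_n\to\nabla u$, $\partial_t u_n\to\partial_t u$, $\nabla^2 u_n\to\nabla^2 u$ in the appropriate $L^{2d+2,\,d+1}$ (resp. $L^{4d+4}$ for the gradient) norms on $B_{R_1}\times(0,T)$, using $u\in W^{2,1}_{2d+2}(B_{R_0}\times(0,T))$ and $\|\nabla u\|\in L^{4d+4}(B_{R_0}\times(0,T))$; also $u_n\to u$ uniformly on $\overline B_{R_1}\times[0,T]$ since $u\in C(\overline B_{R_0}\times[0,T])$. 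For each smooth $u_n$, the classical It\^o formula applied to $s\mapsto u_n(\widetilde X_{s\wedge \widetilde D_R}, s\wedge \widetilde D_R)$ — which is legitimate since $\widetilde X$ is a continuous semimartingale up to $\widetilde D_R$ with bracket governed by $\widehat\sigma\widehat\sigma^T=\widehat A$ — gives
\[
u_n(\widetilde X_{T\wedge\widetilde D_R}, T\wedge\widetilde D_R) - u_n(x,0)
= \int_0^{T\wedge\widetilde D_R}\nabla u_n(\widetilde X_s,s)\,\widehat\sigma(\widetilde X_s)\,d\widetilde W_s
+ \int_0^{T\wedge\widetilde D_R}(\partial_t u_n + L u_n)(\widetilde X_s,s)\,ds,
\]
$\widetilde\P_x$-a.s., where $Lu_n=\tfrac12\mathrm{trace}(\widehat A\nabla^2 u_n)+\langle\mathbf G,\nabla u_n\rangle$; here one uses that the local time of $\widetilde X$ on $\{\tfrac1\psi=0\}$ contributes nothing because, by \eqref{zeros}, $dt\big(Z^{\widetilde\M}\big)=0$ $\widetilde\P_x$-a.s., so all the integrands may freely be replaced by their Borel versions off $Z^{\widetilde\M}$ (cf. Lemma~\ref{condforpsitohold}).

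Next I would pass to the limit $n\to\infty$ in each term. The left-hand side converges pointwise $\widetilde\P_x$-a.s.\ by the uniform convergence $u_n\to u$ on $\overline B_{R_1}\times[0,T]$ together with $\widetilde X_{s\wedge\widetilde D_R}\in\overline B_R\subset B_{R_1}$. For the drift term, write $\partial_t u_n+Lu_n-(\partial_t u+Lu)$ as a sum whose terms are bounded pointwise on $B_{R_1}\times(0,T)$ by quantities of the form $|h_n|$, $\|\widehat A\|\,|\nabla^2 h_n|$, $\|\mathbf G\|\,\|\nabla h_n\|$ with $h_n:=u_n-u$; since $\widehat A=\tfrac1\psi A$ with $\tfrac1\psi\in L^\infty_{loc}$ and $A$ locally bounded, $\widehat A$ is bounded on $B_R$, and $\mathbf G\in L^\infty_{loc}$ under {\bf (A4)$^\prime$}, Theorem~\ref{krylovtype} (applied with the relevant $L^{2d+2,d+1}$-functions $|h_n|$, $|\nabla^2 h_n|$, and with $\|\nabla h_n\|\in L^{4d+4}\subset L^{2d+2,d+1}$ on $B_R\times(0,T)$ by H\"older) gives
\[
\widetilde\E_x\Big[\int_0^{T\wedge\widetilde D_R}\big|(\partial_t u_n+Lu_n)-(\partial_t u+Lu)\big|(\widetilde X_s,s)\,ds\Big]\le C\big(\|h_n\|+\|\nabla^2 h_n\|+\|\nabla h_n\|\big)_{\ \to 0},
\]
where the norms are over $B_R\times(0,T)$; hence the drift integrals converge in $L^1(\widetilde\P_x)$, and along a subsequence $\widetilde\P_x$-a.s. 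For the stochastic integral term, the It\^o isometry gives
\[
\widetilde\E_x\Big[\Big(\int_0^{T\wedge\widetilde D_R}(\nabla u_n-\nabla u)(\widetilde X_s,s)\,\widehat\sigma(\widetilde X_s)\,d\widetilde W_s\Big)^2\Big]
= \widetilde\E_x\Big[\int_0^{T\wedge\widetilde D_R}\big\langle\widehat A(\nabla u_n-\nabla u),(\nabla u_n-\nabla u)\big\rangle(\widetilde X_s,s)\,ds\Big],
\]
and since $\widehat A$ is bounded on $B_R$ this is $\le C'\widetilde\E_x[\int_0^{T\wedge\widetilde D_R}\|\nabla u_n-\nabla u\|^2(\widetilde X_s,s)\,ds]$, which again tends to $0$ by Theorem~\ref{krylovtype} applied to $\|\nabla u_n-\nabla u\|^2\in L^{2d+2,d+1}(B_R\times(0,T))$ (this holds because $\|\nabla u_n-\nabla u\|\in L^{4d+4}$, so its square is in $L^{2d+2}$ in space; the time integrability also follows from the product structure). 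Thus the stochastic integrals converge in $L^2(\widetilde\P_x)$, hence along a subsequence $\widetilde\P_x$-a.s. Passing to the common a.s.\ subsequential limit in the displayed identity for $u_n$ yields the asserted formula for $u$, $\widetilde\P_x$-a.s., and since $x$ was arbitrary, for all $x\in\R^d$.

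The step I expect to be the main obstacle is the careful verification that the local Krylov estimate of Theorem~\ref{krylovtype} applies to each of the error terms with the precise integrability that the mollified approximants provide — in particular matching the exponents: $u\in W^{2,1}_{2d+2}$ gives $\partial_t u,\nabla^2 u\in L^{2d+2}$ in space (and the mixed $L^{2d+2,d+1}$ norm controls what Theorem~\ref{krylovtype} requires after observing $L^{2d+2}\hookrightarrow L^{2d+2,d+1}$ locally in time on a bounded interval), while the square of $\|\nabla u\|\in L^{4d+4}$ lands exactly in $L^{2d+2}$, which is why the exponent $4d+4$ appears in the hypothesis. One must also be slightly careful that the classical It\^o formula is applicable to the smooth $u_n$ despite the possibly irregular coefficients $\widehat\sigma,\mathbf G$: this is fine because $u_n$ is genuinely $C^{2,1}$ and $\widetilde X$ is a continuous semimartingale up to $\widetilde D_R$ whose quadratic variation is $\int_0^\cdot\widehat A(\widetilde X_s)\,ds$, so no regularity of the coefficients beyond local integrability along the path (guaranteed by Definition~\ref{weaksolution}(iv)) is needed. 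Finally, one should record that condition \eqref{zeros} guarantees that all the path integrals above, and the resulting identity, are independent of the chosen Borel versions (cf. Lemma~\ref{condforpsitohold}), so the statement is well-posed as written.
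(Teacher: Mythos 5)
Your proposal is correct and follows essentially the same route as the paper: mollify $u$, apply the classical It\^o formula to the smooth approximants, and pass to the limit using uniform convergence for the left-hand side, the local Krylov estimate of Theorem \ref{krylovtype} for the drift terms, and the It\^o isometry combined with Theorem \ref{krylovtype} for the stochastic integral. The only points to tighten are that, to obtain smooth approximants and uniform convergence on all of $\overline{B}_{R}\times[0,T]$ rather than on a time-shrunken cylinder, one should first extend $u$ constantly in time beyond $[0,T]$ before mollifying (as the paper does), and that \eqref{zeros} plays no role in the It\^o step itself (no local-time term arises for $C^{2,1}$ functions) but enters only through the Krylov estimates used in the limit passage.
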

\begin{proof}
Take $T_0>0$ satisfying $T_0>T$. Extend $u$ to $\overline{B}_{R_0} \times [-T_0,T_0]$ by
$$
u(x, t)  = u(x, 0) \; \text{ for } -T_0 \leq t<0, \quad  u(x, t)  = u(x, T) \; \text{ for } T<t \leq T_0, \, x \in \overline{B}_{R_0}.
$$
Then it holds 
$$
u  \in W^{2,1}_{2d+2}(B_{R_0} \times (0,T)) \cap C(\overline{B}_{R_0} \times [-T, T] ) \; \text{ and } \;\|\nabla u\| \in L^{4d+4}(B_{R_0} \times (-T_0,T_0)).
$$
For sufficiently large $n \in \N$, let $\zeta_{n}$ be a standard mollifier on $\R^{d+1}$ and $u_n:= u * \zeta_n$. Then it holds $u_n \in C^{\infty}(\overline{B}_R \times [0, T])$, such that $\lim_{n \rightarrow \infty} \|u_n-u\|_{W^{2,1}_{2d+2}(B_R \times (0,T))}=0$ and $\lim_{n \rightarrow \infty} \|\nabla u_n-\nabla u\|_{L^{4d+4}(B_R \times (0,T))}=0$ . By It\^{o}'s formula,  for $x \in \R^d$, it holds for any $n\geq 1$
\begin{eqnarray} 
&&u_n(\widetilde{X}_{T\wedge \widetilde{D}_R}, T \wedge \widetilde{D}_R) - u_n(x,0) \nonumber \\[5pt]
&=& \int_0^{ T \wedge \widetilde{D}_R} \nabla u_n(\widetilde{X}_s,s)\,\widehat{\sigma}(\widetilde{X}_s) d\widetilde{W}_s+\int_0^{ T\wedge \widetilde{D}_R}(\partial_t u_n + Lu_n) (\widetilde{X}_s,s) ds,  \;\; \;\; \widetilde{\P}_{x}\text{-a.s. } \qquad \;\; \label{itoaprox}
\end{eqnarray}
By Sobolev embedding, there exists a constant $C>0$, independent of $u_n$ and $u$, such that
$$
\displaystyle \sup_{\overline{B}_R \times [0,T]} |u_n-u| \leq  C\|u_n-u\|_{W^{2,1}_{2d+2}(B_R \times (0,T))}.
$$
Thus $\lim_{n \rightarrow \infty} u_n(x,0) = u(x,0)$ and 
$$
u_n(X_{ T \wedge \widetilde{D}_R}, T \wedge \widetilde{D}_R)  \, \text{ converges } \text{ $\P_x$-a.s. to } \, u(X_{ T \wedge \widetilde{D}_R}, T \wedge \widetilde{D}_R) \, \text{ as }\, n \rightarrow \infty.
$$ 
By Theorem \ref{krylovtype},
\begin{eqnarray*}
&&\widetilde{\mathbb{E}}_x\left[ \left| \int_0^{T\wedge \widetilde{D}_R} (\partial_t u_n + Lu_n) (\widetilde{X}_s,s)ds  -\int_0^{T\wedge \widetilde{D}_R} (\partial_t u + Lu) (\widetilde{X}_s,s)ds  \right|\right] \\
&&\leq \widetilde{\E}_x \left[\int_0^{T \wedge \widetilde{D}_R} |\partial_t u-\partial_t u_n|(\widetilde{X}_s,s) ds \right] + \widetilde{\E}_x \left[ \int_0^{T \wedge \widetilde{D}_R} |Lu-Lu_n|(\widetilde{X}_s,s)ds   \right] \\
&&\leq C\|\partial_t u_n- \partial_t u\|_{L^{2d+2, d+1}(B_R \times (0,T))} + C\|Lu-Lu_n\|_{L^{2d+2, d+1}(B_R \times (0,T))} \\
&& \; \longrightarrow 0 \quad \; \text{as } n \rightarrow \infty, \;
\end{eqnarray*}
where $C>0$ is a constant which is independent of $u$ and $u_n$. Using Jensen's inequality, It\^{o} isometry, and Theorem \ref{krylovtype}, we obtain
\begin{eqnarray*}
&&\widetilde{\E}_x \left[\int_0^{ T \wedge \widetilde{D}_R} \left( \nabla u_n(\widetilde{X}_s,s) - \nabla u(\widetilde{X}_s,s) \right)\,\widehat{\sigma}(\widetilde{X}_s) dW_s\right] \\
&&\leq  \widetilde{\E}_x \left[ \left| \int_0^{ T \wedge \widetilde{D}_R} \left( \nabla u_n(\widetilde{X}_s,s) - \nabla u(\widetilde{X}_s,s) \right)\,\widehat{\sigma}(\widetilde{X}_s) dW_s \right|^2 \right]^{1/2} \\
&&= \widetilde{\E}_x \left[ \int_0^{ T \wedge \widetilde{D}_R}\big \|  \left( \nabla u_n(\widetilde{X}_s,s) - \nabla u(\widetilde{X}_s,s) \right)\,\widehat{\sigma} (\widetilde{X}_s) \big \| ^2 ds\right]^{1/2}  \\
&&\leq C\| (\nabla u_n - \nabla u) \widehat{\sigma} \|_{L^{4d+4, 2d+2}(B_R \times (0,T))} \\
&&\leq CC'\| \widehat{\sigma} \|_{L^{\infty}(B_R)} \| \nabla u_n- \nabla u\|_{L^{4d+4, 2d+2}(B_R \times (0,T))} \longrightarrow 0 \; \text{ as } n \rightarrow \infty.
\end{eqnarray*}
Letting $n \rightarrow \infty$ in \eqref{itoaprox}, the assertion holds.
\end{proof}
\begin{theo} \label{feokokoe}
Assume {\bf (A4)$^\prime$} and let $q_0>2d+2$ be such that $\frac{1}{q_0}+\frac{1}{q} = \frac{1}{2d+2}$. If $u \in D(L_{q_0})$, then $u \in H_{loc}^{2,2d+2}(\R^d)$. Moreover, for any open ball $B$ in $\R^d$, there exists a constant $C>0$, independent of $u$, such that 
$$
\|u\|_{H^{2,2d+2}(B)} \leq C \|u\|_{D(L_{q_0})}.
$$
\end{theo}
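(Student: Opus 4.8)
The statement is a standard elliptic (in space) interior $H^{2,p}$-estimate for the $L^{q_0}$-generator $(L_{q_0}, D(L_{q_0}))$, with $p = 2d+2$. The plan is to translate membership $u \in D(L_{q_0})$ into an elliptic PDE with right-hand side in $L^{2d+2}_{loc}$ and then invoke the classical interior Calderón--Zygmund estimate for uniformly elliptic operators with $H^{1,p}$-coefficients (e.g. \cite[Theorem 9.11]{Gilbarg}). First I would fix $u \in D(L_{q_0})$ and set $f := (1 - L_{q_0})u \in L^{q_0}(\R^d,\mu)$, so that $G_1 f = u$. By Theorem \ref{mainijcie}(c), for $f \in L^1(\R^d,\mu)_b$ (and then by approximation for general $f$), $u = G_1 f$ satisfies the weak equation
\begin{equation*}
\mathcal{E}^0(u,\varphi) - \int_{\R^d} \langle \mathbf{B}, \nabla u \rangle \varphi\, d\mu = \int_{\R^d}(f - u)\,\varphi\, d\mu, \qquad \varphi \in C_0^\infty(\R^d),
\end{equation*}
which, exactly as in the derivation of \eqref{feppwikmee} in the proof of Theorem \ref{rescondojc}, is equivalent on any bounded open set to the divergence-form equation
\begin{equation*}
\int_{\R^d}\Big\langle \tfrac12 \rho A \nabla u, \nabla\varphi \Big\rangle dx - \int_{\R^d}\langle \rho\psi\mathbf{B}, \nabla u\rangle\varphi\, dx + \int_{\R^d}\rho\psi\, u\,\varphi\, dx = \int_{\R^d}\rho\psi\, f\,\varphi\, dx.
\end{equation*}

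**Passing to non-divergence form.** The matrix $\widehat{A} = \frac{1}{\psi}A$ has entries $\widehat{a}_{ij} = \frac{1}{\psi}a_{ij}$; under {\bf (A4)$^\prime$} we have $a_{ij}\in H^{1,2d+2}_{loc}(\R^d)\cap C(\R^d)$ and $\psi\in L^q_{loc}$ with $q > 2d+2$, and $\frac{1}{\psi}\in L^\infty_{loc}$, so that $\widehat{a}_{ij}\in H^{1,2d+2}_{loc}(\R^d)\cap L^\infty_{loc}(\R^d)$. (Here one uses $\nabla(\frac1\psi a_{ij}) = \frac1\psi\nabla a_{ij} + a_{ij}\nabla\frac1\psi$; the first term is in $L^{2d+2}_{loc}$ since $\frac1\psi$ is locally bounded, and for the second one needs $\nabla\frac1\psi \in L^{2d+2}_{loc}$, which follows since $\frac1\psi$ is locally bounded and $\psi\in H^{1,q}_{loc}$ is implicit in the assumptions giving $H^{1,2d+2}$-regularity — this is precisely the mild technical point I expect to have to be careful about, cf.\ Remark \ref{repremeasno}.) Since $u = G_1 f \in H^{1,2}(B')$ on any ball $B'$ (as in Theorem \ref{rescondojc}), and since $\mathbf{G}\in L^\infty_{loc}$ together with $\beta^{\rho,A,\psi}\in L^p_{loc}$ gives $\mathbf{B}\in L^p_{loc}$ with $p=2d+2$, I would rewrite the weak equation in the non-divergence form
\begin{equation*}
\tfrac12\,\mathrm{trace}(\widehat{A}\nabla^2 u) = -\langle \beta^{\rho,A,\psi} + \mathbf{B}, \nabla u\rangle + u - f = -\langle \mathbf{G}, \nabla u\rangle + u - f \quad \text{a.e.},
\end{equation*}
using integration by parts legitimized by the $H^{1,2d+2}_{loc}$-regularity of $\widehat{a}_{ij}$ and the fact that, by Theorem \ref{1-3reg3} / Theorem \ref{rescondojc}, $u$ already has enough local regularity (in fact $u \in H^{1,p}_{loc}$ by a bootstrap, cf.\ Remark \ref{mostresults4}, so that $\langle\mathbf{G},\nabla u\rangle \in L^{p}_{loc}$ with $p = 2d+2$ because $\mathbf{G}\in L^\infty_{loc}$).

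**Applying Calderón--Zygmund and collecting the estimate.** Once the equation is in non-divergence form with uniformly elliptic, locally $VMO$ (indeed continuous) leading coefficients $\widehat{a}_{ij}$ and right-hand side $F := -\langle\mathbf{G},\nabla u\rangle + u - f \in L^{2d+2}_{loc}(\R^d)$, the interior $H^{2,p}$-estimate \cite[Theorem 9.11]{Gilbarg} applied on a pair of concentric balls $B \Subset B_1 \Subset \R^d$ gives
\begin{equation*}
\|u\|_{H^{2,2d+2}(B)} \le C_1\Big(\|u\|_{L^{2d+2}(B_1)} + \|F\|_{L^{2d+2}(B_1)}\Big),
\end{equation*}
with $C_1$ depending only on the ellipticity constants and the modulus of continuity of $\widehat{a}_{ij}$ on $B_1$. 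It then remains to absorb the $\|\nabla u\|_{L^{2d+2}(B_1)}$ term appearing in $\|F\|_{L^{2d+2}(B_1)}$: one first uses interpolation (Ehrling's lemma) $\|\nabla u\|_{L^{2d+2}(B_1)} \le \varepsilon\|u\|_{H^{2,2d+2}(B_1)} + C_\varepsilon\|u\|_{L^{2d+2}(B_1)}$ together with the standard covering/iteration argument on a chain of nested balls to move from $B_1$-norms of $u$ back to $L^{2d+2}$-norms only, and finally bounds $\|u\|_{L^{2d+2}(B_1)}$ and $\|f\|_{L^{2d+2}(B_1)} = \|(1-L_{q_0})u\|_{L^{2d+2}(B_1)}$ by $\|u\|_{D(L_{q_0})}$. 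For the latter one uses Hölder's inequality with $\frac{1}{q_0} + \frac{1}{q} = \frac{1}{2d+2}$ together with $\rho\psi \in L^q_{loc}$ (i.e.\ $\|g\|_{L^{2d+2}(B_1)} \le \|\rho\psi\|_{L^q(B_1)}^{1/(2d+2)}\cdot(\text{const})\,\|g\|_{L^{q_0}(B_1,\mu)}$, after accounting for $\frac1\psi$ and $\frac1\rho$ being locally bounded to pass between $dx$ and $\mu = \rho\psi\,dx$), which is exactly why the exponent relation $\frac1{q_0}+\frac1q = \frac1{2d+2}$ was imposed. Combining, one obtains $\|u\|_{H^{2,2d+2}(B)} \le C\|u\|_{D(L_{q_0})}$ with $C$ independent of $u$, and since $B$ was an arbitrary ball this also yields $u \in H^{2,2d+2}_{loc}(\R^d)$. \emph{The main obstacle} I anticipate is the bookkeeping needed to justify passing rigorously from the weak (divergence) form of the resolvent equation satisfied by $u = G_1 f$ to the pointwise non-divergence form — this requires knowing a priori that $u$ and its first derivatives are regular enough (here $u \in H^{1,2d+2}_{loc}$, obtained by a preliminary elliptic bootstrap from $u \in H^{1,2}_{loc}$ using that $\widehat a_{ij}$ are continuous and the lower-order data lie in $L^{2d+2}_{loc}$), after which the integration by parts moving the derivative off $\widehat A\nabla u$ is legitimate and the Calderón--Zygmund theory applies directly.
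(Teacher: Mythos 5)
Your overall strategy (turn $u=G_1(1-L_{q_0})u$ into an elliptic equation with right-hand side in $L^{2d+2}_{loc}$ and quote $W^{2,p}$ theory) is the right general idea, but the step on which everything rests is not available under {\bf (A4)$^\prime$}: you apply the Calder\'on--Zygmund estimate to the non-divergence operator with principal coefficients $\widehat a_{ij}=\frac{1}{\psi}a_{ij}$, asserting that these are ``uniformly elliptic, locally VMO (indeed continuous)'' and even $H^{1,2d+2}_{loc}$. None of this is assumed or true in general: {\bf (A4)$^\prime$} only gives $\psi\in L^q_{loc}$ and $\frac{1}{\psi}\in L^\infty_{loc}$, with no continuity and no weak differentiability of $\psi$ (your parenthetical ``$\psi\in H^{1,q}_{loc}$ is implicit'' is exactly what Remark \ref{repremeasno} treats as an \emph{additional} hypothesis, not part of the standing assumptions). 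Indeed the whole point of the paper is that $\frac{1}{\psi}$ may be fully discontinuous and may vanish on a Lebesgue-null set, so $\widehat A$ is neither VMO nor locally uniformly elliptic, and \cite[Theorem 9.11]{Gilbarg} simply does not apply to $\mathrm{trace}(\widehat A\nabla^2\cdot)$. For the same reason your justification of the passage from the weak (divergence) form to the pointwise non-divergence form via ``integration by parts legitimized by the $H^{1,2d+2}_{loc}$-regularity of $\widehat a_{ij}$'' has no basis.

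The paper's proof avoids this by never differentiating or requiring regularity of $\psi$: the operator to which the $W^{2,p}$ theory is applied is $-\frac12\mathrm{trace}(\rho A\nabla^2\cdot)$, whose coefficients $\rho A$ \emph{are} continuous and locally uniformly elliptic (by Theorem \ref{helholmop}), while every occurrence of $\psi$ ($\alpha\rho\psi G_\alpha f$, $\rho\psi f$, $\rho\psi\mathbf B$) is pushed into the right-hand side, where only $L^q_{loc}$-integrability and the exponent relation $\frac{1}{q_0}+\frac{1}{q}=\frac{1}{2d+2}$ are used. Two further ingredients that your outline is missing then become necessary. First, since the right-hand side contains $\langle\rho\psi\mathbf B,\nabla G_\alpha f\rangle$, which with $\rho\psi\mathbf B,\nabla G_\alpha f\in L^{2d+2}_{loc}$ lies only in $L^{d+1}_{loc}$, a single application of the $W^{2,p}$ theory cannot give $H^{2,2d+2}$: the paper first obtains $H^{2,d+1}_{loc}$, uses Morrey to get $\nabla G_\alpha f\in L^\infty_{loc}$, and only then upgrades the right-hand side to $L^{2d+2}_{loc}$ and reapplies \cite[Theorem 9.15, Lemma 9.17]{Gilbarg}. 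Second, because one cannot integrate by parts (no Sobolev regularity of the coefficients in front of $\nabla\varphi$ beyond what $\rho A$ provides, and the drift term involves $\rho\psi\mathbf B$), the identification of the variational solution $\zeta_1 G_\alpha f$ with the strong $H^{2,d+1}$ Dirichlet solution $w$ is done via Trudinger's maximum principle \cite[Theorem 1]{T77}, not by a direct computation; some substitute for this uniqueness step would also be needed in any repaired version of your argument. (Your preliminary step, $G_\alpha f\in H^{1,2d+2}_{loc}$ with an estimate against $\|f\|_{L^{q_0}(\R^d,\mu)}$, matches the paper's first step, which uses \cite[Theorems 1.7.4 and 1.8.3]{BKRS} for the divergence-form equation with principal part $\frac12\rho A$ --- again not with $\widehat A$.)
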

\begin{proof}
By the assumption {\bf (A4)$^\prime$} and Theorem \ref{helholmop}, $\rho \in H_{loc}^{1,2d+2}(\R^d) \cap C_{loc}^{0, 1-\frac{d}{2d+2}}(\R^d)$ and $\rho \psi \mathbf{B} \in L^{2d+2}_{loc}(\R^d)$.
Let $f \in C_0^{\infty}(\R^d)$ and $\alpha>0$.
Then by \eqref{feppwikmee}
\begin{eqnarray}
&&\int_{\R^d} \big \langle \frac12 \rho A \nabla G_{\alpha}f, \nabla \varphi \big \rangle dx - \int_{\R^d} \langle \rho \psi \mathbf{B}, \nabla G_{\alpha} f \rangle \varphi\, dx  +\int_{\R^d} (\alpha \rho \psi G_{\alpha} f) \,\varphi dx  \nonumber \\
&& = \int_{\R^d} (\rho \psi f) \,\varphi dx, \quad \; \text{ for all } \varphi \in C_0^{\infty}(\R^d). \label{fewbllle}
\end{eqnarray}
Let $\widetilde{q}:= \left( \frac{1}{2d+2}+\frac{1}{d}  \right)^{-1}$. Then $\alpha \rho \psi \in L^{2d+2}_{loc}(\R^d) \subset L^{\widetilde{q}}_{loc}(\R^d)$, $\rho \psi f \in L^{2d+2}_{loc}(\R^d) \subset  L^{\widetilde{q}}_{loc}(\R^d)$, hence by \cite[Theorem 1.8.3]{BKRS}, 
$G_{\alpha} f \in H^{1,2d+2}_{loc}(\R^d)$. Moreover, using \cite[Theorem 1.7.4]{BKRS} and the resolvent contraction property, for any open balls $V$, $V'$ in $\R^d$ with $\overline{V} \subset V'$, there exists a constant $\widetilde{C}>0$, independent of $f$, such that 
\begin{eqnarray}
\quad \|G_{\alpha}f\|_{H^{1,2d+2}(V)}  &\leq& \widetilde{C}(\|G_{\alpha}f\|_{L^1(V')} + \| \rho \psi f \|_{L^{\widetilde{q}}(V')})  \nonumber  \\
&\leq& \widetilde{C}(\|G_{\alpha}f\|_{L^1(V')} + \| \rho \psi  \|_{L^{2d+2}(V')} \|f\|_{L^d(V')})   \nonumber  \\
&\leq& \widetilde{C}\,  \widetilde{C}_1 \|f\|_{L^{q_0}(\R^d, \mu)}, \qquad \;\; \label{wefweft}
\end{eqnarray}
where  $\widetilde{C}_1:=  \big(\frac{1}{ \inf \rho \psi}\big)^{\frac{1}{q_0}}(\alpha^{-1}|V'|^{1-\frac{1}{q_0}}  + \|\rho \psi \|_{L^{2d+2}(V')} |V'|^{\frac{1}{d}-\frac{1}{q_0}})$.\, Using Morrey's inequality and \eqref{wefweft}, there exists a constant $\widetilde{C}_2>0$ which is independent of $f$ such that
\begin{equation} \label{vlfdygoe4}
\|G_{\alpha}f\|_{L^{\infty}(V)} \leq  \widetilde{C}_2 \widetilde{C} \widetilde{C}_1 \|f\|_{L^{q_0}(\R^d, \mu)}.
\end{equation}
 Now set
$$
h_1:= \left \langle  \rho \psi \mathbf{B}, \nabla G_{\alpha} f \right \rangle - \alpha \rho \psi G_{\alpha} f  + \rho \psi f \in L^{d+1}_{loc}(\R^d).
$$
Then \eqref{fewbllle} implies
\begin{eqnarray} \label{weoijoir2}
\int_{\R^d} \big \langle \frac12 \rho A \nabla G_{\alpha}f, \nabla \varphi \big \rangle dx = \int_{\R^d} h_1 \varphi dx, \quad \text{ for all } \varphi \in C_0^{\infty}(\R^d).
\end{eqnarray}
Let $U_{1}$, $U_{2}$ be open balls in $\R^d$ satisfying $\overline{B} \subset U_{1} \subset \overline{U}_{1} \subset U_{2}$. Take  $\zeta_1 \in C_0^{\infty}(U_{2})$ such that $\zeta_1 \equiv 1$ on $\overline{U}_{1}$.  Then using integration by parts, and \eqref{weoijoir2}
\begin{eqnarray}    
&&\hspace{-1em} \int_{U_{2}}  \langle \frac12 \rho A \nabla (\zeta_1 G_{\alpha} f), \nabla \varphi  \rangle dx \nonumber  = \int_{U_{2}} \langle  \frac12 \rho A \nabla G_{\alpha} f,   \zeta_1 \nabla \varphi  \rangle dx + \int_{U_{2}} \frac12 \langle  A \nabla \zeta_1, \nabla \varphi  \rangle  \rho G_{\alpha} f   dx \nonumber \\
&& \hspace{-1em} = \int_{U_{2}} \langle \frac12 \rho A \nabla G_{\alpha} f, \nabla (\zeta_1 \varphi)  \rangle  dx - \int_{U_2} \underbrace{  \langle \frac12 \rho A \nabla G_{\alpha} f,  \nabla \zeta_1 \rangle}_{=:h_2} \varphi dx\nonumber  \\
&&   \hspace{-0.6em}+ \int_{U_2} \underbrace{-\frac12  \big( \langle G_{\alpha} f \nabla \rho + \rho \nabla G_{\alpha}f , A \nabla \zeta_1 \rangle + \rho G_{\alpha}f  \langle \nabla A, \nabla \zeta_1 \rangle + \rho G_{\alpha} f \text{trace}(A \nabla ^2 \zeta_1 ) \big)}_{=:h_3}\varphi dx \nonumber \\
&&\hspace{-1em}=  \int_{U_2}  (h_1\zeta_1 - h_2 + h_3) \varphi dx, \quad \text{ for all } \varphi \in C_0^{\infty}(U_2). \label{poijoijw34}
\end{eqnarray}
Note that $h_2, h_3 \in L^{2d+2}_{loc}(\R^d)$. Let $h_4:=\langle \frac12 \nabla (\rho A), \nabla (\zeta_1 G_{\alpha}f) \rangle \in L^{d+1}_{loc}(\R^d)$. Using \eqref{poijoijw34},
\begin{eqnarray}
&&\int_{U_2} \langle \frac12 \rho A \nabla (\zeta_1 G_{\alpha} f), \nabla \varphi \rangle dx +\int_{U_2} \langle \frac12 \nabla (\rho A), \nabla (\zeta_1 G_{\alpha}f) \rangle\varphi dx \nonumber \\
&=& \int_{U_2}  (h_1\zeta_1 - h_2 + h_3 +h_4) \varphi dx, \quad \text{ for all } \varphi \in C_0^{\infty}(U_2). \label{pkpekek}
\end{eqnarray}
We have  $h:=h_1\zeta_1-h_2+h_3+h_4 \in L_{loc}^{d+1}(\R^d)$ and
\begin{equation} \label{oijwoijmn34}
\|h\|_{L^{d+1}(U_2)} \leq C_2(\|G_{\alpha}f\|_{H^{1,2d+2}(U_2)} + \|\rho \psi f\|_{L^{d+1}(U_2)} ),
\end{equation}
where $C_2>0$ is a constant which is independent of $f$. By \cite[Theorem 9.15]{Gilbarg}, there exists $w \in H^{2,d+1}(U_2) \cap H^{1,d+1}_0(U_2)$ such that
\begin{equation} \label{jpojiijiw}
-\frac12 \text{trace}(\rho A \nabla^2 w)  = h \;\; \text{ a.e. on } U_2.
\end{equation}
Furthermore, using \cite[Lemma 9.17]{Gilbarg}, \eqref{oijwoijmn34}, \eqref{wefweft}, there exists a constant $C_1>0$ which is independent of $f$ such that
\begin{eqnarray*}
\|w\|_{H^{2,d+1}(U_2)} &\leq& C_1 \| h\|_{L^{d+1}(U_2)} \\
&\leq& C_1 C_2 \left(\|G_{\alpha} f \|_{H^{1,2d+2}(U_2)}+ \| \rho \psi f\|_{L^{d+1}(U_2)}\right) \\
&\leq& C_1 C_2  C_3 \|f\|_{L^{q_0}(\R^d, \mu)},
\end{eqnarray*}
where $C_3:=\widetilde{C}_1 +  \|\rho \psi \|_{L^{2d+2}(U_2)} |U_2|^{\frac{1}{2d+2}-\frac{1}{q_0}} \big (\frac{1}{ \inf_{U_2} \rho \psi} \big)^{\frac{1}{q_0}}$. Note that \eqref{jpojiijiw} implies
\begin{eqnarray}
&&\int_{U_2} \langle \frac12 \rho A \nabla w, \nabla \varphi \rangle dx +\int_{U_2} \langle \frac12 \nabla (\rho A), \nabla w \rangle\varphi dx \nonumber \\
&=& \int_{U_2}  h \varphi dx, \quad \text{ for all } \varphi \in C_0^{\infty}(U_2). \label{oiowheo3}
\end{eqnarray}
Using the maximum principle  of \cite[Theorem 1]{T77} and comparing \eqref{oiowheo3} with \eqref{pkpekek}, we obtain $\zeta G_{\alpha} f = w$ on $U_2$, hence $G_{\alpha} f = w$ on $U_1$, so that $G_{\alpha} f \in H^{2,d+1}(U_1)$. Therefore, by Morrey's inequality, we obtain $\partial_i G_{\alpha}f \in L^{\infty}(U_1)$, \,$1 \leq i \leq d$, and
\begin{eqnarray}
\| \partial_i G_{\alpha} f \|_{L^{\infty}(U_1)} &\leq& C_4\|G_{\alpha}f \|_{H^{2,d+1}(U_1)} \nonumber \\
&\leq& C_4\|w\|_{H^{2,d+1}(U_2)} \nonumber  \\
&\leq& C_1 C_2 C_3 C_4\|f\|_{L^{q_0}(\R^d, \mu)}, \label{weikjpoi42}
\end{eqnarray}
where $C_4>0$ is a constant which is independent of $f$. Thus we obtain $h \in L^{2d+2}(U_1)$.
Now take $\zeta_2 \in C_0^{\infty}(U_1)$ such that $\zeta_2 \equiv 1$ on $\overline{B}$. Note that a.e. on $U_1$ it holds
\begin{eqnarray*}
&&-\frac12 \text{trace}\big(\rho A \nabla^2 (\zeta_2 G_{\alpha} f)\big) \\
&&= -\frac12 \zeta_2 \cdot \text{trace}(\rho A \nabla^2  G_{\alpha} f) - \frac12 G_{\alpha} f \cdot \text{trace}(\rho A \nabla^2 \zeta_2)  -\langle \rho A \nabla \zeta_2, \nabla G_{\alpha} f \rangle.  \\
&&= -\frac{1}{2} \zeta_2 h- \frac12 G_{\alpha} f \cdot \text{trace}(\rho A \nabla^2 \zeta_2)  -\langle \rho A \nabla \zeta_2, \nabla G_{\alpha} f \rangle=:\widetilde{h}.
\end{eqnarray*}
Since $\| \nabla G_{\alpha} f \| \in L^{\infty}(U_1)$, $\widetilde{h} \in L^{2d+2}(U_1)$, by \cite[Theorem 9.15]{Gilbarg}, we get $\zeta_2 G_{\alpha} f \in H^{2,2d+2}(U_1)$, hence $G_{\alpha}f \in H^{2,2d+2}(B)$. Using \cite[Lemma 9.17]{Gilbarg}, \eqref{vlfdygoe4}, \eqref{weikjpoi42}, there exist positive constants $C_5$, $C_6$ which are independent of $f$ such that
\begin{eqnarray}
\|G_{\alpha} f \|_{H^{2,2d+2}(B)} &\leq& \| \zeta_2  G_{\alpha}  f \|_{H^{2,2d+2}(U_1)} \nonumber   \\
&\leq& C_5 \|\widetilde{h}\|_{L^{2d+2}(U_1)}\nonumber  \\
&\leq& C_5 C_6 (\|f\|_{L^{q_0}(\R^d, \mu)} + \|\rho \psi f \|_{L^{2d+2}(U_1)})\nonumber   \\
&\leq&  C_5 C_6 (\|f\|_{L^{q_0}(\R^d, \mu)} + \|\rho \psi\|_{L^{q}(U_1)} (\inf_{U_1} \rho \psi)^{-1/q_0}\|f\|_{L^{q_0}(\R^d, \mu)}) \quad \;\;\nonumber  \\
&\leq& C\|f\|_{L^{q_0}(\R^d, \mu)},  \qquad \; \label{oiowieji3no3}
\end{eqnarray}
where $C:= C_5 C_6 (1 \vee  \|\rho \psi\|_{L^{q}(U_1)} (\inf_U \rho \psi)^{-1/q_0})$.  Using the denseness of $C_0^{\infty}(\R^d)$ in $L^{q_0}(\R^d, \mu)$, \eqref{oiowieji3no3} extends to  $f \in L^{q_0}(\R^d, \mu)$. Now let $u \in D(L_{q_0})$, Then $(1-L_{q_0})u \in L^{q_0}(\R^d, \mu)$, hence by \eqref{oiowieji3no3}, it holds $u = G_1(1-L_{q_0}) u \in H^{2,2d+2}_{loc}(\R^d)$ and
\begin{eqnarray*}
\|u\|_{H^{2,2d+2}(B)} &=& \| G_1(1-L_{q_0}) u \|_{H^{2,2d+2}(B)}  \\
&\leq& C\|(1-L_{q_0})u \|_{L^{q_0}(\R^d, \mu)} \\
&\leq& C\|u\|_{D(L_{q_0})}. 
\end{eqnarray*}
\end{proof}
\begin{theo} \label{pjpjwoeij}
Assume {\bf (A1)}, {\bf (A2)}. Let $f \in D(\overline{L})_b \cap D(L_s)  \cap D(L_2)$ and define 
$$
u_f:=P_{\cdot} f \in C(\R^d \times [0, \infty))
$$ 
as in Lemm \ref{contiiokc}. Then for any open set $U$ in $\R^d$ and $T>0$, 
$$
\partial_t u_f, \, \partial_i u_f \in L^{2, \infty}(U \times (0,T))  \text{ for all } 1 \leq i \leq d,
$$
and for each $t \in (0,T)$, it holds
$$
\partial_t u_f(\cdot,t) = T_t L_2 f \in L^{2}(U), \; \text{ and }\; \partial_i u_f(\cdot, t) = \partial_i P_t f  \in L^2(U).
$$
If we additionally assume {\bf (A4)$^\prime$} and  $f \in D(L_{q_0})$ where $q_0$ is as in Theorem \ref{feokokoe}, then $\partial_i \partial_j u_f  \in L^{2d+2, \infty}(U \times (0,T))$  for all $1 \leq i,j \leq d$, and for each $t \in (0, T)$, it holds
$$
\partial_i \partial_j u_f (\cdot, t ) = \partial_i \partial_j P_t f \in L^{2d+2}(U).
$$
\end{theo}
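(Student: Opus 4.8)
The plan is to obtain the spatial regularity of $u_f=P_{\cdot}f$ from the mapping properties of the semigroup together with the elliptic estimates of Theorems~\ref{rescondojc} and \ref{feokokoe}, and to identify the time derivative directly from the $L^1$-generator, making all bounds uniform in $t$ by the contraction property. Throughout I would use the following elementary facts. Since $f\in D(\overline{L})_b$ and $(\overline{T}_t)_{t>0}$ is sub-Markovian, $T_tf=\overline{T}_tf\in D(\overline{L})_b$ with $\|T_tf\|_{L^\infty}\le\|f\|_{L^\infty}$ and $\overline{L}T_tf=\overline{T}_t\overline{L}f=T_t\overline{L}f$; similarly $T_tf\in D(L_2)\cap D(L_s)$ with $L_rT_tf=T_tL_rf$, and the $L^r$-realizations of the generator agree $\mu$-a.e., so $\overline{L}f=L_2f=L_sf$ $\mu$-a.e. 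By Lemma~\ref{contiiokc}, $u_f\in C(\R^d\times[0,\infty))$, and since $\mu=\rho\psi\,dx$ with $\rho\psi>0$, finite a.e., $P_tf=T_tf$ $dx$-a.e. for every $t$; thus the distributional derivatives of $u_f$ on $U\times(0,T)$ are exactly what must be computed. Finally, by Theorem~\ref{mainijcie}(c), $\sup_{t>0}\mathcal{E}^0(T_tf,T_tf)\le\sup_{t>0}\big(-\int_{\R^d}\overline{L}T_tf\cdot T_tf\,d\mu\big)\le\|\overline{L}f\|_{L^1(\R^d,\mu)}\|f\|_{L^\infty}<\infty$.

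For the first-order spatial derivatives, I would use that $T_tf\in D(\overline{L})_b\subset D(\mathcal{E}^0)$ (Theorem~\ref{mainijcie}(c)) and that $\rho$ is locally bounded above and below with $\frac{1}{\rho\psi}\in L^\infty_{loc}(\R^d)$; membership in the form domain then gives $T_tf\in H^{1,2}_{loc}(\R^d)$ with $\|\nabla T_tf\|_{L^2(B)}^2\le\frac{2}{\lambda_B\inf_B\rho}\,\mathcal{E}^0(T_tf,T_tf)$ on any ball $B$. Combined with the uniform bound above, this yields $\sup_{t\in(0,T)}\|\nabla u_f(\cdot,t)\|_{L^2(U)}<\infty$ for bounded $U$ (the general case by exhaustion), hence $\partial_iu_f\in L^{2,\infty}(U\times(0,T))$. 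Moreover $t\mapsto T_tf$ is continuous into $(D(\mathcal{E}^0),\mathcal{E}^0_1)$, since $\mathcal{E}^0(T_tf-T_sf,T_tf-T_sf)\le\|T_t\overline{L}f-T_s\overline{L}f\|_{L^1(\R^d,\mu)}\|T_tf-T_sf\|_{L^\infty}\to0$ as $t\to s$; consequently the slices are well-defined for every $t$ and $\partial_iu_f(\cdot,t)=\partial_iP_tf\in L^2(U)$. For the second-order spatial derivatives, under {\bf (A4)$^\prime$} and $f\in D(L_{q_0})$ one has $T_tf\in D(L_{q_0})$, so Theorem~\ref{feokokoe} applies to $u=T_tf$ and gives $T_tf\in H^{2,2d+2}_{loc}(\R^d)$ with $\|T_tf\|_{H^{2,2d+2}(B)}\le C_B\|T_tf\|_{D(L_{q_0})}\le C_B\|f\|_{D(L_{q_0})}$ on any ball $B$, uniformly in $t$ by contraction; arguing as before, $\partial_i\partial_ju_f\in L^{2d+2,\infty}(U\times(0,T))$ and $\partial_i\partial_ju_f(\cdot,t)=\partial_i\partial_jP_tf\in L^{2d+2}(U)$ for every $t$.

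To identify the time derivative, I would test against product functions: for $\varphi_1\in C_0^\infty(U)$ and $\varphi_2\in C_0^\infty((0,T))$, using $u_f(\cdot,t)=T_tf$ $dx$-a.e. and Fubini,
$$
\iint_{U\times(0,T)}u_f\,\varphi_1\,\partial_t\varphi_2\,dx\,dt=\int_0^T\partial_t\varphi_2(t)\,\Big(\int_U T_tf\,\varphi_1\,dx\Big)\,dt.
$$
Since $\varphi_1\in L^2(\R^d,\mu)$ (as $\rho\psi\in L^1_{loc}$ and $\varphi_1\in C_0^\infty$) and $\frac{d}{dt}T_tf=L_2T_tf=T_tL_2f$ in $L^2(\R^d,\mu)$ by strong continuity, the map $t\mapsto\int_U T_tf\,\varphi_1\,dx$ is $C^1$ with derivative $\int_U T_tL_2f\,\varphi_1\,dx$, so an integration by parts in $t$ turns the right-hand side into $-\iint(T_tL_2f)\,\varphi_1\,\varphi_2\,dx\,dt$. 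By Lemma~\ref{stoneweier}, finite sums of such products are $C^2$-dense in $C_0^\infty(U\times(0,T))$, so $\partial_tu_f=T_{\cdot}L_2f$ as distributions on $U\times(0,T)$; since $t\mapsto T_tL_2f$ is $L^2(\R^d,\mu)$-continuous, the slice $\partial_tu_f(\cdot,t)=T_tL_2f$ lies in $L^2(U)$ for every $t$ with $\sup_{t\in(0,T)}\|T_tL_2f\|_{L^2(U)}\le\|\tfrac{1}{\rho\psi}\|_{L^\infty(U)}^{1/2}\|L_2f\|_{L^2(\R^d,\mu)}<\infty$ (bounded $U$; general by exhaustion), whence $\partial_tu_f\in L^{2,\infty}(U\times(0,T))$.

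The main obstacle will be the time-derivative identification: passing from the $L^2(\R^d,\mu)$-level differentiability of $t\mapsto T_tf$ to the correct distributional statement for the jointly continuous $u_f$, and from product test functions to all of $C_0^\infty(U\times(0,T))$ via Lemma~\ref{stoneweier}, together with the bookkeeping required to upgrade the local, uniform-in-$t$ estimates to genuine $L^{p,\infty}(U\times(0,T))$ memberships rather than mere $L^p_{loc}$ ones. By contrast, the second-order spatial regularity is a direct application of Theorem~\ref{feokokoe}, and the first-order regularity follows simply from membership of $T_tf$ in the form domain $D(\mathcal{E}^0)$.
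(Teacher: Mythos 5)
Your proposal is correct and follows essentially the same route as the paper's proof: uniform-in-$t$ bounds coming from Theorem \ref{mainijcie}(c) (for $\nabla P_t f$) and Theorem \ref{feokokoe} (for the second derivatives), strong continuity of the semigroup on $L^1$, $L^2$ and $L^{q_0}(\R^d,\mu)$ to control the slice maps, and identification of $\partial_t u_f$ (and, by the same mechanism, the spatial derivatives) through product test functions $\varphi_1\varphi_2$ and the density Lemma \ref{stoneweier}. The only points the paper makes explicit that you leave implicit are the construction of jointly Borel measurable versions of $(x,t)\mapsto \partial_i P_t f(x)$, $T_t L_2 f(x)$, $\partial_i\partial_j P_t f(x)$ (done there by citing a theorem of Neveu, for which your continuity estimates are exactly the required input) and the repetition of the product-test-function identification for the spatial derivatives, both of which are routine given what you have established.
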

\begin{proof}
Assume {\bf (A1)}, {\bf (A2)}. Let $f \in D(\overline{L})_b \cap D(L_s)  \cap D(L_2)$ and $t>0$, $t_0 \geq 0$.  Then by Theorem \ref{mainijcie}(c), 
$$
P_{t_0} f = \overline{T}_{t_0} f \in  D(\overline{L})_b \subset D(\mathcal{E}^0),
$$    
where $\overline{T}_0 := id$. Observe that by Theorem \ref{mainijcie}(c), for any open ball  $B$ in $\R^d$ with $\overline{U} \subset B$,
\begin{eqnarray}
&& \; \quad \| \nabla P_t f - \nabla P_{t_0}f \|^2_{L^2(B)}  \nonumber \\
&&\leq  (\lambda_B \inf_{B} \rho)^{-1}  \int_{B} \langle A\nabla(P_t f- P_{t_0}f), \nabla(P_t f- P_{t_0}f) \rangle \rho dx  \nonumber \\
&&\leq   2(\lambda_B \inf_{B} \rho)^{-1}\mathcal{E}^0(P_t f - P_{t_0}f, P_t f- P_{t_0}f)  \nonumber \\
&&\leq  2(\lambda_B \inf_{B} \rho)^{-1}  \int_{\R^d} -\overline{L}(\overline{T}_t f- \overline{T}_{t_0}f) \cdot (\overline{T}_t f- \overline{T}_{t_0}f) d\mu \nonumber  \\
&&\leq 4 (\lambda_B \inf_{B} \rho)^{-1}  \|f\|_{L^{\infty}(\R^d, \mu)} \|\overline{T}_t\overline{L}f - \overline{T}_{t_0} \overline{L}f \|_{L^1(\R^d, \mu)}. \label{feftlklere}
\end{eqnarray}
Likewise, 
\begin{eqnarray*} \label{fekokwe2}
 \| \nabla P_t f \|^2_{L^2(B)} \leq  2(\lambda_B \inf_{B} \rho)^{-1}  \|f\|_{L^{\infty}(\R^d, \mu)} \|\overline{T}_t  \overline{L} f\|_{L^1(\R^d,\mu)}.
\end{eqnarray*}
For each $i=1, \dots, d$, define a map
$$
\partial_i P_{\cdot} f: [0,T] \rightarrow L^2(U), \; \;  t \mapsto \partial_i P_{t} f.
$$
Then by \eqref{feftlklere} and the $L^1(\R^d, \mu)$-strong continuity of $(\overline{T}_t)_{t>0}$, the map $\partial_i P_{\cdot} f$ is continuous with respect to the $\| \cdot \|_{L^2(B)}$-norm, hence by \cite[Theorem, p91]{Nev65}, there exists a Borel measurable function $u^i_f$ on $U \times (0,T)$ such that for each $t \in (0,T)$ it holds
$$
u^i_f(\cdot, t) = \partial_i P_t f \in L^2(U).
$$
Thus using \eqref{feftlklere} and the $L^1(\R^d, \mu)$-contraction property of $(\overline{T}_t)_{t >0}$, it holds $u^i_f \in L^{2, \infty}(U \times (0,T))$ and
\begin{eqnarray*}
\|u^i_f \|_{L^{2, \infty}(U \times (0,T))} &=& \sup_{t \in (0,T)} \| \partial_i P_t  f\|_{L^2(U)} \\
&\leq&  2(\lambda_B \inf_{B} \rho)^{-1/2}  \|f\|^{1/2}_{L^{\infty}(\R^d, \mu)} \|\overline{L} f\|^{1/2}_{L^1(\R^d,\mu)}.
\end{eqnarray*}
Now let $\varphi_1 \in C_0^{\infty}(U)$ and $\varphi_2 \in C_0^{\infty}((0,T))$. Then
\begin{eqnarray}
\iint_{U \times (0,T)} u_f \cdot \partial_i(\varphi_1 \varphi_2) dx dt &=& \int_0^T \big(\int_U P_t f \cdot \partial_i \varphi_1 dx \big) \varphi_2 dt  \nonumber \\
&=& \int_0^T -\big( \int_U \partial_i P_t f  \cdot \varphi_1 dx \big) \varphi_2 dt \nonumber \\
&=& -\iint_U u_f^i  \cdot \varphi_1 \varphi_2 dxdt.   \label{pokpoin3}
\end{eqnarray}
Using the approximation as in Lemma \ref{stoneweier},  $\partial_i u_f = u^i_f \in L^{2,\infty}(U \times (0,T))$.\\
Now define a map
$$
T_{\cdot}L_2 f: [0,T] \rightarrow L^2(U), \; \; \, t \mapsto T_t L_2 f,
$$
where $T_{0}:= id$. \,Since
\begin{eqnarray*}
\|T_t L_2 f- T_{t_0} L_2 f\|_{L^2(U)} \leq (\inf_{U} \rho \psi)^{-1/2} \|T_t L_2 f- T_{t_0} L_2 f\|_{L^2(\R^d, \mu)},
\end{eqnarray*}
using the $L^2(\R^d, \mu)$-strong continuity of $(T_t)_{t>0}$ and \cite[Theorem, p91]{Nev65}, there exists a Borel measurable function $u^0_f$ on $U \times (0,T)$ such that for each $t \in (0,T)$ it holds
$$
u^0_f(\cdot, t) = T_t L_2 f \in L^2(U).
$$
Using the $L^2(\R^d, \mu)$-contraction property of $(T_t)_{t >0}$, it holds $u^0_f \in L^{2, \infty}(U \times (0,T))$ and
\begin{eqnarray*}
\|u^0_f \|_{L^{2, \infty}(U \times (0,T))} &=& \sup_{t \in (0,T)} \| T_t L_2 f\|_{L^2(U)} \\
&\leq&  ( \inf_{U} \rho \psi)^{-1/2}  \|L_2 f\|_{L^2(\R^d,\mu)}.
\end{eqnarray*}
Observe that
\begin{eqnarray*}
\iint_{U \times (0,T)} u_f \cdot \partial_t(\varphi_1 \varphi_2) dx dt &=& \int_0^T \big(\int_U T_t f\cdot   \varphi_1 dx \big) \partial_t\varphi_2 dt \\
&=& \int_0^T -\big( \int_U T_t L_2 f  \cdot \varphi_1 dx \big) \varphi_2 dt  \\
&=& -\iint_U u_f^0 \cdot\varphi_1 \varphi_2 dxdt.
\end{eqnarray*}
Using the approximation of Lemma \ref{stoneweier}, we obtain $\partial_t u_f = u^0_f \in L^{2, \infty}(U \times (0,T))$. \\
Now assume {\bf (A4$^\prime$)}. Then  by Theorem \ref{feokokoe}, $P_{t_0} f \in D(L_{q_0}) \subset H_{loc}^{2,2d+2}(\R^d)$ and for each $ 1 \leq i, j \leq d$, it holds
\begin{eqnarray}
&&\,\quad \| \partial_i \partial_j P_t f - \partial_i \partial_j P_{t_0}f \|_{L^{2d+2}(U)} \nonumber \\
&&\leq \|P_{t} f - P_{t_0} f \|_{H^{2,2d+2}(U)}  \nonumber \\
&&\leq \|T_t f- T_{t_0} f \|_{L^{q_0}(\R^d, \mu)}  + \|T_t L_{q_0} f - T_{t_0} L_{q_0} f \|_{L^{q_0}(\R^d, \mu)} \label{eorijgoijoi}
\end{eqnarray}
Define a map
$$
\partial_i \partial_j P_{\cdot} f: [0,T] \rightarrow L^2(U), \; \;  t \mapsto \partial_i  \partial_j P_{t} f.
$$
By the $L^{q_0}(\R^d, \mu)$-strong continuity of $(T_t)_{t>0}$ and \eqref{eorijgoijoi}, the map $\partial_i \partial_j P_{\cdot} f$ is continuous with respect to the $\| \cdot \|_{L^{2d+2}(U)}$-norm. Hence by \cite[Theorem, p91]{Nev65}, there exists a Borel measurable function $u^{ij}_f$ on $U \times (0,T)$ such that for each $t \in (0,T)$,  it holds
$$
u^{ij}_f(\cdot, t) = \partial_i  \partial_j P_t f.
$$
Using Theorem \ref{feokokoe} and the  $L^{q_0}(\R^d, \mu)$-contraction property of $(T_t)_{t>0}$, $u_f^{ij} \in L^{2d+2, \infty}(U \times (0,T))$ and
\begin{eqnarray*}
\|u_f^{ij}\|_{L^{2d+2, \infty}(U \times (0,T))}  &&\leq \sup_{t \in (0,T)} \|\partial_i \partial_j P_t f \|_{L^{2d+2}(U )} \nonumber \\
&&\leq  \sup_{t \in (0,T)}  \|P_t f\|_{H^{2,2d+2}(U)} \nonumber  \nonumber \\
&&\leq\sup_{t \in (0,T)}   C \left( \|T_t f\|_{L^{q_0}(\R^d, \mu)}+ \|T_t L_{q_0} f \|_{L^{q_0}(\R^d, \mu)} \right) \nonumber \\
&&\leq  C \|f\|_{D(L_{q_0})}, \label{foooekow}
\end{eqnarray*}
where $C>0$ is a constant which is independent of $f$.
Using the same line of arguments as in \eqref{pokpoin3} and the approximation as in Lemma \ref{stoneweier}, 
$$
\partial_i \partial_j u_f = u^{ij}_f \in L^{2d+2, \infty}(U \times (0,T))  .
$$
\end{proof}

\begin{theo} \label{fvodkoko}
Assume  {\bf (A4)$^\prime$} and  let $f \in C_0^{\infty}(\R^d)$. Then there exists
$$
u_f \in C_b\left(\R^d \times [0, \infty) \right) \cap \big( \displaystyle \bigcap_{r>0} W^{2,1}_{2d+2, \infty} (B_r \times (0,\infty)) \big) 
$$ 
satisfying $u_f(x, 0) = f(x)$ for all $x \in \R^d$ such that 
$$
\partial_t u_f \in  L^{\infty}(\R^d \times (0,\infty)), \; \partial_i u_f \in \displaystyle \bigcap_{r>0} L^{\infty}(B_{r} \times (0,\infty)) \; \text{ for all } 1\leq i \leq d,
$$
and
$$
\partial_t u_f = \frac12 \text{\rm trace}(\widehat{A} \nabla^2 u_f)+ \langle \mathbf{G}, \nabla u_f \rangle \, \; \text{ a.e. on } \R^d \times (0, \infty).
$$
\end{theo}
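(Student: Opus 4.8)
The plan is to take $u_f:=P_\cdot f$, the continuous version of $(T_tf)_{t\ge 0}$ furnished by Lemma \ref{contiiokc}, and to read off every claimed property from Theorems \ref{pjpjwoeij} and \ref{feokokoe} together with the contraction property of the semigroups on the scale $L^r(\R^d,\mu)$, $r\ge 1$. First I would record that {\bf (A4)$^\prime$} implies {\bf (A1)}, {\bf (A2)}, {\bf (A3)}, {\bf (A4)}, and that for $f\in C_0^\infty(\R^d)$ the function $Lf=\tfrac12\,\mathrm{trace}(\widehat A\nabla^2 f)+\langle\mathbf G,\nabla f\rangle$ is bounded with compact support (using $\tfrac1\psi\in L^\infty_{loc}$, $A$ locally bounded, $\mathbf G\in L^\infty_{loc}$). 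Since $\mu=\rho\psi\,dx$ is finite on compact sets and has full topological support ($\rho>0$ everywhere, $\psi>0$ a.e.), it follows that $f\in D(\overline L)_b\cap D(L_s)\cap D(L_2)\cap D(L_{q_0})$, where $q_0>2d+2$ is as in Theorem \ref{feokokoe}. Define $u_f:=P_\cdot f\in C(\R^d\times[0,\infty))$ via Lemma \ref{contiiokc}; the sub-Markovian property gives $\|T_tf\|_{L^\infty(\R^d,\mu)}\le\|f\|_\infty$, and full support of $\mu$ then yields $\|u_f\|_{C(\R^d\times[0,\infty))}\le\|f\|_\infty$, so $u_f\in C_b(\R^d\times[0,\infty))$; moreover $u_f(\cdot,0)=f$ since $T_0=\mathrm{id}$ and $f$ is continuous.

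Next I would apply Theorem \ref{pjpjwoeij} to this $f$ (its hypotheses, including $f\in D(L_{q_0})$ under {\bf (A4)$^\prime$}, are met): for every bounded open $U$ and $T>0$ the weak derivatives satisfy $\partial_tu_f,\partial_iu_f\in L^{2,\infty}(U\times(0,T))$, $\partial_i\partial_ju_f\in L^{2d+2,\infty}(U\times(0,T))$, and for each $t>0$ there are the identifications $\partial_tu_f(\cdot,t)=T_tL_2f$, $\partial_iu_f(\cdot,t)=\partial_iP_tf$, $\partial_i\partial_ju_f(\cdot,t)=\partial_i\partial_jP_tf$. To replace the local-in-time control by the global-in-time one required in the statement, I would use two ingredients. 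First, $\|\partial_tu_f(\cdot,t)\|_{L^\infty(\R^d)}=\|T_tL_2f\|_{L^\infty(\R^d,\mu)}\le\|Lf\|_\infty$ for all $t>0$, so $\partial_tu_f\in L^\infty(\R^d\times(0,\infty))$. Second, since $f\in D(L_{q_0})$ gives $T_tf\in D(L_{q_0})$ with $L_{q_0}T_tf=T_tL_{q_0}f$, Theorem \ref{feokokoe} yields, for every ball $B_r$, $\|P_tf\|_{H^{2,2d+2}(B_r)}\le C\big(\|T_tf\|_{L^{q_0}(\R^d,\mu)}+\|T_tL_{q_0}f\|_{L^{q_0}(\R^d,\mu)}\big)\le C\big(\|f\|_{L^{q_0}(\R^d,\mu)}+\|Lf\|_{L^{q_0}(\R^d,\mu)}\big)$ uniformly in $t>0$; Morrey's inequality then gives $\partial_iu_f\in\bigcap_{r>0}L^\infty(B_r\times(0,\infty))$ and $\partial_i\partial_ju_f\in L^{2d+2,\infty}(B_r\times(0,\infty))$ for every $r>0$, whence $u_f\in\bigcap_{r>0}W^{2,1}_{2d+2,\infty}(B_r\times(0,\infty))$.

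For the equation itself, the key point I would isolate as a lemma is: for every $u\in D(L_{q_0})$ one has $u\in H^{2,2d+2}_{loc}(\R^d)$ (Theorem \ref{feokokoe}) and $L_{q_0}u=\tfrac12\,\mathrm{trace}(\widehat A\nabla^2u)+\langle\mathbf G,\nabla u\rangle$ a.e. To see this I would write $u=G_1g$ with $g=(1-L_{q_0})u\in L^{q_0}(\R^d,\mu)$, start from the weak identity \eqref{feppwikmee} for $G_1g$, integrate by parts in the principal term — legitimate because $u\in H^{2,2d+2}_{loc}$ while $\rho A\in H^{1,2d+2}_{loc}$ and $\rho\psi\mathbf B\in L^{2d+2}_{loc}$ by Theorem \ref{helholmop} — divide by $\rho\psi$ (positive and locally bounded away from $0$ and $\infty$), and match the resulting a.e. identity with $(1-L_{q_0})u=g$, using $\beta^{\rho,A,\psi}+\mathbf B=\mathbf G$. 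Applied with $u=P_tf=T_tf$ at fixed $t>0$, and using the consistency of the semigroups on the scale of $L^r$ spaces ($T_tL_2f=T_tL_{q_0}f=L_{q_0}T_tf$), this gives $\partial_tu_f(\cdot,t)=T_tL_2f=\tfrac12\,\mathrm{trace}(\widehat A\nabla^2P_tf)+\langle\mathbf G,\nabla P_tf\rangle$ a.e. on $\R^d$; together with $\nabla u_f(\cdot,t)=\nabla P_tf$, $\nabla^2u_f(\cdot,t)=\nabla^2P_tf$ from Theorem \ref{pjpjwoeij}, and a Fubini argument in $t$, one concludes $\partial_tu_f=\tfrac12\,\mathrm{trace}(\widehat A\nabla^2u_f)+\langle\mathbf G,\nabla u_f\rangle$ a.e. on $\R^d\times(0,\infty)$.

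I expect the main obstacle to be precisely this last identification of the abstract $L^{q_0}$-generator with the differential operator in the strong (a.e.) sense: it relies on the elliptic bootstrap of Theorem \ref{feokokoe} to place $D(L_{q_0})$ inside $H^{2,2d+2}_{loc}(\R^d)$ and on a careful integration by parts in the weak formulation, where one has to keep track that $\rho A$ carries exactly $H^{1,2d+2}_{loc}$-regularity and $\rho\psi\mathbf B$ exactly $L^{2d+2}_{loc}$-integrability, so that the pairing against $C_0^\infty(\R^d)$ after one integration by parts is well defined and produces the coefficient field $\mathbf G$. By contrast, the global-in-time uniform bounds of the second step are routine once the consistency and contractivity of $(T_t)_{t\ge 0}$ on $L^r(\R^d,\mu)$ are used.
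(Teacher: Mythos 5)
Your proposal is correct, and everything up to the last step (the choice $u_f:=P_{\cdot}f$, boundedness and continuity via Lemma \ref{contiiokc} and sub-Markovianity, the identifications of the weak derivatives from Theorem \ref{pjpjwoeij}, and the global-in-time bounds obtained from Theorem \ref{feokokoe}, the $L^{q_0}$-contractivity of $(T_t)_{t>0}$ and Morrey's inequality) coincides with the paper's proof. You differ only in how the a.e.\ equation is derived: the paper stays parabolic, invoking the space-time weak identity \eqref{maindivkjbie-3} (already established in the proof of Theorem \ref{1-3reg3} for $f\in D(\overline{L})_b\cap D(L_s)\cap D(L_2)$) and performing one spatial integration by parts, which produces $\beta^{\rho,A,\psi}+\mathbf{B}=\mathbf{G}$ directly under the $dx\,dt$-integral; you instead isolate the elliptic statement that $L_{q_0}u=\frac12\mathrm{trace}(\widehat{A}\nabla^2u)+\langle\mathbf{G},\nabla u\rangle$ a.e.\ for $u\in D(L_{q_0})$, prove it from \eqref{feppwikmee} plus integration by parts, apply it to $u=P_tf$ at each fixed $t$ using $\partial_tu_f(\cdot,t)=T_tL_2f=L_{q_0}T_tf$, and conclude by Fubini. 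Your route is sound: the needed extension of \eqref{feppwikmee} beyond $C_0^\infty$-data is a routine density argument using the $H^{2,2d+2}$-estimate of Theorem \ref{feokokoe} (or, for the concrete application, one can note $(1-L_{q_0})P_tf=P_t\big((1-L)f\big)\in L^1(\R^d,\mu)\cap L^\infty(\R^d,\mu)$, so Theorem \ref{mainijcie}(c) applies directly to $G_1$ of it), and the Fubini step is legitimate because Theorem \ref{pjpjwoeij} supplies jointly measurable versions of the derivatives. What each approach buys: the paper's is shorter since \eqref{maindivkjbie-3} is already on hand, while yours yields a reusable identification of the abstract generator $L_{q_0}$ with the differential operator on all of $D(L_{q_0})$, at the cost of the extra approximation and the pointwise-in-time bookkeeping; both hinge on exactly the same two ingredients, namely the regularity $D(L_{q_0})\subset H^{2,2d+2}_{loc}(\R^d)$ of Theorem \ref{feokokoe} and the relation $\beta^{\rho,A,\psi}+\mathbf{B}=\mathbf{G}$ from Theorem \ref{helholmop}.
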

\begin{proof}
Let $f \in C_0^{\infty}(\R^d)$. Then $f \in D(L_s)$.  Define
$u_f:=P_{\cdot} f(\cdot)$. Then by  Lemma \ref{contiiokc}, $u_f \in C_b(\R^d \times [0, \infty))$ and $u_f(x, 0) = f(x)$ for all $x \in \R^d$. In particular, since $\mathbf{G} \in L^{\infty}_{loc}(\R^d, \R^d)$, it holds $f \in D(L_{q_0})$, so that $P_t f \in D(L_{q_0})$ for any $t \geq 0$. By Theorem \ref{pjpjwoeij}, for each $t>0$, it holds $\partial_t u_f(\cdot ,t) = T_t L_s f = T_t L f$\; $\mu$-a.e. on $\R^d$. Note that for each $t>0$, using  the sub-Markovian property,
\begin{eqnarray*}
\| \partial_t u_f(\cdot, t) \|_{L^{\infty}(\R^d)} &=& \|T_t L f \|_{L^{\infty}(\R^d)}\\
&\leq&  \|Lf\|_{L^{\infty}(\R^d, \mu)},
\end{eqnarray*}
hence $\partial_t u_f \in L^{\infty}(\R^d \times (0, \infty))$. By Theorem \ref{pjpjwoeij}, for $1 \leq i,j \leq d$, $t>0$, \;
$\partial_i u_f(\cdot, t)  = \partial_i P_t f$,\, $\partial_i \partial_j u_f (\cdot, t)= \partial_i \partial_j P_t f$ \,$\mu$-a.e. on $\R^d$. Using Theorem \ref{feokokoe} and the $L^{q_0}(\R^d, \mu)$-contraction property of $(T_t)_{t>0}$, for any $R>0$ and for each $1 \leq i,j \leq d$, $t>0$, it holds
\begin{eqnarray*}
 \|\partial_i \partial_j u_f(\cdot,t ) \|_{L^{2d+2}(B_R )}&\leq& \|P_t f\|_{H^{2,2d+2}(B_R)} \nonumber \\
&\leq& C \left( \|T_t f\|_{L^{q_0}(\R^d, \mu)}+ \|T_t L_{q_0} f \|_{L^{q_0}(\R^d, \mu)} \right) \nonumber \\
&\leq& C \|f\|_{D(L_{q_0})}, \label{foooekowwev}
\end{eqnarray*}
where $C>0$ is as in Theorem \ref{feokokoe} and independent of $f$ and $t>0$. By Morrey's inequality, there exists a constant $C_{R,d}>0$, independent of $f$ and $t>0$, such that for each $t>0$, $1 \leq i \leq d$, 
\begin{eqnarray*}
\|\partial_i u_f(\cdot, t)\|_{L^{\infty}(B_R)}&\leq& \|\partial_i P_t f\|_{L^{\infty}(B_R)} \\
&\leq& C_{R,d} \|P_t f\|_{H^{2,2d+2}(B_R)}  \\
&\leq& C_{R,d} C \|f\|_{D(L_{q_0})}.
\end{eqnarray*}
Thus, $u_f \in  W^{2,1}_{2d+2, \infty} (B_R \times (0,\infty))$ and $\partial_t u_f, \partial_i u_f \in  L^{\infty}(B_{R} \times (0,\infty))$ for all $1 \leq i \leq d$.  By \eqref{maindivkjbie-3}, it holds 
\begin{eqnarray*}
&&\iint_{\R^d \times (0,\infty)} \langle \frac{1}{2} \rho A  \nabla u_f,  \nabla \varphi  \rangle-   \langle  \rho \psi \bold{B}, \nabla u_f  \rangle  \varphi  \;dx dt \nonumber  \\
&&\qquad  =\iint_{\R^d \times (0,\infty)} -\partial_t u_f \cdot \varphi  \rho \psi dx dt \quad \text{ for all } \varphi \in C_0^{\infty}(\R^d \times (0,\infty)). 
\end{eqnarray*}
Using integration by parts, we obtain
\begin{eqnarray*}
&&-\iint_{\R^d \times (0,\infty)} \big(\, \frac12 \text{trace}( \widehat{A} \nabla^2 u_f) +
\big \langle \beta^{\rho, A, \psi}+ \bold{B}, \nabla u_f \big  \rangle \big) \varphi  \, d\mu dt \nonumber  \\
&&\qquad  =\iint_{\R^d \times (0,\infty)} -\partial_t u_f \cdot \varphi \, d\mu dt \quad \text{ for all } \varphi \in C_0^{\infty}(\R^d \times (0,\infty)). 
\end{eqnarray*}
Therefore,
$$
\partial_t u_f = \frac12 \text{trace}(\widehat{A} \nabla^2 u_f)+ \langle \mathbf{G}, \nabla u_f \rangle \,\, \; \text{ a.e. on } \R^d \times (0, \infty).  \vspace{-0.5em}
$$
\end{proof}
\begin{defn}\label{wellposedness}
We say that {\bf uniqueness in law} holds for \eqref{fepojpow}, if 
for any two weak solutions 
$$\
\M=  
(\Omega,\F, (\F_t)_{t \ge 0}, (X_t)_{t \ge 0}, (W_t)_{t \ge 0}, (\P_x)_{x \in \R^d})
$$ 
and 
$$
\widetilde{\M} =  
(\widetilde{\Omega}, \widetilde{\F}, (\widetilde{\F}_t)_{t \ge 0}, (\widetilde{X}_t)_{t \ge 0}, (\widetilde{W}_t)_{t \ge 0}, (\widetilde{\P}_x)_{x \in \R^d})
$$ 
of  \eqref{fepojpow} it holds $\P_x\circ X^{-1}=\widetilde{\P}_x\circ \widetilde{X}^{-1}$ for all $x\in \R^d$. We say that the stochastic differential equation \eqref{fepojpow} is {\bf well-posed} if there exists a weak solution to it and uniqueness in law holds.
\end{defn}
\begin{theo} \label{weifjowej}
Assume {\bf (A4)$^\prime$}. Let $\M =  (\Omega, \F, (\F_t)_{t \ge 0}, (X_t)_{t \ge 0}, (W_t)_{t \ge 0}, (\P_x)_{x \in \R^d})$ and \\
$\widetilde{\M}= (\widetilde{\Omega}, \widetilde{\F}, (\widetilde{\F}_t)_{t \ge 0}, (\widetilde{X}_t)_{t \ge 0}, (\widetilde{W}_t)_{t \ge 0}, (\widetilde{\P}_x)_{x \in \R^d})$ be any two weak solutions to \eqref{fepojpow}. Suppose 
\begin{eqnarray}\label{wellposed setssame}
\P_x(\Lambda(Z^{\M}) ) = \widetilde{\P}_{x}(\Lambda(Z^{\widetilde{\M}})) =1,\quad  \text{ for all } x \in \R^d.
\end{eqnarray} 
Then $\P_x\circ X^{-1}=\widetilde{\P}_x\circ \widetilde{X}^{-1}$  for all $x \in \R^d$. In particular, under  {\bf (A4)$^\prime$} any weak solution to \eqref{fepojpow} is a strong Markov process.
\end{theo}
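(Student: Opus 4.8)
The plan is to adapt the classical drift-elimination argument of Stroock and Varadhan, in the form presented in \cite[Chapter 5]{KaSh}, using the parabolic function $u_f=P_{\cdot}f$ constructed in Theorem \ref{fvodkoko} and the local It\^o-formula of Theorem \ref{fefere}. Fix $x\in\R^d$, $T>0$ and $f\in C_0^{\infty}(\R^d)$, and set
$$
g(y,t):=u_f(y,T-t)=P_{T-t}f(y),\qquad (y,t)\in\R^d\times[0,T].
$$
By Theorem \ref{fvodkoko}, $g\in C_b(\R^d\times[0,T])$, for every $R_0>0$ one has $g\in W^{2,1}_{2d+2}(B_{R_0}\times(0,T))$ with $\|\nabla g\|\in L^{4d+4}(B_{R_0}\times(0,T))$ (the mixed-norm $L^{2d+2,\infty}$ and $L^{\infty}$ bounds on the derivatives of $u_f$ restrict, on a finite time interval, to these spaces), $g(\cdot,0)=P_Tf$, $g(\cdot,T)=f$, and the equation $\partial_t u_f=\tfrac12\text{trace}(\widehat A\nabla^2u_f)+\langle\mathbf G,\nabla u_f\rangle$ translates, after reversing time, into the backward equation
$$
\partial_t g+Lg=0\qquad\text{a.e. on }\R^d\times(0,T),\qquad Lg:=\tfrac12\text{trace}(\widehat A\nabla^2g)+\langle\mathbf G,\nabla g\rangle .
$$
Thus $g$ is exactly of the type to which Theorem \ref{fefere} applies.

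First I would apply the local It\^o-formula of Theorem \ref{fefere} to $\widetilde{\M}$ (legitimate since \eqref{wellposed setssame} gives \eqref{zeros}): for $R<R_0$ and $\widetilde{\P}_x$-a.s.,
$$
g(\widetilde X_{T\wedge\widetilde D_R},T\wedge\widetilde D_R)-P_Tf(x)
=\int_0^{T\wedge\widetilde D_R}\nabla g(\widetilde X_s,s)\,\widehat{\sigma}(\widetilde X_s)\,d\widetilde W_s
+\int_0^{T\wedge\widetilde D_R}(\partial_tg+Lg)(\widetilde X_s,s)\,ds.
$$
By the local Krylov-type estimate (Theorem \ref{krylovtype}), $\widetilde{\E}_x\big[\int_0^{T\wedge\widetilde D_R}|\partial_tg+Lg|(\widetilde X_s,s)\,ds\big]\le C\,\|\partial_tg+Lg\|_{L^{2d+2,d+1}(B_R\times(0,T))}=0$, so the last integral vanishes a.s.; and since $\nabla g\,\widehat{\sigma}$ is bounded on $B_R\times(0,T)$, the stopped stochastic integral has zero expectation. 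Hence $\widetilde{\E}_x\big[g(\widetilde X_{T\wedge\widetilde D_R},T\wedge\widetilde D_R)\big]=P_Tf(x)$ for all $R<R_0$. Letting $R\to\infty$, using $\widetilde D_R\nearrow\infty$ $\widetilde{\P}_x$-a.s., the continuity of $\widetilde X$ and of $g$, and the boundedness of $g$, dominated convergence yields
$$
\widetilde{\E}_x\big[f(\widetilde X_T)\big]=P_Tf(x),\qquad\forall\,x\in\R^d,\ \forall\,T>0,\ \forall\,f\in C_0^{\infty}(\R^d).
$$
Since the right-hand side does not depend on the weak solution, $\E_x[f(X_T)]=\widetilde{\E}_x[f(\widetilde X_T)]$, i.e. the one-dimensional marginals of $X$ and $\widetilde X$ agree for every starting point.

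To pass from one-dimensional to all finite-dimensional marginals, I would argue as in \cite[Chapter 5]{KaSh}: for $0=t_0<t_1<\dots<t_n$ and $f_1,\dots,f_n\in C_b(\R^d)$, condition on $\widetilde{\F}_{t_1}$; a regular conditional probability argument (available since the coefficients are time-homogeneous and the filtration satisfies the usual conditions) shows that, $\widetilde{\P}_x$-a.s., the conditional law of the shifted process $(\widetilde X_{t_1+\cdot})$ is the law of a weak solution of \eqref{fepojpow} started at $\widetilde X_{t_1}$ that still satisfies \eqref{zeros}. Combining this with the one-dimensional identity $\widetilde{\E}_z[f_1(\widetilde X_{t_1})]=P_{t_1}f_1(z)$ and inducting on $n$ gives
$$
\widetilde{\E}_x\Big[\prod_{i=1}^n f_i(\widetilde X_{t_i})\Big]=\big(P_{t_1}(f_1\,P_{t_2-t_1}(f_2\cdots P_{t_n-t_{n-1}}f_n))\big)(x),
$$
the same expression for $\M$. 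As $X$ and $\widetilde X$ have continuous paths, equality of all finite-dimensional distributions gives $\P_x\circ X^{-1}=\widetilde{\P}_x\circ\widetilde X^{-1}$ for all $x\in\R^d$. Finally, the last display exhibits $(P_t)_{t>0}$ as the transition function of every weak solution; since by Theorem \ref{existhunt4} there is a Hunt process with transition function $(P_t)_{t>0}$ and $(P_t)_{t>0}$ is strong Feller (Proposition \ref{regular2-3}), every weak solution is a realization of a strong Markov process.

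I expect the main obstacle to be the promotion from one-dimensional to finite-dimensional marginals: one must verify carefully that the regular conditional probability of a weak solution given $\widetilde{\F}_{t_1}$ is again a weak solution of the \emph{same} equation satisfying the zero-time-at-degeneracy condition \eqref{zeros}, which is where the measurable-selection machinery of \cite[Chapter 5]{KaSh} enters. The parabolic regularity inputs (Theorems \ref{fvodkoko}, \ref{fefere} and \ref{krylovtype}) already carry the analytic weight of the first step.
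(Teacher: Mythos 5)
Your proposal is correct and follows essentially the same route as the paper: the paper likewise views the laws as solutions of the time-homogeneous martingale problem, applies Theorem \ref{krylovtype} to kill the $\partial_t g+Lg$ term, uses the local It\^o-formula of Theorem \ref{fefere} with $g(\cdot,t)=u_f(\cdot,T-t)$ from Theorem \ref{fvodkoko}, and lets $R\to\infty$ to identify the one-dimensional marginals with $P_Tf(x)$. The only difference is cosmetic: where you sketch the regular-conditional-probability promotion to finite-dimensional distributions and the strong Markov property, the paper simply delegates these steps to \cite[Chapter 5, proof of 4.27 Proposition]{KaSh} and \cite[Chapter 5, 4.20 Theorem]{KaSh}.
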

\begin{proof} Let $x\in \R^d$ be arbitrary. Let $\mathbb{Q}_x=\P_x\circ X^{-1}$ and $\mathbb{\widetilde{Q}}_x={\widetilde{\P}}_x\circ \widetilde{X}^{-1}$ respectively. Then $\mathbb{Q}_x$, $\widetilde{\mathbb{Q}}_x$ are two solutions of the time-homogeneous martingale problem with initial condition $x$ and coefficients $(\widehat{\sigma},\mathbf{G})$ as defined in \cite[Chapter 5, 4.15 Definition]{KaSh}. Let $f \in C_0^{\infty}(\R^d)$. For  $T>0$, define $g(x,t):=u_f(x, T-t)$,  $(x,t) \in \R^d \times [0,T]$, where $u_f$ is defined as in Theorem \ref{fvodkoko}.
Then by Theorem \ref{fvodkoko},
\begin{eqnarray*}
&&g \in C_b\left(\R^d \times [0, T] \right) \cap \big( \displaystyle \bigcap_{r>0} W^{2,1}_{2d+2, \infty} (B_r \times (0,T)) \big), \\
&&\partial_t g \in  L^{\infty}(\R^d \times (0,T)),\; \,  \; \partial_i g \in \displaystyle \bigcap_{r>0} L^{\infty}(B_{r} \times (0,T)),  \, 1\leq i \leq d,
\end{eqnarray*}
and it holds
\begin{eqnarray*}
\frac{\partial g}{\partial t} + Lg = 0 \;\;\,  \text{ a.e. in } \R^d \times (0,T), \;\; \; \;g(x, T) = f(x) \;\; \text{for all }  x \in \R^d.
\end{eqnarray*}
Applying Theorem \ref{krylovtype} to $\M$, for  $x \in \R^d$, $R>0$, it holds 
$$
\E_x \left[ \int_0^{T \wedge D_R}  \Big \vert \frac{\partial g}{\partial t} + Lg \Big \vert (X_s,s) ds   \right] =0, \; \; \quad
$$
hence
$$
\int_0^{T \wedge D_R}  \big( \frac{\partial g}{\partial t} + Lg  \big)(X_s,s) ds  = 0, \;\;  \P_x\text{-a.s., }
$$
hence by Theorem \ref{fefere},
$$
g(X_{T \wedge D_R}, T\wedge D_R) - g(x,0) = \int_0^{T \wedge D_R} \nabla g(X_s,s) \widehat{\sigma}(X_s) dW_s, \; \;\; \; \P_{x}\text{-a.s. }
$$
Therefore
$$
\E_x \left[ g(X_{T\wedge D_R},T \wedge D_R)  \right]  = g(x,0).
$$
Letting $R \rightarrow \infty$ and using Lebesgue's Theorem, we obtain 
$$
\E_x[f(X_T)] =\E_x[g(X_T, T)]  = g(x,0).
$$
Analogously for $\widetilde{\M}$, we obtain $ \widetilde{\E}_x[f(\widetilde{X}_T)] = g(x,0)$. Thus
$$
\E_x[f(X_T)] =\widetilde{\E}_x[f(\widetilde{X}_T)].
$$ 
Therefore $\mathbb{Q}_x$, and $\widetilde{\mathbb{Q}}_x$ have the same one-dimensional marginal distributions and we can conclude as in \cite[Chapter 5, proof of 4.27 Proposition]{KaSh} that $\mathbb{Q}_x = \widetilde{\mathbb{Q}}_x$.\\
For the last statement, see \cite[Chapter 5, 4.20 Theorem]{KaSh}.
\end{proof}
\\
Combining Theorem \ref{weifjowej}, Remark \ref{mostresults4} and Theorem \ref{weakexistence4}, we obtain the following result.
\begin{theo} \label{poijoijweuniqe}
Assume  {\bf (A4)$^\prime$} and suppose that $\M$ of Theorem \ref{weakexistence4} is non-explosive.
(This is for instance the case if \eqref{consuniquelaw} holds, see Theorem \ref{weakexistence4}.)
Then the Hunt process $\M$ is the  unique solution (in law) to \eqref{fepojpow} that satisfies $\P_x(\Lambda(Z^{\M}) ) =1$, for all  $x \in \R^d$. Moreover, under the same conditions as in \cite[Theorem 4.4]{LT18} but replacing $A,\sigma$ there with $\frac{1}{\psi}A$, $\sqrt{\frac{1}{\psi}}\sigma$ respectively, the moment inequalities of the mentioned theorem also hold for our $\M$ here.
\end{theo}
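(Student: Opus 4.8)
The plan is to combine the three ingredients already established. First I would observe that {\bf (A4)$^\prime$} implies {\bf (A1)}, {\bf (A2)}, {\bf (A3)} and {\bf (A4)}: {\bf (A1)} and {\bf (A2)} are part of {\bf (A4)$^\prime$} with $p=2d+2$; since $q>2d+2\ge \frac{p}{2}\vee 2$, {\bf (A3)} holds; and since $\mathbf{G}\in L^{\infty}_{loc}(\R^d,\R^d)$ while $\rho\psi\in L^q_{loc}(\R^d)$, we get $\mathbf{G}\in L^s_{loc}(\R^d,\R^d,\mu)$, which is {\bf (A4)}. Hence Theorem \ref{weakexistence4} applies and yields the Hunt process $\M$ together, on a suitable standard extension, with an $m$-dimensional Brownian motion $W$ for which \eqref{weaksolutiondc-3} holds on $0\le t<\zeta$. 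As $\M$ is assumed non-explosive, $\zeta=\infty$ $\P_x$-a.s. for all $x$, so by Remark \ref{explanationweaksolution}(ii) this $\M$ (on the standard extension) is a weak solution to \eqref{fepojpow} in the sense of Definition \ref{weaksolution}.

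Next I would verify that this weak solution spends no time at the degeneracy set, i.e. $\P_x(\Lambda(Z^{\M}))=1$ for every $x\in\R^d$. For this I invoke Lemma \ref{condforpsitohold}(ii): by Remark \ref{mostresults4} the Krylov type estimate \eqref{sfpolkd} holds for $\M$, and applied with $g=1_{B_n}\psi$ (which lies in $L^q(\R^d,\mu)\subset L^s(\R^d,\mu)$) it gives, using non-explosiveness, $\E_x\big[\int_0^T 1_{B_n}\psi(X_s)\,ds\big]<\infty$ for all $n\in\N$, $T>0$, $x\in\R^d$, where $\psi$ denotes the extended Borel version of Remark \ref{explanationweaksolution}(i). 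This is exactly condition (ii) of Lemma \ref{condforpsitohold}, so \eqref{zeros} holds for $\M$, i.e. $\P_x(\Lambda(Z^{\M}))=1$. In particular $\M$ is a weak solution of the class in which uniqueness is to be shown.

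Then the uniqueness part follows by feeding $\M$ and an arbitrary competitor into Theorem \ref{weifjowej}. Let $\widetilde{\M}$ be any weak solution to \eqref{fepojpow} with $\widetilde{\P}_x(\Lambda(Z^{\widetilde{\M}}))=1$ for all $x$. Then the hypothesis of Theorem \ref{weifjowej} (namely $\P_x(\Lambda(Z^{\M}))=\widetilde{\P}_x(\Lambda(Z^{\widetilde{\M}}))=1$ for all $x$) is met by the previous paragraph, whence $\P_x\circ X^{-1}=\widetilde{\P}_x\circ\widetilde{X}^{-1}$ for every $x\in\R^d$. Thus $\M$ is the unique (in law) weak solution of \eqref{fepojpow} satisfying $\P_x(\Lambda(Z^{\M}))=1$ for all $x$. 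Finally, the moment inequalities follow from Remark \ref{mostresults4}: it records that the analogues of \cite[Theorem 4.4]{LT18} carry over to the present setting with $A$, $\sigma$ replaced by $\frac{1}{\psi}A$, $\sqrt{\frac{1}{\psi}}\sigma$, so one only has to transcribe the growth conditions of \cite[Theorem 4.4]{LT18} into the stated hypotheses, which is routine.

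The only step that is not a verbatim citation is the verification that $\M$ spends zero time at $\{\sqrt{1/\psi}=0\}$, and there the delicate point is the bookkeeping of the fixed Borel versions of $\psi$ and $1/\psi$ from Remark \ref{explanationweaksolution}(i), so that \eqref{sfpolkd} may legitimately be applied to $1_{B_n}\psi$; I expect this, rather than any genuine analytic difficulty, to be the main obstacle.
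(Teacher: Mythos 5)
Your proposal follows exactly the route the paper intends: Theorem \ref{poijoijweuniqe} carries no separate proof beyond ``Combining Theorem \ref{weifjowej}, Remark \ref{mostresults4} and Theorem \ref{weakexistence4}'', and your three steps (checking {\bf (A1)}--{\bf (A4)} and invoking Theorem \ref{weakexistence4} with non-explosion, verifying \eqref{zeros} for $\M$, then concluding with Theorem \ref{weifjowej} and reading off the moment inequalities from Remark \ref{mostresults4}) are precisely that combination. The one place where your justification is wrong as written is the parenthetical claim that $1_{B_n}\psi\in L^{q}(\R^d,\mu)\subset L^{s}(\R^d,\mu)$: since $d\mu=\rho\psi\,dx$, membership of $1_{B_n}\psi$ in $L^{q}(\mu)$ would require $\psi\in L^{q+1}_{loc}(\R^d)$, which is not assumed, and the inclusion $L^{q}(B_n,\mu)\subset L^{s}(B_n,\mu)$ needs $q\ge s$, which {\bf (A4)$^\prime$} does not guarantee ($s$ is only constrained by $\frac{1}{q}+\frac{1}{s}<\frac{2}{d}$, so e.g. $s=\infty$ is admissible). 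The step itself is easily repaired: either cite the final assertion of Lemma \ref{condforpsitohold} directly (as the paper does); or observe that one may without loss of generality take $s\le d$ (any smaller admissible $s$ still satisfies {\bf (A2)}), in which case $s+1<q$ and hence $1_{B_n}\psi\in L^{s}(\R^d,\mu)$ because $\int_{B_n}\psi^{s}\,d\mu=\int_{B_n}\rho\,\psi^{s+1}dx<\infty$; or, simplest of all, note that $\{\frac{1}{\psi}=0\}$ is a $\mu$-null set, so applying \eqref{sfpolkd} to $g=1_{B_n\cap\{\frac{1}{\psi}=0\}}$, whose $L^{r}(\R^d,\mu)$-norm vanishes, gives \eqref{zeros} for $\M$ directly without any integrability of $\psi$. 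With that repair the argument is complete and coincides with the paper's.
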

\begin{rem}\label{remuniquenessinlawconsequences}
Once uniqueness in law holds for \eqref{fepojpow}, any weak solution to \eqref{fepojpow} satisfies the improved (time homogeneous) Krylov type estimate \eqref{sfpolkd}. We will illustrate this in two with respect to each other extreme cases. 
For the first case suppose that {\bf (A4)$^\prime$} holds with $q=2d+2+\varepsilon$ for some small $\varepsilon>0$. Then we may choose $s=\frac{2}{3}d$ and $s_0:=\frac{sq}{q-1}=\frac{2}{3}d\cdot \frac{2d+2+\varepsilon}{2d+1+\varepsilon}$ satisfies $s_0<\frac{4}{5}d$, actually $s_0=\frac{4}{5}d-\delta$ for small $\delta>0$ and for any bounded open set $V$, any ball $B\subset \R^d$, and $g \in L^{s_0}(\R^d)_0$ with $\text{supp}(g) \subset V$, we have by \eqref{sfpolkd} for any $x\in \overline{B}$
\begin{eqnarray*}
\mathbb{E}_x\left[ \int_0^T g(X_s) ds\right] &\leq& C_{B,s,t}\,\|g\|_{L^s(\R^d, \mu)}  \\
&\leq& C_{B,s,t}\,\|\rho \psi \|^{1/s}_{L^q(V)}\left( \int_{V} |g|^{\frac{sq}{q-1}} dx   \right)^{\frac{q-1}{sq}}  = C_{B,s,t}\,  \|\rho \psi \|^{1/s}_{L^q(V)}\|g\|_{L^{s_0}{(V)}}.
\end{eqnarray*}
On the other hand, if {\bf (A4)$^\prime$} holds with $q=\infty$ and $\frac{1}{\psi}$ is supposed to be locally pointwise bounded below and above by strictly  positive constants, we may choose $s=\frac{d}{2}+\varepsilon$ for arbitrarily small $\varepsilon>0$ and we obtain for $g\in  L^{s}(\R^d)_0$ with $\text{supp}(g) \subset V$, $V,B$ and $x$ as above,
$$
\mathbb{E}_x\left[ \int_0^T g(X_s) ds\right] \leq C_{B,s,t}\, \|\rho\psi\|_{L^\infty(V)}^{1/s}\|g\|_{L^s(V)}.
$$
\end{rem}
\begin{exam}\label{wellposednessexplicitexample}
Consider the situation in Example \ref{imporexam} except the conditions (a), (b), (c). Let $p:=2d+2$ and assume $\mathbf{G} \in L^{\infty}_{loc}(\R^d, \R^d)$. Let $\alpha \geq 0$ be such that $\alpha (2d+2)< d$. Take $q \in (2d+2, \frac{d}{\alpha})$. Then $A$, $\mathbf{G}$, $\psi$ satisfy {\bf (A4)$^\prime$}.  Therefore, the Hunt process $\M$ of Theorem \ref{weakexistence4}
solves weakly \,$\P_x$-a.s. for any $x \in \R^d$,
\begin{equation}\label{weaksolutiondcuq}
X_t= x+\int_0^t \|X_s\|^{\alpha/2}  \cdot \frac{\sigma}{\sqrt{\phi}} (X_s) \, dW_s +   \int^{t}_{0}   \mathbf{G}(X_s) \, ds, \quad 0\le  t <\zeta. \quad 
\end{equation}
Assume \eqref{consuniquelaw}. Then $\zeta=\infty$ and by Theorem \ref{poijoijweuniqe}, $\M$ is the unique (in law) solution to \eqref{weaksolutiondcuq} that satisfies $\P_x(\Lambda(Z^{\M}) ) =1$, for all  $x \in \R^d$. If we choose the following Borel measurable version of $\|x\|^{\alpha/2}$, namely
$$
f_{\gamma}(x):=\|x\|^{\alpha/2}1_{\{x\not=0\}}(x)+\gamma 1_{\{x=0\}}(x), \quad x\in \R^d
$$
where $\gamma$ is an arbitrary but fixed strictly positive real number, then $\P_x(\Lambda(Z^{\widetilde{\M}}) ) =1$ (here of course $Z^{\widetilde{\M}}$ is defined w.r.t. $\sqrt{\frac{1}{\psi}}=\frac{f_{\gamma} }{\sqrt{\phi}}$) is automatically satisfied by Lemma \ref{condforpsitohold}(i) for any weak solution $\widetilde{\M}$ to 

\begin{equation}\label{exampleEngSchm}
\widetilde{X}_t= x+\int_0^t \frac{f_{\gamma}\cdot \sigma }{\sqrt{\phi}}(\widetilde{X}_s) \, d\widetilde{W}_s +   \int^{t}_{0}   \mathbf{G}(\widetilde{X}_s) \, ds, \quad t\ge 0, \ x\in\R^d, \quad 
\end{equation}
Thus under \eqref{consuniquelaw}, the SDE  \eqref{exampleEngSchm}
is well-posed for any $\gamma>0$ and moreover $\M$ of Theorem \ref{weakexistence4} also solves \eqref{exampleEngSchm}.
\end{exam}
{\bf Acknowledgement.} We would like to thank our colleagues Khaled Bahlali for very useful suggestions about uniqueness in law and Francesco Russo for a valuable discussion about the subject.

\centerline{}
Haesung Lee, Gerald Trutnau\\
Department of Mathematical Sciences and \\
Research Institute of Mathematics of Seoul National University,\\
1 Gwanak-Ro, Gwanak-Gu,
Seoul 08826, South Korea,  \\
E-mail: fthslt@snu.ac.kr, trutnau@snu.ac.kr
\end{document}